\documentclass[11pt,reqno]{amsart}
\usepackage{amsmath,amssymb,graphicx,mathrsfs,amsopn,stmaryrd,amsthm,amscd,bm,extarrows}
\usepackage{newtxtext}
\usepackage[dvipsnames]{xcolor}
\usepackage{ytableau}
\usepackage[colorlinks=true,citecolor=blue,linkcolor=blue]{hyperref}
\usepackage[alphabetic,nobysame]{amsrefs}
\renewcommand{\MR}[1]{} \renewcommand{\PrintDOI}[1]{}
\newcommand{\arxiv}[1]{\href{http://arxiv.org/abs/#1}{\sf arXiv:\nolinkurl{#1}}}
\usepackage{youngtab}
\usepackage[english]{babel}
\usepackage[mathscr]{euscript}
\usepackage[margin=1in]{geometry}
\DeclareFontFamily{OT1}{pzc}{}
\DeclareFontShape{OT1}{pzc}{m}{it}{ <-> s*[1.0] pzcmi7t }{}
\DeclareMathAlphabet{\mathpzc}{OT1}{pzc}{m}{it}

\usepackage{collectbox}


\usepackage{dsfont}
\usepackage{tikz}    
\usetikzlibrary{arrows,matrix,decorations.markings,shapes.geometric}   
\usetikzlibrary{decorations.pathreplacing}

\usepackage{xy}
\allowdisplaybreaks
\xyoption{all}
\numberwithin{equation}{section}
\newtheorem{thm}{Theorem}[section]
\newtheorem{prop}[thm]{Proposition}
\newtheorem{lem}[thm]{Lemma}
\newtheorem{cor}[thm]{Corollary}

\newtheorem{alphatheorem}{Theorem}

\theoremstyle{definition} 
\newtheorem{eg}[thm]{Example}
\newtheorem{dfn}[thm]{Definition}
\theoremstyle{remark}
\newtheorem{rem}[thm]{Remark}

\newcommand{\beq}{\begin{equation}}
\newcommand{\eeq}{\end{equation}}
\newcommand{\be}{\begin{equation*}}
\newcommand{\ee}{\end{equation*}}

\newcommand{\bC}{\mathbb{C}}

\newcommand{\bZ}{\mathbb{Z}}

\newcommand{\bN}{\mathbb{N}}

\newcommand{\mc}{\mathcal}

\newcommand{\cM}{\mathcal{M}}

\newcommand{\sfD}{\mathsf{D}}
\newcommand{\sfE}{\mathsf{E}}
\newcommand{\sfF}{\mathsf{F}}

\newcommand{\sfT}{\mathsf{T}}

\newcommand{\bA}{\mathbf{A}}
\newcommand{\bB}{\mathbf{B}}

\newcommand{\mfk}{\mathfrak}
\newcommand{\g}{\mathfrak{g}}
\newcommand{\h}{\mathfrak{h}}

\newcommand{\gl}{\mathfrak{gl}}
\newcommand{\fkn}{\mathfrak{n}}

\newcommand{\fkm}{\mathfrak{m}}
\newcommand{\fkM}{\mathfrak{M}}
\newcommand{\fkb}{\mathfrak{b}}

\newcommand{\fkS}{\mathfrak{S}}

\newcommand{\fkc}{\mathfrak{c}}

\newcommand{\fks}{\mathfrak{s}}
\newcommand{\fkh}{\mathfrak{h}}
\newcommand{\fkp}{\mathfrak{p}}

\newcommand{\rY}{\mathrm{Y}}

\newcommand{\rU}{\mathrm{U}}

\newcommand{\gr}{{\mathrm{gr}}}

\newcommand{\tl}{\tilde}

\newcommand{\gge}{\geqslant}
\newcommand{\lle}{\leqslant}
\newcommand{\la}{\lambda}

\newcommand{\bla}{\bm\lambda}

\newcommand{\ka}{\kappa}

\newcommand{\I}{{\mathbb I}}

\newcommand{\ve}{\varepsilon}

\newcommand{\Y}{{\mathrm{Y}_{m|n}(\sigma)}}
\newcommand{\Yz}{{\mathrm{Y}_{m|n}^0}}
\newcommand{\Yp}{{\mathrm{Y}_{m|n}^+(\sigma)}}
\newcommand{\Ym}{{\mathrm{Y}_{m|n}^-(\sigma)}}

\newcommand{\ovl}[1]{\overline{#1}}
\newcommand{\paa}[1]{|{#1}|}
\newcommand{\tp}{\operatorname{pa}}
\newcommand{\str}{\operatorname{str}}
\newcommand{\col}{\operatorname{col}}
\newcommand{\row}{\operatorname{row}}
\newcommand{\pr}{\operatorname{pr}}
\newcommand{\ev}{\operatorname{ev}}
\newcommand{\HC}{\operatorname{HC}}
\newcommand{\hc}{\operatorname{hc}}
\newcommand{\dprime}{{\prime\prime}}
\newcommand{\trprime}{{\prime\prime\prime}}

\begin{document}
\pagestyle{myheadings}
\setcounter{page}{1}

\title[Shifted super Yangians and finite $W$-superalgebras]{Representations of shifted super Yangians and finite $W$-superalgebras of type A}

\author{Kang Lu}
\address{
Shenzhen International Center for Mathematics and Department of Mathematics, Southern University
of Science and Technology, Shenzhen, China
}\email{(Lu) luk@sustech.edu.cn}
\author{Yung-Ning Peng}
\address{Department of Mathematics, National Cheng Kung University, Tainan City, 70101, Taiwan, and National Center for Theoretical Sciences, Mathematics Division, Taipei City, 10617, Taiwan}\email{(Peng) ynp@gs.ncku.edu.tw}

\subjclass[2020]{Primary 17B37.}
\keywords{Super Yangians, Finite $W$-superalgebras}

\begin{abstract}
In this article, we study the representation theory of shifted super Yangians and finite $W$-superalgebras of type A. A criterion for the finite dimensionality of irreducible modules is obtained in the standard parity case. Furthermore, we provide an explicit Gelfand-Tsetlin character formula for Verma modules of finite $W$-superalgebras. As an application, we show that the centers of the finite $W$-superalgebras associated to any even nilpotent elements belonging to the same general linear Lie superalgebra are all isomorphic to the center of the universal enveloping superalgebra.
\end{abstract}
	
\maketitle

\setcounter{tocdepth}{1}
\tableofcontents

\thispagestyle{empty}
\section{Introduction} 
Finite $W$-algebras are certain associative algebras determined by nilpotent elements in complex semisimple Lie algebras. Geometrically, they can be realized as quantizations of the Slodowy slices to the corresponding nilpotent orbits.
The study of finite $W$-algebras originated from Kostant's celebrated work \cite{Ko78}, in which the finite $W$-algebras associated to principal (also called regular) nilpotent orbits are proved to be isomorphic to the center of the universal enveloping algebras. The modern terminology and the name itself were later introduced by Premet \cites{Pr95, Pr02}. For a comprehensive history and overview of applications, we refer to the survey articles by Losev \cite{Losev10} and Wang \cite{Wa11}.

Under mild conditions with specific technical requirements depending on the type, one can define finite $W$-superalgebras associated to nilpotent elements in both basic classical and strange Lie superalgebras.
In recent years, finite $W$-superalgebras and their representations have been extensively studied by mathematicians and physicists from different points of view and using various approaches \cites{CC24,CW26,PS16,PS17,WZ09,Zha14,ZS16,ZS25}.

The algebraic structure of finite $W$-algebras is significantly more intricate than that of universal enveloping algebras. 
A cornerstone is the realization of a finite $W$-algebra associated to an arbitrary nilpotent element in a general linear Lie algebra in terms of the {\em shifted Yangian}, as established by Brundan-Kleshchev \cites{BK05, BK06}. Their work generalized earlier results \cites{Ko78, RS99} from principal or rectangular nilpotent elements to the arbitrary case.
By studying shifted Yangians, the representation theory of finite $W$-algebras is then systematically developed in \cite{BK08}, leading to fundamental results such as highest weight theory, the classification of finite-dimensional simple modules, and a Gelfand-Tsetlin character formula for Verma modules. 

The aforementioned isomorphism in \cite{BK06} for other types of Lie algebras was still unavailable in the literature, except for some very special cases, until a recent work \cite{LPTTW25}, in which the isomorphism is established for the case of type BC (and conjecturally for D) Lie algebras.
On the other hand, the corresponding results are relatively complete for type A Lie superalgebras.
In \cite{Pe21}, an explicit presentation of a finite $W$-superalgebra associated to an arbitrary even nilpotent element in a type A Lie superalgebra was obtained, superizing the main results of \cite{BK06} and generalizing special cases in \cites{BR03,BBG13}.
Hence, it is natural to seek further results on the representation theory of finite $W$-superalgebras. 
The goal of this article is to study the representation theory of the shifted super Yangian of type A in the spirit of \cite{BK08}. As a consequence, several corresponding results of the representation theory of finite $W$-superalgebras of type A are obtained, generalizing some earlier results in \cites{BBG13, BG19} to non-principal even nilpotent case. 

To quickly summarize our results in this introduction, we introduce some terminology; precise definitions can be found in the context.
A pyramid is, roughly speaking, a skew Young diagram $\pi$ in French style, but not allowing any box hanging in the air.
In the super-setting, each row in $\pi$, and hence each box in $\pi$, is assigned a $\bZ_2$-parity. Let $m$ and $n$ denote the number of even and odd rows, respectively, so that the height of $\pi$ is $m+n$.
A pyramid  $\pi$ records an even nilpotent element $e$ and a good $\bZ$-grading, and hence corresponds to a finite $W$-superalgebra which we denote by $W(\pi)$.
On the other hand, a pyramid $\pi$ corresponds uniquely to a triple $(\sigma, \ell, \fks)$, where $\sigma=(s_{i,j})_{1\lle i,j\lle m+n}$ is a shift matrix of size $m+n$ with entries in $\bZ_{\gge 0}$, $\ell$ is the size of the largest Jordan block of $e$, and $\fks$ is the parity sequence recording the parity of each row in $\pi$ from top to bottom. We call $\ell$ the  level of $\pi$. The matrix $\sigma$ (and the sequence $\fks$) allows one to define the shifted super Yangian $\Y$, which is a subalgebra of the super Yangian $\rY_{m|n}:=\rY(\gl_{m|n})$.
In terms of a Drinfeld type presentation, $\Y$ is an associative superalgebra generated by the following symbols
\begin{align*}
\big\{D_i^{(r)} \mid 1\lle i&\lle m+n,r>0\big\}\\ \cup\,
\big\{E_j^{(r)}&\mid 1\lle j< m+n,r>s_{j,j+1}\big\}\\
&\cup\big\{F_j^{(r)}\mid 1\lle j< m+n,r>s_{j+1,j}\big\},
\end{align*}
subject to certain relations in Section \ref{sec:ssy}. When $\sigma=0$, or when the pyramid is of rectangular shape, $\Y$ is exactly the whole super Yangian $\rY_{m|n}$. 

Let $I_\ell$ denote the two-sided ideal of $\Y$ generated by $\big\{D_1^{(r)} \mid r> \ell - s_{1,m+n} - s_{m+n,1}\big\}$. It is proved in \cite{Pe21} that the quotient $\rY_{m|n}^\ell(\sigma):=\Y /I_\ell$ is isomorphic to $W(\pi)$ as superalgebras. One should note that $\Y$ is not closed under the restriction of the coproduct of $\rY_{m|n}$. 
Our first result is a description of the behavior of $\Y$ under the coproduct.
\begin{alphatheorem}[Theorem~\ref{thm:grownup}, Corollary~\ref{SWcop}]\label{thm:A}
The restriction of the coproduct
$\Delta:\rY_{m|n}\rightarrow \rY_{m|n} \otimes \rY_{m|n}$ 
induces a homomorphism
\be
\Delta:\rY_{m|n}(\sigma)\rightarrow \rY_{m|n}(\sigma^\prime) \otimes \rY_{m|n}(\sigma^\dprime).
\ee
Moreover, this homomorphism factors through the corresponding quotients to yield a homomorphism of finite $W$-superalgebras 
\be
\Delta_{\ell^\prime,\ell^\dprime} : W(\pi) \rightarrow W(\pi^\prime)\otimes W(\pi^\dprime).
\ee
\end{alphatheorem}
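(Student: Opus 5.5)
Following the non-super approach of Brundan--Kleshchev \cite{BK08}, I would prove the two parts of Theorem~\ref{thm:A} separately.

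\emph{The homomorphism of shifted super Yangians.} The shift matrix $\sigma$ splits as $\sigma=\sigma'+\sigma''$, with $\sigma'$ and $\sigma''$ the shift matrices of the pyramids $\pi'$ and $\pi''$ obtained by cutting $\pi$ vertically into a left piece of level $\ell'$ and a right piece of level $\ell''$, where $\ell=\ell'+\ell''$. I would work with the parabolic (Gauss) decomposition of the generating matrix $T(u)=F(u)D(u)E(u)$. The generators $D_i^{(r)}$, $E_{i,j}^{(r)}$, $F_{i,j}^{(r)}$ of $\Y$ are the coefficients of these factors, with the constraints $r>s_{i,j}$ or $r>s_{j,i}$.
\begin{enumerate}
\item For the $D_i(u)$, I would show that $\Delta(D_i(u))$ equals $D_i(u)\otimes D_i(u)$ plus correction terms involving $F_{i,k}(u)$ and $E_{k,i}(u)$. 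This should follow from a quasideterminant formula $D_i(u)=|T_{[i..m+n]}(u)|$ together with $\Delta(T(u))=T(u)\otimes T(u)$. It is cleaner to run the induction for the expressions $\Delta(E_{i,i+1}(u))$ and $\Delta(F_{i+1,i}(u))$.
\item The key computation is $m+n=2$, the case of $\rY_{1|1}$ and $\rY_{2}$ (and of $\rY_{0|2}$). There the formulas are explicit, e.g.
\[
\Delta(E(u))=1\otimes E(u)+\sum_{k\ge0}(-1)^k \bigl(D_1(u)^{-1}F(u)^k D_2(u)\bigr)\otimes E(u)^{k+1}\cdots .
\]
From these one reads off that the coefficients of $u^{-r}$ with $r>s_{12}$ land in $\rY(\sigma')\otimes\rY(\sigma'')$, since $E^{(r)}$ in each factor is allowed for $r>s'_{12}$, respectively $r>s''_{12}$. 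The super signs come from the parity $|i|+|j|$ and should only affect signs, not degrees.
\item I would reduce the general case to the $2\times2$ case using the standard embeddings $\rY_{2}\hookrightarrow\rY_{m|n}$ (shift homomorphisms $\psi_k$ and the corresponding compatibility of $\Delta$), as in Brundan--Kleshchev's ``grown-up'' argument. Alternatively, I would use the explicit description of $\Y$ as spanned by PBW monomials in generators satisfying degree bounds given by the Kazhdan filtration.
\end{enumerate}

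\emph{Factorization through the quotients.} I need $\Delta(I_\ell)\subseteq I_{\ell'}\otimes\Y' +\Y'\otimes I_{\ell''}$. Since $I_\ell$ is generated by $D_1^{(r)}$ with $r>\ell-s_{1,m+n}-s_{m+n,1}$, it suffices to treat these elements. By \cite{Pe21}, the ideal $I_\ell$ equals the kernel of the evaluation-type map. Equivalently, in the quotient the matrix $T(u)$ (suitably normalized by $u^{\ell}$) becomes polynomial: all $T_{ij}^{(r)}$ vanish for $r>\ell$ once the shift is accounted for. This polynomiality is multiplicative under $\Delta(T(u))=T(u)\otimes T(u)$, with degrees adding as $\ell'+\ell''=\ell$. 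It therefore yields the factorization
\[
\Delta_{\ell',\ell''}:W(\pi)\to W(\pi')\otimes W(\pi'').
\]

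The main obstacle is step 3, controlling which coefficients appear in the general case. Concretely, this means proving that the correction terms respect the shifted degree bounds for every pair $i<j$, including the odd--odd and mixed-parity blocks, where $\rY_{0|2}$ behaves like $\rY_2$ with opposite signs. I would first verify the rank-two formulas and then propagate them through the shift maps.
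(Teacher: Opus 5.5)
\textbf{Step~3 is a genuine gap, and it carries the whole content of the first assertion.} The rank-one embeddings $\psi_j$ (and the shift maps) are algebra maps but not coalgebra maps. Already $\Delta(D_1(u))=\sum_{k=1}^{m+n}\sfT_{1k}(u)\otimes\sfT_{k1}(u)$ has $m+n$ terms rather than the two it has in rank one. Likewise $\Delta(E_j(u))$ involves root vectors $E_{a,c}$, $F_{c,b}$ with $c>j+1$. So there is no ``corresponding compatibility of $\Delta$'' along which rank-one formulas could be propagated. The PBW/Kazhdan-filtration alternative is too vague to replace it.

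The rank-one step is also misstated. The correct formula is
\[
\Delta(E(u))=1\otimes E(u)+\sum_{k\gge0}(-1)^kE(u)^{k+1}\otimes D_1(u)^{-1}F(u)^kD_2(u)
\]
up to super signs, with the tensor factors the opposite way from yours. It lands in the shifted tensor product not because the degree condition $r>s_{12}$ splits between the two factors, but because $s'_{12}=s''_{21}=0$. Every $E$ is allowed on the left, every $D$ and $F$ on the right, and only the term $1\otimes E^{(r)}$ needs $r>s_{12}$. With your ordering the right factor would contain $E^{(a)}$ with $a\lle s_{12}$, which is not in $\rY(\sigma'')$.

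What the general case actually needs is the following, coefficientwise:
\begin{itemize}
\item $\Delta(D_i(u))\in\rY^{\gge0}_{m|n}\otimes\rY^{\lle0}_{m|n}$;
\item $\Delta(E_i(u))-1\otimes E_i(u)\in\rY^{+}_{m|n}\otimes\rY^{\lle0}_{m|n}$;
\item the mirror statement for $F_i$.
\end{itemize}
This suffices because $\rY_{m|n}(\sigma')\supseteq\rY^{\gge0}_{m|n}$ and $\rY_{m|n}(\sigma'')\supseteq\rY^{\lle0}_{m|n}$. It can be proved directly, but not by reduction to rank one. Write $\Delta(\sfT(u))=(\sfT(u)\otimes1)(1\otimes\sfT(u))$ as a matrix product and insert $\sfT=FDE$ in both factors. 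Gauss-decompose the middle product $X=(E(u)\otimes1)(1\otimes F(u))$, whose entries lie in $\rY^+_{m|n}\otimes\rY^-_{m|n}$. Uniqueness of Gauss decompositions then gives $\Delta(E(u))=(1\otimes D(u))^{-1}E^X(u)(1\otimes D(u))\,(1\otimes E(u))$, whose $(i,i+1)$ entry has the required form.

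That would be a genuinely different route from the paper's, which obtains the containment from the $W$-superalgebra side. The paper builds $\Delta_{\ell',\ell''}$ on $W(\pi)$ from $\fkp\twoheadrightarrow\fkp'\oplus\fkp''$ and proves compatibility with column removal via injectivity of the Miura map. It then passes to $\rY_{m|n}(\sigma)=\varprojlim\rY^\ell_{m|n}(\sigma)$ and identifies the limit with $\Delta$ using \eqref{coproductW}.

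Your descent step can be made to work, with two corrections. First, the polynomiality you call equivalent to the definition of $I_\ell$ is Theorem~\ref{thm:vanish}, a substantial result proved in the appendix. Second, the bookkeeping must use row lengths rather than levels, since $I_\ell$ is generated by $D_1^{(r)}$ for $r>p_1$. Because $s'_{1k}=s''_{k1}=0$, the terms of $\Delta(D_1(u))=\sum_k\sfT_{1k}(u)\otimes\sfT_{k1}(u)$ are shifted $T$'s of $\rY_{m|n}(\sigma')$ and $\rY_{m|n}(\sigma'')$. They vanish in $W(\pi')$ beyond degree $p_1'$ and in $W(\pi'')$ beyond degree $p_1''$, and $p_1=p_1'+p_1''$.

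Finally, you never show that the induced map is the parabolic induction \eqref{Wdel}. That is what $\Delta_{\ell',\ell''}$ denotes, and it is what Corollary~\ref{SWcop} and the later tensor-product and Miura arguments use. Proving it needs \eqref{coproductW} together with the fact that $D_i,E_i,F_i\in W(\pi)$ are the Gauss factors of $(\sfT_{i,j;0}(u))$.
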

We roughly explain the notation in the theorem above.
Starting with the pyramid $\pi$, we cut it vertically along its highest column to obtain two smaller pyramids $\pi^\prime$ and $\pi^\dprime$, where $\pi^\prime$ is the left pyramid and $\pi^\dprime$ is the right pyramid. Let $\ell, \ell^\prime$, and $\ell^\dprime$ denote the levels of $\pi$, $\pi^\prime$, and $\pi^\dprime$, respectively. 
Identify $W(\pi)\cong \rY_{m|n}^\ell(\sigma)$, $W(\pi^\prime)\cong \rY_{m|n}^{\ell^\prime}(\sigma^\prime)$, and $W(\pi^\dprime)\cong \rY_{m|n}^{\ell^\dprime}(\sigma^\dprime)$, where $\sigma^\prime$ and $\sigma^\dprime$ are the corresponding shift matrices of the same height $m+n$ by allowing empty rows. Since we cut $\pi$ at its highest column, it turns out that $\sigma^\prime$ is lower-triangular and $\sigma^\dprime$ is upper-triangular. As a result, we have $\sigma=\sigma^\prime + \sigma^\dprime$ and we may identify $\Y$ as a subalgebra of both $\rY_{m|n}(\sigma^\prime)$ and $\rY_{m|n}(\sigma^\dprime)$. This homomorphism allows one to construct the tensor product of modules for shifted super Yangians and finite $W$-superalgebras, under a certain compatibility assumption related to parities.

Now we turn our attention to the representations. Using a PBW-type basis and triangular decomposition, one can define the highest weight theory for $\Y$ and hence the notion of Verma modules. Here, the role of the Cartan subalgebra is replaced by the Gelfand-Tsetlin subalgebra generated by $\big\{ D_i^{(r)}\mid 1\lle i\lle m+n, \, r\gge 1\,\big\}$.
This subalgebra is a maximal commutative (purely even) subalgebra that is contained in any shifted super Yangian $\Y$. 
It is convenient to use generating series $D_i(u)=1+\sum_{r\gge 1} D_i^{(r)}u^{-r} \in \rY_{m|n}(\sigma)[\![u^{-1}]\!]$ to describe the results. 
As usual, a highest weight vector in a $\Y$-module means a vector which is annihilated by all $E_j^{(r)}$ while each $D_i^{(r)}$ acts  on it as a scalar, say $\la_i^{(r)}$.
Thus, by collecting these scalars, one obtains an $(m+n)$-tuple of power series in $u^{-1}$ which we call an $\bm\ell$-weight:
\[
\bla=\big(\la_i(u)\big)_{1\lle i\lle m+n},
\]
where each $\la_i(u)=1+\sum_{r\gge 1} \la_i^{(r)}u^{-r}$ is a formal series in $\bC[\![u^{-1}]\!]$ with constant term 1.
Let $\mathscr P_{m|n}$ denote the set of $\bm\ell$-weights. Given an $\bm\ell$-weight $\bla\in\mathscr P_{m|n}$, let $\mc M(\sigma,\bla)$ denote the corresponding Verma module for $\Y$ and let $\mc L(\sigma,\bla)$ denote its irreducible quotient.
Our next result is a finite-dimensional criterion of $\mc L(\sigma,\bla)$ in the standard parity case.

\begin{alphatheorem} [Theorem~\ref{thm:fd}]
\label{thm:B}
Let $\fks$ be the standard parity sequence. For $\bla\in\mathscr P_{m|n}$, a finite dimensionality criterion for $\mc L(\sigma,\bla)$ in terms of Drinfeld polynomials is obtained.
\end{alphatheorem}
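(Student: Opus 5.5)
The plan is to reduce the statement to the unshifted case $\sigma=0$, where finite-dimensional irreducible $\rY_{m|n}$-modules are classified by Zhang's criterion in the standard parity. That criterion says $L(\bla)$ is finite dimensional iff the ratios $\la_i(u)/\la_{i+1}(u)$ for $i\neq m$ are of the form $P_i(u+1)/P_i(u)$ with $P_i$ monic, and $\la_m(u)/\la_{m+1}(u)$ is a ratio of monic polynomials of equal degree.

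For $\Y$ I expect the criterion to read as follows: $\mc L(\sigma,\bla)$ is finite dimensional iff there exist monic polynomials $P_i(u)$ ($i\neq m$) and a ratio $P_m(u)/Q_m(u)$ as above, with the same ratio conditions on $\la_i(u)/\la_{i+1}(u)$. The shift enters only through an extra factor: the relevant ratio should be $u^{s_{i,i+1}}\la_i(u)/\bigl((u+\dots)^{s_{i+1,i}}\la_{i+1}(u)\bigr)$-type normalisations, or equivalently $\deg P_i\gge s_{i,i+1}+s_{i+1,i}$ type lower bounds. I will fix the precise normalisation by working out the case $m+n=2$ first.

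\textbf{Step 1 (necessity).} Assume $\mc L(\sigma,\bla)$ is finite dimensional. For each $i$, consider the subalgebra generated by $D_i^{(r)},D_{i+1}^{(r)},E_i^{(r)},F_i^{(r)}$. Up to the shift $s_{i,i+1},s_{i+1,i}$, this is a shifted Yangian of rank one, either $\rY_2(\sigma)$ or $\rY_{1|1}(\sigma)$. The highest weight vector generates a finite-dimensional highest weight module for it. I then run the rank-one argument of Brundan--Kleshchev in the even case: the operators $F_i^{(r)}$ for $r>s_{i+1,i}$ acting on the highest weight vector span a finite-dimensional space. This forces a linear recurrence, so the ratio $\la_i(u)/\la_{i+1}(u)$ (suitably normalised) is the expansion of a rational function of the stated form. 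The odd case $i=m$ is easier: $(F_m^{(r)})^2$-type relations give only the equal-degree rational condition.

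\textbf{Step 2 (sufficiency).} Given $\bla$ satisfying the conditions, I factor it as a product. One factor is a weight realised by a one-dimensional or finite-dimensional module of the triangular pieces $\rY_{m|n}(\sigma')$ and $\rY_{m|n}(\sigma'')$, namely the "shift-absorbing" polynomial factors $u^{s_{i,i+1}}$. The other factor is a weight for the full $\rY_{m|n}$, hence for $\sigma=0$, which is finite dimensional by Zhang. Since $\sigma'$ is lower triangular and $\sigma''$ upper triangular, I can choose the data as follows. I take $\sigma'=0$ or pass to $\rY_{m|n}$ on one side, and on the other side a module built from evaluation at the pyramid, e.g. a $W(\pi)$-module pulled back.

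I then use Theorem~\ref{thm:A}: $\Delta$ makes the tensor product a $\Y$-module. The tensor product of highest weight vectors is a highest weight vector of weight equal to the product of weights, by the triangularity of $\Delta$ on the Gelfand--Tsetlin generators modulo $E$'s. Hence $\mc L(\sigma,\bla)$ is a subquotient of a finite-dimensional module. Alternatively, I can use the canonical shift isomorphisms $\rY_{m|n}(\sigma)\cong\rY_{m|n}(\dot\sigma)$ whenever $s_{i,i+1}+s_{i+1,i}$ is unchanged, which reduces everything to the upper-triangular case.

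\textbf{Main obstacle.} I expect the hardest part to be the rank-one analysis for the odd simple root $i=m$ in the shifted setting, together with keeping track of the parity compatibility needed for the tensor product. Standard parity is used precisely there: only one odd simple root occurs, and all other rank-one subalgebras are even shifted Yangians, so Brundan--Kleshchev's results apply verbatim.
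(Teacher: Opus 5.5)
Your architecture is the paper's. For necessity, the paper restricts $v_{\bla}$ to the rank-one subalgebras $\psi_j(\rY_{\fks_j\fks_{j+1}}(\sigma_j))$ and cites \cite[Coro.~7.5]{BK08} for $j\neq m$ and Proposition~\ref{prop:rk1} for $j=m$. For sufficiency, it passes to upper triangular $\sigma$ via $\iota$, uses $\Delta:\Y\to\rY_{m|n}\otimes\Y$, writes $\bla=\bm\gamma\bm\zeta$, and combines Lemma~\ref{lem:tensor} with Zhang's theorem for $\mc L(\bm\gamma)$.

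The gap is that you never determine the criterion, and the placeholders you write down are wrong. A normalising factor like $u^{s_{i,i+1}}/(\cdots)^{s_{i+1,i}}$ behaves like $u^{s_{i,i+1}-s_{i+1,i}}$ at $u=\infty$, whereas $\la_i/\la_{i+1}$ and $P(u)/P(u\mp 1)$ both tend to $1$. There is also no lower bound on $\deg P_i$.

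Your version of Zhang's condition also has the shift in the wrong direction for $i>m$ with the conventions here. There the rank-one algebra is $\rY_{0|2}(\sigma_j)$, whose relations carry $\fks_{j+1}=-1$, so \cite{BK08} does not apply verbatim and one gets $P_j(u)/P_j(u+1)$. The criterion to be proved is
\[
\frac{\la_j(u)}{\la_{j+1}(u)}=\frac{P_j(u)}{P_j(u-\fks_j)}\cdot\frac{u^{d_j}}{Q_j(u)}\quad (j\neq m),\qquad \frac{\la_m(u)}{\la_{m+1}(u)}=\frac{P_m(u)}{Q_m(u)},
\]
with $d_j=s_{j,j+1}+s_{j+1,j}$ and $Q_j$ an \emph{arbitrary} monic polynomial of degree $d_j$ coprime to $P_j$. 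The shift contributes exactly $u^{d_j}$ over a free degree-$d_j$ denominator, and nothing else.

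This is precisely what Step 2 is missing. The shift-absorbing factor must be $\bm\zeta$ with $\zeta_j(u)/\zeta_{j+1}(u)=u^{d_j}/Q_j(u)$ for every $j$ (taking $Q_m=u^{d_m}$) and $Q_j$ arbitrary. Factors of the form $u^{s_{i,i+1}}$ alone do not reach all weights allowed by the necessity direction.

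You must also prove that $\mc L(\sigma,\bm\zeta)$ is one-dimensional; this is the missing lemma, Lemma~\ref{lem:1-dim}. On $w_{\bm\zeta}$ one has $H_i(u)w_{\bm\zeta}=u^{-d_i}Q_i(u)w_{\bm\zeta}$, which has no terms beyond $u^{-d_i}$. Meanwhile $r>s_{i,i+1}$ and $s>s_{i+1,i}$ force $r+s-1>d_i$. Hence $E_i^{(r)}F_i^{(s)}w_{\bm\zeta}=-\fks_{i+1}H_i^{(r+s-1)}w_{\bm\zeta}=0$, so every $F_i^{(s)}w_{\bm\zeta}$ lies in the radical and the quotient is spanned by $w_{\bm\zeta}$. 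No $W(\pi)$-module is needed.

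Finally, the obstacles you single out are misplaced:
\begin{itemize}
\item Parity compatibility plays no role, since both tensor factors of $\Delta:\Y\to\rY_{m|n}\otimes\Y$ carry the same $\fks$. It only matters when tensoring $W(\pi')$- and $W(\pi'')$-modules.
\item At $j=m$, necessity is just rationality of $\la_m/\la_{m+1}$. This comes from finite-dimensionality of weight spaces (Proposition~\ref{prop:O-rational}), not from relations for $(F_m^{(r)})^2$.
\end{itemize}
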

We should mention that such a criterion for the whole super Yangian $\rY_{m|n}$ was only established for the standard parity sequence \cites{Zh96}. For an arbitrary parity sequence, an explicit criterion is still unavailable. However, using the notion of odd reflections for super Yangians \cites{Mo22, Lu22}, one can still check the finite dimensionality of irreducible modules by recursively reducing them to the standard parity case.
Deducing an explicit criterion or a procedure checking the finite-dimensionality of $\mc L(\sigma,\bla)$ for an arbitrary parity sequence is an interesting yet non-trivial problem, which will be considered elsewhere.

Let $p_i$ denote the number of boxes in the $i$-{th} row of $\pi$. If $v$ is a highest weight vector in a $W(\pi)$-module, by a highly non-trivial technical result Theorem~\ref{thm:vanish} one deduces that there exist complex numbers $\{ a_{i,j} \mid 1\lle i\lle m+n, 1\lle j\lle p_i\}$ such that 
\be
(u-\ka_i)^{p_i}D_i(u-\ka_i)v=\prod_{j=1}^{p_i}(u+a_{i,j})v, \qquad \forall\, 1\lle i\lle m+n,
\ee
where $\kappa_i$'s are certain integers defined in \eqref{eq:ka}.
Putting the scalars $a_{i,1}, \ldots, a_{i,p_i}$ into the boxes of the $i$-{th} row of $\pi$, we call the result a $\pi$-tableau. It turns out that the highest $\bm\ell$-weights for $W(\pi)$-modules are then parametrized by the set of row-symmetrized $\pi$-tableaux, which we denote by $\mathrm{Row}(\pi)$. For $\bA\in \mathrm{Row}(\pi)$, let $\mc M(\bA)$ denote the corresponding Verma module for $W(\pi)$ defined in \eqref{eq:Verma-W} and denote by $\mc L(\bA)$ its irreducible quotient. Our next result is the classification of finite-dimensional irreducible $W(\pi)$-modules and a character formula for Verma modules.

\begin{alphatheorem} [Theorem~\ref{thm:fd-W}, Theorem~\ref{thm:character}]
\label{thm:C}
We have the following.
\begin{enumerate} 
    \item When $\pi$ is a standard pyramid (i.e. the parity sequence $\fks$ corresponding to $\pi$ is standard), a criterion for the irreducible $W(\pi)$-modules $\mc L(\bA)$ to be finite dimensional is obtained.
    \item For $\bA\in \mathrm{Row}(\pi)$, an explicit character formula \eqref{eq:ch-Verma} for $\mc M(\bA)$ is obtained.
\end{enumerate}
\end{alphatheorem}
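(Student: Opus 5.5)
Theorem~\ref{thm:C} has two parts, and I would prove them separately, following Brundan--Kleshchev's treatment of finite $W$-algebras.

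\textbf{Part (1): reduction to Theorem~\ref{thm:B}.} Identify $W(\pi)\cong \rY^\ell_{m|n}(\sigma)=\Y/I_\ell$. An irreducible $W(\pi)$-module $\mc L(\bA)$ is then the same thing as the irreducible $\Y$-module $\mc L(\sigma,\bla)$ on which $I_\ell$ acts trivially. Here $\bla$ is determined by $\bA$ through
\[
(u-\ka_i)^{p_i}\la_i(u-\ka_i)=\prod_{j=1}^{p_i}(u+a_{i,j}).
\]
The finite dimensionality criterion of Theorem~\ref{thm:B} is stated through the Drinfeld polynomials, i.e.\ through ratios of the $\la_i(u)$ for adjacent rows, with the odd-step ratio treated separately in the standard parity case. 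I would substitute the above factorization into that criterion. The condition should then become a combinatorial condition on $\bA$: after suitably reordering the entries in each row, adjacent rows within the even block and within the odd block have entries differing by integers in a prescribed order. The $\ka_i$ and the shifts $s_{i,j}$ make the degrees match. At the even--odd boundary the condition is a simple atypicality-type condition.

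Part (1) also has an existence direction: every $\bA$ satisfying the condition must actually arise. I would handle this by checking that the corresponding $\mc L(\sigma,\bla)$ is annihilated by $I_\ell$. This can be done by realizing it inside a tensor product of finite-dimensional evaluation-type modules via the coproduct of Theorem~\ref{thm:A}.

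\textbf{Part (2): Gelfand--Tsetlin character of $\mc M(\bA)$.} I would use the PBW basis of $W(\pi)$ coming from $\Y$. The Verma module has a basis of ordered monomials in the $F$-type generators $F_{i,j}^{(r)}$ whose degrees are bounded by the truncation at level $\ell$ and the shifts. Each such monomial is a generalized eigenvector for the Gelfand--Tsetlin subalgebra, with weight shifted by the monomial's lowering steps. Summing over the basis expresses $\operatorname{ch}\mc M(\bA)$ as the product of the character of the highest weight with factors $(1-x)^{-1}$ for even root vectors and $(1+x)$ for odd ones. This yields \eqref{eq:ch-Verma}, provided the weights of the monomials can be computed.

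\textbf{Main obstacle.} The hardest step is that computation of Gelfand--Tsetlin weights. The $D_i(u)$ do not act diagonally on PBW monomials; they act only triangularly with respect to a suitable filtration. For the weight computation I would pass to the associated graded algebra, or to a Levi-type subalgebra, and use the tensor product structure from Theorem~\ref{thm:A}: split $\pi$ into columns, reduce to the case $\pi$ of a single column (a $\gl_{m|n}$ evaluation-type algebra), and multiply the characters using the coproduct. Keeping track of the parity signs in this process is the delicate point.
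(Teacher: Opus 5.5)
Your Part (1) is essentially the paper's argument. Pulled back along $\Y\twoheadrightarrow W(\pi)$, $\mc L(\bA)$ is just $\mc L(\sigma,\bla)$. The ``only if'' direction is the rank-one restriction, using \cite[Cor.~7.8]{BK08} on each even--even and odd--odd pair of adjacent rows. The ``if'' direction realizes $\mc L(\bA)$ as a subquotient of $\mc L(\bA_1)\boxtimes\cdots\boxtimes\mc L(\bA_\ell)$, where the $\bA_k$ are the columns of a column-strict representative.

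Two corrections. First, there is nothing to check about $I_\ell$, since every $\bA\in\mathrm{Row}(\pi)$ gives $\mc M(\bA)\neq 0$. Second, there is \emph{no} condition at the even--odd boundary: for a $W(\pi)$-module, $\la_m(u)/\la_{m+1}(u)$ is automatically a ratio of monic polynomials of equal degree. The criterion is therefore that $\bA$ be row equivalent to a column-strict tableau. You should make explicit how the adjacent-row conditions combine into a single such representative; this is the combinatorics of \cite[Thm.~7.9]{BK08}.

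Part (2) has a genuine gap. The $q$-character records generalized eigenvalues of all $D_i^{(r)}$, not only of the $D_i^{(1)}$. PBW monomials $F_\theta v_+$ are not generalized eigenvectors whose $\bm\ell$-weight is ``the highest one shifted by the lowering steps''. A product of factors $(1-x)^{-1}$ and $(1+x)$ over root vectors only computes the ordinary weight character, i.e.\ the image of $\mathrm{ch}\,\mc M(\bA)$ under $[\bla]\mapsto e^{\varpi(\bla)}$.

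Already for two even rows with $p_1=2$ and $a_{1,1}\neq a_{1,2}$, the two-dimensional weight space of weight $\varpi(\bla)-\alpha_1$ carries two distinct $\bm\ell$-weights. For $j=1,2$, one lowers $a_{1,j}$ by $1$ in row $1$ and multiplies row $2$ by $y_{2,a_{1,j}}/y_{2,a_{1,j}-1}$. The eigenvectors are $L_1(-a_{1,1})v_+$ and $L_1(-a_{1,2})v_+$ with $L_i(u)=u^{p_i}T_{m+n,i}(u)$, not PBW monomials.

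Your remedy, splitting $\pi$ into columns and multiplying characters via the coproduct, presupposes $\mathrm{ch}\,\mc M(\bA)=\prod_k\mathrm{ch}\,\mc M(\bA_k)$. That is Corollary~\ref{cor:char}, a \emph{consequence} of the theorem. The coproduct only gives a homomorphism $\mc M(\bA)\to\mc M(\bA_1)\boxtimes\cdots\boxtimes\mc M(\bA_\ell)$ between modules of the same ordinary character, and this map has three problems:
\begin{itemize}
\item It fails to be bijective for special $\bA$, since tensor products of evaluation Verma modules need not be cyclic.
\item Its bijectivity for generic $\bA$ is obtained in the paper only from the character formula (Lemma~\ref{lem:simple-generic}).
\item The one-column case, i.e.\ the Gelfand--Tsetlin character of a $\gl_{m'|n'}$ Verma module, is itself the same branching computation.
\end{itemize}

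The missing idea is the branching rule (Theorem~\ref{thm:branching}). Take $\bA$ generic (entries in each row pairwise non-congruent mod $\bZ$). Restrict $\mc M(\bA)$ to $W(\bar\pi)$, where $\bar\pi$ is $\pi$ minus its bottom row. Filter it by Verma modules $\mc M(\bB)$ whose rows are $a_{i,j}-\fks_ic_{i,j}$, with $c_{i,j}\le 1$ when $\fks_i\neq\fks_{m+n}$. The subquotients are generated by the vectors $\fkm_c=\prod L_i(-a_{i,j}+\fks_i(k-1))\,v_+$, whose $D_i(u)$-eigenvalues are computed from Lemma~\ref{lem:relations} via Lemmas~\ref{lem:simple} and~\ref{lem:general}.

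Induction on $m+n$ then gives the $\bm\ell$-weights in rows $<m+n$. The central quantum Berezinian fixes the bottom row. The non-generic case follows by continuity of generalized joint eigenvalues along a path of generic tableaux.

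Genericity is essential in the super case. When $\fks_i\neq\fks_{m+n}$, the vectors $\fkm_c$ depend on the order of the factors and can vanish (Corollary~\ref{cor:string}). That, not sign bookkeeping, is the real difficulty.
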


Since a $W(\pi)$-module is a $\Y$-module by pulling back of the map $\Y\twoheadrightarrow \rY_{m|n}^\ell(\sigma) \cong W(\pi)$, Part (1) of Theorem \ref{thm:C} can be thought as a consequence of Theorem \ref{thm:B}. Alternatively, it also follows from the corresponding results \cite[Thm.~7.9]{BK08} for non-super case and the classification result for the principal case \cite[Thm.~7.2]{BBG13}.

For Part (2), the primary differences from the non-super setting arise from the relations in Part (1) of Lemma \ref{lem:relations} as follows:
\begin{itemize} 
    \item Certain vectors constructed in constructed in \cite[Lem. 6.17]{BK08} are sensitive to the ordering of the product and may vanish in the super case, see Corollary \ref{cor:string} and also \cite[\S4]{Mo22};
    \item the second relation in Lemma \ref{lem:relations}(1) after taking higher order derivatives contains many unwanted residue terms, which makes the detail more complicated.
\end{itemize} 
To bypass these obstacles, we first consider the {\em generic $\pi$-tableau case}, which not only prevents such a vanishing issue but also simplifies some complicated calculations. 
Then the general case follows from that since the set of generic $\pi$-tableaux is Zariski dense in the set of all $\pi$-tableaux.

Finally, as an application of Part (2) of Theorem \ref{thm:C}, we prove that the centers of finite $W$-superalgebras for any even nilpotent elements in the same general linear Lie superalgebra are all isomorphic, confirming a conjecture \cite[Conj.~4.12]{ZS25} for the case of type A.
\begin{alphatheorem} [Theorem~\ref{isocent}]
\label{thm:D}
For any pyramid $\pi$ corresponding to an even nilpotent element in $\gl_{M|N}$, the center $Z(W(\pi))$ of the corresponding $W$-superalgebra is isomorphic to $Z(\rU(\gl_{M|N}))$. In particular, it depends only on $M$ and $N$, and is independent of the shape of $\pi$.
\end{alphatheorem}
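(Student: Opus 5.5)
The plan is to follow the strategy of Brundan--Kleshchev for the non-super case: construct an injective homomorphism from $Z(\rU(\gl_{M|N}))$ into $Z(W(\pi))$ and prove that it is onto by comparing images under a Harish-Chandra-type homomorphism. The injection $\rU(\gl_{M|N})^{G}\cong Z(\rU(\gl_{M|N}))\to Z(W(\pi))$ is the standard one. Since $W(\pi)=(\rU(\gl_{M|N})/I_\chi)^{\mathrm{ad}\,\fkm}$ is realized through the generalized Whittaker model, every central element $z$ maps to a well-defined element of $W(\pi)$, and this element is central. Equivalently, in the Yangian picture I would take the coefficients of the quantum Berezinian $\mathrm{Ber}\,T(u)$ of $\rY_{m|n}$. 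These coefficients are central, they lie in the Gelfand--Tsetlin subalgebra (hence in $\Y$), and their images in $\rY_{m|n}^\ell(\sigma)\cong W(\pi)$ are central.

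The first step is injectivity. I would use the Miura map $W(\pi)\hookrightarrow \rU(\fkh_\pi)$, where $\fkh_\pi$ is the Levi subalgebra $\bigoplus_{\text{columns}}\gl_{\cdot|\cdot}$, or equivalently the action of the central elements on Verma modules $\mc M(\bA)$. On $\mc M(\bA)$, the element $D_1(u)\cdots$, i.e.\ the Berezinian $D_1(u)D_2(u-1)\cdots D_{m+n}(\ldots)^{\pm1}$, acts by a scalar. That scalar is an explicit rational function in the entries $a_{i,j}$ of $\bA$. Reading $\bA$ column by column, it coincides with the supersymmetric function giving the central character of the $\gl_{M|N}$-Verma module whose highest weight is $\bA$, shifted by $\rho$. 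As $\bA$ ranges over $\mathrm{Row}(\pi)$, these weights range over a Zariski dense set. So a central element of $\rU(\gl_{M|N})$ that maps to zero must have zero Harish-Chandra image, and hence is zero.

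The second step, surjectivity, is the main obstacle. Here I would use the Gelfand--Tsetlin character formula from Theorem~\ref{thm:C}(2) (Theorem~\ref{thm:character}). Let $z\in Z(W(\pi))$. Then $z$ acts on each $\mc M(\bA)$ by a scalar $c_z(\bA)$, since $\mc M(\bA)$ is generated by a highest weight vector and $z$ preserves $\ell$-weight spaces. I then argue as follows.
\begin{enumerate}
\item $c_z$ is a polynomial in the $a_{i,j}$ that is invariant under row permutations. This follows from the PBW basis: $z$ acts on the highest weight vector through its image under the Harish-Chandra projection onto the Gelfand--Tsetlin part.
\item The character formula shows that, for generic $\bA$, the module $\mc M(\bA)$ contains subquotients isomorphic to $\mc L(\bB)$ for every $\bB$ obtained from $\bA$ by the elementary moves allowed in the formula. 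These are transpositions of entries between rows in the same column-compatible positions (even/even swaps), together with odd-reflection-type shifts for entries in rows of opposite parity. Hence $c_z(\bA)=c_z(\bB)$ for all such $\bB$.
\item These moves generate exactly the equivalence relation that defines central characters of $\gl_{M|N}$. Using Sergeev's and Pragacz's description of $Z(\rU(\gl_{M|N}))$ as supersymmetric polynomials (with the cancellation property), I conclude that $c_z$ equals the Harish-Chandra image of some $z'\in Z(\rU(\gl_{M|N}))$.
\end{enumerate}
The delicate point is extracting the odd linkage (supersymmetry) from the character formula for generic tableaux and extending it by Zariski density. I expect this to be the hardest part.

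The last step is to show that $z-z'$ acts by zero on all Verma modules and therefore lies in the intersection of their annihilators. I would prove that this intersection is zero, by a density argument in the Miura/associated graded picture: $\gr W(\pi)\cong S(\fkg^e)$ embeds into polynomial functions on the family of highest weights. Hence $z=z'$. This gives $Z(W(\pi))\cong Z(\rU(\gl_{M|N}))$, which is independent of the shape of $\pi$.
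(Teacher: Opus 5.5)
Your injectivity step is essentially the paper's argument: it amounts to the commutative square $\HC=\hc\circ\mu\circ\psi$, checked on $z_{M|N}(u)$ through the Miura transform. Define the map as $\psi$ (Whittaker projection plus the $\rho$-shift) rather than ``equivalently'' through Berezinian coefficients. The $z^{(r)}_{M|N}$ are not algebraically free in the super case, so a map prescribed on them is not automatically a homomorphism.

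\textbf{The gap: step (2) of surjectivity.} For generic $\bA$ the Verma module $\mc M(\bA)$ has no subquotients $\mc L(\bB)$ with $\bB\neq\bA$. The paper uses the character formula for exactly the opposite conclusion: it shows that no $\ell$-weight of tableau type $\bB\ne\bA$ occurs in $\mathrm{ch}\,\mc M(\bA)$ when all entries are pairwise non-congruent mod $\bZ$. Hence generic $\mc M(\bA)$ is irreducible (Lemma~\ref{lem:simple-generic}). At non-generic $\bA$, a Gelfand--Tsetlin character is only a list of $\ell$-weights with multiplicities. The occurrence of an $\ell$-weight of type $\bB$ gives neither a singular vector nor a composition factor $\mc L(\bB)$; that would require a linkage theory for $W(\pi)$ which neither you nor the paper supplies. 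Moreover, the cancellation condition defining $I(\fkc)$ lives on the atypical hyperplanes $x_i=y_j$, which are non-generic by definition. It therefore cannot be ``extracted from generic tableaux'' and extended by density. So steps (2)--(3) give no mechanism for either column-permutation invariance or supersymmetry of $c_z$.

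\textbf{The missing idea: let the Miura transform do this work.}
\begin{enumerate}
\item The genuine output of the character formula is the generic isomorphism $\mc M(\bA)\cong\mc M(\bA_1)\boxtimes\cdots\boxtimes\mc M(\bA_\ell)$ (Corollary~\ref{cor:char}), together with its irreducibility.
\item Consequently $\mu(z)$ acts by scalars on a Zariski-dense family of $\rU(\h)$-Verma modules. This gives $\mu(Z(W(\pi)))\subseteq Z(\rU(\h))$, hence $\hc\circ\mu(z)\in\bigotimes_a I(\fkc_a)$.
\item Step 2 gives, for free, invariance under same-parity permutations within each column and the cancellation property for pairs in the same column.
\item What remains is $\fkS_M\times\fkS_N$-invariance. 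For this you only need row permutations. These are automatic, because $\mc M(\bA)$ depends only on the row class and generically equals the column tensor product. Row and within-column transpositions together generate $\fkS_M\times\fkS_N$.
\item Since the tallest column meets every non-empty row, invariance under the Weyl group spreads the cancellation condition to all pairs. Thus $I(\fkc)=(\bigotimes_a I(\fkc_a))\cap S(\fkc)^{\fkS_M\times\fkS_N}$.
\item Injectivity of $\hc\circ\mu$ on $Z(W(\pi))$ then gives surjectivity of $\psi$ directly.
\end{enumerate}

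Your final annihilator step is therefore unnecessary, and its justification is false as stated. $\gr W(\pi)\cong S(\g^e)$ does not embed into polynomial functions on the $(M+N)$-parameter family of highest weights; take $\pi$ a single column, where $W(\pi)=\rU(\gl_{M|N})$.
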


This article is organized as follows.
In Section \ref{sec:ssy}, we set up the notation and recall some preliminary results about shifted super Yangians.
In Section \ref{sec:W-alg}, we recall the definition of finite $W$-superalgebras and some known results, particularly their relationship to shifted super Yangians. After that, we construct parabolic inductions, as replacements for comultiplications, for finite $W$-superalgebras, and then lift them to the corresponding shifted super Yangians. This allows one to construct the tensor product of modules, under certain compatibility assumptions on parities.
In Section \ref{sec:rep-shift}, we define highest weight modules, Verma modules, category $\mathcal O$, and $q$-characters (or Gelfand-Tsetlin characters) for shifted super Yangians.
These notions are extended to finite $W$-superalgebras, and the corresponding results are developed in Section \ref{sec:rep-W}. In particular, we give a $q$-character formula for $W$-superalgebras, which leads to an isomorphism between the center of a finite $W$-superalgebra and the center of its corresponding universal enveloping superalgebra.

\vspace{2mm}

\noindent {\bf Acknowledgements.} 
YP is partially supported by the NSTC grants 111-2628-M-006-006-MY3 and 114-2115-M-006-006-MY2, and by the National Center for Theoretical Sciences, Taipei, Taiwan. YP would also like to thank Ngau Lam for informative discussions.

\section{Shifted super Yangians}\label{sec:ssy}
Throughout the paper we work over $\bC$. Let $\bN$ be the set of natural numbers (non-negative integers). Denote  $\gge$ the partial order on $\bC$ defined by $x\gge y$ if $x-y\in\bN$.

\subsection{Shifted super Yangians}
We recall the shifted super Yangians following \cite{Pe21}.

We first fix some notations that will be frequently used. A \textit{parity sequence of type $(m|n)$} is a sequence $\fks=(\fks_i)_{1\lle i\lle m+n}$ such that $\fks_i\in \{\pm 1\}$ and $1$ appears exactly $m$ times. We shall call $\fks$ a parity sequence when the type is clear from the context. The parity sequence $(1,\dots,1,-1,\dots,-1)$ is called \textit{standard}. For each $1\lle i\lle m+n$, define $|i|\in \mathbb Z_2$ by the rule: $\fks_i=(-1)^{|i|}$. 

A shift matrix is a matrix $\sigma=(s_{i,j})_{1\lle i,j\lle m+n}$ of natural  integers such that
\beq\label{sijk}
s_{i,j}+s_{j,k}=s_{i,k}
\eeq
whenever $|i-j|+|j-k|=|i-k|$.  In particular, $s_{i,i}=0$ for all $1\lle i\lle m+n$. A shift matrix $\sigma$ is completely determined by the upper and lower diagonal entries $s_{i,i+1}$ and $s_{i+1,i}$ for $1\lle i<m+n$.

\begin{dfn}\label{def:sY}
The \textit{shifted super Yangian} $\rY_{m|n}(\sigma)$ is the unital $\bZ_2$-graded algebra generated by the elements 
\begin{align}
    &\big\{D_i^{(r)},D_i^{\prime(r)} \mid 1\lle i\lle m+n,r>0\big\},\label{eq:D-gen}\\
    &\big\{E_j^{(r)}\mid 1\lle j< m+n,r>s_{j,j+1}\big\},\\
    &\big\{F_j^{(r)}\mid 1\lle j< m+n,r>s_{j+1,j}\big\},\label{eq:F-gen}
\end{align}
where $D_i^{(r)}$, $D_i^{\prime(r)}$ are even elements while 
\beq\label{z2par}
|E_j^{(r)}|=|F_j^{(r)}|=\paa{j}+\paa{j+1},
\eeq
and subject to the following relations:
\begin{align}
\sum_{t=0}^{r}D_{i}^{(r-t)}D_{j}^{\prime (t)}&=\sum_{t=0}^{r}D_{j}^{\prime (r-t)}D_{i}^{(t)}=\delta_{r0}\delta_{ij},\,\label{dr1}\\
\big[D_{i}^{(r)},D_{j}^{(s)}\big]&=0, \label{dr2} \\
 [D_{i}^{(r)}, E_{j}^{(s)}]
        &=\fks_i(\delta_{i,j}-\delta_{i,j+1})\sum_{t=0}^{r-1} D_{i}^{(t)} E_{j}^{(r+s-1-t)}
        ,\label{dr3}\\
 [D_{i}^{(r)}, F_{j}^{(s)}]
        &=\fks_i(\delta_{i,j+1}-\delta_{i,j})\sum_{t=0}^{r-1} F_{j}^{(r+s-1-t)}D_{i}^{(t)},  \label{dr4}  \\
 [E_{i}^{(r)} , F_{j}^{(s)}]
        &=-\delta_{i,j}\fks_{i+1}
        \sum_{t=0}^{r+s-1} D_{i}^{\prime (r+s-1-t)} D_{i+1}^{(t)},    \label{dr5} \\
 [E_{i}^{(r)} , E_{i}^{(s)}]
        &=\fks_{i+1}
          \Big( \sum_{t=s_{i,i+1}+1}^{s-1} E_{i}^{(t)} E_{i}^{(r+s-1-t)} 
          -\sum_{t=s_{i,i+1}+1}^{r-1} E_{i}^{(t)} E_{i}^{(r+s-1-t)}  \Big),    \label{dr6} \\
 [F_{i}^{(r)} , F_{i}^{(s)}]
         & =\fks_{i+1} 
          \Big( \sum_{t=s_{i+1,i}+1}^{r-1} F_{i}^{(r+s-1-t)} F_{i}^{(t)} 
          -\sum_{t=s_{i+1,i}+1}^{s-1} F_{i}^{(r+s-1-t)} F_{i}^{(t)}  \Big),   \label{dr7} \\
 [E_{i}^{(r+1)}, E_{i+1}^{(s)}]&-[E_{i}^{(r)}, E_{i+1}^{(s+1)}]
=\fks_{i+1} E_{i}^{(r)}E_{i+1}^{(s)}\,, \label{dr8}      \\[2mm]
[F_{i+1}^{(s)}, F_{i}^{(r+1)}]&-[F_{i+1}^{(s+1)}, F_{i}^{(r)}]
=\fks_{i+1} F_{i+1}^{(s)}F_{i}^{(r)}\,, \label{dr9}\\[2mm]
[E_{i}^{(r)}, E_{j}^{(s)}] &= [F_{i}^{(r)}, F_{j}^{(s)}]=0
\qquad\qquad\text{\;\;if\;\; $|i-j|>1$}, \label{dr10} \\[2mm]
\big[E_{i}^{(r)},[E_{i}^{(s)}&,E_{j}^{(t)}]\big]+
\big[E_{i}^{(s)},[E_{i}^{(r)},E_{j}^{(t)}]\big]=0 \quad \text{if}\,\,\, |i-j|=1, \label{dr11}\\[2mm]
\big[F_{i}^{(r)},[F_{i}^{(s)}&,F_{j}^{(t)}]\big]+
\big[F_{i}^{(s)},[F_{i}^{(r)},F_{j}^{(t)}]\big]=0 \quad \text{if}\,\,\, |i-j|=1, \label{dr12} \\[2mm]
\big[\,[E_{i}^{(r)},E_{i+1}^{(t)}]\,&,\,[E_{i+1}^{(t)},E_{i+2}^{(s)}]\,\big]=0 \;\;\text{if\;\;}  m+n\gge 4, \, \paa{i+1}\neq\paa{i+2}, \label{sdr1}\\[2mm]
\big[\,[F_{i}^{(r)},F_{i+1}^{(t)}]\,&,\,[F_{i+1}^{(t)},F_{i+2}^{(s)}]\,\big]=0 \;\;\text{if\;\;}  m+n\gge 4, \, \paa{i+1}\neq\paa{i+2}, \label{sdr2}
\end{align}
for all indices $i,j,r,s,t$ that make sense.
\end{dfn}

\begin{rem}
If $\fks_i\neq\fks_{i+1}$, then we may interchange $r$ and $s$ in the left-hand side of \eqref{dr6} since $E_i^{(r)}$ is odd. As a consequence, the right-hand side is zero. Similarly for \eqref{dr7}.
\end{rem}

\begin{rem}
The shifted super Yangians here are dominantly shifted and they are closely related to finite $W$-superalgebras \cite{Pe21}. It is possible to define shifted super Yangians for arbitrary shifts, cf. \cite{BFN18,FPT22}. We shall only focus on the dominant shifted ones as one of our main goals is to study representations of finite $W$-superalgebras. 
\end{rem}

When $\sigma$ is the zero matrix, $\rY_{m|n}:=\rY_{m|n}(0)$ is the \textit{super Yangian} introduced in \cite{Na91} and the above presentation for $\rY_{m|n}$ was obtained by \cite{Go07} in the case of standard $\fks$, where the general $\fks$ case was obtained in \cite{Pe16} and \cite{Ts20} independently. In most cases we work with a fixed $\fks$ and omit it in our notation. In case if different choices of $\fks$'s are involved we write $\rY_{m|n}(\sigma,\fks)$ to emphasize which parity sequence we are using.

The super Yangian $\rY_{m|n}$ was originally defined via an RTT type presentation in \cite{Na91} as follows. The super Yangian $\rY_{m|n}$ is a unital $\bZ_2$-graded algebra generated by $\sfT_{i,j}^{(r)}$ of parity $|i|+|j|$ for $1\lle i,j\lle m+n$ and $r>0$, subject to the relation 
\beq\label{RTT}
(u-v)[\sfT_{i,j}(u),\sfT_{k,l}(v)]=(-1)^{|i||j|+|j||k|+|i||k|}(\sfT_{k,j}(u)\sfT_{i,l}(v)-\sfT_{k,j}(v)\sfT_{i,l}(u)).\eeq
Here
\[
\sfT_{i,j}(u)=\delta_{i,j}+\sum_{r>0}\sfT_{i,j}^{(r)}u^{-r}\in \rY_{m|n}[\![u^{-1}]\!].
\]
There exists an anti-automorphism $\tau:\rY_{m|n}\rightarrow \rY_{m|n}$ defined by the following assignment:
\beq\label{tautran}
\tau(\sfT_{ij}(u))=(-1)^{|j|(|i|+1)}\sfT_{ji}(u).
\eeq

The following surjective homomorphism $\ev: \rY_{m|n} \rightarrow \rU(\gl_{m|n})$ is called the {\em evaluation homomorphism}, which is defined by:
\beq\label{evahom}
\ev(\sfT_{ij}(u)) = \delta_{ij} + \fks_i e_{ij} u^{-1}.
\eeq

One can introduce the Gaussian generators $\sfD_{i}^{(r)},\sfE_i^{(r)},\sfF_{i}^{(r)}$ for $1\lle i\lle m+n$ and $r\gge 1$ by applying the Gauss decomposition to $\sfT(u):=(\sfT_{ij}(u))_{1\lle i,j\lle m+n}$. Specifically, let
\beq\label{eq:D-def}
\sfD_i(u)=1+\sum_{r>0}\sfD_i^{(r)}u^{-r},\qquad 1\lle i\lle m+n,
\eeq
\[
\sfE_{i,j}(u)=\sum_{r>0}\sfE^{(r)}_{i,j}u^{-r},\quad \sfF_{j,i}(u)=\sum_{r>0}\sfF^{(r)}_{j,i}u^{-r},\quad 1\lle i<j\lle m+n,
\]
which satisfy the relations
\beq\label{eq:GD}
\sfT_{i,j}(u)=\sum_{k=1}^{\min(i,j)}\sfF_{i,k}(u)\sfD_k(u)\sfE_{k,j}(u)
\eeq
with the convention that $\sfE_{i,i}(u)=\sfF_{i,i}(u)=1$ for all $1\lle i\lle m+n$. In particular, we set
\[
\sfE_i(u)=\sum_{r>0}\sfE_i^{(r)}u^{-r}:=\sfE_{i,i+1}(u),\quad 
\sfF_i(u)=\sum_{r>0}\sfF_i^{(r)}u^{-r}:=\sfF_{i+1,i}(u)
\]
for $1\lle i<m+n$.

It is known from \cite{Pe21} that $\Y$ can be identified as a subalgebra of $\rY_{m|n}$ via the homomorphism induced by
\[
D_i^{(r)}\mapsto \sfD_i^{(r)},\quad E_i^{(r)}\mapsto \sfE_i^{(r)},\quad F_i^{(r)}\mapsto \sfF_i^{(r)}
\]
for admissible indices. However, in general the elements $\sfT_{i,j}^{(r)}$, $\sfE_{i,j}^{(r)}$, and $\sfF_{i,j}^{(r)}$ in $\rY_{m|n}$ differ from the elements $T_{i,j}^{(r)}$, $E_{i,j}^{(r)}$, and $F_{i,j}^{(r)}$ in $\Y$, respectively, defined below in \S\ref{ssec:PBW}. From now on, we shall not distinguish $D_i^{(r)},E_i^{(r)},F_i^{(r)}$ and $\sfD_i^{(r)},\sfE_i^{(r)},\sfF_i^{(r)}$, respectively.

\subsection{PBW basis}\label{ssec:PBW}
For $1\lle i<j\lle m+n$ and $r>s_{i,j}$ (resp. $r>s_{j,i}$), define the elements $E_{i,j}^{(r)}$ (resp. $F_{j,i}^{(r)}$) inductively by the rule:
\begin{align}\label{gedef}
E_{i,i+1}^{(r)}&:=E_i^{(r)},\qquad E_{i,j}^{(r)}:=\fks_{j-1}[E_{i,j-1}^{(r-s_{j-1,j})},E_{j-1}^{(s_{j-1,j}+1)}],\\
\label{gfdef} F_{i+1,i}^{(r)}&:=F_i^{(r)},\qquad F_{j,i}^{(r)}:=\fks_{j-1}[F_{j-1}^{(s_{j,j-1}+1)},F_{i,j-1}^{(r-s_{j,j-1})}].
\end{align}

Denote by $\Yz:=\Yz(\sigma)$ (it is known that $\Yz(\sigma)$ is independent of $\sigma$) the subalgebra of $\Y$ generated by all the $D_i^{(r)}$ with $1\lle i\lle m+n$ and $r>0$, $\Yp$ the subalgebra of $\Y$ generated by all the $E_i^{(r)}$ with $1\lle i< m+n$ and $r>s_{i,i+1}$, and  $\Ym$ the subalgebra of $\Y$ generated by all the $F_i^{(r)}$ with $1\lle i< m+n$ and $r>s_{i+1,i}$.

\begin{prop}[{\cite[Coro.~5.9, 5.10]{Pe21}}]\label{prop:PBW}
The supermonomials in the elements
\begin{align*}
&\{D_i^{(r)}\mid 1\lle i\lle m+n,r>0\},\\
&\{E_{i,j}^{(r)}\mid1\lle i<j\lle m+n,r>s_{i,j}\},\\
&\{F_{j,i}^{(r)}\mid1\lle i<j\lle m+n,r>s_{j,i}\},
\end{align*}
respectively, taken in any fixed order form a basis for $\Yz$, $\Yp$, $\Ym$, respectively. Moreover, the multiplicative map
\beq\label{eq:triangle}
\Ym\otimes \Yz\otimes \Yp \stackrel{\sim}{\longrightarrow} \Y 
\eeq
is an isomorphism of superspaces.
\end{prop}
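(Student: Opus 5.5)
The plan follows the strategy of Brundan--Kleshchev for shifted Yangians, adapted to the super setting. It has three steps: spanning, linear independence via the embedding into $\rY_{m|n}$, and the triangular decomposition.

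\emph{Spanning.} Put a filtration on $\Y$ by declaring $D_i^{(r)}$, $E_{i,j}^{(r)}$, $F_{j,i}^{(r)}$ to have degree $r-1$ (the loop filtration). Using the relations \eqref{dr2}--\eqref{sdr2}, one checks that the associated graded algebra is supercommutative modulo lower terms among the root vectors of the same sign. The key relations are:
\begin{itemize}
\item \eqref{dr6}, together with its odd variant;
\item \eqref{dr8}, giving $[E_{i,j}^{(r)},E_{k,l}^{(s)}]$ as a combination of $E_{i,l}$ or $E_{k,j}$ terms of lower total degree, or zero;
\item the Serre-type relations \eqref{dr11}, \eqref{sdr1}, which control the brackets of the higher root vectors defined in \eqref{gedef}.
\end{itemize}
From this, a straightforward induction on degree and length shows that ordered supermonomials span $\Yp$. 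The case of $\Ym$ is analogous, or follows by applying the anti-automorphism $\tau$ (suitably restricted). The case of $\Yz$ is immediate from \eqref{dr2}, with odd squares absent because the $D_i^{(r)}$ are even. For the whole algebra, relations \eqref{dr3}--\eqref{dr5} let us move each $E$ to the right of every $D$ and $F$ modulo lower terms. This shows that the multiplication map \eqref{eq:triangle} is surjective.

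\emph{Linear independence.} This is done inside $\rY_{m|n}$ through the embedding $\Y\hookrightarrow\rY_{m|n}$ from \cite{Pe21}. The first task is to identify $E_{i,j}^{(r)}$, defined in \eqref{gedef}, with $\sfE_{i,j}^{(r)}$ plus a combination of products of $\sfE_{k,l}$'s of lower degree. Here one uses the Gauss decomposition identity $\sfE_{i,j}^{(r)}=\pm[\sfE_{i,j-1}^{(r-s)},\sfE_{j-1}^{(s+1)}]$, valid in $\rY_{m|n}$ for any admissible $s$. The second task is to recall the PBW theorem for $\rY_{m|n}$ in terms of $\sfT_{i,j}^{(r)}$ (Nazarov, Gow): the associated graded of $\rY_{m|n}$ under the loop filtration is $\rU(\gl_{m|n}[t])$.

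Under this identification, the images in $\gr\rY_{m|n}$ of our ordered supermonomials are ordered supermonomials in $e_{i,j}t^{r-1}$ with $r>s_{i,j}$. By PBW for the current superalgebra $\gl_{m|n}[t]$ these are linearly independent. Here one must take care that in the super PBW basis the odd generators appear with exponent at most one, and our monomials must respect the same convention. Independence in $\gr$ lifts to independence in $\Y$, so the three bases are established. The bijectivity of \eqref{eq:triangle} follows because the images of $D$-, $E$- and $F$-monomials combine into a PBW basis of $\rU(\gl_{m|n}[t])$ adapted to the decomposition $\fkn^-\oplus\fkh\oplus\fkn^+$.

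\emph{Main obstacle.} I expect the hardest step to be the leading-term identification of $E_{i,j}^{(r)}$ with $e_{i,j}t^{r-1}$ in the associated graded. This requires tracking the signs $\fks_{j-1}$ and the shift bookkeeping $s_{i,j}=s_{i,j-1}+s_{j-1,j}$ across non-standard parity sequences. To handle it, I would first compute in the top graded piece, where brackets reduce to $[e_{i,j-1}t^a,e_{j-1,j}t^b]=e_{i,j}t^{a+b}$ up to a sign. The normalisation $\fks_{j-1}$ in \eqref{gedef} is chosen precisely to cancel that sign.
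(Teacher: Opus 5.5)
Your architecture matches the standard proof behind the cited result. The paper itself gives no argument and quotes \cite{Pe21}, which follows the Brundan--Kleshchev pattern of \cite{BK06}: prove spanning from the defining relations of $\Y$, then prove linear independence by mapping into $\rY_{m|n}$ and reading off symbols in $\gr^L\rY_{m|n}\cong\rU(\gl_{m|n}[t])$. This also gives injectivity of $\Y\to\rY_{m|n}$ and bijectivity of \eqref{eq:triangle}.

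The independence half is fine, except for the identity you quote. The claim $\sfE_{i,j}^{(r)}=\pm[\sfE_{i,j-1}^{(r-s)},\sfE_{j-1}^{(s+1)}]$ for every admissible $s$ is false once $s>0$. With zero shift, \eqref{dr8} gives $\fks_2[\sfE_1^{(r)},\sfE_2^{(2)}]=\sfE_{1,3}^{(r+1)}-\sfE_1^{(r)}\sfE_2^{(1)}$. If your identity held, $E_{i,j}^{(r)}$ would equal $\pm\sfE_{i,j}^{(r)}$ exactly, contrary to the paper's remark that these elements differ in general. Drop it. All you need is the leading symbol $\gr^L_{r-1}E_{i,j}^{(r)}=\pm e_{i,j}t^{r-1}$, and your last paragraph obtains this correctly by induction on $j-i$ from $[e_{i,j-1}t^a,e_{j-1,j}t^b]=e_{i,j}t^{a+b}\neq 0$.

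The spanning half is not right as written.

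\begin{itemize}
\item Under the loop filtration, brackets of same-sign root vectors do not drop degree. The bracket $[E_{i,j}^{(r)},E_{j,k}^{(s)}]$ has symbol $\pm e_{i,k}t^{r+s-2}$, so its leading term $\pm E_{i,k}^{(r+s-1)}$ has the same loop degree $(r-1)+(s-1)$ as the product. Hence $\gr^L\Y$ behaves like the enveloping algebra of a shifted current superalgebra, not a supercommutative algebra, and ``lower total degree'' is wrong.
\item The filtration that works is the canonical one with $\deg=r$, which the paper introduces right after the proposition; there commutators drop degree by one. Induction on that degree gives spanning, provided you know that every $[E_{i,j}^{(r)},E_{k,l}^{(s)}]$ is a combination of monomials in root vectors of total degree at most $r+s-1$. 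This must include $[x,x]$ for odd $x$, so that squares can be removed.
\item That estimate is not what \eqref{dr8} says, since \eqref{dr8} only involves $E_i$ and $E_{i+1}$. It must be derived by induction on $j-i$ and $l-k$ from \eqref{gedef}, \eqref{dr6}, \eqref{dr8}, \eqref{dr10} and the Serre relations \eqref{dr11}, \eqref{sdr1}, entirely inside $\Y$.
\item You cannot borrow these relations from $\rY_{m|n}$, because injectivity of $\Y\to\rY_{m|n}$ is an output of the theorem, not an input.
\end{itemize}

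This bracket lemma is the real content of the spanning step and needs an actual proof, not the label ``straightforward''. Once it is in place, and transported to $\Ym$ via $\tau$, the rest of your plan goes through.
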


Define the canonical filtration of $\mathcal F_0\Y\subset \mathcal F_1\Y\subset \cdots$ of $\Y$ by declaring that all $D_i^{(r)}$, $E_{i,j}^{(r)}$ and $F_{j,i}^{(r)}$ are of degree $r$. In other words, $\mathcal F_d\Y$ is the span of all supermonomials in these elements of total degree at most $d$. Then the associated graded superalgebra $\gr\,\Y$ is free supercommutative on generators
\begin{align}
&\{\gr_r\,D_i^{(r)}\mid 1\lle i\lle m+n,r>0\},\label{eq:gr-img-d}\\
&\{\gr_r\,E_{i,j}^{(r)}\mid1\lle i<j\lle m+n,r>s_{i,j}\},\\
&\{\gr_r\,F_{j,i}^{(r)}\mid1\lle i<j\lle m+n,r>s_{j,i}\}.\label{eq:gr-img-f}
\end{align}

Also define the \textit{positive} and \textit{negative Borel subalgebras}
\beq\label{}
\rY_{m|n}^{\gge 0}(\sigma):=\Yz\Yp,\qquad \rY_{m|n}^{\lle 0}(\sigma):=\Ym\Yz.
\eeq
Clearly, by the defining relations of $\Y$, they are indeed subalgebras of $\Y$.

It is also convenient to introduce a different set of generators $T_{i,j}^{(r)}$ of $\rY_{m|n}(\sigma)$ as follows. Let
\[
E_{i,j}(u):=\sum_{r>s_{i,j}}E_{i,j}^{(r)}u^{-r},\quad F_{j,i}(u):=\sum_{r>s_{j,i}}F_{j,i}^{(r)}u^{-r}
\]
for $1\lle i<j\lle m+n$ and set $E_{i,i}(u)=F_{i,i}(u)=1$, $D_i(u)=\sfD_i(u)$ from \eqref{eq:D-def} by convention. Then define
\beq\label{eq:GD-shift}
T_{i,j}(u)=\sum_{r\gge 0}T_{i,j}^{(r)}u^{-r}:=\sum_{k=1}^{\min(i,j)}F_{i,k}(u)D_k(u)E_{k,j}(u).
\eeq
Clearly, $T_{i,j}^{(0)}=\delta_{ij}$ and $T_{i,j}^{(r)}=0$ for $0< r\lle s_{i,j}$. 

Note that if $\sigma$ is the zero matrix, then $T_{i,j}^{(r)}$ corresponds to $\sfT_{i,j}^{(r)}$ in the (non-shifted) super Yangian $\rY_{m|n}$. 
\begin{lem}\label{lem:PBW-T}
The associated graded superalgebra $\gr\,\Y$ is free supercommutative on generators 
\[\{\gr_r\,T_{i,j}^{(r)}\mid 1\lle i,j\lle m+n, r>s_{i,j}\}.\] 
Thus, the supermonomials in the elements 
\[\{T_{i,j}^{(r)}\mid 1\lle i,j\lle m+n, r>s_{i,j}\}\] 
taken in some fixed order form a basis for $\rY_{m|n}(\sigma)$.
\end{lem}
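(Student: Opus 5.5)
The plan is to compute the images of the $T_{i,j}^{(r)}$ in $\gr\,\Y$ and compare them with the free generators \eqref{eq:gr-img-d}--\eqref{eq:gr-img-f} coming from Proposition~\ref{prop:PBW}.

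First I would expand the definition \eqref{eq:GD-shift} coefficientwise. For $i<j$,
\[
T_{i,j}^{(r)}=\sum_{k=1}^{i}\ \sum_{a+b+c=r}F_{i,k}^{(a)}D_k^{(b)}E_{k,j}^{(c)},
\]
and there are analogous formulas for $i>j$ and $i=j$. All terms here are products of PBW generators of total degree $r$, so $T_{i,j}^{(r)}\in\mathcal F_r\Y$. In $\gr_r$ only the terms with a single nontrivial factor survive; products of two or more positive-degree generators are decomposable in the graded algebra. For $i<j$ the surviving term is $k=i$ with $a=b=0$, giving
\[
\gr_r T_{i,j}^{(r)}=\gr_r E_{i,j}^{(r)}+(\text{polynomial in generators of degree}<r).
\]
Similarly, for $i>j$ we get $\gr_r F_{i,j}^{(r)}$ plus decomposables, and for $i=j$ we get $\gr_r D_i^{(r)}$ plus decomposables. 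The index ranges also match: $T_{i,j}^{(r)}=0$ for $0<r\lle s_{i,j}$, while $E_{i,j}^{(r)}$ and $F_{j,i}^{(r)}$ exist exactly for $r>s_{i,j}$ and $r>s_{j,i}$, respectively.

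Second, I would argue that the assignment sending each old generator to $\gr_r T_{i,j}^{(r)}$ defines a unitriangular change of generators of the free supercommutative algebra $\gr\,\Y$. The triangularity is with respect to degree: each new generator equals the corresponding old generator modulo polynomials in old generators of strictly smaller degree. Parities agree, since $|T_{i,j}^{(r)}|=|i|+|j|$ matches the parity of $E_{i,j}$ and $F_{j,i}$. By induction on degree, the resulting endomorphism of $\gr\,\Y$ is surjective, and it is invertible with a triangular inverse. Hence the $\gr_r T_{i,j}^{(r)}$ are free supercommutative generators.

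The main point needing care is the claim that the cross terms are decomposable in $\gr$, and that the components of $D$, $E$, $F$ lying in $\gr_r$ are exactly the listed generators. This holds because $E_{i,j}^{(c)}$ for $c>s_{i,j}$ and $F_{j,i}^{(a)}$ are themselves among the PBW generators, so the products are genuinely monomials of lower-degree factors. No reordering issue arises, since supercommutators drop degree.

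Finally, the basis statement follows by the standard filtration argument. Ordered supermonomials in the $T_{i,j}^{(r)}$ of total degree $d$ have images in $\gr_d$ that form a basis there, by freeness. Lifting gives spanning of $\mathcal F_d\Y$ by induction on $d$. Linear independence follows from independence of the top-degree symbols.
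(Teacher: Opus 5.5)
Your proposal is correct and is essentially the paper's argument: a unitriangular change of free generators of $\gr\,\Y$, measured against the free generators from Proposition~\ref{prop:PBW}. The only difference is direction. The paper writes $D_i^{(r)}, E_{i,j}^{(r)}, F_{j,i}^{(r)}$ as the corresponding $T$'s plus lower-degree monomials in $T$'s, whereas you expand \eqref{eq:GD-shift} directly to write $\gr_r T_{i,j}^{(r)}$ as the old generator plus decomposables and then invert, which if anything is more immediately justified by the definition.
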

\begin{proof}
Note that $T_{i,j}^{(r)}=0$ if $0<r\lle s_{i,j}$. Clearly, the elements $D_{i}^{(r)}$ (resp. $E_{i,j}^{(r)}$ and $F_{j,i}^{(r)}$) can be written as $T_{i,i}^{(r)}$ (resp. $T_{i,j}^{(r)}$ and $T_{j,i}^{(r)}$) plus a linear combination of monomials of total degree $r$ in the elements $\{T_{a,b}^{(s)}\mid 1\lle a,b\lle m+n,0<s<r\}$. Since $\gr\,\Y$ is free supercommutative on generators \eqref{eq:gr-img-d}--\eqref{eq:gr-img-f}, the elements $\{\gr_r\,T_{i,j}^{(r)}\mid 1\lle i,j\lle m+n, r>s_{i,j}\}$ also freely generate $\gr\,\Y$.
\end{proof}

\subsection{Properties of shifted Yangians}
In this subsection, we summarize a few important properties of shifted super Yangians that will be used later.
\subsubsection{Some isomorphisms}
Let $\sigma$ be a shift matrix and fix $\fks$.
Note that the transpose $\sigma^t$ is also a shift matrix.
On the other hand, suppose that
$\vec\sigma = (\vec{s}_{i,j})_{1 \lle i,j \lle m+n}$ is another shift matrix satisfying  the condition 
$$\vec{s}_{i,i+1}+\vec s_{i+1,i}= s_{i,i+1}+s_{i+1,i}$$ for all $1\lle i< m+n$. 
Denote by $\vec{D}_{i}^{(r)}, \vec{E}_{j}^{(r)}$ and $\vec{F}_{j}^{(r)}$ the generators of $\rY_{m|n}(\vec\sigma)$ to avoid confusion. 

\begin{prop}\cite[Props. 5.13, 5.14]{Pe21}
\begin{enumerate}
\item 
The restriction of \eqref{tautran} gives rise to an anti-isomorphism $\tau: \rY_{m|n}(\sigma)\rightarrow \rY_{m|n}(\sigma^t)$ such that
\beq\label{taudef}
\tau(D_{i}^{(r)}) =
D_{i}^{(r)},\,\,\,
\tau(E_{j}^{(r)}) =
(-1)^{|j+1|(|j|+1)}F_{j}^{(r)},\,\,\,
\tau(F_{j}^{(r)}) =
(-1)^{|j|(|j+1|+1)}E_{j}^{(r)}.
\eeq
Moreover, for $1\lle i<j\lle m+n$, we have
\beq\label{taudef2}
\tau(E_{i,j}^{(r)})=(-1)^{|j|(|i|+1)}F_{j,i}^{(r)}, \quad \tau(F_{j,i}^{(r)})=(-1)^{|i|(|j|+1)}E_{i,j}^{(r)}.
\eeq
\item 
The map $\iota:\rY_{m|n}(\sigma) \rightarrow \rY_{m|n}({\vec{\sigma}})$ defined by
\beq\label{iotadef}
\iota(D_{i}^{(r)}) = \vec{D}_{i}^{(r)},\quad
\iota(E_{j}^{(r)}) = \vec{E}_{j}^{(r-s_{j,j+1}+\vec s_{j,j+1})},\quad
\iota(F_{j}^{(r)}) = \vec{F}_{j}^{(r-s_{j+1,j}+\vec s_{j+1,j})},
\eeq
is an isomorphism.
\end{enumerate}
\end{prop}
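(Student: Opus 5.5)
Both parts can be checked on the Drinfeld-type presentation in Definition~\ref{def:sY}. In each case I would define the map on generators and show it respects every relation \eqref{dr1}--\eqref{sdr2}. Bijectivity then comes from the PBW basis in Proposition~\ref{prop:PBW}, or from exhibiting an explicit inverse.

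For part (1), I would start from the anti-automorphism $\tau$ of $\rY_{m|n}$ in \eqref{tautran}. First compute $\tau$ on the Gaussian generators. Applying the anti-automorphism to the Gauss decomposition \eqref{eq:GD}, and using that super-transposition reverses the order of products with the appropriate signs, the decomposition of $\tau(\sfT(u))$ is again of the form $F D E$ with the roles of $E$ and $F$ exchanged. Uniqueness of Gauss decomposition then gives $\tau(\sfD_i(u))=\sfD_i(u)$, and $\tau(\sfE_{i,j}(u))$, $\tau(\sfF_{j,i}(u))$ are the signed versions of $\sfF_{j,i}(u)$ and $\sfE_{i,j}(u)$ prescribed by \eqref{taudef2}. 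Specializing to $j=i+1$ yields \eqref{taudef}.

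Since $\rY_{m|n}(\sigma)\subset \rY_{m|n}$ is generated by $D_i^{(r)}$, $E_j^{(r)}$ with $r>s_{j,j+1}$ and $F_j^{(r)}$ with $r>s_{j+1,j}$, the image $\tau(\rY_{m|n}(\sigma))$ is generated by the $D$'s, the $F_j^{(r)}$ with $r>s_{j,j+1}=s^t_{j+1,j}$, and the $E_j^{(r)}$ with $r>s^t_{j,j+1}$. This is exactly $\rY_{m|n}(\sigma^t)$. Bijectivity follows because $\tau$ is invertible on $\rY_{m|n}$ and the same argument applies to $\sigma^t$.

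For \eqref{taudef2} with general $i<j$, I would induct on $j-i$ using \eqref{gedef} and \eqref{gfdef}. An anti-automorphism sends a supercommutator $[x,y]$ to $-(-1)^{|x||y|}[\tau y,\tau x]$. The main obstacle is the sign bookkeeping: one must verify that the accumulated sign together with $\fks_{j-1}$ produces $(-1)^{|j|(|i|+1)}$. I would handle this by case analysis on the parities $|i|,|j-1|,|j|$. A second point needing care is that the shifted definition of $F_{j,i}$ in \eqref{gfdef} uses $s_{j,j-1}$, which matches $s^t_{j-1,j}$, so the indices align.

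For part (2), I would check directly that $\iota$ preserves the relations. Relations \eqref{dr1} and \eqref{dr2} are immediate. Relations \eqref{dr3}, \eqref{dr4}, \eqref{dr8}--\eqref{sdr2} involve only differences of superscripts, and these are invariant under a uniform shift of $E_j$ by $c_j=\vec s_{j,j+1}-s_{j,j+1}$ and of $F_j$ by $-c_j$. In \eqref{dr5} the total index $r+s$ is preserved, because the condition $\vec s_{j,j+1}+\vec s_{j+1,j}=s_{j,j+1}+s_{j+1,j}$ makes the two shifts cancel. In \eqref{dr6} and \eqref{dr7} the lower summation bound $s_{i,i+1}+1$ moves to $\vec s_{i,i+1}+1$ consistently with the shift.

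Relations \eqref{dr8} and \eqref{dr9} need some care, since $E_i$ and $E_{i+1}$ shift by different amounts $c_i$ and $c_{i+1}$. These relations are nevertheless homogeneous of the form $r+s$ fixed in each term, so independent shifts in the two indices are fine. The Serre-type relations are homogeneous as well. Finally, the map with the opposite shifts is an inverse by symmetry, so $\iota$ is an isomorphism.
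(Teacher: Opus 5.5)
Your outline is sound and is the standard argument; the paper gives no proof here and only quotes \cite{Pe21}. Restricting the super-transposition of $\rY_{m|n}$ (using $s^t_{j+1,j}=s_{j,j+1}$ to match generator ranges), and checking that the relations of Definition~\ref{def:sY} are stable under the shifts with the opposite shift as inverse, is the usual route (cf.\ the even case in \cite{BK06}).

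There is one concrete error, and it would make your sign check for \eqref{taudef2} fail as set up. A super anti-automorphism satisfies $\tau(xy)=(-1)^{|x||y|}\tau(y)\tau(x)$. Hence
\[
\tau([x,y])=(-1)^{|x||y|}[\tau(y),\tau(x)]=-[\tau(x),\tau(y)].
\]
Your rule $-(-1)^{|x||y|}[\tau(y),\tau(x)]$ equals $+[\tau(x),\tau(y)]$, which is the rule for a homomorphism. With it, each inductive step picks up a spurious factor $-1$. For example, for $\gl_{2|1}$ with standard $\fks$ and $(i,j)=(1,3)$ you would get $\tau(E_{1,3}^{(r)})=+F_{3,1}^{(r)}$, whereas \eqref{taudef2} gives $-F_{3,1}^{(r)}$.

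With the correct rule no case analysis is needed. Put $a=|i|$, $b=|j-1|$, $c=|j|$. The induction hypothesis, \eqref{taudef}, and the definition \eqref{gfdef} of $F_{j,i}^{(r)}$ in $\rY_{m|n}(\sigma^t)$ (where $s^t_{j,j-1}=s_{j-1,j}$) give $\tau(E_{i,j}^{(r)})=(-1)^{e}F_{j,i}^{(r)}$. Here $e=b(a+1)+c(b+1)+(a+b)(b+c)\equiv c(a+1)\pmod 2$, and the two factors $\fks_{j-1}$ cancel. The formula for $\tau(F_{j,i}^{(r)})$ follows by running the same argument for $\sigma^t$ and using that $\tau^2$ is the parity automorphism $x\mapsto(-1)^{|x|}x$.

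A smaller point in part (2): \eqref{dr8} is not homogeneous in $r+s$, since its terms have total superscripts $r+s+1$, $r+s+1$, $r+s$. Independent shifts are harmless for a different reason. In \eqref{dr8}, \eqref{dr11} and \eqref{sdr1}, the superscript of each $E_k$ depends only on the index attached to $E_k$. Moreover, $r>s_{k,k+1}$ if and only if $r+\vec s_{k,k+1}-s_{k,k+1}>\vec s_{k,k+1}$, so the relation families for $\sigma$ are carried bijectively onto those for $\vec\sigma$.
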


For any power series $f(u)\in 1+u^{-1}\bC[\![u^{-1}]\!]$, there exists an automorphism of $\rY_{m|n}$ given by
\beq\label{eq:mu-f}
\mu_f:\rY_{m|n}\to \rY_{m|n}, \qquad \sfT_{i,j}(u)\mapsto f(u)\sfT_{i,j}(u).
\eeq
It is well known that $\mu_f$ fixes each $E_i^{(r)}$ and $F_i^{(r)}$ and sends $D_i(u)$ to $f(u)D_i(u)$. Hence $\mu_f$ restricts to an automorphism to $\Y$.

\subsubsection{$\mathbf Q_{m|n}$-grading}
Let $\fkc$ be the (commutative) Lie subalgebra of $\Y$ spanned by the elements $D_i^{(1)}$ for $1\lle i\lle m+n$. Let $\ve_1,\dots,\ve_{m+n}$ be the basis for $\fkc^*$ dual to the elements $\fks_1 D_1^{(1)},\dots,\fks_{m+n} D_{m+n}^{(1)}$. We call elements of $\fkc^*$ \textit{weights} and elements of 
\beq\label{eq:Pdef}
\mathbf P_{m|n}:=\bigoplus_{i=1}^{m+n} \bZ \ve_i \subset \fkc^*
\eeq
\textit{integral weights}. The \textit{root lattice} associated to the Lie superalgebra $\gl_{m|n}$ is the $\bZ$-submodule $\mathbf Q_{m|n}$ of $\mathbf P_{m|n}$ spanned by the simple roots $\alpha_i:=\ve_i-\ve_{i+1}$ for $1\lle i<m+n$. Define the \textit{dominance order} of $\fkc^*$ by the rule: $\alpha\gge \beta$ if $\alpha-\beta$ is a sum of simple roots.

It is often helpful to consider $\Y$ as a superalgebra graded by the root lattice $\mathbf Q_{m|n}$. Explicitly, by the defining relations of $\Y$ and the notation, we can define a $\mathbf Q_{m|n}$-grading
\[
\Y=\bigoplus_{\alpha\in\mathbf Q_{m|n}}\big(\Y\big)_{\alpha}
\]
of the superalgebra $\Y$ by declaring that the generators $D_{i}^{(r)}$, $E_{i}^{(r)}$, and $F_{i}^{(r)}$ are of degree $0$, $\alpha_i$, $-\alpha_i$, respectively. In particular, it follows from \eqref{eq:GD-shift} that $T_{i,j}^{(r)}$ is of degree $\ve_i-\ve_j$.

\subsubsection{Coproduct}
It is well known that the super Yangian $\rY_{m|n}$ is a Hopf superalgebra with counit $\ve:\rY_{m|n}\to \bC$ and  coproduct $\Delta:\rY_{m|n}\to \rY_{m|n}\otimes \rY_{m|n}$ respectively defined by
\begin{align}\label{counit}
\ve(\sfT_{i,j}(u))&=\delta_{i,j},\\
\label{coproduct}\Delta(\sfT_{i,j}(u))&=\sum_{k=1}^{m+n}\sfT_{i,k}(u)\otimes \sfT_{k,j}(u).
\end{align}

Define
\beq\label{eq:def-H}
H_i(u)=\sum_{r\gge 0}H_i^{(r)}u^{-r}:=\big(  D_i(u)\big)^{-1}D_{i+1}(u)
\eeq
for $1\lle i<m+n$. Clearly, $H_i^{(0)}=1$. Denote $\mathrm{K}_{m|n}^{>0}(\sigma)$ (resp. $\mathrm{K}_{m|n}^{<0}(\sigma)$) the two-sided ideal of the Borel subalgebra $\rY_{m|n}^{\gge 0}(\sigma)$ (resp. $\rY_{m|n}^{\lle 0}(\sigma)$) generated by $E_i^{(r)}$ (resp. $F_i^{(r)}$) for $1\lle i<m+n$ and $r>s_{i,i+1}$ (resp. $r>s_{i+1,i}$). Again, we simply write them as $\mathrm{K}_{m|n}^{>0},\mathrm{K}_{m|n}^{<0}$ when the shift matrix $\sigma$ is zero.

The following proposition for the coproduct is a refinement of \cite[Prop.~2.7]{LM21} for the standard parity sequence, see also \cite[Prop.~2.6]{Lu22} and \cite[Lem.~2.5]{HZ24}. 

\begin{prop}\label{prop:copro}
The coproduct $\Delta:\rY_{m|n}\to \rY_{m|n}\otimes \rY_{m|n}$ has the following properties:
\begin{enumerate}
    \item $\Delta (D_i^{(r)})\equiv \sum_{s=0}^{r}D_i^{(s)}\otimes D_i^{(r-s)}$ modulo $\mathrm{K}_{m|n}^{>0}\otimes \mathrm{K}_{m|n}^{<0}$;
    \item $\Delta (E_i^{(r)})\equiv 1\otimes E_i^{(r)}+\sum_{s=1}^rE_i^{(s)}\otimes H_i^{(r-s)}$ modulo $\big(\mathrm{K}_{m|n}^{>0}\big)^2\otimes \mathrm{K}_{m|n}^{<0}$;
    \item $\Delta(F_i^{(r)})\equiv F_i^{(r)}\otimes 1+\sum_{s=1}^rH_i^{(r-s)}\otimes F_i^{(s)}$ modulo $\mathrm{K}_{m|n}^{>0}\otimes \big(\mathrm{K}_{m|n}^{<0}\big)^2$.
\end{enumerate}
\end{prop}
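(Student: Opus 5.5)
We prove Proposition~\ref{prop:copro} starting from the explicit coproduct \eqref{coproduct} on the matrix $\sfT(u)$ and the Gauss decomposition \eqref{eq:GD}. Write $\sfT(u)=\sfF(u)\sfD(u)\sfE(u)$, where $\sfF(u)$ is lower unitriangular, $\sfD(u)$ is diagonal and $\sfE(u)$ is upper unitriangular. Then
\[
\Delta(\sfT(u))=\sfT(u)\otimes\sfT(u)=\big(\sfF\otimes 1\big)\big(\sfD\otimes 1\big)\big(\sfE\otimes\sfF\big)\big(1\otimes\sfD\big)\big(1\otimes\sfE\big),
\]
with matrix multiplication taken in the super sense. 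The task is therefore to Gauss-decompose the middle factor $X(u):=\sfE(u)\otimes\sfF(u)$ (entries $\sum_k \sfE_{i,k}(u)\otimes\sfF_{k,j}(u)$) modulo the relevant ideals, and then read off the diagonal and near-diagonal Gaussian factors of the product.

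\emph{Step 1.} Modulo $\mathrm{K}^{>0}_{m|n}\otimes\mathrm{K}^{<0}_{m|n}$, the matrix $X(u)$ is congruent to the identity, because every off-diagonal contribution contains some $\sfE_{i,k}\otimes\sfF_{k,j}$ with $k\ne i$ or $k\ne j$. Moreover, every $\sfE_{i,j}^{(r)}$ and $\sfF_{j,i}^{(r)}$ lies in the corresponding ideal, since each is expressible via commutators of simple generators. The rows are handled as follows:
\begin{itemize}
\item row $\le i$: the first factor is upper triangular;
\item column $\le i$: the second factor is lower triangular.
\end{itemize}
Moreover, $\mathrm{K}^{>0}$ is a two-sided ideal of the Borel subalgebra containing $\sfD\sfE$, so these congruences survive multiplication on the appropriate side. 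Consequently the leading principal minors (quasideterminants) of $\Delta\sfT(u)$ agree modulo the ideal with those of $\sfF\sfD\otimes\sfD\sfE$. That product has $\sfD_i(u)\otimes\sfD_i(u)$ on its Gauss diagonal, which gives (1).

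\emph{Step 2.} For (2), the goal is to compute $\Delta(\sfE_i(u))=\Delta(\sfD_i(u))^{-1}\Delta(\sfT_{i,i+1}(u))$ more precisely, using the quasideterminant formulas. This time we keep terms that are linear in $\mathrm{K}^{>0}$ on the left factor but drop products of two such terms, since only $(\mathrm{K}^{>0})^2\otimes\mathrm{K}^{<0}$ is discarded. The relevant entry is
\[
\Delta\sfT_{i,i+1}=\sum_k \sfT_{i,k}\otimes\sfT_{k,i+1}.
\]
Modulo the ideal, only two contributions survive:
\begin{itemize}
\item $k=i$, contributing $\sfD_i\otimes\sfD_i\sfE_i$;
\item $k=i+1$, contributing $\sfD_i\sfE_i\otimes\sfD_{i+1}$.
\end{itemize}
Lower-$k$ terms involve $\sfF$ on the left and cancel against the corresponding parts of $\Delta\sfD_i$ via the minors. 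Multiplying by $\Delta(\sfD_i)^{-1}\equiv\sfD_i^{-1}\otimes\sfD_i^{-1}$ then gives
\[
1\otimes\sfE_i(u)+\sfE_i(u)\otimes \sfD_i(u)^{-1}\sfD_{i+1}(u)=1\otimes\sfE_i(u)+\sfE_i(u)\otimes H_i(u),
\]
where we must check that the super signs cancel: $\sfD$ is even, and the parity factors from \eqref{coproduct} are trivial on these entries. Extracting the coefficient of $u^{-r}$ yields (2), and (3) follows in the same way, or by applying the antiautomorphism $\tau$ of \eqref{tautran}, which swaps the roles of $\sfE$ and $\sfF$ and the tensor factors suitably.

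The main obstacle is Step 2. We must justify carefully that every cross term dropped really lies in $(\mathrm{K}^{>0})^2\otimes\mathrm{K}^{<0}$, and not merely in $\mathrm{K}^{>0}\otimes\mathrm{K}^{<0}$, when inverting $\Delta(\sfD_i)$ and the leading $i\times i$ block. The plan is to work with quasideterminant expansions. Terms in $\Delta(\sfD_i)$ that are nontrivial modulo only the weaker ideal multiply a factor already in $\mathrm{K}^{>0}\otimes 1$ (coming from $\sfE_i$) and therefore land in the square. Since this follows the argument of \cite[Prop.~2.7]{LM21} verbatim, with the parity signs tracked via $\fks$, I would cite that structure and check the signs.
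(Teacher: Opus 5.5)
Your route, Gauss-decomposing $\Delta\sfT(u)=\sfT(u)\otimes\sfT(u)$, is not the paper's. The paper, following \cite{LM21}, starts from the primitive element $E_i^{(1)}$. It raises $r$ using $\mathbf D_i^{(2)}=D_i^{(2)}-\frac12(D_i^{(1)})^2-\frac12 D_i^{(1)}$ and $[\mathbf D_i^{(2)},E_i^{(r)}]=\fks_iE_i^{(r+1)}$, then obtains (1) from (2) and (3) by commuting with $\Delta(F_i^{(1)})$ and $\Delta(E_i^{(1)})$. So deferring your ``main obstacle'' to \cite{LM21} verbatim does not cover your argument. As written, the reductions you rely on are also false.

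(a) $X(u)\not\equiv I$ modulo $\mathrm K^{>0}_{m|n}\otimes\mathrm K^{<0}_{m|n}$. For $a<b$ the $k=b$ term of $X_{ab}$ is $\sfE_{a,b}(u)\otimes 1$, and for $a>b$ the $k=a$ term is $1\otimes\sfF_{a,b}(u)$; neither lies in the ideal. In fact $\sfE_i\otimes 1\in X_{i,i+1}$ is exactly the source of $\sfE_i\otimes H_i$ in (2), so Step 1 contradicts Step 2. Moreover, reducing $\Delta\sfT(u)$ or its minors ``modulo the ideal'' has no meaning. $\mathrm K^{>0}_{m|n}\otimes\mathrm K^{<0}_{m|n}$ is an ideal only of $\rY^{\gge 0}_{m|n}\otimes\rY^{\lle 0}_{m|n}$, and the entries of $\Delta\sfT$ carry factors $\sfF\otimes 1$ and $1\otimes\sfE$ outside it.

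(b) $\Delta(\sfE_i)=\Delta(\sfD_i)^{-1}\Delta(\sfT_{i,i+1})$ holds only for $i=1$, and even there the bookkeeping fails. The term $\sfT_{1,2}\otimes\sfT_{2,2}$ contains $\sfD_1\sfE_1\otimes\sfF_1\sfD_1\sfE_1$. The first-order correction of $\Delta(\sfD_1)^{-1}$ applied to $\sfD_1\otimes\sfD_1\sfE_1$ gives $-\sfE_1\otimes\sfD_1^{-1}\sfF_1\sfD_1\sfE_1$. Neither lies in $(\mathrm K^{>0}_{m|n})^2\otimes\mathrm K^{<0}_{m|n}$ (the second factor is not even in $\rY^{\lle 0}_{m|n}$); they cancel exactly. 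So the claim you plan to verify, that every dropped term lies in $(\mathrm K^{>0}_{m|n})^2\otimes\mathrm K^{<0}_{m|n}$, is false for this decomposition. Your heuristic misses that the $k=i$ term carries its $\sfE_i$ in the second tensor factor.

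The missing idea is to strip off the outer unitriangular factors before reducing. Set $M:=(\sfD\otimes 1)X(1\otimes\sfD)$, so that $\Delta\sfT=(\sfF\otimes 1)\,M\,(1\otimes\sfE)$, and let $M=F^MD^ME^M$ be its Gauss decomposition. Uniqueness of Gauss decomposition gives $\Delta(\sfD)=D^M$, $\Delta(\sfE)=E^M(1\otimes\sfE)$ and $\Delta(\sfF)=(\sfF\otimes 1)F^M$. All entries of $M$ lie in $\rY^{\gge 0}_{m|n}\otimes\rY^{\lle 0}_{m|n}$, where the congruences make sense. Drop the subscript $m|n$ below.

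For (1), use $|M_{[i]}|_{ii}=M_{ii}-\sum_{c,d<i}M_{ic}(M_{[i-1]}^{-1})_{cd}M_{di}$. The entries $M_{ic}$ with $c<i$ lie in $\rY^{\gge 0}\otimes\mathrm K^{<0}$, and $M_{di}$ with $d<i$ lie in $\mathrm K^{>0}\otimes\rY^{\lle 0}$. Hence every cross term is in $\mathrm K^{>0}\otimes\mathrm K^{<0}$, and $D^M_i\equiv M_{ii}\equiv\sfD_i\otimes\sfD_i$.

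For (2) you also need $\sfE_{a,k}^{(r)}\in(\mathrm K^{>0})^2$ for $k\gge a+2$, which holds because it is a supercommutator by \eqref{gedef}. This places the $k\gge i+2$ part of $M_{i,i+1}$ and all cross terms $M_{ic}(\cdots)M_{d,i+1}$ in $(\mathrm K^{>0})^2\otimes\mathrm K^{<0}$, so $D^M_iE^M_{i,i+1}\equiv\sfD_i\sfE_i\otimes\sfD_{i+1}$. The correction coming from $(D^M_i)^{-1}$ lies in $(\mathrm K^{>0}\otimes\mathrm K^{<0})(\mathrm K^{>0}\otimes\rY^{\lle 0})\subset(\mathrm K^{>0})^2\otimes\mathrm K^{<0}$. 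Hence $E^M_{i,i+1}\equiv\sfE_i\otimes H_i$, which gives (2), and (3) is symmetric.

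Repaired this way, your route is a valid alternative to the paper's. All three congruences come uniformly from one factorization, with no need to compute $\Delta(\mathbf D_i^{(2)})$.
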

\begin{proof}
We prove it using the same argument as in \cite[Prop.~2.7]{LM21}. Let
\[
\mathbf D_i^{(2)}:=D_i^{(2)}-\frac12(D_i^{(1)})^2-\frac12D_i^{(1)},
\]
then by \eqref{dr3}, we have $[\mathbf D_i^{(2)},E_i^{(r)}]=\fks_iE_i^{(r+1)}$. Note that $D_i^{(2)}=\sfT_{i,i}^{(2)}-\sum_{j<i}\sfT_{i,j}^{(1)}\sfT_{j,i}^{(1)}$ and $D_i^{(1)}=\sfT_{i,i}^{(1)}$. A direct computation implies that
\begin{align*}
\Delta(\mathbf D_i^{(2)})\in\mathbf D_i^{(2)}\otimes 1+1\otimes \mathbf D_i^{(2)}+E_i^{(1)}\otimes F_i^{(1)}+\bC^{\times} E_{i-1}^{(1)}\otimes F_{i-1}^{(1)}+\big(\rY_{m|n}^{+}\big)^2\otimes \big(\rY_{m|n}^{-}\big)^2.
\end{align*}
Note that $\Delta(E_i^{(1)})=E_i^{(1)}\otimes 1+1\otimes E_i^{(1)}$. Using $[\mathbf D_i^{(2)},E_i^{(r)}]=\fks_iE_i^{(r+1)}$, $[F_i^{(1)},E_i^{(s)}]=\fks_i H_i^{(s)}$, and $[E_i^{(r)},\rY_{m|n}^{-}]\subset \mathrm{Y}_{m|n}^{\lle0}$, one shows inductively Part (2). Similarly, one proves Part (3).

We then prove Part (1). It is clearly for the case $i=1$. By \eqref{eq:def-H}, it suffices to prove the statement with $D$ replaced by $H$. Applying $[\Delta(F_i^{(1)}),?]$ to Part (2) and using $[F_i^{(1)},E_i^{(s)}]=\fks_i H_i^{(s)}$, we find that
\[
\Delta(H_i^{(r)})-\sum_{s=0}^r H_i^{(s)}\otimes H_i^{(r)}\in \Big[F_i^{(1)},\big(\mathrm{K}_{m|n}^{>0}\big)^2\Big]\otimes \mathrm{K}_{m|n}^{<0}.
\]
Similarly, applying $[\Delta(E_i^{(1)}),?]$ to Part (3), we have
\[
\Delta(H_i^{(r)})-\sum_{s=0}^r H_i^{(s)}\otimes H_i^{(r)}\in \mathrm{K}_{m|n}^{>0}\otimes \Big[E_i^{(1)},\big(\mathrm{K}_{m|n}^{<0}\big)^2\Big].
\]
Combining the two equations above completes the proof of Part (1).
\end{proof}

Recall that we can consider $\Y$ as a subalgebra of $\rY_{m|n}$ via the canonical embedding $\Y\hookrightarrow \rY_{m|n}$ such that the generators $D_i^{(r)}$, $E_{i}^{(r)}$, and $F_i^{(r)}$ of $\Y$ map to the elements of $\rY_{m|n}$ with the same name. Write $\sigma=\sigma'+\sigma''$ where $\sigma'$ (resp. $\sigma''$) is an lower (resp. upper) triangular shift matrix. Embedding the shifted super Yangians $\Y$, $\rY_{m|n}(\sigma')$, and $\rY_{m|n}(\sigma'')$ into $\rY_{m|n}$ via the canonical way.
\begin{lem}\label{lem:copro-res}
The coproduct $\Delta:\rY_{m|n}\to \rY_{m|n}\otimes \rY_{m|n}$ restricts to a superalgebra homomorphism
\beq\label{eq:coproduct}
\Delta:\Y\to \rY_{m|n}(\sigma')\otimes \rY_{m|n}(\sigma'').
\eeq
\end{lem}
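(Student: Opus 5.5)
Since $\Y$ is generated by the $D_i^{(r)}$, $E_j^{(r)}$ ($r>s_{j,j+1}$) and $F_j^{(r)}$ ($r>s_{j+1,j}$), and $\Delta$ is a superalgebra homomorphism on $\rY_{m|n}$, it suffices to show that $\Delta$ maps each generator into the subalgebra $\rY_{m|n}(\sigma')\otimes\rY_{m|n}(\sigma'')$ of $\rY_{m|n}\otimes\rY_{m|n}$. Here $\sigma'$ is lower triangular, so $s'_{j,j+1}=0$ and $s'_{j+1,j}=s_{j+1,j}$. Likewise $\sigma''$ is upper triangular, so $s''_{j,j+1}=s_{j,j+1}$ and $s''_{j+1,j}=0$. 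Thus $\rY_{m|n}(\sigma')$ contains all $E_j^{(r)}$, all $D_i^{(r)}$, and the $F_j^{(r)}$ with $r>s_{j+1,j}$. Symmetrically, $\rY_{m|n}(\sigma'')$ contains all $F_j^{(r)}$, all $D_i^{(r)}$, and the $E_j^{(r)}$ with $r>s_{j,j+1}$.

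\emph{Generators $D_i^{(r)}$.} These lie in $\rY_{m|n}(0)$-diagonal parts, so one may use the matrix-entry description. Write $\sfT_{i,j}(u)$ via \eqref{eq:GD} and apply \eqref{coproduct}. A cleaner route is to work with the quasideterminant formula $D_i(u)=\sfT_{ii}(u)-\sum \sfT_{i,\le i-1}(u)\,(\sfT_{\le i-1,\le i-1}(u))^{-1}\sfT_{\le i-1,i}(u)$. The key structural fact is that $\Delta$ of a quasideterminant of $\sfT(u)$ factors as the product of Gauss factors: $\Delta(\sfT(u))=\sfT(u)\otimes\sfT(u)$ as matrices, so
\[
\sfT^{[1]}(u)\sfT^{[2]}(u)=F^{[1]}D^{[1]}E^{[1]}F^{[2]}D^{[2]}E^{[2]}.
\]
The plan is to rewrite the middle product $E^{[1]}(u)F^{[2]}(u)$ as a lower-times-diagonal-times-upper Gauss decomposition. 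Its entries are series in elements $E^{[1]}_{a,b}\otimes 1$ and $1\otimes F^{[2]}_{c,d}$. The resulting Gauss factors of $\Delta(\sfT(u))$ then have coefficients generated by $E_{a,b}^{(r)}\otimes1$, $D\otimes D$, $1\otimes F$, $F_{b,a}^{(r)}\otimes 1$ and $1\otimes E_{a,b}^{(r)}$. The point is to track degrees: the new lower-unitriangular factor is $F^{[1]}\cdot(\text{lower part})$, and its $(j,i)$ entry's $u$-expansion begins in degree $>s_{j,i}$ in the first tensor factor, because $F^{[1]}_{j,i}$ already does. The modifications coming from $E^{[1]}F^{[2]}$ produce, in the first factor, only $E$'s and $D$'s, which belong to $\rY_{m|n}(\sigma')$ with no constraint. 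Similarly the new upper factor has second-tensor-factor terms $E^{[2]}_{i,j}$ of degree $>s_{i,j}$ together with $F$'s and $D$'s. This gives $\Delta(D_i^{(r)})$ and the required membership for $\Delta(E_j^{(r)})$ and $\Delta(F_j^{(r)})$.

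\emph{Simpler alternative I would try first.} Use Proposition \ref{prop:copro}(2) together with the derivation trick from its proof. Set $\mathbf D_j^{(2)}$ as there, with $[\mathbf D_j^{(2)},E_j^{(r)}]=\fks_jE_j^{(r+1)}$. Then
\[
E_j^{(r)}=\fks_j^{\,r-s-1}\,\mathrm{ad}(\mathbf D_j^{(2)})^{r-s-1}E_j^{(s+1)},\qquad s=s_{j,j+1}.
\]
Hence it suffices to show:
\begin{enumerate}
\item $\Delta(\mathbf D_j^{(2)})\in\rY_{m|n}(\sigma')\otimes\rY_{m|n}(\sigma'')$;
\item $\Delta(E_j^{(s+1)})\in\rY_{m|n}(\sigma')\otimes\rY_{m|n}(\sigma'')$.
\end{enumerate}
For (1), the formula in the proof of Proposition \ref{prop:copro} shows that the cross terms are of the shape $\rY^+\otimes\rY^-$. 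All $E$'s lie in $\rY_{m|n}(\sigma')$ and all $F$'s lie in $\rY_{m|n}(\sigma'')$, so this is fine provided those remainder terms are genuinely in the positive part tensor the negative part, which follows from the explicit degree-$2$ computation in $\sfT^{(1)},\sfT^{(2)}$. For (2), the element $E_j^{(s+1)}$ is the coefficient of lowest admissible degree. I would compute $\Delta(\sfT_{j,j+1}^{(\cdot)})$ directly, or apply the Gauss-factor argument above only to this low degree.

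The $F$ case follows by applying the anti-automorphism $\tau$, since $\tau$ swaps $\sigma\leftrightarrow\sigma^t$ and $\sigma'\leftrightarrow(\sigma'')^t$. The $D$ case follows from $[E_j^{(r)},F_j^{(1)}]$ producing $H_j^{(r)}$ via \eqref{dr5}, combined with $D_1$ handled directly.

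The main obstacle is the precise control of the lowest coefficient $\Delta(E_j^{(s_{j,j+1}+1)})$. The terms $E_{a,b}\otimes F\cdots$ coming from Gauss refactorization must avoid needing an $E_{j,j+1}^{(r)}$ with $r\le s_{j,j+1}$ in the second tensor factor. I expect the degree-counting bound $T_{i,j}^{(r)}=0$ for $r\le s_{i,j}$ and the additivity \eqref{sijk} of shifts to settle this. Specifically, any path $i\to k\to j$ contributes shifts $s'_{i,k}+s''_{k,j}\ge s_{i,j}$ when $k$ is the extremal index.
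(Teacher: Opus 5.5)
Your first route, Gauss-refactoring $\Delta(\sfT(u))=\sfT^{[1]}(u)\sfT^{[2]}(u)$ inside the Yangian, is a genuinely different proof from the paper's, and it can be made to work.

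\textbf{How the paper argues.} It never works inside $\rY_{m|n}$. It obtains the lemma from Theorem~\ref{thm:grownup} in three steps:
\begin{enumerate}
\item it realizes $\Y$, $\rY_{m|n}(\sigma')$ and $\rY_{m|n}(\sigma'')$ as inverse limits of truncations $\cong W(\pi)$;
\item it uses that the parabolic inductions $\Delta_{\ell',\ell''}$, induced by $\fkp\twoheadrightarrow\fkp'\oplus\fkp''$, are compatible with column removal (Corollary~\ref{comWdel});
\item it matches the limit map with $\Delta$ on generators via \eqref{coproductW} and \eqref{coproduct}.
\end{enumerate}
Your route is shorter and self-contained: no $W$-superalgebras, no Miura injectivity, no inverse limits. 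The paper's route gives more. It shows that $\Delta$ descends to the truncations and agrees there with $\Delta_{\ell',\ell''}$ (Corollary~\ref{SWcop}). A purely Yangian argument would not give this without separately showing that the kernel of $\Y\twoheadrightarrow W(\pi)$ is sent into the sum of the two kernels.

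\textbf{What actually makes your route work.} You have misidentified the mechanism, which is why you end with a spurious ``main obstacle''. The point is not degree tracking. It is the inclusions $\rY^{\gge0}_{m|n}\subseteq\rY_{m|n}(\sigma')$ and $\rY^{\lle0}_{m|n}\subseteq\rY_{m|n}(\sigma'')$, which you noted at the start. The Gauss factors $\tilde F,\tilde D,\tilde E$ of $D^{[1]}E^{[1]}F^{[2]}D^{[2]}$ have coefficients in the subalgebra $\rY^{\gge0}_{m|n}\otimes\rY^{\lle0}_{m|n}$. Uniqueness of the Gauss decomposition then gives, entrywise, $\Delta(F)=F^{[1]}\tilde F$, $\Delta(D)=\tilde D$ and $\Delta(E)=\tilde E E^{[2]}$. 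For simple roots this reads $\Delta(E_j(u))=1\otimes E_j(u)+\tilde E_{j,j+1}(u)$ and $\Delta(F_j(u))=F_j(u)\otimes 1+\tilde F_{j+1,j}(u)$. So the coefficient of $u^{-r}$ has exactly one term outside $\rY^{\gge0}_{m|n}\otimes\rY^{\lle0}_{m|n}$, namely $1\otimes E_j^{(r)}$ (resp.\ $F_j^{(r)}\otimes1$). That term lies in the target precisely because $r>s_{j,j+1}$ (resp.\ $r>s_{j+1,j}$).

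This argument is uniform in $r$, so $r=s_{j,j+1}+1$ needs no special treatment. The $\mathrm{ad}\,\mathbf D_j^{(2)}$ reduction and the shift inequality along paths are superfluous. In fact Proposition~\ref{prop:copro} already gives the lemma in two lines, since all its error terms lie in $\rY^{\gge0}_{m|n}\otimes\rY^{\lle0}_{m|n}$.

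\textbf{Two claims to discard.}
\begin{itemize}
\item The assertion that the $(j,i)$ entry of $F^{[1]}\tilde F$ ``begins in degree $>s_{j,i}$'' is false, since $\sfF_{j,i}(u)\otimes1$ has coefficients in every degree. What is true, and only needed for $j=i+1$, is the statement above.
\item The $D$-case of your alternative fails as written. $F_j^{(1)}\notin\rY_{m|n}(\sigma')$ when $s_{j+1,j}>0$, so bracketing with $\Delta(F_j^{(1)})$ need not preserve $\rY_{m|n}(\sigma')\otimes\rY_{m|n}(\sigma'')$. The $D$'s are handled directly by $\Delta(D)=\tilde D$.
\end{itemize}
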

\begin{proof}
This is established in Theorem \ref{thm:grownup}.
\end{proof}

In particular, when $\sigma$ is upper triangular, then the coproduct $\Delta:\rY_{m|n}\to \rY_{m|n}\otimes \rY_{m|n}$ restricts to
\beq\label{eq:copro-spec}
\Delta:\Y\to \rY_{m|n}\otimes \Y.
\eeq
More generally, if $\sigma=\sigma'+\sigma''$, then there exists a homomorphism 
\[
\Delta^\iota:\Y\to \rY_{m|n}(\sigma')\otimes \rY_{m|n}(\sigma'')
\]
obtained by twisting $\Delta$ using isomorphism $\iota$, see Remark \ref{rem:row-shift}.

\subsection{Quantum Berezinian and center of $\Y$}
Recall from \cite{GGRW05} that the $kk$-\textit{quasideterminant} of a $k\times k$ matrix $X$, denoted by $|X|_{k,k}$, is given by
\beq\label{eq:quasi}
|X|_{k,k}:= d-ca^{-1}b,
\eeq
where 
$X= 
\left(
\begin{array}{c|c}
 a & b   \\
\hline
 c & d 
\end{array}
\right)
$ 
so that $a$ is an invertible square matrix of size $k-1$ and $d$ is a scalar. Then it is well known that
\[
D_k(u)=\big|\sfT_{[k]}(u)\big|_{k,k}\in \Y[\![u^{-1}]\!],
\]
where $\sfT_{[k]}(u)=(\sfT_{i,j}(u))_{1\lle i,j\lle k}$ is the $k\times k$-submatrix of $\sfT(u)$ on the upper left corner.

For a given parity sequence $\fks$,  define $\ka_i$ for $1\lle i\lle m+n$ recursively by
\beq\label{eq:ka}
\ka_1=\begin{cases}
    0, & \text{ if }\fks_1=1,\\
    -1, & \text{ otherwise,}
\end{cases}
\qquad
\ka_i=\ka_{i-1}+\tfrac{1}{2}(\fks_i+\fks_{i-1}), \qquad 1<i\lle m+n.
\eeq
It follows from \cite[Theorem~1]{Gow05} and  \cite[Theorem~4.5]{CH23} (see also \cite{Ts20}) that the \textit{quantum Berezinian} $\mathcal C_{m|n}(u)$ of the matrix $\sfT(u)$ introduced in \cite{Na91} can be expressed as
\beq\label{eq:Ber}
\mathcal C_{m|n}(u)=\sum_{r\gge 0}\mathcal C_{m|n}^{(r)}u^{-r}=\prod_{i=1}^{m+n}D_i(u-\ka_i)^{\fks_i}\in \rY_{m|n}(\sigma)[\![u^{-1}]\!].
\eeq
It is a group like element, by which we mean that
\beq\label{coBer}
\Delta(\mathcal C_{m|n}(u)) =\mathcal C_{m|n}(u) \otimes \mathcal C_{m|n}(u).
\eeq
In the case of the standard parity sequence, \eqref{coBer} first appeared in \cite[(15)]{Na91} where a detail proof is given in \cite[\textsection 4]{Na20}. The general case follows from the fact that quantum Berezinian of $\sfT^\fks(u)$ for various $\fks$ are identified under suitable Hopf superalgebra isomorphisms between super Yangians associated with different parity sequences, see e.g. \cite[(4.14)]{CH23}.

Denote by $\mathcal Z(\rY_{m|n}(\sigma))$ the center of $\rY_{m|n}(\sigma)$.
\begin{prop}\label{prop:center}
The elements $\mathcal C_{m|n}^{(r)}$ for $r\gge 1$ are algebraically free generators of the center $\mathcal Z(\rY_{m|n}(\sigma))$.
\end{prop}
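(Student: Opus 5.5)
Two things must be shown: that the $\mathcal C_{m|n}^{(r)}$ lie in $\rY_{m|n}(\sigma)$ and are central there, and that they are algebraically independent and generate the whole center. The approach reduces the first to the unshifted case and the second to an associated graded computation.

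\emph{Step 1 (membership and centrality).} By \eqref{eq:Ber}, $\mathcal C_{m|n}(u)$ is a product of the series $D_i(u-\ka_i)^{\fks_i}$. Each such series has coefficients in $\Yz\subset\Y$, so every $\mathcal C_{m|n}^{(r)}$ lies in $\Y$. Next I would invoke the known result for the unshifted super Yangian: the coefficients of the quantum Berezinian are algebraically free generators of $\mathcal Z(\rY_{m|n})$ (Nazarov; Gow for the standard parity, and via the parity identifications of \cite{CH23} for general $\fks$). Since $\Y\subset\rY_{m|n}$, the $\mathcal C_{m|n}^{(r)}$ are central in $\Y$. They are also algebraically free in $\Y$, because freeness is inherited from $\rY_{m|n}$. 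So the only real content is that they generate $\mathcal Z(\Y)$.

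\emph{Step 2 (symbols of the $\mathcal C_{m|n}^{(r)}$).} Pass to the canonical filtration, under which $\gr\,\Y$ is free supercommutative on the $\gr_r T_{i,j}^{(r)}$ with $r>s_{i,j}$ (Lemma~\ref{lem:PBW-T}). Expanding \eqref{eq:Ber}, I would check that
\[
\mathcal C_{m|n}^{(r)}\equiv \sum_{i}\fks_i D_i^{(r)}
\]
modulo $\mathcal F_{r-1}\Y$ plus products of lower-degree elements. In $\gr\,\Y$, the symbol of $\mathcal C_{m|n}^{(r)}$ therefore has linear part the supertrace $\sum_i\fks_i\,\gr_r T_{i,i}^{(r)}$. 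The $\ka_i$-shifts only affect lower filtration degree.

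\emph{Step 3 (generation, the main obstacle).} Take a central $z\in\Y$ of filtration degree $d$. I will show, by induction on $d$, that $\gr_d z$ is a polynomial in the symbols $\gr_r\mathcal C_{m|n}^{(r)}$; subtracting the corresponding polynomial in the $\mathcal C_{m|n}^{(r)}$ then lowers the degree. To control $\gr_d z$, use the Poisson-type structure that commutators induce on $\gr\,\Y$: $z$ is central, so $\gr_d z$ Poisson-commutes with everything. The hard part is computing the invariants of this Poisson superalgebra in the shifted setting. I would try a different route first.

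That route is a $\Yz$-weight argument using the triangular decomposition \eqref{eq:triangle}. A central $z$ has $\mathbf Q_{m|n}$-degree $0$. Write $z=\sum F\cdot D\cdot E$ in PBW form. Its action on Verma-type modules (or the Harish-Chandra projection $\Y\to\Yz$ killing $\Ym\mathrm{K}^{>0}$) is then determined by its $\Yz$-component. I would show the Harish-Chandra map is injective on $\mathcal Z(\Y)$ by comparing with the unshifted case. Its image consists of series invariant under the "dot-action" constraints coming from odd and even reflections. That image is generated by the coefficients of $\prod_i D_i(u-\ka_i)^{\fks_i}$, exactly as for $\rY_{m|n}$.

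Alternatively, and perhaps more cleanly, one can use the isomorphism $\iota$ to reduce to a triangular $\sigma$. One then shows $\mathcal Z(\Y)\subseteq\mathcal Z(\rY_{m|n})$ via a density/localization argument. If $\gr\,\Y\subset\gr\,\rY_{m|n}$ is a subalgebra whose Poisson centralizer of the generators $T_{i,j}^{(1)}$ and $T_{i,j}^{(2)}$ (when present) already forces $\mathrm{ad}$-invariance under all of $\rY_{m|n}$, then a central element of $\Y$ commutes with all of $\rY_{m|n}$. Step 1 would then finish the proof. This invariance step is the one I expect to be delicate.
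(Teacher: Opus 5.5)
Your Step 1 is correct and is exactly the easy half of the paper's argument. The $\mathcal C_{m|n}^{(r)}$ are polynomials in the $D_i^{(s)}\in\Yz\subseteq\Y$, so they lie in $\Y$, are central there, and are algebraically free because they are free in $\rY_{m|n}$.

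\textbf{The gap is Step 3.} That step is the whole content of the proposition, and you list three strategies without completing any of them.
\begin{itemize}
\item The Poisson route is abandoned.
\item The Harish-Chandra route asserts without argument that the projection to $\Yz$ is injective on $\mathcal Z(\Y)$. ``Comparing with the unshifted case'' is circular, because it presupposes $\mathcal Z(\Y)\subseteq\mathcal Z(\rY_{m|n})$. Without that, you would need to show that Verma modules over $\Y$ have zero common annihilator. The route then describes the image by unspecified ``reflection constraints'' that supposedly cut out exactly the algebra generated by the Berezinian coefficients. That restates the result rather than proving it.
\item The third route is conditional, and you leave its invariance step open.
\end{itemize}
Step 2 also works in a filtration badly suited to the problem. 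In the canonical filtration, the products $D_i^{(a)}D_j^{(b)}$ with $a+b=r$ have the same degree as $D_i^{(r)}$, so the symbol of $\mathcal C_{m|n}^{(r)}$ is not linear. Moreover, $\gr\,\Y$ is supercommutative, so centrality of $z$ only yields a Poisson condition, and solving that condition is exactly what you could not do.

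\textbf{The missing idea is the loop filtration} $\deg T_{i,j}^{(r)}=r-1$, which is what the paper uses.
\begin{itemize}
\item The paper's input is \cite[Thm.~4.4]{Pe21}. It identifies $\gr^L\Y$ with the enveloping superalgebra $\rU(\mathfrak a_\sigma)$ of the shifted current superalgebra $\mathfrak a_\sigma\subseteq\gl_{m|n}[t]$. This is spanned by the elements $e_{i,j}t^{r-1}$ with $r>s_{i,j}$, which correspond up to sign to the $\gr^L T_{i,j}^{(r)}$.
\item In this filtration every product in the expansion of \eqref{eq:Ber} drops to lower degree. Hence $\gr^L\mathcal C_{m|n}^{(r)}=\sum_i\fks_i\,\gr^L T_{i,i}^{(r)}$ is linear. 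With the normalization of \eqref{evahom}, it is the central current $\sum_i e_{i,i}t^{r-1}$.
\item For central $z\in\Y$, the symbol $\gr^L z$ is genuinely central in $\rU(\mathfrak a_\sigma)$, not merely Poisson central.
\item One then shows that this center is the polynomial algebra in these currents, by modifying \cite[Thm.~1.7.5]{Mo07} (with \cite[Prop.~3.8]{Na20} for the super case). Symmetrize, then apply $\mathrm{ad}(e_{p,q}t^M)$ with $M$ larger than every $t$-degree occurring. The resulting variables are new, and this forces the gradient of the invariant in each $t$-degree to lie in the center $\bC\cdot\sum_i e_{i,i}$ of $\gl_{m|n}$.
\item A shift only removes small $t$-degrees, so these operators lie in $\mathfrak a_\sigma$ for every $\sigma$, and the argument is insensitive to the shift.
\item Induction on loop degree then writes any central $z$ as a polynomial in the $\mathcal C_{m|n}^{(r)}$. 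This gives $\mathcal Z(\Y)=\mathcal Z(\rY_{m|n})$, which is the form in which the paper states the result.
\end{itemize}
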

\begin{proof}
When $\sigma=0$ and $\fks$ is the standard one, it is well known \cite{Go07,Na20} that the set $\{\mathcal C_{m|n}^{(r)}\,|\,r\gge1\}$ is algebraically independent and it generates $\mathcal Z(\rY_{m|n})$. For a general $\fks$, the proof in the standard case works analogously as in \cite[Thm.~2.43]{Ts20}.

We claim that $\mathcal Z(\rY_{m|n})= \mathcal Z(\rY_{m|n}(\sigma))$ for any $\sigma$.
Note that $D_i^{(r)}$ belongs to $\rY_{m|n}^0 \subseteq \rY_{m|n}$ for all $1\lle i\lle m+n, r>0$. Then \eqref{eq:Ber} implies that $\mathcal Z(\rY_{m|n})\subseteq\mathcal Z(\rY_{m|n}(\sigma))$.
The other inclusion can be established by passing to the associated graded superalgebra $\gr^L\, \rY_{m|n}(\sigma)$ with respect to the loop filtration, with the help of \cite[Thm.~4.4]{Pe21} and \cite[Prop.~3.8]{Na20}, and modifying the argument in \cite[Thm. 1.7.5]{Mo07}.
\end{proof}

Let $\mathcal E(u):=(\delta_{i,j}+\fks_i e_{i,j}u^{-1})_{1\lle i,j\lle m+n}$. For $1\lle k\lle m+n$, define the following formal series 
\beq\label{eq:zetakdef}
\zeta_k(u):=\big|\mathcal E_{[k]}(u)\big|_{kk}\in \mathrm U(\gl_{m|n})[\![u^{-1}]\!], 
\eeq
where $\mathcal E_{[k]}(u)$ is the $k\times k$-submatrix of $\mathcal E(u)$ on the upper left corner. 
Set 
\beq\label{eq:zdef}
z_{m|n,\fks}(u)=\sum_{r\gge 0} z_{m|n,\fks}^{(r)} u^{-r} := \prod_{i=1}^{m+n} (u-\kappa_i)^{\fks_i}\zeta_i(u-\kappa_i)^{\fks_i} \in \mathrm U(\gl_{m|n})[\![u^{-1}]\!].
\eeq
We simply write $z_{m|n}(u)$ or $z(u)$ when there is no possible confusion about which $\fks$ we are using.
Clearly, $z_{m|n}(u)$ is obtained by applying the evaluation map \eqref{evahom} to $\prod_{i=1}^{m+n}(u-\kappa_i)^{\fks_i}\mathcal C_{m|n}(u)$. 

The series appeared in \cite{Na91,MR04,BG19} for the standard parity sequence. It was also observed in \cite[Rmk.~2.5]{BG19} that $z_{M|N,\fks}(u)$ can be defined for a general $\fks$. It is well known that the coefficients of $z_{m|n}(u)$ generate the center of $\mathrm U(\gl_{m|n})$; see Theorem \ref{thmHC} below.

\section{Finite $W$-superalgebras}\label{sec:W-alg}
In this section, we first recall the definition of finite $W$-superalgebras and their relations to shifted super Yangians following \cite{Pe21}. 
Then we develop some properties about finite $W$-superalgebras that will be used to study their representations.
In particular, a homomorphism called {\em parabolic induction} is constructed in \eqref{Wdel}. We lift its domain to the shifted super Yangian in Theorem \ref{thm:grownup}.
These homomorphisms allow one to construct the tensor product of modules.

\subsection{Pyramid}
We recall the notion of  pyramid \cite{EK05, Ho12} following \cite{BK08,Pe21}. 
A \textit{pyramid} $\pi$ of \textit{level} $\ell$ is a sequence $\pi=(q_1,\ldots,q_\ell)$ of integers such that
\beq\label{eq:k-def}
0<q_1\lle \cdots\lle q_k,\quad q_{k+1}\gge \cdots q_\ell>0.
\eeq
for some fixed integer $0\lle k\lle \ell$. One can visualize the pyramid $\pi$ as a diagram consisting of $q_1$ boxes
stacked in the first column, $q_2$ boxes stacked in the second column, $\ldots$, $q_\ell$ boxes
stacked in the $\ell$-th column, where columns are numbered $1, 2, \ldots, \ell$ from left to right. We say that
the pyramid is \textit{left-justified} if $q_1\gge q_2\gge \cdots\gge  q_\ell$ and \textit{right-justified} if $q_1\lle q_2\lle \cdots\lle  q_\ell$.

For our purpose, we shall consider the pyramid of at most $m+n$ rows of boxes. Let $p_i$ be the number of boxes on the $i$-th row. Then it defines the tuple $(p_1,\dots,p_{m+n})$ of row lengths with 
\[
0\lle p_1\lle p_2\lle \ldots\lle p_{m+n}=\ell.
\]
Note that we allow empty rows.
Fix a parity sequence $\fks=(\fks_1,\cdots ,\fks_{m+n})$ of type $(m|n)$.
We say the $i$-th row of $\pi$ is an even (resp. odd) row if $\fks_i=1$ (resp. $-1$). We call a box in $\pi$ even (resp. odd) if it belongs to an even (resp. odd) row.
Let $M$ and $N$ be defined by
\be
M:= \sum_{\substack{1\lle i\lle m+n\\ \fks_i=1}} p_i, \qquad N:= \sum_{\substack{1\lle i\lle m+n\\ \fks_i=-1}} p_i.
\ee
Namely, $M$ is the number of even boxes in $\pi$, while $N$ is the number of odd boxes in $\pi$.
We put integers from $\I=\{1,2,\ldots, M+N\}$ into boxes of $\pi$ by the following rule: put $1,2,\ldots, M$ into the even boxes of $\pi$ down columns, from left to right, and put $M+1,\ldots,M+N$ into the odd boxes of $\pi$ in the same way. For any $i\in \I$, let col($i$) and row($i$) denote the column and row in which $i$ appears, respectively.

For $1\lle i\lle \ell$, denote $m_i$ (resp. $n_i$) the number of even (resp. odd) boxes in the $i$-th column of $\pi$. Pick some $1\lle k\lle \ell$ as in \eqref{eq:k-def} such that $q_k=m_k+n_k$ is maximal among all $q_i$ for $1\lle i\lle \ell$. Define a shift matrix $\sigma=(s_{i,j})_{1\lle i,j\lle m+n}$ to the pyramid $\pi$ by setting
\beq\label{eq:sigma-def}
s_{i,j}:=\begin{cases}
    \#\{c=1,\dots,k\mid i>m+n-q_c\gge j\}, &\text{ if }i\gge j,\\
    \#\{c=k+1,\dots,\ell\mid i\lle m+n-q_c< j\}, &\text{ if }i\lle j.
\end{cases}
\eeq
Note that the pyramid $\pi$ can be recovered if the shift matrix $\sigma$ and the level $\ell$ are known.
For each $1\lle i\lle m+n$, one simply puts $\ell-s_{m+n,i}-s_{i,m+n}$ boxes on the $i$-th row of $\pi$ with $s_{m+n,i}$ columns indented from the left and $s_{i,m+n}$ columns indented from the right. 

\subsection{Finite $W$-superalgebras}
We recall the definition of $W$-superalgebra in terms of a fixed pyramid $\pi$ as above.

Let $\g=\gl_{M|N}$ be the general linear Lie superalgebra consisting of $(M+N)\times(M+N)$ matrices equipped with the standard $\bZ_2$-grading $\g=\g_{\ovl 0}\oplus \g_{\ovl 1}$. Denote by $(\, \cdot \, ,\,\cdot\,)$ the non-degenerate even supersymmetric $\g$-invariant bilinear form defined by 
$$(x,y):=\str(x y)$$
for all $x,y \in \g$, where $x y$ stands for the usual matrix product and $\str$ means the supertrace.
Endow the $\bZ$-grading $\g=\oplus_{j\in\bZ}\g_j$ defined by declaring that $e_{ij}$ is of degree $(\col(j)-\col(i))$ for all $1\lle i,j\lle M+N$.
Define the following subalgebras according to the $\bZ$-grading:
\[\h:=\g_0,\qquad  \fkp:=\oplus_{j\gge 0}\g_j, \qquad \fkm:=\oplus_{j<0}\g_j.\]
Let $e\in\fkp$ denote the nilpotent element in $\g_{\ovl{0}}$ defined by
\beq\label{e_pi}
e:=\sum_{i,j} e_{ij},
\eeq
summing over all pairs $\ytableausetup{boxsize=1em}
\ytableausetup{centertableaux}\begin{ytableau}
i & j
\end{ytableau}$ appearing in the pyramid $\pi$.
It is a fact \cite{Ho12} that the $\bZ$-grading $\g=\oplus_{j\in\bZ}\g_j$ defined in this fashion is a good $\bZ$-grading for $e\in\g_1$. Note that in the setup of \cite{Pe21} this grading is doubled.

Define $\chi\in\g^*$ by 
\beq\label{eq:chi}
\chi(y):=(y,e), \qquad \forall \,y\in\g.
\eeq
The restriction of $\chi$ on $\mathfrak{m}$ induces a 1-dimensional $\rU(\mfk{m})$-module. Let $I_\chi$ be the left ideal of $\rU(\g)$ generated by 
$$\{a-\chi(a)  \,  |  \,  a\in\mfk{m}\}.$$ 
As a consequence of the PBW theorem, we have $\rU(\g)=I_\chi\oplus \rU(\mathfrak{p})$ together with the following identification 
$$\rU(\g)/I_\chi \cong \rU(\mfk{p})$$ 
induced by the natural projection $\pr_\chi:\rU(\g)\twoheadrightarrow \rU(\mfk{p})$. One then defines the following $\chi$-twisted action of $\mathfrak{m}$ on $\rU(\mathfrak{p})$ by  
$$a\cdot y := \pr_\chi([a,y]) 
\qquad \forall\, a\in\mfk{m}, y\in \rU(\mfk{p}).$$ 
The associated {\em finite W-superalgebra}, which we often omit the prefix ``finite" from now on, is defined to be the space of $\mfk{m}$-invariants in $\rU(\mfk{p})$ under the $\chi$-twisted action. That is,
\beq\label{def:Walg}
W(\pi):=\rU(\mfk{p})^\mfk{m}=\{y\in \rU(\mfk{p})\,\,|\,\, [a,y]\in I_\chi, \text{for all } a\in\mfk{m}\}.
\eeq
We adapt the multiplication of $\rU(\mfk{p})$ as the multiplication of $W(\pi)$.

In the special case when $\pi$ consists of a single column, the corresponding nilpotent element \eqref{e_pi} is zero and therefore $\chi=0$, $\fkp=\g$, $\fkm=0$. Thus, the corresponding $W$-superalgebra is precisely the universal enveloping superalgebra $\rU(\g)$.

\subsection{Identification} 
Recall that, for each $1\lle j\lle \ell$, $m_j$ and $n_j$ denote the number of even and odd boxes in the $j$-th column of $\pi$, respectively.
Pick some $1\lle k\lle \ell$ as described in \eqref{eq:k-def} and set $\hbar=m_k-n_k$.
Let $(\check{q}_1,\dots,\check{q}_{\ell})$ denote the {\em super column heights} of $\pi$, where $\check{q}_j$ is defined by
\beq\label{sheight}
\check{q}_j:= m_j-n_j.
\eeq

Consider the ordered index set $\I:=\lbrace 1<\ldots<M < \ovl{1}<\ldots<\ovl{N}\rbrace$, where $\ovl{i}=i+M$ for $1\lle i\lle N$ while $M$ and $N$ are the numbers of even and odd boxes of $\pi$, respectively. The {\em Kazhdan filtration} of $\rU(\g)$
\[
\cdots\subseteq \mathrm{F}_d\rU(\g) \subseteq \mathrm{F}_{d+1}\rU(\g)\subseteq \cdots
\] 
is defined by declaring
\beq\label{kzdegdef}
\deg (e_{i,j}):= \text{col}(j)-\text{col}(i)+1
\eeq
for each $i,j \in \I$, where $\mathrm{F}_d\rU(\g)$ denotes the span of all supermonomials $e_{i_1,j_1}\cdots e_{i_s,j_s}$ for $s\gge 0$ with $\sum_{k=1}^s \deg (e_{i_k,j_k})\lle d$. 
The Kazhdan filtration descends to filtrations of $\rU(\fkp)$ and $W(\pi)$ by setting $\mathrm{F}_d\rU(\fkp):=\pr_\chi(\mathrm{F}_d\rU(\g))$ and $\mathrm{F}_d W(\pi):=W(\pi)\cap \mathrm{F}_d\rU(\fkp)$, respectively.
Let $\gr\,\rU(\g)$ and $\gr\, W(\pi)$ denote the associated graded superalgebras. 

Define $\rho = (\rho_1,\dots,\rho_\ell)$, where $\rho_r$ is given by
\beq\label{rhodef}
\rho_r := \hbar-\check{q}_{r} - \check{q}_{r+1} -\cdots-\check{q}_\ell
\eeq
for each $r=1,\dots,\ell$.
For all $i,j\in \I$, define 
\beq\label{etilde}
\tilde e_{i,j} :=  
e_{i,j} + \delta_{i,j} (-1)^{\tp{(i)}}\rho_{\col(i)},
\eeq
where $\tp{(i)}:= 0$ if $i\in\{ 1 ,\ldots, M \}$ and $\tp{(i)}:= 1$ otherwise. 

By calculation one easily shows that 
\beq\label{etilrel}
\begin{split}
[\tilde e_{i,j}, \tilde e_{h,k}]
=&\,(\tilde{e}_{i,k} - \delta_{i,k} (-1)^{\tp(i)} \rho_{\col(i)})\delta_{h,j}\\
&\,- (-1)^{(\tp(i)+\tp(j))(\tp(h)+\tp(k))}
\delta_{i,k} (\tilde e_{h,j} - \delta_{h,j} (-1)^{\tp(j)}\rho_{\col(j)}).
\end{split}
\eeq
The homomorphism $\rU(\mathfrak{m}) \rightarrow \mathbb{C}$ induced by the character $\chi$ can be explicitly described as follows.
For any $i,j\in \I$, we have
\beq\label{chidef}
\chi (\tilde e_{i,j}) = \left\{
\begin{array}{ll}
(-1)^{\tp(i)}&\hbox{if $\row(i)=\row(j)$ and $\col(i) = \col(j)+1$;}\\[3mm]
0&\hbox{otherwise.}
\end{array}\right.
\eeq

Now we recall certain elements in the universal enveloping superalgebra $\rU(\gl_{M|N})$ defined in \cite{Pe21}. Although we slightly modify the expressions from \cite{Pe21}, the same series \eqref{newtseries} will be obtained.
For $1\lle i,j\lle m+n$ and integer $0\lle x\lle m+n$,
we set
$$\sfT_{ i,j;x}^{(0)} :=  \left\{
\begin{array}{ll}
- \delta_{i,j}&\hbox{if $i\lle x$;}\\[3mm]
 \delta_{i,j}&\hbox{otherwise.}
\end{array}\right.$$
and for $r \gge 1$ we define
\beq\label{mystery}
\sfT_{i,j;x}^{(r)}
:=
\sum_{s = 1}^r(-1)^{r-s}
\sum_{\substack{i_1,\dots,i_s\\j_1,\dots,j_s}}
(-1)^{\#\{1\lle t\lle s-1\,|\, \text{row}(j_t)\lle x\}+\tp(i_1)+\cdots+\tp(i_s)}
 \tilde e_{i_1,j_1} \cdots \tilde e_{i_s,j_s}
\eeq
where the second sum is taken over all $i_1,\dots,i_s,j_1,\dots,j_s\in \I$
such that
\begin{itemize}
\item[(1)] $\sum_{1\lle t\lle s}(\col(j_t)-\col(i_t))= r-s$;
\item[(2)] $\col(i_t) \lle \col(j_t)$ for each $t=1,\dots,s$;
\item[(3)] if $\text{row}(j_t)>x$, then
$\col(j_t) < \col(i_{t+1})$ for each
$t=1,\dots,s-1$;
\item[(4)]
if $\text{row}(j_t)\lle x$, then $\col(j_t) \gge \col(i_{t+1})$
for each
$t=1,\dots,s-1$;
\item[(5)] $\row(i_1)=i$, $\row(j_s) = j$;
\item[(6)]
$\row(j_t)=\row(i_{t+1})$ for each $t=1,\dots,s-1$.
\end{itemize}
Due to conditions (1) and (2), $\sfT_{i,j;x}^{(r)}$ belongs to $\mathrm{F}_r \rU(\fkp)$. 
We further define the following series for all $1\lle i,j\lle m+n$:
\beq\label{newtseries}
\sfT_{i,j;x}(u) := \sum_{r \gge 0} \sfT_{i,j;x}^{(r)} u^{-r}
\in \rU(\fkp) [\![u^{-1}]\!].
\eeq
In particular, if either $p_i=0$ or $p_j=0$, we set $\sfT_{i,j;x}(u)=\sfT_{i,j;x}^{(0)}$.

We are now able to recall the following result relating the shifted super Yangians and finite $W$-superalgebras. 
\begin{thm}[{\cite[Coro. 9.3 \& Thm. 10.1]{Pe21}}]\label{thm:pe21}
The elements
\begin{align}\label{Wdi}
&\{\sfT_{i,i;i-1}^{(r)} \, | \, 1\lle i\lle m+n, \, r> s_{i,i}\}\\ \label{Wei}
&\{\sfT_{i,i+1;i}^{(r)} \, | \, 1\lle i\lle m+n-1, \, r> s_{i,i+1}\}\\ \label{Wfi}
&\{\sfT_{i+1,i;i}^{(r)} \, | \, 1\lle i\lle m+n-1, \, r> s_{i+1,i}\}
\end{align}
in $\rU(\fkp)$ defined by \eqref{mystery} are $\fkm$-invariants and they generate the subalgebra $W(\pi)$. 
Moreover, there exists a unique surjective homomorphism 
\beq \label{omegadef}
\Omega: \rY_{m|n}(\sigma) \twoheadrightarrow W(\pi)
\eeq
such that 
\beq\label{omega2}
\Omega(D_i^{(r)})=\sfT_{i,i;i-1}^{(r)}, \quad \Omega(E_i^{(r)})=\sfT_{i,i+1;i}^{(r)}, \quad \Omega(F_i^{(r)})=\sfT_{i+1,i;i}^{(r)}, 
\eeq
where the kernel of $\Omega$ is the two-sided ideal generated by $\{D_{1}^{(r)}\,|\, r> p_1\}$. 
\end{thm}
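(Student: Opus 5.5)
This result is the super analogue of Brundan--Kleshchev's theorem for shifted Yangians and finite $W$-algebras, and I would follow their strategy in the form adapted in \cite{Pe21}. The argument has four steps.

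\textbf{Step 1: the generators are $\fkm$-invariant.} I first check that the elements \eqref{Wdi}--\eqref{Wfi} lie in $W(\pi)=\rU(\fkp)^{\fkm}$. Invariance is tested against the generators of $\fkm$, namely the $\tilde e_{h,k}$ with $\col(k)=\col(h)-1$. For these I compute $\pr_\chi([\tilde e_{h,k},\sfT_{i,j;x}^{(r)}])$ directly from \eqref{etilrel} and \eqref{chidef}. The idea is to organise the sum \eqref{mystery} as the coefficient of a product of column-wise matrices, each of the form $u+\tilde e$ restricted to column blocks and twisted by the signs attached to rows $\lle x$. The commutator with $\tilde e_{h,k}$ then telescopes between adjacent columns, and the shifts $\rho_{\col(i)}$ in \eqref{rhodef} are chosen exactly so that the boundary terms cancel modulo $I_\chi$. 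I would carry this out first for $\sfT_{i,j;x}$ with $x=i-1$ or $x=i$ as required.

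\textbf{Step 2: $\Omega$ is a homomorphism.} I verify that the elements \eqref{omega2} satisfy the relations \eqref{dr1}--\eqref{sdr2}. I do not check this relation by relation. Instead, I induct on the number of columns by means of a Miura-type map. Removing the leftmost (or rightmost) column of $\pi$ gives an embedding
\[
W(\pi)\hookrightarrow W(\pi^\flat)\otimes \rU(\gl_{m_1|n_1}),
\]
and under it the series $\sfT_{i,j;x}(u)$ factor as a product of the corresponding series for $\pi^\flat$ and the evaluation image \eqref{evahom} of the column. Since the coproduct \eqref{coproduct} is a homomorphism, since $\ev$ is one, and since the Gauss decomposition \eqref{eq:GD} is compatible with quasideterminants, the relations are inherited from $\rY_{m|n}$. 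The shift of the spectral parameter and the indexing are made consistent with $\sigma$ through the isomorphisms $\iota$ of Pe21 Props.~5.13--5.14.

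\textbf{Step 3: surjectivity.} I compare associated graded algebras under the Kazhdan filtration. The degree-$r$ leading terms of $\sfT_{i,j;x}^{(r)}$ are the linear elements $\sum \pm\,\tilde e_{i',j'}$, summed over boxes in rows $i,j$ with column difference $r-1$. By Lemma~\ref{lem:PBW-T}, these leading terms together with their iterated commutators (the images of $T_{i,j}^{(r)}$) span a graded space of the same size as $\gr W(\pi)\cong S(\g^e)$. Here I use the super Kazhdan/Premet-type result that $\gr W(\pi)$ is the coordinate ring of the Slodowy slice. This shows that the images generate $W(\pi)$.

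\textbf{Step 4: the kernel.} The elements $D_1^{(r)}$ with $r>p_1$ vanish, because row $1$ has only $p_1$ boxes, so the sum \eqref{mystery} is empty in those degrees. This gives a surjection $\rY_{m|n}(\sigma)/I\twoheadrightarrow W(\pi)$, where $I$ is the two-sided ideal generated by these $D_1^{(r)}$. To show it is an isomorphism I apply the same PBW argument to the quotient. I would show that $\gr(\rY_{m|n}(\sigma)/I)$ is spanned by monomials in $T_{i,j}^{(r)}$ with $s_{i,j}<r\lle s_{i,j}+p_{\min(i,j)}$. Counting these generators gives $\dim\g^e$ in each parity, and then a dimension comparison of the graded pieces finishes the proof.

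I expect the main obstacle to be Step~1 together with the sign bookkeeping in Step~2. Conditions (3)--(4) of \eqref{mystery} and the factor $(-1)^{\#\{\cdots\}}$ have to interact correctly with the super signs in \eqref{etilrel}. Where the telescoping fails I would first try to check the $2$-row case by hand and then induct on $m+n$ by means of quasideterminant identities.
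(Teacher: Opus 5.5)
Your four ingredients are the ones \cite{Pe21} uses: the explicit invariance computation, the baby comultiplication $\Delta_R$ matched against the column-removal map $\psi_R:\rU(\fkp)\to\rU(\dot\fkp)\otimes\rU(\gl_{m_\ell|n_\ell})$, the PBW theorem for the truncation, and $\gr W(\pi)\cong S(\g^e)$. However, Step~2 has a genuine hole.

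To transport the defining relations from $W(\dot\pi)\otimes\rU(\gl_{m_\ell|n_\ell})$ back to the elements \eqref{Wdi}--\eqref{Wfi} in $\rU(\fkp)$, you need the column-removal map to be injective on $W(\pi)$, or at least on the subalgebra these elements generate. You only assert that it is an embedding. This is not available beforehand. In this paper, injectivity of parabolic induction (Theorem~\ref{muinj}, Corollary~\ref{popinj}) is \emph{deduced} from the identification $\rY^\ell_{m|n}(\sigma)\cong W(\pi)$ together with \cite[Thm.~8.2]{Pe21}, i.e.\ from the very statement you are proving. Your Step~4 cannot supply it either. A spanning set for $\rY^\ell_{m|n}(\sigma)$ only gives the upper bound $\dim\mathrm F_d\rY^\ell_{m|n}(\sigma)\lle\dim\mathrm F_dS(\g^e)=\dim\mathrm F_dW(\pi)$. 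Injectivity of $\psi_R$ needs a \emph{lower} bound on $\dim\psi_R(\mathrm F_dW(\pi))$.

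What is missing is the linear-independence half of the PBW theorem for the truncation, proved on the Yangian side. Let $\dot\Omega:\rY^{\ell-1}_{m|n}(\dot\sigma)\cong W(\dot\pi)$ come from induction on $\ell$. You need that $(\dot\Omega\otimes 1)\circ\Delta_R$ is injective on $\rY^\ell_{m|n}(\sigma)$. This holds because $\gr\Delta_R$ sends the PBW monomials of Proposition~\ref{prop:PBW-truncated} to linearly independent elements; that is \cite[Thm.~8.2, Coro.~8.2]{Pe21}.

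Granting this, the argument closes as in \cite{Pe21}:
\begin{enumerate}
\item Step~1 and the matching formulas $\psi_R(\sfT^{(r)}_{i,i;i-1})=(\dot\Omega\otimes 1)\Delta_R(D_i^{(r)})$, etc.\ (cf.\ \cite[Lem.~10.2]{Pe21}), give $(\dot\Omega\otimes1)\Delta_R(\mathrm F_d\rY^\ell_{m|n}(\sigma))\subseteq\psi_R(\mathrm F_dW(\pi))$.
\item The equality $\dim\mathrm F_d\rY^\ell_{m|n}(\sigma)=\dim\mathrm F_dS(\g^e)=\dim\mathrm F_dW(\pi)$ then forces $\psi_R|_{W(\pi)}$ to be injective with exactly that image.
\item Setting $\Omega:=\psi_R^{-1}\circ(\dot\Omega\otimes1)\circ\Delta_R$ yields the homomorphism, generation, and the kernel simultaneously.
\end{enumerate}
Alternatively, you could prove directly that $\gr\psi_R$ is injective on $\gr W(\pi)$, but some independence computation of this kind is unavoidable.

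Separately, Step~3 is inaccurate as written. By condition (1) of \eqref{mystery}, every summand has Kazhdan degree $\sum_t(\col(j_t)-\col(i_t)+1)=r$. So the degree-$r$ symbol of $\sfT^{(r)}_{i,j;x}$ contains all the products with $s\gge2$ and is not linear. What you actually need is that the restrictions of these symbols to the Slodowy slice are basis coordinates of $\g^e$ plus polynomials in coordinates of lower degree. Also, Lemma~\ref{lem:PBW-T} concerns $\gr\rY_{m|n}(\sigma)$ and says nothing about $\gr W(\pi)$.
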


In view of Theorem \ref{thm:pe21}, we define the \textit{truncated shifted super Yangian} $\rY_{m|n}^\ell(\sigma)$ to be the quotient of $\rY_{m|n}(\sigma)$ by the two-sided ideal generated by the elements $\{D_1^{(r)}\mid r>p_1\}$. As a consequence of Theorem \ref{thm:pe21}, we identify $\rY_{m|n}^\ell(\sigma)\cong W(\pi)$ from now on.
In particular, under this identification, the elements $D_i^{(r)}$, $E_i^{(r)}$ and $F_i^{(r)}$ in the quotient  $\rY_{m|n}^\ell(\sigma)$ are identified with corresponding elements in $W(\pi)$ by \eqref{omega2}.
Although omitted in the notation, the parity sequence $\fks$ in the definition of $\rY_{m|n}(\sigma)$ is obtained by reading the parity of each row of $\pi$, from top to bottom.

The following PBW theorem for truncated shifted super Yangian $\rY_{m|n}^\ell(\sigma)$ is known.
\begin{prop}[{\cite[Coro.~8.2]{Pe21}}]\label{prop:PBW-truncated}
The supermonomials in the elements
\begin{align*}
&\{D_i^{(r)}\mid 1\lle i\lle m+n,0<r\lle p_i\},\\
&\{E_{i,j}^{(r)}\mid1\lle i<j\lle m+n,s_{i,j}<r\lle s_{i,j}+p_i\},\\
&\{F_{j,i}^{(r)}\mid1\lle i<j\lle m+n,s_{j,i}<r\lle s_{j,i}+p_i\}
\end{align*}
taken in any fixed order form a basis for $\rY_{m|n}^\ell(\sigma)$.
\end{prop}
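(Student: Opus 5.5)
The plan is to deduce the statement from Theorem~\ref{thm:pe21} together with the structure of the Kazhdan-graded algebra $\gr\,W(\pi)$. By Theorem~\ref{thm:pe21}, $\Omega$ induces an isomorphism $\rY_{m|n}^\ell(\sigma)\cong W(\pi)$. It therefore suffices to show that the images under $\Omega$ of the listed elements, in a fixed order, give supermonomials forming a basis of $W(\pi)$. I will prove this by passing to the associated graded superalgebra for the Kazhdan filtration and verifying two facts: the leading terms of the images are algebraically free, and they generate.

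The first input is the super analogue of Premet's theorem for good gradings. For an even nilpotent $e$ with good $\bZ$-grading, $\gr\,W(\pi)$ is isomorphic to the supersymmetric algebra $S(\g^e)$ on the centralizer $\g^e$, graded by Kazhdan degree; this is available in the literature on finite $W$-superalgebras (e.g.\ the works of Wang--Zhao and Zhao cited in the introduction). For a pyramid, $\g^e$ has a standard homogeneous basis indexed by pairs of rows $(i,j)$ and a shift $r$. The basis vectors are
\[
\sum_{\substack{\row(h)=i,\ \row(k)=j\\ \col(k)-\col(h)=r-1}} \pm\, e_{h,k},
\]
with $r$ running over an interval of length $\min(p_i,p_j)$ starting just above $s_{i,j}$; this element has Kazhdan degree $r$. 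Since $p_1\lle\cdots\lle p_{m+n}$, we have $\min(p_i,p_j)=p_i$ for $i\lle j$. The resulting ranges are exactly $0<r\lle p_i$ for $D_i^{(r)}$, $s_{i,j}<r\lle s_{i,j}+p_i$ for $E_{i,j}^{(r)}$, and $s_{j,i}<r\lle s_{j,i}+p_i$ for $F_{j,i}^{(r)}$. So the proposed PBW generators are in degree-preserving bijection with a basis of $\g^e$.

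The key step is to compute $\gr_r\,\Omega(X)$ for each listed generator $X$ and show that it equals the corresponding basis vector of $\g^e$ above, up to a nonzero sign. For $D_i^{(r)}$, $E_i^{(r)}$ and $F_i^{(r)}$ this is read off from \eqref{mystery}. Terms with $s\gge 2$ factors have Kazhdan degree at most $r$, but those surviving in top degree contribute only products, which lie in the square of the augmentation ideal of $S(\g^e)$. The $s=1$ terms give the linear part $\sum \pm\tilde e_{h,k}$ over boxes in rows $i,j$ with column difference $r-1$. For general $E_{i,j}^{(r)}$ and $F_{j,i}^{(r)}$, defined by the iterated commutators \eqref{gedef} and \eqref{gfdef}, I would induct on $j-i$. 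The top-degree part of a commutator is computed by the Poisson (super)bracket on $\gr\,\rU(\fkp)$, which is the Lie bracket on linear elements. Bracketing the linear leading term of $E_{i,j-1}^{(r-s_{j-1,j})}$ with that of $E_{j-1}^{(s_{j-1,j}+1)}$ gives the expected sum connecting row $i$ to row $j$. I expect this to be the main obstacle, for two reasons. One must check that the top-degree component does not cancel, which reduces to a combinatorial matching of boxes using that the shift $s_{j-1,j}+1$ places the row-$(j-1)$ element in its lowest degree. One must also keep track of the super signs $(-1)^{\tp}$ and $\fks_{j-1}$.

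Given this, the leading terms of the listed elements form a basis of $\g^e$, and the ordered supermonomials in them form a basis of $S(\g^e)\cong\gr\,W(\pi)$. A standard filtration argument then lifts this: the ordered supermonomials in the elements themselves are linearly independent in $W(\pi)$ and span it. Transporting back through the isomorphism $\rY_{m|n}^\ell(\sigma)\cong W(\pi)$ gives the proposition.
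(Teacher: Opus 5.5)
Your argument is sound as a derivation, but it runs in the opposite direction from the source. The paper does not prove this proposition; it imports it from \cite[Coro.~8.2]{Pe21}. There it is obtained on the shifted super Yangian side, in the section on truncations, before $W(\pi)$ appears, together with the injectivity of $\gr\Delta_L$ and $\gr\Delta_R$ that is used in Theorem~\ref{muinj}. The description of $\ker\Omega$ in Theorem~\ref{thm:pe21} then relies on those truncation results: the identification $\rY^\ell_{m|n}(\sigma)\cong W(\pi)$ is built as $\psi_R^{-1}\circ\Delta_R$, cf.\ the proof of Lemma~\ref{nwbaby}.

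You instead take $\ker\Omega$ as known and get both spanning and freeness from the super Premet theorem $\gr W(\pi)\cong S(\g^e)$ plus a leading-term computation. Inside this paper, where Theorem~\ref{thm:pe21} is quoted first, that is a legitimate deduction. It is not an independent proof, however. The genuinely Yangian-side content is that the ideal generated by the $D_1^{(r)}$ with $r>p_1$ is large enough for the truncated monomials to span $\rY^\ell_{m|n}(\sigma)$, and your argument absorbs this into the citation of the kernel. If you supplied that spanning statement directly, your leading-term computation would prove both the proposition and the kernel description at once. Your route gives a conceptual reason for the ranges: the leading terms are exactly a basis of $\g^e$, and freeness comes from Premet's theorem. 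The Yangian-side route gives independence from $W(\pi)$, which is what makes the identification provable in the first place.

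Two technical remarks. First, you need the specific isomorphism $\gr W(\pi)\cong S(\g^e)$ given by the algebra map $S(\fkp)\to S(\g^e)$ induced by a graded projection $\fkp\twoheadrightarrow\g^e$, i.e.\ restriction of functions to the Slodowy slice. Only for this map do the $s\gge 2$ terms of \eqref{mystery}, which all have Kazhdan degree exactly $r$, land in decomposables while elements of $\g^e$ are fixed, so that ``linear part'' means what you want. Second, the step you flag as the main obstacle is routine. Write $c^{(r)}_{i,j}$ for the sum of the $e_{h,k}$ with $\row(h)=i$, $\row(k)=j$, $\col(k)-\col(h)=r-1$. Then $[c^{(r-s_{j-1,j})}_{i,j-1},c^{(s_{j-1,j}+1)}_{j-1,j}]=c^{(r)}_{i,j}$ on the nose, for three reasons:
\begin{itemize}
\item each such $e_{h,l}$ arises from exactly one box $g$ of row $j-1$, namely the one in column $\col(l)-s_{j-1,j}$, which exists because $r>s_{i,j}\gge s_{j-1,j}$;
\item $[e_{h,g},e_{g,l}]=e_{h,l}$ with no extra term, since $h\neq l$;
\item the remaining signs $(-1)^{r-1}$, $(-1)^{\tp(h)}$ and $\fks_{j-1}$ are uniform, so no cancellation can occur.
\end{itemize}
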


The following theorem is a super generalization of \cite[Thm.~3.5]{BK08}, which turns out to be crucial later. 
\begin{thm}\label{thm:vanish}
The generators $T_{i,j}^{(r)}$ of $\rY_{m|n}^\ell(\sigma)$ are zero for all $1\lle i,j\lle m+n$ and $r>s_{i,j}+p_{\min(i,j)}$.
\end{thm}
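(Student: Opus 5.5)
We will follow the strategy of \cite[Thm.~3.5]{BK08}, adapted to the super setting via the $\Omega$ of Theorem~\ref{thm:pe21}. The idea is to identify the images of the series $T_{i,j}(u)$ of \eqref{eq:GD-shift} in $W(\pi)\subset \rU(\fkp)$ with explicit series that are visibly polynomial of bounded degree. Concretely, we aim to show that for $i\lle j$ the image of $T_{i,j}(u)$ in $\rY^\ell_{m|n}(\sigma)$ equals, up to the sign conventions $(-1)^{\cdots}$ appearing in \eqref{mystery}, the series $\sfT_{i,j;i-1}(u)$. Symmetrically, for $i>j$ it should equal $\sfT_{i,j;j-1}(u)$. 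In other words, the index $x$ in $\sfT_{i,j;x}(u)$ should be $\min(i,j)-1$. Once this identification is available, the statement reduces to a degree count in \eqref{mystery}. By conditions (1), (2) and (5), a monomial $\tilde e_{i_1,j_1}\cdots\tilde e_{i_s,j_s}$ contributing to $\sfT^{(r)}_{i,j;x}$ has Kazhdan degree $r$. Conditions (3), (4) and (6) with $x=\min(i,j)-1$ force the chain of columns to be essentially increasing. The resulting path starts in row $i$, ends in row $j$, and passes only through rows $\gge \min(i,j)$; every such row is at least as long as row $\min(i,j)$ and aligned with it (this uses the pyramid shape). So the total column gain $\sum(\col(j_t)-\col(i_t))+s$ is bounded by the horizontal span available, which is $s_{i,j}+p_{\min(i,j)}$. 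Hence $\sfT^{(r)}_{i,j;x}=0$ for $r>s_{i,j}+p_{\min(i,j)}$.

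The key steps are as follows. First, we treat $i=j$: the claim for $T_{i,i}=D_i$ follows from \eqref{omega2} and the degree count above. Second, for $i<j$ we use the inductive definition \eqref{gedef}. This gives $E_{i,j}^{(r)}=\fks_{j-1}[E_{i,j-1}^{(r-s_{j-1,j})},E_{j-1}^{(s_{j-1,j}+1)}]$. We then prove that $\Omega(E_{i,j}(u))$ agrees with $D_i(u)^{-1}\sfT_{i,j;i-1}(u)$, computing the commutator inside $\rU(\fkp)$ via \eqref{etilrel} modulo $I_\chi$. This is the analogue of \cite[Lemma~3.6/Thm.~3.5]{BK08} and of the computations in \cite[\S\S 6--9]{Pe21}, where the identities $\sfT_{i,j;x}$ versus $\sfT_{i,j;x+1}$ relate via a quasideterminant-type recursion. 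Third, we assemble $T_{i,j}(u)=\sum_k F_{i,k}(u)D_k(u)E_{k,j}(u)$ from these identifications using the Gauss decomposition. This should match $\sfT_{i,j;\min(i,j)-1}(u)$ through the standard identity expressing a matrix entry via its quasideterminant factorization; the relevant fact is that the matrices $\big(\sfT_{a,b;x}(u)\big)$ for consecutive $x$ are related by the corresponding Gauss elimination step, as in \cite{BK06,Pe21}. Fourth, the case $i>j$ follows by applying the anti-isomorphism $\tau$ of \eqref{taudef}--\eqref{taudef2}. This requires checking that $\tau$ is compatible with $\Omega$ after transposing the pyramid, which swaps $\sigma$ and $\sigma^t$. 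Alternatively, one can repeat the argument with \eqref{gfdef}.

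We expect the main obstacle to be the second and third steps, namely establishing the identification of $\Omega(T_{i,j}(u))$ with $\sfT_{i,j;\min(i,j)-1}(u)$ in full generality rather than only for adjacent indices. The super signs $(-1)^{\tp(i_1)+\cdots}$ and the parity-dependent shifts $\rho$ in \eqref{etilde} make the commutator bookkeeping delicate. If a direct identification proves awkward, we will try a fallback. We would first show vanishing for the generators $D_i$, $E_i$, $F_i$ of \eqref{Wdi}--\eqref{Wfi} directly from \eqref{mystery}. We would then propagate it to $T_{i,j}^{(r)}$ by induction on $|i-j|$, using the relations \eqref{dr3}, \eqref{dr8} and the filtration degree. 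The ingredients here are that $T_{i,j}^{(r)}$ is obtained from $T_{i,j-1}$ by commutator with $E_{j-1}^{(s_{j-1,j}+1)}$, and that \eqref{dr3} lets one shift $r$ while preserving the truncation bound.
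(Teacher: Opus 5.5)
Your central identification is false, and the degree count built on it fails. By \eqref{omega2}, $\Omega(D_i(u))=\sfT_{i,i;i-1}(u)$. So identifying $\Omega(T_{i,i}(u))$ with $\sfT_{i,i;i-1}(u)$ would force $T_{i,i}(u)=D_i(u)$ in $W(\pi)$, whereas \eqref{eq:GD-shift} gives $T_{i,i}(u)=\sum_{k\lle i}F_{i,k}(u)D_k(u)E_{k,i}(u)$. Moreover, $D_i$ does not truncate. Take $\pi$ a single column of two even boxes, so $W(\pi)=\rU(\gl_2)$, $p_1=p_2=1$ and $\tilde e_{a,b}=e_{a,b}$. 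Then \eqref{mystery} gives $\sfT^{(r)}_{2,2;1}=(-1)^{r-1}e_{2,1}e_{1,1}^{r-2}e_{1,2}\neq 0$ for all $r\gge 2$. By contrast, $T_{2,2}(u)=1+e_{2,2}u^{-1}$, and this truncation comes only from cancellation against the term $F_{2,1}(u)D_1(u)E_{1,2}(u)$.

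The flaw in your count is that condition (4) lets the chain stay in, or move leftwards within, rows $\lle x$ without bound. So your claim that the path passes only through rows $\gge\min(i,j)$ is not what \eqref{mystery} says.

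Your count does work for $x=0$. There condition (3) forces $\col(i_1)\lle\col(j_1)<\col(i_2)\lle\cdots\lle\col(j_s)$, whence $r\lle \ell-s_{m+n,i}-s_{j,m+n}=s_{i,j}+p_{\min(i,j)}$. But $\Omega(T_{i,j}(u))=\sfT_{i,j;0}(u)$ fails for shifted pyramids. Take the left-justified pyramid with $p_1=1$, $p_2=2$. Then $s_{1,2}=1$, hence $T_{1,2}^{(1)}=0$. Yet $\sfT^{(1)}_{1,2;0}=\pm e_{a,b}\neq 0$, where $a,b$ are the two boxes of the first column, and this element is not even $\fkm$-invariant. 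The underlying reason is that the shifted $E_{i,j}^{(r)}$ of \eqref{gedef} differ from the Gauss-decomposition $\sfE_{i,j}^{(r)}$. So the truncation of $T_{i,j}$ cannot simply be read off from a series of the form \eqref{mystery}.

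Your fallback fails at its base. Apart from $D_1$, the generators do not truncate: in the $\rU(\gl_2)$ example, $E_1^{(r)}=(-1)^{r-1}e_{1,1}^{r-1}e_{1,2}$ and $D_2^{(r)}$ are nonzero for all $r$. So there is nothing to propagate.

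Your propagation step itself is sound and worth keeping. For $k<j-1$, $F_{i,k}(u)$ and $D_k(u)$ supercommute with $E_{j-1}^{(s_{j-1,j}+1)}$. Using \eqref{gedef}, together with \eqref{dr3} for the term $k=j-1$, one gets $[T_{i,j-1}(u),E_{j-1}^{(s_{j-1,j}+1)}]=\fks_{j-1}u^{s_{j-1,j}}T_{i,j}(u)$ for $i<j$. Since $s_{i,j-1}+s_{j-1,j}=s_{i,j}$, this transports the bound from $T_{i,j-1}$ to $T_{i,j}$, and similarly on the $F$ side. However, this only reduces the theorem to the diagonal statement $T_{i,i}^{(r)}=0$ for $r>p_i$ and $i\gge 2$. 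That is exactly the part your proposal does not handle.

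The missing idea is an induction on the level $\ell$. After using $\tau$ to arrange $q_1\gge q_\ell$, the paper expresses the parabolic generators of $W(\pi)$ as $\mathtt I_{\ell-1}$ of those of $W(\pi_{\ell-1})$ plus terms linear in the $e$'s of the last column. This is Lemma~\ref{lem:BK08-3.3}, which needs the modified elements \eqref{eq:barE-def} and Lemmas~\ref{lem:BK08-2.2} and~\ref{lem:BK08-2.3}. From this it derives the recursion of Lemma~\ref{lem:BK08-3.4} for all $T_{i,j}^{(r)}$. Every term on the right-hand side of that recursion is then killed by the induction hypothesis together with the identity $(s_{i,j}+p_i)-s_{h,j}=s_{i,h}+p_{\min(i,h)}$.
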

The proof of Theorem \ref{thm:vanish} is lengthy and technical, thus we postpone it to the Appendix \ref{sec:app}.

\begin{prop}\label{prop:PBW-T-truncated}
The supermonomials in the elements 
\be
\{T_{i,j}^{(r)}\mid 1\lle i,j\lle m+n,s_{i,j}<r\lle s_{i,j}+p_{\min(i,j)}\}
\ee
taken in some fixed order form a basis for $\rY_{m|n}^\ell(\sigma)$.
\end{prop}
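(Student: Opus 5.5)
We derive this from Proposition~\ref{prop:PBW-truncated} and Theorem~\ref{thm:vanish}, in the same way as Lemma~\ref{lem:PBW-T} was derived from Proposition~\ref{prop:PBW}. We prove spanning and then get linear independence by counting.

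\emph{Spanning.} Let $\mathcal F_d\rY_{m|n}^\ell(\sigma)$ be the image of the canonical filtration of $\Y$. By Lemma~\ref{lem:PBW-T}, $\Y$ is spanned by ordered supermonomials in all $T_{i,j}^{(r)}$ with $r>s_{i,j}$. Modulo lower filtration degree, these generators supercommute in $\gr\,\Y$, so reordering a monomial only produces terms of strictly lower total degree. Hence the images of the ordered monomials span the quotient. By Theorem~\ref{thm:vanish}, every $T_{i,j}^{(r)}$ with $r>s_{i,j}+p_{\min(i,j)}$ is zero in $\rY_{m|n}^\ell(\sigma)$. Any ordered monomial containing such a factor therefore vanishes. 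The remaining monomials, those in the elements listed in the statement, span $\rY_{m|n}^\ell(\sigma)$.

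\emph{Independence.} For $i<j$ the listed set contains exactly $p_i$ elements $T_{i,j}^{(r)}$, of degrees $s_{i,j}<r\lle s_{i,j}+p_i$. Likewise it contains exactly $p_i$ elements $T_{j,i}^{(r)}$, of degrees $s_{j,i}<r\lle s_{j,i}+p_i$. The diagonal contributes $T_{i,i}^{(r)}$ for $0<r\lle p_i$. These multisets of degrees and parities coincide with those of the generators $E_{i,j}^{(r)}$, $F_{j,i}^{(r)}$, $D_i^{(r)}$ in Proposition~\ref{prop:PBW-truncated}. So for each $d$, the number of ordered supermonomials of total degree $\lle d$ is the same for both generating sets.

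Next we use the filtration. By Proposition~\ref{prop:PBW-truncated}, the monomials in $D,E,F$ of total degree $\lle d$ form a basis of $\mathcal F_d\rY_{m|n}^\ell(\sigma)$. To see this, note that $\mathcal F_d$ is spanned by images of monomials in the $D,E,F$ of degree $\lle d$. These can be straightened within degree $\lle d$, because the truncation relations are compatible with the filtration: the Kazhdan filtration on $W(\pi)$ matches the canonical one under $\Omega$. This is the one point we expect to need care. We would justify it via \cite{Pe21}, where Proposition~\ref{prop:PBW-truncated} is proved through the associated graded with respect to exactly this filtration, so $\dim\mathcal F_d$ equals the count of PBW monomials of degree $\lle d$. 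The spanning argument above works degree-by-degree: it shows that the listed $T$-monomials of degree $\lle d$ span $\mathcal F_d$. Since $\mathcal F_d$ is finite dimensional and the two counts agree, these monomials form a basis of $\mathcal F_d$. Taking the union over $d$ gives the proposition.

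The main obstacle is thus the filtration compatibility. An alternative that avoids it is to work directly in $\gr$. Each $D_i^{(r)}, E_{i,j}^{(r)}, F_{j,i}^{(r)}$ equals the corresponding $T^{(r)}$ plus terms of lower degree, so the $T$'s of degree in range generate $\gr\,\rY_{m|n}^\ell(\sigma)$. The count of generators then matches the known Hilbert series of $\gr\,W(\pi)$, which is a free supercommutative algebra.
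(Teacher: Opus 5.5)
Your proposal is correct and is essentially the paper's argument. The paper simply reruns the proof of Lemma~\ref{lem:PBW-T} with Theorem~\ref{thm:vanish}: in $\gr\,\rY_{m|n}^\ell(\sigma)$, each in-range $\gr_r D_i^{(r)}$, $\gr_r E_{i,j}^{(r)}$, $\gr_r F_{j,i}^{(r)}$ equals $\gr_r T_{i,i}^{(r)}$, $\gr_r T_{i,j}^{(r)}$, $\gr_r T_{j,i}^{(r)}$ respectively, plus a polynomial in lower-degree $\gr\,T$'s (the out-of-range ones vanish by Theorem~\ref{thm:vanish}), so free generation passes to the in-range $T$'s. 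That is your ``alternative'', and your span-plus-count on each $\mathcal F_d$ is the same triangularity in filtered form; both rest on the filtered form of Proposition~\ref{prop:PBW-truncated} from \cite{Pe21}, which the paper also uses without comment, so you were right to flag it as the one point needing a citation.
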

\begin{proof}
Follows from the same argument as in the proof of Lemma \ref{lem:PBW-T} using Theorem \ref{thm:vanish}.
\end{proof}

\subsection{Parabolic induction}
The goal of this subsection is to construct certain homomorphisms, generalizing the notion of baby comultiplications defined in \cite[Thm 6.1]{Pe21}. These homomorphisms serve as comultiplications for $W$-superalgebras and shifted super Yangians, allowing one to construct the tensor product of modules under mild assumptions on parity sequence. In particular, most results in this subsection are generalizations of corresponding results of \cite[\textsection 11]{BK06} to general linear Lie superalgebras.

Choose non-negative integers $\ell^\prime$ and $\ell^{\dprime}$ such that $\ell=\ell^\prime + \ell^{\dprime}$. 
The leftmost $\ell^\prime$ columns of $\pi$ form a pyramid, which we denote by $\pi^\prime$.
Similarly, let $\pi^\dprime$ be the pyramid consisting of the rightmost $\ell^\dprime$ columns of $\pi$.
In this situation, we write the decomposition as  $\pi=\pi^\prime\otimes \pi^\dprime$. For example, 
\beq\label{ex:pi1}
\ytableausetup{centerboxes,boxsize=1.1em}
\begin{ytableau}
\none & \ovl{2} & \ovl{4} \\
\none & 1 & 3 & 5 \\
\none & 2 & 4 & 6 & 7 \\
\ovl{1} & \ovl{3} & \ovl{5} & \ovl{6} & \ovl{7} & \ovl{8}
\end{ytableau}
\quad = \quad
\begin{ytableau}
\none & \ovl{2} & \ovl{4} \\
\none & 1 & 3 \\
\none & 2 & 4 \\
\ovl{1} & \ovl{3} & \ovl{5}
\end{ytableau}
\quad \otimes \quad
\begin{ytableau}
\none \\
1 \\
2 & 3 \\
\ovl{1} & \ovl{2} & \ovl{3}
\end{ytableau}
\eeq

Suppose that there are $M^\prime$ even and $N^\prime$ odd boxes in $\pi^\prime$, respectively.
Let $\fkp^\prime$, $\fkh^\prime$, $\fkm^\prime$ be the subalgebras of $\g^\prime=\gl_{M^\prime | N^\prime}$ 
defined with respect to $\pi^\prime$, and let $W(\pi^\prime)=\rU(\fkp^\prime)^{{\fkm}^\prime}$ be the $W$-superalgebra associated with $\pi^\prime$.
Similarly, suppose that there are $M^\dprime$ even and $N^\dprime$ odd boxes in $\pi^\dprime$, respectively.
One defines the subalgebras $\fkp^\dprime$, $\fkh^\dprime$, $\fkm^\dprime$ of $\g^\dprime=\gl_{M^\dprime | N^\dprime}$ 
and $W(\pi^\dprime)=\rU(\fkp^\dprime)^{\fkm^\dprime}$, where $M=M^\prime+M^\dprime$ and $N=N^\prime+N^\dprime$.

For all $1\lle j\lle N'$, let $\ovl{j}=M'+j$ be a shorthand notation.
Following exactly the same rule (\ref{etilde}), one defines elements $\tilde{e}_{ij}$ in $\rU(\fkp^\prime)$ for all $i,j\in \I^\prime=\{1,\ldots, M^\prime,\ovl{1}, \ldots, \ovl{N^\prime}\}$ according to $\pi^\prime$.
Similarly, elements $\tilde{e}_{ij}$ in $\rU(\fkp^\dprime)$ for all $i,j\in \I^\dprime=\{1,\ldots, M^\dprime,\ovl{1}, \ldots, \ovl{N^\dprime}\}$ are defined according to $\pi^\dprime$ with similar shorthand notation.

Define a homomorphism $\Delta_{\ell^\prime,\ell^\dprime}:\rU(\fkp)\rightarrow \rU(\fkp^\prime)\otimes \rU(\fkp^\dprime)$ by 
\beq\label{wgcoprod}
\Delta_{\ell^\prime,\ell^\dprime}(\tilde e_{i,j}):= \left\{
\begin{array}{ll}
\tilde e_{i,j}\otimes 1 &\hbox{if $ \col (j)\lle \ell^\prime$,}\\
0 &\hbox{if $ \col (i)\lle \ell^\prime$ and $ \ell^\prime+1\lle \col (j)$,}\\
1\otimes \tilde e_{\eta(i),\eta(j)} &\hbox{if $\ell^\prime+1 \lle \col(i)$,}
\end{array}
\right.
\eeq
where 
\be
\eta(i):= \left\{
\begin{array}{ll}
i-M^\prime &\hbox{if $ \tp(i)=0$,}\\[2mm]
\overline{i-N^\prime} &\hbox{if $ \tp(i)=1$,}
\end{array}
\right.
\ee
for all $i,j\in \I$ satisfying $\col(i)\lle \col(j)$.

By definition, $\Delta_{\ell^\prime,\ell^\dprime}$ is a filtered map with respect to Kazhdan filtrations.
We spell the result of $\Delta_{\ell^\prime,\ell^\dprime}$ when applied to the distinguished elements $\sfT_{i,j;0}^{(r)}$ in the following lemma.
\begin{lem}
For $1\lle i,j \lle m+n$, $r>0$,
\beq\label{coproductW}
\Delta_{\ell^\prime,\ell^\dprime}(\sfT_{i,j;0}^{(r)})=\sum_{s=0}^{r} \sum_{k=1}^{m+n} \sfT_{i,k;0}^{(s)}\otimes \sfT_{k,j;0}^{(r-s)}.
\eeq
\end{lem}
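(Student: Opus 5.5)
We will prove \eqref{coproductW} by a direct combinatorial comparison, using the explicit formula \eqref{mystery} with $x=0$. When $x=0$ the sign $(-1)^{\#\{t\mid \row(j_t)\lle 0\}}$ is trivial and condition (4) is vacuous. Condition (3) then says that every term of $\sfT_{i,j;0}^{(r)}$ is a monomial $\tilde e_{i_1,j_1}\cdots\tilde e_{i_s,j_s}$ with
\[
\col(i_1)\lle\col(j_1)<\col(i_2)\lle\col(j_2)<\cdots<\col(i_s)\lle\col(j_s).
\]
The monomial carries the coefficient $(-1)^{r-s}(-1)^{\tp(i_1)+\cdots+\tp(i_s)}$. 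In addition, the row of $j_t$ equals the row of $i_{t+1}$, $\row(i_1)=i$, and $\row(j_s)=j$. So the indices form a strictly left-to-right chain of column intervals through the pyramid.

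Since $\Delta_{\ell',\ell''}$ is an algebra homomorphism, it suffices to compute its value on each such monomial via \eqref{wgcoprod}. Because the columns increase strictly along the chain, there is a unique $0\lle t_0\lle s$ with $\col(j_{t})\lle\ell'$ for $t\lle t_0$ and $\col(i_t)\gge\ell'+1$ for $t>t_0+1$. The factor $\tilde e_{i_{t_0+1},j_{t_0+1}}$ is the only one that can straddle the cut.
\begin{itemize}
\item If it straddles (its $i$ is left of the cut and its $j$ is right of it), the monomial is killed.
\item Otherwise the monomial maps to $(\tilde e_{i_1,j_1}\cdots\tilde e_{i_{t_0},j_{t_0}})\otimes(\tilde e_{\eta(i_{t_0+1}),\eta(j_{t_0+1})}\cdots)$.
\end{itemize}
No sign arises in the second case, because all first-tensor factors precede all second-tensor factors. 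The left factor is a valid chain for $\pi'$ running from row $i$ to row $k:=\row(j_{t_0})$. The right factor, after relabelling by $\eta$, is a valid chain for $\pi''$ from row $k$ to row $j$. Here we use that $\eta$ preserves rows, parities and columns up to the shift by $\ell'$, and that the $\rho$-shifts in \eqref{etilde} match up. Let $r',s'$ and $r'',s''$ denote the degree and length of the two pieces, so $s'+s''=s$. Then $r'+r''=r$, since the total column differences plus lengths add. The signs also factor: $(-1)^{r-s}=(-1)^{r'-s'}(-1)^{r''-s''}$, and the parity sums split.

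Conversely, take any pair of chains, one for $\pi'$ from row $i$ to row $k$ and one for $\pi''$ from row $k$ to row $j$. Their concatenation, with the $\pi''$ indices shifted back via $\eta^{-1}$, is a non-straddling chain for $\pi$. This uses the strict inequality $\col(j_{t_0})\lle\ell'<\col(i_{t_0+1})$ required by condition (3). The degenerate pieces $s'=0$ or $s''=0$ account for the terms $\sfT^{(0)}_{i,k;0}=\delta_{ik}$ and $\sfT^{(0)}_{k,j;0}=\delta_{kj}$. The resulting bijection between non-vanishing terms on the two sides gives \eqref{coproductW}.

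The main obstacle is checking the $\rho$-shift compatibility. We need $\Delta_{\ell',\ell''}$ to be well defined on the diagonal elements: $\tilde e_{i,i}$ computed with $\rho$ for $\pi$ must correspond to $\tilde e_{i,i}$ computed with $\rho'$ or $\rho''$. Equivalently, we must confirm that $\rho_c$ for $\pi$ agrees with $\rho'_c$ when $c\lle\ell'$ and with $\rho''_{c-\ell'}$ when $c>\ell'$. This follows from \eqref{rhodef} once the choice of $k$, and hence of $\hbar$, for $\pi'$ and $\pi''$ is made compatibly. It is where I would spend care, possibly invoking the fact that \eqref{wgcoprod} is already asserted to define a homomorphism. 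I would also check carefully that the sign conventions for odd indices introduce no extra sign when separating factors into the two tensor slots.
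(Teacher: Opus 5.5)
Your approach is the same direct verification the paper has in mind; its proof is the single line ``it follows from the explicit formula \eqref{mystery} and \eqref{wgcoprod}''. The substance of your argument is correct. This covers the reduction of \eqref{mystery} at $x=0$ to strictly increasing column chains, the unique cut $t_0$, the killing of the straddling factor, and the splitting of $r-s$ and of the parity signs. It also covers the absence of Koszul signs (all first-slot factors precede all second-slot factors) and the bijection with concatenated pairs of chains, including the $s'=0$ and $s''=0$ cases.

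The one thing to correct is your ``main obstacle''. The claim that $\rho_c$ for $\pi$ agrees with $\rho'_c$ for $c\lle\ell'$ is false in general, and you do not need it.

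\begin{itemize}
\item \textbf{Why it is false.} Take a two-column rectangle of height $m+n$ with $m\neq n$, cut at $\ell'=\ell''=1$. Then \eqref{rhodef} gives $\rho_1=(m-n)-2(m-n)=-(m-n)$, while $\rho'_1=0$.
\item \textbf{Why choosing $k$ does not help.} Columns of equal height occupy the same rows, so $\hbar$ is determined by the pyramid and there is no freedom to make the choices compatible.
\item \textbf{Why it is not needed.} \eqref{wgcoprod} prescribes the images of the shifted generators $\tilde e_{i,j}$ of $\pi$ directly as $\tilde e_{i,j}\otimes 1$ or $1\otimes\tilde e_{\eta(i),\eta(j)}$. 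The element $\sfT^{(r)}_{i,j;0}$ is a polynomial in these $\tilde e$'s, so computing its image is a formal substitution in which $\rho$, $\rho'$, $\rho''$ are never compared.
\item \textbf{Why \eqref{wgcoprod} is a homomorphism.} The prescription differs from the projection $\fkp\twoheadrightarrow\fkp'\oplus\fkp''$ only by a constant shift. (That projection is a homomorphism because its kernel, spanned by the straddling $e_{i,j}$, is an ideal.) On column $c$ the shift is a constant multiple of the supertrace of $\gl_{m_c|n_c}$, so it is a character of $\fkp$: it vanishes on odd elements and on $[\fkp,\fkp]$, whose degree-zero part is $[\fkh,\fkh]$.
\end{itemize}

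So delete the clause ``and that the $\rho$-shifts in \eqref{etilde} match up'' and the closing discussion of $\rho$-compatibility. With that change your argument is complete.
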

\begin{proof}
It follows from the explicit formula of $\sfT_{i,j;0}^{(r)}$ and (\ref{wgcoprod}).
\end{proof}

We now provide another point of view about how $\Delta_{\ell^\prime,\ell^\dprime}$ is defined.
Consider the standard embedding of $\g^\prime\oplus\g^\dprime$ into $\g$. It embeds $\fkp^\prime\oplus\fkp^\dprime$ into $\fkp$ and $\fkm^\prime\oplus\fkm^\dprime$ into $\fkm$. We may think the map $\Delta_{\ell^\prime,\ell^\dprime}$ is induced by the natural projection $\fkp\twoheadrightarrow \fkp^\prime\oplus\fkp^\dprime$ composed by a constant shift.
Define $\chi^\prime\oplus\chi^\dprime:\fkm^\prime\oplus\fkm^\dprime\rightarrow \mathbb{C}$ by taking the supertrace form with $e^\prime+e^\dprime$, where $e^\prime\in\g^\prime$ and $e^\dprime\in\g^\dprime$ are nilpotent elements associated to $\pi^\prime$ and $\pi^\dprime$, respectively. 
Then $\chi^\prime\oplus\chi^\dprime$ coincides with the restriction of the character $\chi:\fkm\rightarrow \mathbb{C}$ defined by taking the supertrace form with the nilpotent element $e\in\g$ associated to $\pi$. As a result, $\Delta_{\ell^\prime,\ell^\dprime}$ sends $\chi$-twisted $\fkm$-invariants in $\rU(\fkp)$ to $(\chi^\prime\oplus\chi^\dprime)$-twisted $(\fkm^\prime\oplus\fkm^\dprime)$-invariants in $\rU(\fkp^\prime\oplus\fkp^\dprime)\cong \rU(\fkp^\prime)\otimes \rU(\fkp^\dprime)$. Therefore, the restriction of $\Delta_{\ell^\prime,\ell^\dprime}$ to $W(\pi)$ defines a homomorphism
\beq\label{Wdel}
\Delta_{\ell^\prime,\ell^\dprime} : W(\pi) \rightarrow W(\pi^\prime)\otimes W(\pi^\dprime),
\eeq
which is again a filtered map with respect to Kazhdan filtrations.

We call the map \eqref{Wdel} {\em parabolic induction}, naming after similar homomorphisms defined for affine $W$-algebras \cite{Ge20, KU22}. In fact, it can be traced back to a functor defined by Losev \cite{Lo11}, relating the module categories for different finite $W$-algebras.
As we shall see in Lemma \ref{nwbaby}, it is also a generalization of the baby comultiplications in \cite[(8.5), (8.6)]{Pe21}.
The parabolic induction is coassociative, as indicated in the following lemma.
\begin{lem}\label{wasso}
Suppose $\ell=\ell^\prime+\ell^\dprime+\ell^\trprime$ for some non-negative integers $\ell^\prime,\ell^\dprime,\ell^\trprime$ and let $\pi=\pi^\prime\otimes\pi^\dprime\otimes\pi^\trprime$, where $\pi^\prime$ means the pyramid consisting of the leftmost $\ell^\prime$ columns of $\pi$,
$\pi^\dprime$ means the pyramid consisting of the middle $\ell^\dprime$ columns of $\pi$,
and $\pi^\trprime$ means the pyramid consisting of the rightmost $\ell^\trprime$ columns of $\pi$.
Then the following diagram commutes.
\be
\begin{CD}
W(\pi^\prime \otimes \pi^\dprime \otimes \pi^\trprime) & @>\Delta_{\ell^\prime+\ell'',\ell'''}>> & W(\pi^\prime \otimes \pi'') \otimes W(\pi''')\\
@V\Delta_{\ell^\prime,\ell^\dprime+\ell^\trprime} VV&&@VV\Delta_{\ell^\prime,\ell^\dprime} \otimes 1V\\
W(\pi^\prime) \otimes W(\pi'' \otimes \pi''') &@>1 \otimes \Delta_{\ell^\dprime,\ell^\trprime}>> &W(\pi^\prime) \otimes W(\pi'')
\otimes W(\pi''')
\end{CD}
\ee
\end{lem}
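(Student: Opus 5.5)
The plan is to establish the commutativity one level up, for the maps on the ambient enveloping superalgebras $\rU(\fkp)\to\rU(\fkp^\prime)\otimes\rU(\fkp^\dprime)\otimes\rU(\fkp^\trprime)$ defined by \eqref{wgcoprod}. The diagram for $W$-superalgebras is then obtained by restriction, since each parabolic induction in the diagram is the restriction of such a map to the $W$-superalgebra (see \eqref{Wdel}). Both composites are superalgebra homomorphisms, and $\rU(\fkp)$ is generated by the elements $\tilde e_{i,j}$ with $\col(i)\lle\col(j)$. It therefore suffices to check that the two composites agree on each $\tilde e_{i,j}$.

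The check is a case analysis according to which of the three blocks of columns contain $i$ and $j$. The blocks are $[1,\ell^\prime]$, $[\ell^\prime+1,\ell^\prime+\ell^\dprime]$ and $[\ell^\prime+\ell^\dprime+1,\ell]$. Since $\col(i)\lle\col(j)$, there are six cases.

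\begin{enumerate}
\item If $\col(j)\lle\ell^\prime$, both composites send $\tilde e_{i,j}$ to $\tilde e_{i,j}\otimes1\otimes1$.
\item If both $i$ and $j$ lie in the middle block, the top-right composite gives $(\tilde e_{\eta(i),\eta(j)}\otimes 1)\mapsto 1\otimes\tilde e_{\eta'\eta(i),\eta'\eta(j)}\otimes 1$. The bottom-left composite gives $1\otimes\tilde e_{\eta(i),\eta(j)}\otimes 1$, with $\eta$ taken relative to $\pi^\prime$.
\item If both lie in the last block, the two composites again give the corresponding element in the third tensor factor.
\item In each of the three "straddling" cases (first block to middle, first to last, middle to last), at least one of the maps gives $0$, so both composites are $0$.
\end{enumerate}

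The real content is the bookkeeping of the relabelings $\eta$. One must check that, for boxes lying in a given block, relabelling in two steps agrees with relabelling directly. Concretely, subtracting $M^\prime$ (resp. $N^\prime$) from even (resp. odd) labels and then the analogous shift for the intermediate pyramid should agree with the direct shift. This holds because labels are assigned down columns from left to right, separately for even and odd boxes. Hence the boxes of a block of consecutive columns carry consecutive even labels and consecutive odd labels, and their relative order is preserved.

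One subtle point, which I expect to be the main obstacle, is that $\tilde e_{i,j}$ involves the shift $\rho_{\col(i)}$ of \eqref{rhodef}, which depends on the pyramid. The definition \eqref{wgcoprod} sends $\tilde e_{i,j}$ to $\tilde e$'s defined with the $\rho$ of $\pi^\prime$ or $\pi^\dprime$. So there is nothing to check beyond matching the $\tilde e$'s themselves, provided both composites are expressed consistently in terms of the $\tilde e$'s of $\pi^\prime,\pi^\dprime,\pi^\trprime$. Since each map is defined directly on $\tilde e$-generators, the composites are compatible and the diagonal case reduces to the relabeling identity above. Alternatively, one could verify the identity on the generating series using \eqref{coproductW}, where coassociativity of the matrix coproduct formula is immediate. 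However, the $\sfT_{i,j;0}^{(r)}$ need not generate $W(\pi)$ directly, so I would rely on the generator-level check instead.
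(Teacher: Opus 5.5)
Your proposal is correct and is essentially the paper's argument: the paper's proof just says the lemma follows from \eqref{wgcoprod}, i.e.\ a generator-by-generator check on the $\tilde e_{i,j}$ at the level of $\rU(\fkp)$ followed by restriction to $W(\pi)$, which is exactly what you carry out. One small bookkeeping slip: in your case (2), the map $\Delta_{\ell^\prime+\ell^\dprime,\ell^\trprime}$ does not relabel boxes in its left factor, so the top-right composite is simply $1\otimes\tilde e_{\eta(i),\eta(j)}\otimes 1$ with a single $\eta$ relative to $\pi^\prime$; the identity ``two-step relabelling equals direct relabelling'' is actually needed in case (3), on the bottom-left path.
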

\begin{proof}
It follows from \eqref{wgcoprod}.        
\end{proof}

Let $(\sigma, \ell, \fks)$ denote the unique triple corresponding to $\pi$ and keep in mind the isomorphism $\rY_{m|n}^\ell(\sigma)\cong W(\pi)$ in Theorem \ref{thm:pe21}. Let $t={\rm min} \{q_1,q_\ell\}$ denote the height of the shortest column of $\pi$.
Recall the baby comultiplications $\Delta_L$ and $\Delta_R$ defined in \cite[(8.5), (8.6)]{Pe21}. 

Suppose first that $\Delta_R:\rY_{m|n}^\ell(\sigma)\rightarrow \rY_{m|n}^{\ell-1}(\dot\sigma)\otimes \rU(\gl_{m_\ell|n_\ell})$ is defined, which means that $q_\ell\lle q_1$ and there are $m_\ell$ even and $n_\ell$ odd boxes in the rightmost column of $\pi$. In this case $\dot\sigma$ is given by \cite[(6.1)]{Pe21}. Consider the triple $(\dot\sigma,\ell-1,\fks)$. This triple corresponds to the pyramid $\pi^\prime$ obtained by removing the rightmost column of $\pi$. After identifying $W(\pi^\prime)$ with $\rY_{m|n}^{\ell-1}(\dot\sigma)$, we may describe the map as $\Delta_R:W(\pi)\rightarrow W(\pi^\prime)\otimes \rU(\gl_{m_\ell|n_\ell})$, just like $\Delta_{\ell-1,1}$ in \eqref{Wdel}.

Suppose instead that $\Delta_L:\rY_{m|n}^{\ell}(\sigma)\rightarrow \rU(\gl_{m_1|n_1})\otimes\rY_{m|n}^{\ell-1}(\dot\sigma)$ is defined, which means that $q_1\lle q_\ell$ and there are $m_1$ even and $n_1$ odd boxes in the leftmost column of $\pi$. In this case $\dot\sigma$ is given by \cite[(6.2)]{Pe21}. Then the triple $(\dot\sigma,\ell-1,\fks)$ corresponds to the pyramid $\pi^\dprime$ obtained by removing the leftmost column of $\pi$. By similar identifications, we have $\Delta_L:W(\pi)\rightarrow \rU(\gl_{m_1|n_1})\otimes W(\pi^\dprime)$, just like $\Delta_{1,\ell-1}$ in \eqref{Wdel}.
The next result shows that the baby comultiplications \cite[(8.5), (8.6)]{Pe21} are special cases of the parabolic induction \eqref{Wdel}.

\begin{lem}\label{nwbaby}
Whenever the baby comultiplications are defined, we have $\Delta_R=\Delta_{\ell-1,1}$ and $\Delta_L=\Delta_{1,\ell-1}$, respectively.
\end{lem}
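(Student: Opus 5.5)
Both $\Delta_R$ (resp. $\Delta_L$) and $\Delta_{\ell-1,1}$ (resp. $\Delta_{1,\ell-1}$) are algebra homomorphisms out of $W(\pi)$, and by Theorem~\ref{thm:pe21} the algebra $W(\pi)$ is generated by the elements \eqref{Wdi}--\eqref{Wfi}, i.e.\ by $\Omega(D_i^{(r)})$, $\Omega(E_i^{(r)})$, $\Omega(F_i^{(r)})$. So I will only check that the two maps agree on these generators. First I rewrite $\Delta_R$ and $\Delta_L$ at the level of $\rU(\fkp)$. As recalled from \cite[\S8]{Pe21}, the baby comultiplications are restrictions of explicit maps $\rU(\fkp)\to\rU(\fkp^\prime)\otimes\rU(\gl_{m_\ell|n_\ell})$ (resp.\ $\rU(\gl_{m_1|n_1})\otimes\rU(\fkp^\dprime)$). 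These maps send $\tilde e_{i,j}$ to $\tilde e_{i,j}\otimes1$, to $1\otimes\tilde e_{\eta(i),\eta(j)}$, or to $0$, according to the columns of $i$ and $j$, with the $\rho$-shifts in \eqref{etilde} built in. I compare this case by case with \eqref{wgcoprod} for $\ell^\prime=\ell-1$ (resp.\ $\ell^\prime=1$). If the two lifts agree on every $\tilde e_{i,j}$ with $\col(i)\lle\col(j)$, then they are the same homomorphism on $\rU(\fkp)$, and therefore on $W(\pi)$.

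The one delicate point in this comparison is the constants. The elements $\tilde e_{i,j}$ of $\rU(\fkp^\prime)$ use $\rho^\prime$, computed from $\pi^\prime$ via \eqref{rhodef} with its own $\hbar^\prime$. Those of $\rU(\gl_{m_\ell|n_\ell})$ use the single-column $\rho$. I have to check that $\rho_r-\rho^\prime_r$ and the corresponding difference for the last column are exactly the constant shifts that appear in \cite[(8.5),(8.6)]{Pe21}, taking into account that the grading in \cite{Pe21} is doubled. This is a direct computation with super column heights: removing the last column changes $\check q_\ell$, and it changes $\hbar$ only if the tallest column is removed. I expect this bookkeeping to be the main obstacle; the parity signs $(-1)^{\tp(i)}$ need particular care.

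If the lift-level comparison turns out to be awkward, I will fall back on generators. Both $\Delta_R$ and $\Delta_{\ell-1,1}$ preserve the Kazhdan filtration. On $\sfT^{(r)}_{i,j;0}$, formula \eqref{coproductW} gives $\Delta_{\ell-1,1}$ explicitly, and the defining formulas in \cite[\S8]{Pe21} give $\Delta_R$ on the images of $D_i^{(r)}$, $E_i^{(r)}$, $F_i^{(r)}$. I then express $\sfT^{(r)}_{i,i;i-1}$ and the others through the Gauss decomposition of the matrix $(\sfT_{i,j;0}(u))$, i.e.\ quasideterminants, which is the content of \cite[\S9]{Pe21}. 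With this, \eqref{coproductW} yields the $\Delta_{\ell-1,1}$-image of the generators as the truncated-coproduct formulas. In the second tensor factor the algebra is $\rU(\gl_{m_\ell|n_\ell})$, and there $\sfT_{i,j;0}(u)$ is the evaluation image \eqref{evahom}. So the formulas reduce to those of Proposition~\ref{prop:copro} composed with evaluation, and these are the defining formulas of $\Delta_R$.

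The case of $\Delta_L$ is symmetric. Alternatively, I can transport the $\Delta_R$ case through the anti-isomorphism $\tau$ of \eqref{taudef} combined with the left--right flip of pyramids, checking that \eqref{wgcoprod} is compatible with this flip.
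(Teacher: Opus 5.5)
Your main plan is essentially the paper's proof. The paper notes that, by inspecting \eqref{wgcoprod}, $\Delta_{\ell-1,1}$ (resp.\ $\Delta_{1,\ell-1}$) is literally the injective map $\psi_R$ (resp.\ $\psi_L$) on $\rU(\fkp)$ from the proof of \cite[Thm.~10.1]{Pe21}; since the identification $\rY_{m|n}^\ell(\sigma)\cong W(\pi)$ is by construction $\psi_R^{-1}\circ\Delta_R$, the equality follows at once. One small correction: the $\rU(\fkp)$-level maps you invoke come from that proof, not from \cite[\S8]{Pe21}, and it is precisely this construction of the identification that reduces the whole argument to your case-by-case comparison on the $\tilde e_{i,j}$, constant shifts included.
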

\begin{proof}
The statement is trivial when $\ell=1$ so we assume $\ell\gge 2$. Suppose first that $\Delta_R$ is defined.   Examining \eqref{wgcoprod}, one finds that the map $\Delta_{\ell-1,1}$ coincides with the injective homomorphism $\psi_R:W(\pi)\rightarrow \rU(\dot{\fkp})\otimes \rU(\gl_{m_\ell|n_\ell})$ defined in the proof of \cite[Thm. 10.1]{Pe21}. Since the composition $\psi_R^{-1}\circ \Delta_R:\rY_{m|n}^{\ell}(\sigma)\rightarrow W(\pi)$ is exactly the identification $\rY_{m|n}^\ell(\sigma)\cong W(\pi)$, we deduce that $\Delta_{\ell-1,1}=\Delta_R$. The other case where $\Delta_L$ is defined is similar, using instead the injective homomorphism $\psi_L:W(\pi)\rightarrow \rU(\gl_{m_1|n_1})\otimes \rU(\dot{\fkp})$ defined in the proof of \cite[Thm. 10.1]{Pe21}, where the composition $\psi_L^{-1}\circ \Delta_L:\rY_{m|n}^{\ell}(\sigma)\rightarrow W(\pi)$ is our identification.
\end{proof}

Iterating the parabolic induction \eqref{Wdel} $(\ell-1)$ times, which can be done in any order with the help of Lemma \ref{wasso}, we split the pyramid $\pi$ into its individual columns and obtain the following homomorphism
\beq\label{miura}
\mu:W(\pi)\rightarrow \rU(\gl_{m_1|n_1})\otimes \cdots \otimes \rU(\gl_{m_{\ell}|n_\ell}).
\eeq
Recall that $\h\cong\gl_{m_1|n_1}\oplus \cdots\oplus \gl_{m_\ell|n_\ell}$.
For each $1\lle r\lle \ell$ and $1\lle i\lle q_r$, let $\pi(r,i)\in \I$ denote the entry of the $i^{th}$ box, counted from top to bottom, in the $r^{th}$ column of $\pi$. Define $e_{i,j}^{[r]}:=e_{\pi(r,i), \pi(r,j)}$, for all $1\lle r\lle \ell$, $1\lle i,j\lle q_r$. Then $\{e_{i,j}^{[r]} \, | \, 1\lle i,j\lle q_r, 1\lle r\lle \ell\}$ forms a basis for $\h$.
Under the identification $\rU(\h)\cong \rU(\gl_{m_1|n_1})\otimes \cdots \otimes \rU(\gl_{m_\ell|n_\ell})$, $e_{i,j}^{[r]}$ is identified with 
\be
1^{\otimes(r-1)}\otimes e_{\eta(\pi(r,i)),\eta(\pi(r,j))} \otimes 1^{(\ell-r)},
\ee
where 
\[
\eta(\pi(r,i))=\begin{cases}
    \pi(r,i)-(m_1+\cdots+m_{r-1}), & \text{ if }\fks_i=1,\\[2mm]
    \overline{\pi(r,i)-(n_1+\cdots+n_{r-1})}, &\text{ if }\fks_i=-1.
\end{cases}
\]
For example, take $\pi$ as the one in the left of \eqref{ex:pi1}. Then $e_{2,3}^{[4]}=e_{\pi(4,2),\pi(4,3)}=e_{6,\ovl{6}}\in \rU(\h)$, which is identified with 
\be
1^{\otimes 3}\otimes e_{2,\ovl{1}} \otimes 1^{\otimes 2}\in 
\rU(\gl_{0|1}) \otimes \rU(\gl_{2|2}) \otimes \rU(\gl_{2|2}) \otimes \rU(\gl_{2|1})\otimes \rU(\gl_{1|1}) \otimes \rU(\gl_{0|1}).
\ee
The map \eqref{miura} is then identified with a homomorphism
\beq\label{miura2}
\mu:W(\pi)\rightarrow \rU(\h),
\eeq
which is a filtered map where $W(\pi)$ is equipped with the Kazhdan filtration \eqref{kzdegdef} and $\rU(\h)$ is equipped with the standard filtration. 
Let $\gr\, \rU(\h)$ denote the corresponding associated graded superalgebra. 
Define following constant translation
\beq\label{zetatrans}
\zeta:\rU(\h)\rightarrow \rU(\h), \qquad e_{i,j}^{[r]}\mapsto e_{i,j}^{[r]}+\delta_{i,j}(-1)^{\tp{(i)}}(\check{q}_{r+1} +\cdots+\check{q}_\ell).
\eeq
Let $\xi:\rU(\fkp)\rightarrow \rU(\h)$ be the homomorphism induced by the natural projection $\fkp\twoheadrightarrow\h$. Comparing \eqref{rhodef}, \eqref{etilde}, and \eqref{miura2} as iterations of \eqref{wgcoprod}, one observes that the restriction of the composition $\zeta\circ\xi$ to $W(\pi)$ is exactly $\mu$. This homomorphism \eqref{miura2} is called the {\em Miura transform}. In fact, not only $\mu$ but also the parabolic induction $\Delta_{\ell^\prime,\ell^\dprime}$ is injective, as stated in the following results, generalizing \cite[Thm. 11.4, Coro. 11.5]{BK06}.
\begin{thm}\label{muinj}
The map $\gr\, \mu: \gr\,W(\pi)\rightarrow \gr\,\rU(\h)$ is injective and hence so is the Miura transform $\mu:W(\pi)\rightarrow \rU(\h)$. 
\end{thm}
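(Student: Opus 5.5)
Following \cite[Thm.~11.4]{BK06}, we reduce the statement to a computation at the level of associated graded superalgebras. Because $\mu$ is filtered (Kazhdan filtration on $W(\pi)$, standard filtration on $\rU(\h)$), injectivity of $\gr\,\mu$ implies injectivity of $\mu$ by the usual argument. Take $0\neq w\in \mathrm F_dW(\pi)\setminus\mathrm F_{d-1}W(\pi)$. Then $\gr_d w\neq 0$, and if $\gr\,\mu$ is injective its image $\gr_d\mu(w)$ is nonzero, so $\mu(w)\neq0$. The work therefore goes into describing $\gr\,W(\pi)$ and $\gr\,\mu$ explicitly.

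\textbf{Step 1: describe $\gr\,W(\pi)$.} We use the identification $W(\pi)\cong \rY^\ell_{m|n}(\sigma)$ from Theorem~\ref{thm:pe21}, together with the PBW basis of Proposition~\ref{prop:PBW-truncated}. The generators $\sfT^{(r)}_{i,j;x}$ have Kazhdan degree $r$, so the images of $D_i^{(r)}$, $E_{i,j}^{(r)}$, $F_{j,i}^{(r)}$ lie in $\mathrm F_r W(\pi)$. We claim that $\gr\,W(\pi)$ is free supercommutative on the symbols $\gr_r$ of these PBW generators, in the ranges $0<r\lle p_i$ and $s<r\lle s+p_i$. To prove this, it is enough to show that the symbols of the corresponding ordered supermonomials are linearly independent in $\gr\,\rU(\fkp)\cong S(\fkp)$. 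That independence will in fact follow from Step~2, so the two steps prove each other: any relation among the symbols would map under $\gr\,\mu$ to a relation in $\gr\,\rU(\h)\cong S(\h)$.

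\textbf{Step 2: compute $\gr\,\mu$ on generators and check independence.} The map $\mu=\zeta\circ\xi$ is a projection followed by a constant shift, so $\gr\,\mu$ is the projection $S(\fkp)\to S(\h)$ killing $\g_{j}$ for $j>0$, restricted to $\gr\,W(\pi)$. Apply the explicit formula \eqref{mystery}. The top-degree part of $\sfT^{(r)}_{i,j;x}$ mapped into $S(\h)$ keeps only the terms in which every factor has $\col(i_t)=\col(j_t)$. These form the terms with exactly $s=r$ factors, each factor lying in one column $\gl_{m_c|n_c}$. Hence $\gr_r\mu(D_i^{(r)})$, up to sign, is the $r$-th elementary-type supersymmetric sum
$$\sum_{c_1<\cdots<c_r}\ \sum e^{[c_1]}_{\cdot,\cdot}\cdots e^{[c_r]}_{\cdot,\cdot}$$
over paths through the rows $i$ and below, following \eqref{coproductW} iterated. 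Similar expressions hold for $E_{i,j}^{(r)}$ and $F_{j,i}^{(r)}$.

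To get independence, we follow the non-super approach of \cite{BK06}. We use induction on $\ell$ via the parabolic induction $\Delta_{\ell-1,1}$ or $\Delta_{1,\ell-1}$, which by Lemma~\ref{nwbaby} are the baby comultiplications. Split off a shortest column, of height $t=\min\{q_1,q_\ell\}$. By Lemma~\ref{wasso}, $\mu$ factors as $(\mu'\otimes 1)\circ\Delta_{\ell-1,1}$, so it suffices to show that $\gr\,\Delta_{\ell-1,1}$ is injective and then apply induction. For this we compare the leading terms of the PBW generators. Generators whose range $r\lle p_i$ involves the new column have leading terms of the form $x\otimes e_{a,b}$ plus old terms, where $x$ is a generator of degree $r-1$ in $\gr\,W(\pi')$. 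Choosing a suitable monomial order, the leading monomials of distinct PBW supermonomials then stay distinct.

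\textbf{Main obstacle.} The hardest step is this leading-term analysis in the super setting. Odd generators square to zero in $S(\h)$, so the leading monomials must be chosen as supermonomials without repeated odd factors. We must also check that the sign factors in \eqref{mystery} never cause cancellation. As a first attempt, we would do a dimension count: verify that the Hilbert series of the candidate generators of $\gr\,W(\pi)$ matches that of $S(\fkp^{\,e})$, where $\gr\,W(\pi)\cong S(\g^e)$ (the superized Premet/Kazhdan result). Then we would show surjectivity onto the image generators degree by degree. This reduces the problem to checking that the leading terms constructed above are algebraically independent over the free supercommutative algebra $S(\h)$.
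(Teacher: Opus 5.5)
\textbf{Gap: injectivity of $\gr\,\Delta_L$ and $\gr\,\Delta_R$ is asserted, not proved.}

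Your reduction is the same as the paper's. By Lemma~\ref{wasso} and Lemma~\ref{nwbaby}, $\mu$ is a composite of $\ell-1$ baby comultiplications $\Delta_R=\Delta_{\ell-1,1}$ or $\Delta_L=\Delta_{1,\ell-1}$. At each stage you split off an end column of minimal height, so the relevant map is defined. All maps are filtered, so $\gr\,\mu$ is the corresponding composite of the maps $\gr\,\Delta_R$ and $\gr\,\Delta_L$, tensored with identities. Everything therefore hinges on the injectivity of $\gr\,\Delta_R$ and $\gr\,\Delta_L$, and your proposal does not establish it.

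The sentence ``choosing a suitable monomial order, the leading monomials of distinct PBW supermonomials then stay distinct'' is an assertion rather than an argument, and you yourself leave it as the unresolved main obstacle. The fallback does not close it. Matching the Hilbert series of $\gr\,W(\pi)$ with that of $S(\g^e)$ only describes the source. Injectivity of $\gr\,\mu$ is the statement that the images in $S(\h)$ of the symbols of the PBW generators are algebraically independent in the super sense, and no count on the source detects that. ``Surjectivity onto the image'' holds trivially, so as you note, the fallback just reduces back to the same independence statement. Your Step~1 (that the Kazhdan filtration on $W(\pi)$ is the PBW-degree filtration) is derived from Step~2, so it is unsupported as well.

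\textbf{The missing input.} It is \cite[Thm.~8.2]{Pe21}, which states that $\gr\,\Delta_L$ and $\gr\,\Delta_R$ are injective. The paper simply cites it. With that citation your reduction finishes the proof in two lines, and the explicit symbol computations of Step~2 become unnecessary. If you want a self-contained argument instead, you need explicit formulas for $\Delta_R$ and $\Delta_L$ on the generators (cf.\ \cite[Thm.~6.1]{Pe21}), a genuine triangularity argument, and the sign bookkeeping carried out.

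\textbf{A secondary error in Step~2.} Your description of $\gr_r\,\mu(D_i^{(r)})$ is wrong. $D_i^{(r)}$ corresponds to $\sfT^{(r)}_{i,i;i-1}$, and conditions (3)--(4) of \eqref{mystery} force weakly decreasing columns whenever an intermediate row lies above row $i$. So the sum is not only over $c_1<\cdots<c_r$. Already for a single column of two even boxes, $D_2^{(2)}=-e_{2,1}e_{1,2}\neq 0$, whereas a sum over $c_1<c_2$ is empty. The strictly increasing pattern is what occurs for the elements $\sfT^{(r)}_{i,j;0}$ of \eqref{coproductW}, not for $D_i^{(r)}$.
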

\begin{proof}
As \eqref{miura}, $\mu$ can be understood as a composition of $(\ell-1)$ copies of $\Delta_L$ or $\Delta_R$ in any order which makes sense. Since both $\gr\,\Delta_L$ and $\gr\,\Delta_R$ are injective \cite[Thm.~8.2]{Pe21}, the statement follows. 
\end{proof}

\begin{cor}\label{popinj}
The map $\gr\,\Delta_{\ell^\prime,\ell^\dprime}: \gr\,W(\pi)\rightarrow \gr\,W(\pi^\prime)\otimes W(\pi^\dprime)$ is injective for any $\ell=\ell^\prime+\ell^\dprime$ and hence so is the parabolic induction $\Delta_{\ell^\prime,\ell^\dprime}: W(\pi)\rightarrow W(\pi^\prime)\otimes W(\pi^\dprime)$.
\end{cor}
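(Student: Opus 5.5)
The plan is to deduce Corollary \ref{popinj} from Theorem \ref{muinj} by factoring the Miura transform through the parabolic induction. Iterate the parabolic induction on each factor. Applying $\Delta_{\ell^\prime,\ell^\dprime}$ first, then splitting $\pi^\prime$ into its $\ell^\prime$ columns (via iterates of $\Delta_{1,\ast}$ or $\Delta_{\ast,1}$) and likewise $\pi^\dprime$ into its $\ell^\dprime$ columns, gives the composite
\be
W(\pi)\xrightarrow{\ \Delta_{\ell^\prime,\ell^\dprime}\ } W(\pi^\prime)\otimes W(\pi^\dprime)\xrightarrow{\ \mu^\prime\otimes\mu^\dprime\ }\rU(\h^\prime)\otimes\rU(\h^\dprime)\cong \rU(\h).
\ee
Here $\mu^\prime$ and $\mu^\dprime$ are the Miura transforms of $\pi^\prime$ and $\pi^\dprime$. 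By the coassociativity in Lemma \ref{wasso}, iterating \eqref{Wdel} in any order gives the same map. Hence this composite equals the Miura transform $\mu$ of $\pi$.

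Some care is needed with the identifications. The constant shifts $\rho$ in \eqref{etilde} for $\pi^\prime$ and $\pi^\dprime$ are built into the definition \eqref{wgcoprod}, which is written in terms of the $\tilde e_{i,j}$ of each pyramid. So the composite is literally the iteration \eqref{miura} that defines $\mu$, and nothing further needs checking.

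Next, pass to associated graded algebras. Every map involved is filtered: $\Delta_{\ell^\prime,\ell^\dprime}$ for the Kazhdan filtrations, and $\mu^\prime$, $\mu^\dprime$ from Kazhdan to standard filtrations. We equip $W(\pi^\prime)\otimes W(\pi^\dprime)$ with the tensor product filtration, so that $\gr(W(\pi^\prime)\otimes W(\pi^\dprime))\cong \gr W(\pi^\prime)\otimes\gr W(\pi^\dprime)$. Then $\gr\mu=(\gr\mu^\prime\otimes\gr\mu^\dprime)\circ\gr\Delta_{\ell^\prime,\ell^\dprime}$. By Theorem \ref{muinj}, $\gr\mu$ is injective, and therefore its first factor $\gr\Delta_{\ell^\prime,\ell^\dprime}$ is injective.

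Finally, a filtered map whose associated graded map is injective is itself injective, since the filtrations are exhaustive and bounded below (the Kazhdan filtration on $W(\pi)$ starts in degree $0$). Hence $\Delta_{\ell^\prime,\ell^\dprime}$ is injective. One could even skip the graded step: $\mu$ is injective by Theorem \ref{muinj} and factors through $\Delta_{\ell^\prime,\ell^\dprime}$, which forces $\Delta_{\ell^\prime,\ell^\dprime}$ to be injective.

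The main point to verify is the factorization $\mu=(\mu^\prime\otimes\mu^\dprime)\circ\Delta_{\ell^\prime,\ell^\dprime}$ with consistent shifts. I would check it directly on the generators $\tilde e_{i,j}$ using \eqref{wgcoprod}, noting that applying \eqref{wgcoprod} twice sends $\tilde e_{i,j}$ with $\col(i)<\col(j)$ to $0$ and keeps the block-diagonal part with its shifts. This matches $\zeta\circ\xi$ restricted to $W(\pi)$.
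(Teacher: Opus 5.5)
Your proposal is correct and is essentially the paper's own argument. The paper proves the corollary in one line: $\mu$ factors as $\Delta_{\ell^\prime,\ell^\dprime}$ followed by the remaining $\ell-2$ parabolic inductions (legitimate by Lemma~\ref{wasso}), so injectivity of $\gr\,\mu$ from Theorem~\ref{muinj} forces injectivity of $\gr\,\Delta_{\ell^\prime,\ell^\dprime}$, and hence of $\Delta_{\ell^\prime,\ell^\dprime}$. Your treatment of the tensor-product filtration and of the constant shifts only makes explicit what that one-line proof leaves implicit.
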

\begin{proof}
$\mu$ can be understood as a composition of $\Delta_{\ell^\prime,\ell^\dprime}$ followed by $(\ell-2)$ more maps.       
\end{proof}

Recall that if $\vec{\pi}$ is a pyramid obtained by horizontally moving some rows of $\pi$, then there is a canonical isomorphism induced from \eqref{iotadef},
\beq\label{Wiota}
\iota:W(\pi)\rightarrow  W(\vec{\pi}).
\eeq
The parabolic induction is compatible with these isomorphisms, as stated in the following lemma, generalizing \cite[Lem. 11.6]{BK06}.
\begin{lem}\label{lem:paraiota}
Suppose that $\pi=\pi^\prime\otimes\pi^\dprime$ and $\vec{\pi}={\vec\pi}^\prime\otimes{\vec\pi}^\dprime$ are pyramids such that 
${\vec\pi}^\prime$ and ${\vec\pi}^\dprime$ have same row lengths as $\pi^\prime$ and $\pi^\dprime$, respectively. Then the following diagram commutes:
\be
\begin{CD}
W(\pi) & @>\iota>> & W(\vec\pi)\\
@V\Delta_{\ell^\prime,\ell^\dprime} VV&&@VV\Delta_{\ell^\prime,\ell^\dprime} V\\
W(\pi^\prime) \otimes W(\pi^\dprime) &@> \iota \otimes \iota >> &  W(\vec\pi^\prime) \otimes W(\vec\pi^\dprime),
\end{CD}
\ee
where $\ell^\prime,\ell^\dprime$ are levels of $\pi^\prime, \pi^\dprime$, respectively, with $\ell=\ell^\prime+\ell^\dprime$.
\end{lem}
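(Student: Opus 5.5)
To prove Lemma~\ref{lem:paraiota}, I would compare the two compositions on a generating set of $W(\pi)$ and use injectivity to keep the bookkeeping manageable. By Theorem~\ref{thm:pe21}, $W(\pi)$ is generated by the elements $D_i^{(r)}=\sfT_{i,i;i-1}^{(r)}$, $E_i^{(r)}=\sfT_{i,i+1;i}^{(r)}$ and $F_i^{(r)}=\sfT_{i+1,i;i}^{(r)}$. Since all maps in the square are algebra homomorphisms, it suffices to check commutativity on these generators.

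The maps $\iota$ act very simply: they fix $D_i^{(r)}$ and shift the superscripts of $E_i^{(r)}$ and $F_i^{(r)}$ by the differences $\vec s_{i,i+1}-s_{i,i+1}$ and $\vec s_{i+1,i}-s_{i+1,i}$, as in \eqref{iotadef}. So the real task is to express $\Delta_{\ell',\ell''}$ of each generator in terms of the generators of $W(\pi')$ and $W(\pi'')$.

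\textbf{Reduction to columns via the Miura transform.} Rather than computing $\Delta_{\ell',\ell''}$ directly, I would apply $\mu'\otimes\mu''$, where $\mu'$ and $\mu''$ are the Miura transforms of $\pi'$ and $\pi''$. By Theorem~\ref{muinj}, $\mu'\otimes\mu''$ is injective. By coassociativity (Lemma~\ref{wasso}), $(\mu'\otimes\mu'')\circ\Delta_{\ell',\ell''}=\mu$, and likewise for $\vec\pi$. Hence it suffices to show the following compatibility:
\[
\mu_{\vec\pi}\circ\iota=(\text{suitable identification})\circ\mu_\pi,
\]
together with the analogous statements for $\pi'$ and $\pi''$.

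The ``suitable identification'' has to be chosen carefully. Row shifting permutes which boxes lie in which column, so the factors $\rU(\gl_{m_r|n_r})$ for $\pi$ and for $\vec\pi$ do not match one-to-one. I therefore expect the cleaner route to be the one in \cite[Lem.~11.6]{BK06}: reduce to an elementary move. Any row rearrangement preserving the row lengths of $\pi'$ and $\pi''$ can be realised by a sequence of moves, each shifting one row by one column in a way that keeps the cut position valid. The isomorphism $\iota$ is compatible with composition, since by \eqref{iotadef} it is determined by the shift differences.

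\textbf{Elementary move.} For a single move, use the explicit formulas. In the Drinfeld generators, $\iota$ multiplies $E_i(u)$ by a power $u^{\pm1}$ (up to truncation) and fixes $D_i(u)$. By \eqref{coproductW} and the $\sfT_{i,j;x}$ formulas, $\Delta_{\ell',\ell''}(E_i^{(r)})$ equals $E_i^{(r)}\otimes 1+1\otimes E_i^{(r)}$ plus $D$-type correction terms, in the same spirit as Proposition~\ref{prop:copro}. Because the row lengths of $\pi'$ and $\pi''$ are preserved, a move changes only the shifts $s'_{i,i\pm1}$ of $\pi'$ or $s''_{i,i\pm1}$ of $\pi''$, each by the same amount as for $\pi$. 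The degree shift on the left is then distributed consistently over the two tensor factors, so both sides agree on $E_i^{(r)}$ and $F_i^{(r)}$. On the $D_i^{(r)}$ the check is immediate.

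\textbf{Main obstacle.} The hardest step will be this last verification on $E_i^{(r)}$ and $F_i^{(r)}$. It requires controlling the lower-order correction terms in $\Delta_{\ell',\ell''}(\sfT_{i,i+1;i}^{(r)})$. I would first try to sidestep this by working in $\gr$ with the Kazhdan filtration, where injectivity (Corollary~\ref{popinj}) and the PBW bases (Proposition~\ref{prop:PBW-truncated}) make the top-degree parts easy to compare. I would then lift the result using the fact that both composites are filtered homomorphisms that agree on $D_i^{(r)}$, together with the commutator relation \eqref{dr3}. That relation generates all $E_i^{(r)}$ from $E_i^{(s_{i,i+1}+1)}$ by bracketing with the element $\mathbf D_i^{(2)}$ from the proof of Proposition~\ref{prop:copro}. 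This reduces the whole check to the single lowest-degree generators $E_i^{(s_{i,i+1}+1)}$ and $F_i^{(s_{i+1,i}+1)}$, whose images can be computed explicitly from \eqref{mystery} and \eqref{wgcoprod}.
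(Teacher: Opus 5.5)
There is a genuine gap. The step that carries all the content is comparing $\Delta_{\ell',\ell''}\circ\iota$ with $(\iota\otimes\iota)\circ\Delta_{\ell',\ell''}$ on generators. That step is asserted rather than proved, and the tools you propose for it do not work.

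\textbf{Miura reduction.} It fails for the reason you half-notice. $\pi$ and $\vec\pi$ can have different column structures; for example, $\pi=(2,1,1)$ and $\vec\pi=(1,2,1)$, both cut after two columns. The Miura images of $E_i^{(r)}$ and $F_i^{(r)}$ are sensitive to the order of the columns. So there is no natural map $\rU(\h)\to\rU(\vec\h)$ intertwining $\mu$, $\vec\mu$ and $\iota$, and you do not construct one. Injectivity of $\mu'\otimes\mu''$ therefore gives nothing by itself.

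\textbf{Elementary-move step.} $\Delta_{\ell',\ell''}(E_i^{(r)})$ is not $E_i^{(r)}\otimes 1+1\otimes E_i^{(r)}$ plus $D$-type terms. Already in $\rY_{m|n}$, $\Delta(E_i^{(2)})$ contains $\sum_{k>i+1}\sfT_{i,k}^{(1)}\otimes\sfT_{k,i+1}^{(1)}$; Proposition~\ref{prop:copro}(2) holds only modulo $(\mathrm K_{m|n}^{>0})^2\otimes\mathrm K_{m|n}^{<0}$. Likewise the check on $D_i^{(r)}$ is not immediate: $\Delta(D_i^{(r)})$ has $\mathrm K^{>0}\otimes\mathrm K^{<0}$ components whose superscripts are shifted by $\iota\otimes\iota$.

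More basically, you have no formula for $\Delta_{\ell',\ell''}$ on the generators of a general $W(\pi)$. The Yangian description of Theorem~\ref{thm:grownup} needs $\pi'$ right-justified and $\pi''$ left-justified. Extending it to all pyramids is exactly what this lemma is used for in Remark~\ref{rem:row-shift}.

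\textbf{The fallback.} It does not close the gap either.
\begin{itemize}
\item Agreement in $\gr$ does not lift to equality.
\item Bracketing with $\mathbf{D}_i^{(2)}$ does validly reduce the check to all $D_i^{(r)}$ and the lowest $E_i^{(s_{i,i+1}+1)}$, $F_i^{(s_{i+1,i}+1)}$.
\item But computing their images in $\rU(\fkp')\otimes\rU(\fkp'')$ via \eqref{mystery} and \eqref{wgcoprod} still does not let you apply $\iota\otimes\iota$, which is defined only on the generators of $W(\pi')\otimes W(\pi'')$. Rewriting the result in those generators is the whole difficulty, and it is left undone.
\end{itemize}

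\textbf{The missing idea.} Peel off one column at a time instead of going all the way down to the Miura transform. Argue by induction on $\ell$. One of $\Delta_L,\Delta_R$ is defined; say $\Delta_R$.
\begin{itemize}
\item If $\ell''=1$, Lemma~\ref{nwbaby} identifies both vertical maps with the baby comultiplication $\Delta_R$. Its explicit formulas on parabolic generators \cite[Thm.~6.1(1)]{Pe21} can be compared directly with those of $\iota$ \cite[(5.23)]{Pe21}.
\item If $\ell''>1$, remove the last column from $\pi$, $\vec\pi$, $\pi''$ and $\vec\pi''$, and form a cube. The front and back faces commute by coassociativity (Lemma~\ref{wasso}). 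The top and bottom faces commute by the case $\ell''=1$. The right face commutes by the induction hypothesis. Your square, the left face, then commutes because the horizontal parabolic inductions are injective (Corollary~\ref{popinj}).
\end{itemize}
Your instinct to combine coassociativity with injectivity is right; it just has to be used inductively in this way.
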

\begin{proof}
We argue by induction on $\ell$. There is nothing to prove if either $\ell^\prime=0$ or $\ell^\dprime=0$, hence the statement holds trivially in the base case $\ell=1$. Assume from now on $\ell=\ell^\prime+\ell^\dprime$ with $\ell^\prime, \ell^\dprime>0$.
In any case, at least one of $\Delta_L$ and $\Delta_R$ is defined. We explain the inductive step in the case where $\Delta_R$ is defined, where in the other case the argument is almost identical.
Note that when $\Delta_R$ is defined on $W(\pi)$, by our assumptions on the lengths of rows, we ensure that $\Delta_R$ is defined on $W(\vec\pi)$ as well.

Consider first that $\ell^\dprime=1$. By Lemma \ref{nwbaby}, $\Delta_{\ell^\prime,\ell^\dprime}=\Delta_R$ for both $W(\pi)$ and $W(\vec\pi)$. Then the commutativity of the diagram can be checked by the explicit formula of $\Delta_R$ and $\iota$ in \cite[Thm.~6.1 (1)]{Pe21} and \cite[(5.23)]{Pe21} when they evaluate on parabolic generators.

Suppose now $\ell^\dprime>1$. Let $\rho, \hat\rho, \rho'$ and $\hat\rho^\prime$ denote the pyramids obtained by removing the rightmost column (which is of height $q_\ell$ in all cases) of $\pi,\vec\pi, \pi''$ and $\vec\pi''$, respectively. Consider the following cube
\be
\begin{picture}(290,130)
\put(10,10){\makebox(0,0){$W(\pi')\otimes W(\pi'')$}}
\put(200,10){\makebox(0,0){$W(\pi')\otimes W(\rho')\otimes \rU(\mathfrak{gl}_{m_\ell|n_\ell})$}}
\put(15,78){\makebox(0,0){$W(\pi)$}}
\put(180,78){\makebox(0,0){$W(\rho)\otimes \rU(\mathfrak{gl}_{m_\ell|n_\ell})$}}

\put(85,47){\makebox(0,0){$W(\vec\pi')\otimes W(\vec\pi'')$}}
\put(270,47){\makebox(0,0){$W(\vec\pi')\otimes W(\hat\rho')\otimes \rU(\mathfrak{gl}_{m_\ell|n_\ell})$}}
\put(72,115){\makebox(0,0){$W(\vec\pi)$}}
\put(260,115){\makebox(0,0){$W(\hat\rho)\otimes \rU(\mathfrak{gl}_{m_\ell|n_\ell})$}}
\put(43,33){\makebox(0,0){$\nearrow$}}
\put(43,33){\line(-1,-1){13}}
\put(43,103){\makebox(0,0){$\nearrow$}}
\put(43,103){\line(-1,-1){13}}
\put(223,33){\makebox(0,0){$\nearrow$}}
\put(223,33){\line(-1,-1){13}}
\put(223,103){\makebox(0,0){$\nearrow$}}
\put(223,103){\line(-1,-1){13}}
\put(10,27){\makebox(0,0){$\downarrow$}}
\put(9.9,27){\line(0,1){38}}
\put(170,27){\makebox(0,0){$\downarrow$}}
\put(169.9,27){\line(0,1){38}}
\put(110,79){\makebox(0,0){$\rightarrow$}}
\put(110,79.56){\line(-1,0){49}}
\put(110,9){\makebox(0,0){$\rightarrow$}}
\put(110,9.56){\line(-1,0){49}}
\put(70,64){\makebox(0,0){$\downarrow$}}
\put(69.9,64){\line(0,1){13}}
\put(69.9,82){\line(0,1){20}}
\put(245,64){\makebox(0,0){$\downarrow$}}
\put(244.9,64){\line(0,1){38}}
\put(178,114){\makebox(0,0){$\rightarrow$}}
\put(178,114.56){\line(-1,0){49}}
\put(178,46){\makebox(0,0){$\rightarrow$}}
\put(167,46.56){\line(-1,0){38}}
\end{picture}
\ee
where the maps on the front and back squares are defined from the parabolic inductions, and the remaining maps are induced by isomorphisms $\iota$.
The top and bottom squares are commutative by the special case $\ell^\dprime=1$ considered in the previous paragraph. 
The front and back squares are commutative by Lemma \ref{wasso}. 
The square in the right is commutative by the induction hypothesis. Since the horizontal maps are parabolic inductions, all of them are injective by Corollary \ref{popinj}. This implies that the square on the left is commutative, finishing the inductive step.  
\end{proof}

In the remaining part of this section, we will lift the parabolic induction from the truncated shifted super Yangian $\rY_{m|n}^\ell(\sigma)$ to $\rY_{m|n}(\sigma)$.
We first introduce the notion of column removal on pyramids.
Let $1\lle i_1 < \cdots < i_{\dot{\ell}}\lle \ell$ be a subset of the columns of $\pi$. 
Joining these columns together, we obtain a pyramid of column heights $(q_{i_1},\ldots,q_{i_{\dot\ell}})$, denoted by $\dot{\pi}$.
Let $\sigma=(s_{i,j})_{1\lle i,j\lle m+n}$ and $\dot\sigma=(\dot{s}_{i,j})_{1\lle i,j\lle m+n}$ denote the shift matrices corresponding to $\pi$ and $\dot\pi$, respectively.
By construction, we have $\dot{s}_{i,j}\lle s_{i,j}$ for all $i,j$. This implies that $\rY_{m|n}(\sigma) \subseteq \rY_{m|n}(\dot{\sigma})$ when identifying them as subalgebras of $\rY_{m|n}$. This canonical embedding $\rY_{m|n}(\sigma) \hookrightarrow \rY_{m|n}(\dot{\sigma})$ factors through the quotients since $p_1\gge \dot{p}_1$ by construction, inducing the following homomorphism
\beq\label{parsy}
\dag:\rY_{m|n}^{\ell}(\sigma) \rightarrow \rY_{m|n}^{\dot\ell}(\dot{\sigma}).
\eeq
Identifying $\rY_{m|n}^{\ell}(\sigma)$ with $W(\pi)$ and $\rY_{m|n}^{\dot\ell}(\dot\sigma)$ with $W(\dot\pi)$, it defines a homomorphism
\beq\label{parw}
\dag:W(\pi) \rightarrow W(\dot\pi)
\eeq
sending the generators $D_i^{(r)}$, $E_i^{(r)}$, $F_i^{(r)}$ of $W(\pi)$ to $\dot{D}_i^{(r)}$, $\dot{E}_i^{(r)}$, $\dot{F}_i^{(r)}$ of $W(\dot\pi)$, respectively. 
Let $\mu: W(\pi)\rightarrow \rU(\h)$ and $\dot\mu:W(\dot\pi)\rightarrow \rU(\dot{\h})$ denote the Miura transforms in \eqref{miura2}. There exists an obvious projection $\hat\dag:\h\twoheadrightarrow \dot\h$ defined by
\be
\hat\dag(e_{i,j}^{[r]}):= \left\{
\begin{array}{ll}
e_{i,j}^{[s]} &\hbox{if $r=i_s$ for some $s=1,\ldots,\dot\ell$,}\\[2mm]
0 &\hbox{otherwise.}
\end{array}
\right.
\ee
\begin{lem}\label{commu}
With the above notation, the following diagram commutes
\be
\begin{CD}
W(\pi) &@>\mu >> &\rU(\h) \\
@V\dag VV&&@VV\hat\dag V\\
W(\dot\pi)&@>\dot\mu >> & \rU(\dot{\h})
\end{CD}
\ee
\end{lem}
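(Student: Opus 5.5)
Since $\mu$ and $\dot\mu$ are homomorphisms and $\dag$ is defined on generators, it is enough to check $\hat\dag\circ\mu=\dot\mu\circ\dag$ on the generators $D_i^{(r)}$, $E_i^{(r)}$, $F_i^{(r)}$ of $W(\pi)\cong\rY_{m|n}^\ell(\sigma)$. The plan is an explicit formula for $\mu$ of each generator as a sum over column sequences, followed by a column-by-column comparison.

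\emph{Step 1: a formula for $\mu$.} Iterate the baby comultiplications; by Lemma \ref{nwbaby} and Lemma \ref{wasso} this is legitimate in any admissible order. Each $D_i(u)$, $E_i(u)$, $F_i(u)$ is a quasideterminant-type combination of the series $\sfT_{a,b;x}(u)$. Under $\Delta_{\ell-1,1}$ or $\Delta_{1,\ell-1}$ these split according to \eqref{coproductW} and its analogues for other $x$, which come from \cite[Thm.~6.1]{Pe21}. Repeating this until the pyramid is cut into single columns gives
\[
\mu(D_i(u))=\text{an ordered product over }r=1,\dots,\ell\text{ of column factors built from }e^{[r]}_{a,b}.
\]
Each factor is the image under $\zeta$ of the $(i,i)$ quasideterminant of $\mathcal E$ for $\gl_{m_r|n_r}$. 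It carries a shift depending only on $\check q_{r+1}+\cdots+\check q_\ell$, and it equals $1$ when row $i$ has no box in column $r$. There are analogous formulas for $E_i(u)$ and $F_i(u)$: a sum over the column $r$ in which an off-diagonal entry is used, with $D$-type column factors on either side.

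\emph{Step 2: compare.} $\hat\dag$ kills every $e^{[r]}_{a,b}$ with $r\notin\{i_1,\dots,i_{\dot\ell}\}$, so every factor attached to a deleted column becomes $1$, or $0$ for an off-diagonal contribution. What survives is the product over the kept columns, which is exactly the formula for $\dot\mu$ of $\dot D_i(u)$, $\dot E_i(u)$, $\dot F_i(u)$. The one point to check is that the constant shifts agree. The shift $\check q_{r+1}+\cdots+\check q_\ell$ appearing in $\zeta$ and $\rho$ differs from the corresponding sum over the kept columns after $i_s$. Verifying that these shifts match is where the real work lies.

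\emph{Alternative avoiding explicit formulas.} Induct on $\ell-\dot\ell$, reducing to the removal of a single column; composites of such removals give the general case.
\begin{itemize}
\item If the removed column is the leftmost or rightmost one, write $\mu=(\mu'\otimes\mathrm{id})\circ\Delta_R$ (or use $\Delta_L$). Then $\hat\dag$ applies the counit on the $\rU(\gl_{m_\ell|n_\ell})$ factor, sending $e_{a,b}\mapsto0$. From the explicit formula for $\Delta_R$ in \cite[Thm.~6.1]{Pe21}, this counit composed with $\Delta_R$ is precisely the canonical map $\dag$, so the claim follows.
\item If the removed column is interior, it is not directly an end column. Use Lemma \ref{lem:paraiota}, i.e.\ the isomorphisms $\iota$ that slide rows horizontally, to realize the removal through parabolic induction $\Delta_{\ell',\ell''}$ with the column at the edge of one factor. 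Then apply the end-column case to that factor, using Lemma \ref{wasso}.
\end{itemize}

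\emph{Main obstacle.} In both routes the hardest step is the bookkeeping of the $\rho$-shifts in \eqref{etilde} and \eqref{zetatrans}: removing a column changes $\hbar$ and the tail sums of $\check q$. I would verify directly on $D_i^{(1)}$ and $D_i^{(2)}$ that the shifts coincide. Since these elements together with $E_i^{(s+1)}$, $F_i^{(s+1)}$ generate via \eqref{dr3}, that check propagates to all generators.
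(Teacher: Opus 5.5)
Your plan has the same overall shape as the paper's proof: compare $\hat\dag\circ\mu$ with $\dot\mu\circ\dag$ on the generators using explicit formulas. However, it stops exactly at the one point that is not formal.

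\textbf{The direct comparison.} The paper's check from \eqref{mystery} amounts to applying $\mu=\zeta\circ\xi$ directly to the sums \eqref{mystery}.
\begin{itemize}
\item $\xi$ kills every $\tilde e_{i,j}$ with $\col(i)<\col(j)$, so only chains with $\col(i_t)=\col(j_t)$ for all $t$ survive, which forces $r=s$.
\item $\hat\dag$ discards the chains meeting a deleted column.
\item The remaining chains match those for $\dot\pi$ term by term, because conditions (2)--(6) and the signs only see rows, parities and the relative order of columns.
\end{itemize}
Your Step~1 formula is wrong as stated. $\mu(D_i(u))$ is not a product of column quasideterminants: for a $2\times2$ rectangle, $\mu(D_1^{(2)})$ contains $e^{[1]}_{1,2}e^{[2]}_{2,1}$. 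Only the matrix $(\sfT_{a,b;0}(u))$ factorizes, by \eqref{coproductW}.

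\textbf{The real issue is the constant term, not the tail sums.} From \eqref{etilde} and \eqref{zetatrans}, for $i=\pi(c,a)$ one gets $\mu(\tilde e_{i,i})=e^{[c]}_{a,a}+(-1)^{\tp(i)}(\hbar-\check q_c)$. The tail sums $\check q_{c+1}+\cdots+\check q_\ell$ cancel identically, so they are not the issue. The leftover constant depends only on column $c$ and $\hbar$, and the argument needs it to vanish on deleted columns. Otherwise $\hat\dag$ turns such a column's factor into $1+(\hbar-\check q_c)u^{-1}$ on its rows rather than $1$, and in general the two sides already differ on $D_i^{(1)}$.

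So your claim in Step~2 that deleted factors become $1$ is precisely what must be justified. It holds in the normalization where every one-column piece is unshifted:
\begin{itemize}
\item $\mu(\tilde e_{\pi(c,a),\pi(c,b)})=e^{[c]}_{a,b}$, and $\mu(\tilde e_{i,j})=0$ for $\col(i)<\col(j)$.
\end{itemize}
This is what iterating \eqref{wgcoprod} produces when $\hbar$ is taken pyramid by pyramid as $m_k-n_k$, so that $\hbar=\check q_c$ for a single column, consistent with Example~\ref{eg:1-col}. You have to fix this normalization and use it, not leave it as a pending check.

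\textbf{Your proposed ways of closing this do not work.}
\begin{itemize}
\item \emph{Checking $D_i^{(1)},D_i^{(2)}$ and propagating.} This fails because these elements, together with $E_i^{(s_{i,i+1}+1)}$ and $F_i^{(s_{i+1,i}+1)}$, do not generate $W(\pi)$. Relations \eqref{dr3}--\eqref{dr4} yield the higher $E_i^{(r)}$ and $F_i^{(r)}$, but not the higher $D_i^{(r)}$. For a one-row pyramid with $\ell\ge3$, $W(\pi)=\bC[D_1^{(1)},\dots,D_1^{(\ell)}]$ and there are no $E$'s or $F$'s at all.
\item \emph{The alternative induction.} This is circular for interior columns. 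After cutting with $\Delta_{\ell',\ell''}$ so that the removed column is the last column of $\pi'$ and applying the end-column case, you still need $(\dag\otimes\mathrm{id})\circ\Delta_{\ell',\ell''}=\Delta_{\ell'-1,\ell''}\circ\dag$. That is Corollary~\ref{comWdel}, which the paper deduces from this very lemma; Lemmas~\ref{wasso} and~\ref{lem:paraiota} do not provide it.
\item \emph{The end-column case.} This also rests on the same normalization. The identity $(\mathrm{id}\otimes\varepsilon)\circ\Delta_R=\dag$, with $\varepsilon$ the augmentation $e_{a,b}\mapsto0$ of $\rU(\gl_{m_\ell|n_\ell})$, holds only if $\varepsilon$ corresponds to the Yangian counit, that is, only if the last column is unshifted.
\end{itemize}
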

\begin{proof}
One checks explicitly from \eqref{Wdi}--\eqref{Wfi}, \eqref{omega2} and definition
\eqref{mystery} that
$\hat\dag$ maps the elements
$\mu(D_i^{(r)})$,
$\mu(E_i^{(r)})$ and $\mu(F_i^{(r)})$
to $\dot\mu(\dot D_i^{(r)})$,
$\dot\mu(\dot E_i^{(r)})$ and
$\dot\mu(\dot F_i^{(r)})$.
\end{proof}

\begin{cor}\label{comWdel}
Suppose that $\pi=\pi^\prime\otimes\pi^\dprime$ and $\dot\pi=\dot{\pi}^\prime \otimes \dot{\pi}^\dprime$, where $\dot\pi^\prime$ and $\dot{\pi}^\dprime$ are obtained by removing columns from $\pi^\prime$ and $\pi^\dprime$, respectively. Then we the following commutative diagram:
\be
\begin{CD}
W(\pi) & @>\Delta_{\ell^\prime,\ell^\dprime}>> & W(\pi^\prime)\otimes W(\pi^\dprime)\\
@V\dag VV&&@VV \dag \otimes \dag V\\
W(\dot\pi) &@> \Delta_{\dot{\ell}^\prime,\dot{\ell}^\dprime} >> &  W(\dot{\pi}^\prime)\otimes W(\dot{\pi}^\dprime),
\end{CD}
\ee
where $\ell^\prime,\ell^\dprime$ are levels of $\pi^\prime, \pi^\dprime$ 
and $\dot{\ell}^\prime,\dot{\ell}^\dprime$ are levels of $\dot{\pi}^\prime, \dot{\pi}^\dprime$, respectively.
\end{cor}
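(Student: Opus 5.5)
We deduce Corollary \ref{comWdel} from Lemma \ref{commu}, using the injectivity of Miura transforms (Theorem \ref{muinj}) to compare both composites after embedding everything in $\rU(\dot\h^\prime)\otimes\rU(\dot\h^\dprime)$. Concretely, let $\mu^\prime:W(\pi^\prime)\to\rU(\h^\prime)$, $\mu^\dprime:W(\pi^\dprime)\to\rU(\h^\dprime)$ and $\dot\mu^\prime,\dot\mu^\dprime$ be the Miura transforms attached to the four smaller pyramids. Since $\h=\h^\prime\oplus\h^\dprime$ (the columns of $\pi$ split into those of $\pi^\prime$ and $\pi^\dprime$), we have $\rU(\h)=\rU(\h^\prime)\otimes\rU(\h^\dprime)$. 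Likewise $\dot\h=\dot\h^\prime\oplus\dot\h^\dprime$, and under these identifications the projection $\hat\dag:\h\to\dot\h$ is $\hat\dag^\prime\otimes\hat\dag^\dprime$. This holds because the columns kept in $\dot\pi$ are exactly the union of those kept in $\dot\pi^\prime$ and in $\dot\pi^\dprime$.

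The first step is to record that $\mu=(\mu^\prime\otimes\mu^\dprime)\circ\Delta_{\ell^\prime,\ell^\dprime}$ on $W(\pi)$, and similarly $\dot\mu=(\dot\mu^\prime\otimes\dot\mu^\dprime)\circ\Delta_{\dot\ell^\prime,\dot\ell^\dprime}$. This is the statement that the iterated parabolic induction \eqref{miura} may be performed in any order, which is Lemma \ref{wasso}. One should check that the identification of the tensor factors with $\rU(\h)$, via the relabelling $\eta$ and the shifts in \eqref{zetatrans}, is compatible with this splitting. That compatibility is immediate from \eqref{wgcoprod}.

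The second step is the diagram chase. We need $(\dot\mu^\prime\otimes\dot\mu^\dprime)\circ(\dag\otimes\dag)\circ\Delta_{\ell^\prime,\ell^\dprime}$. By Lemma \ref{commu} applied to $\pi^\prime$ and $\pi^\dprime$ separately, this equals $(\hat\dag^\prime\otimes\hat\dag^\dprime)\circ(\mu^\prime\otimes\mu^\dprime)\circ\Delta_{\ell^\prime,\ell^\dprime}$. By the first step this is $\hat\dag\circ\mu$. Lemma \ref{commu} for $\pi$ turns it into $\dot\mu\circ\dag$, and the first step again gives $(\dot\mu^\prime\otimes\dot\mu^\dprime)\circ\Delta_{\dot\ell^\prime,\dot\ell^\dprime}\circ\dag$.

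Finally, $\dot\mu^\prime\otimes\dot\mu^\dprime$ is injective. Each factor is injective by Theorem \ref{muinj}, and a tensor product of injective linear maps over $\bC$ is injective. Cancelling it yields $(\dag\otimes\dag)\circ\Delta_{\ell^\prime,\ell^\dprime}=\Delta_{\dot\ell^\prime,\dot\ell^\dprime}\circ\dag$.

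The only delicate point is the first step, and specifically the constant shifts. In $\mu$, the shift for a column $r$ of $\pi^\prime$ involves $\check q_{r+1}+\cdots+\check q_\ell$ over all of $\pi$. In $\mu^\prime$ this sum only runs over columns of $\pi^\prime$. The discrepancy, the sum of $\check q$ over the columns of $\pi^\dprime$, must be absorbed by the $\rho$-shift built into $\tilde e_{i,j}$ in \eqref{wgcoprod}. I would verify this directly by comparing \eqref{rhodef} for $\pi$ and for $\pi^\prime$ (with their respective $\hbar$). If this bookkeeping is awkward, an alternative is to avoid $\mu$ entirely and check the diagram on the generators $D_i^{(r)},E_i^{(r)},F_i^{(r)}$ via \eqref{coproductW}, since $\dag$ sends them to the like-named generators.
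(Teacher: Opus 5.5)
Your argument is correct and is essentially the paper's proof: the paper likewise combines Lemma \ref{commu} (for $\pi$, $\pi'$ and $\pi''$), the injectivity of the Miura transforms, and the compatibility of $\hat\dag$ with $\rU(\h)\cong\rU(\h')\otimes\rU(\h'')$, leaving the factorization $\mu=(\mu'\otimes\mu'')\circ\Delta_{\ell',\ell''}$ implicit. Your worry about constant shifts is unnecessary: $\mu$ is defined as the iterated parabolic induction, and the passage from \eqref{miura} to \eqref{miura2} is a pure relabelling, so that factorization is exactly Lemma \ref{wasso} and needs no $\rho$-bookkeeping.
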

\begin{proof}
In view of Lemma~\ref{commu} and the injectivity of the Miura transforms, the statement follows from the following commutative diagram:
\be
\begin{CD}
\rU(\h) &@>\sim >> &\rU(\h^\prime) \otimes \rU(\h^\dprime)\\
@V\hat\dag VV&&@VV\hat\dag \otimes \hat\dag V\\
\rU(\dot{\h}) &@>\sim >> &\rU(\dot{\h}^\prime) \otimes
\rU(\dot{\h}^\dprime),
\end{CD}
\ee
where the horizontal maps are induced by the obvious isomorphisms $\h \cong \h^\prime \oplus \h^\dprime$ and $\dot{\h} \cong \dot{\h}^\prime \oplus \dot{\h}^\dprime$.
\end{proof}

The next is the main result of this section.
\begin{thm}\label{thm:grownup}
Let $\sigma$ be a shift matrix and write $\sigma = \sigma^\prime + \sigma^\dprime$ 
where $\sigma^\prime$ is strictly lower triangular and
$\sigma^\dprime$ is strictly upper triangular. 
Identify $\rY_{m|n}(\sigma), \rY_{m|n}(\sigma^\prime)$ and $\rY_{m|n}(\sigma^\dprime)$ as subalgebras of $\rY_{m|n}$. Then the restriction of the comultiplication
$\Delta:\rY_{m|n}\rightarrow \rY_{m|n} \otimes \rY_{m|n}$ 
gives a homomorphism
\be
\Delta:\rY_{m|n}(\sigma)\rightarrow \rY_{m|n}(\sigma^\prime) \otimes \rY_{m|n}(\sigma^\dprime).
\ee
Moreover, for $\ell^\prime \gge s_{m+n,1}, \ell^\dprime \gge s_{1,m+n}$ and $\ell = \ell^\prime + \ell^\dprime$,
this map $\Delta$ factors through the quotients to define a homomorphism
$\rY_{m|n}^\ell(\sigma)
\rightarrow \rY_{m|n}^{\ell^\prime}(\sigma^\prime) \otimes \rY_{m|n}^{\ell^\dprime}(\sigma^\dprime)$
which, after identifying
$\rY_{m|n}^{\ell}(\sigma)\cong W(\pi)$, 
$\rY_{m|n}^{\ell^\prime}(\sigma^\prime) \cong W(\pi^\prime)$ and
$\rY_{m|n}^{\ell^\dprime}(\sigma^\dprime)\cong W(\pi^\dprime)$,
is precisely the parabolic induction $\Delta_{\ell^\prime,\ell^\dprime}$ defined in \eqref{Wdel}.
\end{thm}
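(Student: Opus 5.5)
Following the strategy of \cite[Thm.~11.9]{BK06}, I would first establish the statement at the level of truncated quotients, and then pass to $\rY_{m|n}(\sigma)$ by letting the level grow. To make this possible, I first realize $\sigma$ as the shift matrix of pyramids of arbitrarily large level, cut at the tallest column. Concretely, for $\ell'\gge s_{m+n,1}$ and $\ell''\gge s_{1,m+n}$, let $\pi^\prime$ be the left-justified-at-the-top pyramid of level $\ell^\prime$ with shift matrix $\sigma^\prime$, and let $\pi^\dprime$ be the analogous pyramid of level $\ell^\dprime$ with shift matrix $\sigma^\dprime$. Since $\sigma^\prime$ is lower triangular and $\sigma^\dprime$ is upper triangular, the pyramid $\pi=\pi^\prime\otimes\pi^\dprime$ of level $\ell$ has shift matrix $\sigma$ by \eqref{eq:sigma-def}, and its tallest columns sit at the junction.

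\textbf{Step 1: compare $\Delta$ and $\Delta_{\ell',\ell''}$ on the $T$-generators.} Under the identification of Theorem~\ref{thm:pe21}, I would check that the image of $\sfT_{i,j}(u)$ in $W(\pi)$ (more precisely, of the $T_{i,j}^{(r)}$, which agree with the RTT generators in the nonshifted setting) corresponds to $\sfT_{i,j;0}(u)$. The lemma giving \eqref{coproductW} then shows that $\Delta_{\ell',\ell''}$ acts on these elements by the same matrix coproduct formula as \eqref{coproduct}. The delicate point is that $T_{i,j}$ in $\Y$ differs from $\sfT_{i,j}$ in $\rY_{m|n}$ when $\sigma\neq0$, so the comparison must be carried out on the Drinfeld generators instead.

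Here the simplest route is to use Lemma~\ref{nwbaby} together with coassociativity (Lemma~\ref{wasso}) and Lemma~\ref{lem:paraiota}. This reduces the comparison to the baby comultiplications $\Delta_L,\Delta_R$, whose formulas on $D_i^{(r)},E_i^{(r)},F_i^{(r)}$ are known from \cite[Thm.~6.1]{Pe21} and are obtained from $\Delta$ on $\rY_{m|n}$. With this, the composite of $\Delta$ with the quotient maps agrees with $\Delta_{\ell',\ell''}$ on the generators $D_i^{(r)},E_i^{(r)},F_i^{(r)}$ of $\rY_{m|n}(\sigma)$.

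\textbf{Step 2: lift to the untruncated algebras.} Fix a generator $x$ of $\rY_{m|n}(\sigma)$ and write $\Delta(x)\in\rY_{m|n}\otimes\rY_{m|n}$ in a PBW basis of $\rY_{m|n}$ adapted to $\rY_{m|n}(\sigma^\prime)$ and $\rY_{m|n}(\sigma^\dprime)$. By Proposition~\ref{prop:PBW}, $\rY_{m|n}$ is free over the shifted subalgebra with a complementary monomial basis that involves the missing generators $E_{i,j}^{(r)}$ with $r\lle s_{i,j}$, and similarly for the $F$'s. I need to show that no basis term involving the missing generators occurs.

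To do this, I map further to $\rY^{\ell'}_{m|n}(\dot\sigma')\otimes\rY^{\ell''}_{m|n}(\dot\sigma'')$ for pyramids with more columns added, chosen so that the images of the relevant PBW monomials stay linearly independent; this uses Proposition~\ref{prop:PBW-truncated} for $\ell',\ell''\gg0$. Step~1 says the image lies in $W(\pi')\otimes W(\pi'')$, that is, in the span of images of PBW monomials of $\rY(\sigma')\otimes\rY(\sigma'')$. Comparing coefficients, using the compatibility $\dag$ of Corollary~\ref{comWdel}, then shows that $\Delta(x)$ lies in $\rY_{m|n}(\sigma^\prime)\otimes\rY_{m|n}(\sigma^\dprime)$.

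\textbf{Step 3: factorization.} Once the restriction is defined, factoring through the quotients follows from Step~1, since the map agrees with $\Delta_{\ell',\ell''}$ on generators. It remains to note that $\Delta(D_1^{(r)})=\sum_k T_{1,k}\otimes T_{k,1}$ lies in the ideal for $r>p_1=\ell-s_{m+n,1}-s_{1,m+n}$, which follows from Theorem~\ref{thm:vanish} applied to both tensor factors.

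\textbf{Main obstacle.} I expect Step~2, the lifting, to be the hard part: it needs a precise PBW compatibility between $\rY_{m|n}$ and the shifted subalgebras for large $\ell$. This is the place to adapt \cite[Thm.~11.9]{BK06} carefully in the super setting.
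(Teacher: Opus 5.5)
There is a genuine gap in Steps~1 and~2.

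\textbf{Step 2 has no target to work in.} You want to rule out PBW monomials of $\Delta(x)$ that contain ``missing'' generators (such as $F_i^{(r)}$ with $r\lle s_{i+1,i}$ in the left factor). To do this you push $\Delta(x)\in\rY_{m|n}\otimes\rY_{m|n}$ into $\rY^{\ell'}_{m|n}(\dot\sigma')\otimes\rY^{\ell''}_{m|n}(\dot\sigma'')$. But these truncations are quotients of the shifted subalgebras $\rY_{m|n}(\dot\sigma')$ and $\rY_{m|n}(\dot\sigma'')$, not of $\rY_{m|n}$. Adding columns can only enlarge the shift matrix, so $\rY_{m|n}(\dot\sigma')\subseteq\rY_{m|n}(\sigma')$. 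The only truncations defined on all of $\rY_{m|n}$ are the rectangular ones $\rY^{L}_{m|n}(0)$. The column-removal maps $\dag$ of Corollary~\ref{comWdel} never carry a rectangular $W$-superalgebra to a non-rectangular $W(\pi')$. So there is no algebra in which the coefficients of the offending monomials can be compared.

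\textbf{Step 1 is circular.} ``The composite of $\Delta$ with the quotient maps'' only makes sense once you know $\Delta(\rY_{m|n}(\sigma))\subseteq\rY_{m|n}(\sigma')\otimes\rY_{m|n}(\sigma'')$. That is exactly what Step~2 was supposed to prove using Step~1. Moreover, the claim that the baby comultiplications of \cite{Pe21} ``are obtained from $\Delta$ on $\rY_{m|n}$'' is, via Lemma~\ref{nwbaby}, just the one-column case of the theorem itself. Lemmas~\ref{nwbaby}, \ref{wasso} and~\ref{lem:paraiota} only relate parabolic inductions to one another; none of them compares anything with $\Delta$.

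\textbf{How the paper proceeds.} The paper reverses the logic.
By \cite[Rmk.~8.5]{Pe21}, $\rY_{m|n}(\sigma)$, $\rY_{m|n}(\sigma')$ and $\rY_{m|n}(\sigma'')$ are inverse limits of their truncations. Corollary~\ref{comWdel} shows the maps $\Delta_{\ell',\ell''}$ are compatible as $\ell',\ell''\to\infty$. They therefore assemble into a homomorphism $\varprojlim\Delta_{\ell',\ell''}:\rY_{m|n}(\sigma)\to\rY_{m|n}(\sigma')\otimes\rY_{m|n}(\sigma'')$, whose target and factorization through the quotients come for free.

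What remains is agreement with $\Delta$ on $D_i^{(r)},E_i^{(r)},F_i^{(r)}$. This is done by precisely the route you set aside. Express each generator through the Gauss decomposition formulas of \cite[Prop.~3.1]{Pe16} in the $\sfT_{i,j}^{(r)}$ and apply \eqref{coproduct}. In $\rU(\fkp)$, the same formulas applied to the matrix $(\sfT_{i,j;0}(u))$ produce the images of the generators under \eqref{omega2}. By \eqref{coproductW}, $\Delta_{\ell',\ell''}$ acts on the $\sfT_{i,j;0}^{(r)}$ by the same matrix coproduct. The discrepancy between the shifted $T_{i,j}$ and $\sfT_{i,j}$ that worried you never enters, because only $\sfT$ in $\rY_{m|n}$ and $\sfT_{i,j;0}$ in $\rU(\fkp)$ are used.

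\textbf{The containment is actually easy.} Since $\sigma'$ is strictly lower triangular, $\rY^{\gge 0}_{m|n}\subseteq\rY_{m|n}(\sigma')$. Since $\sigma''$ is strictly upper triangular, $\rY^{\lle 0}_{m|n}\subseteq\rY_{m|n}(\sigma'')$. Proposition~\ref{prop:copro} then places $\Delta$ of every generator of $\rY_{m|n}(\sigma)$ in $\rY_{m|n}(\sigma')\otimes\rY_{m|n}(\sigma'')$. After that, your Step~3 argument with Theorem~\ref{thm:vanish} is correct, since here $\sfT_{1,k}=T'_{1,k}$ and $\sfT_{k,1}=T''_{k,1}$.

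The substantive content is the identification with $\Delta_{\ell',\ell''}$, and that is exactly what your proposal leaves unproved.
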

\begin{proof}
Recall from \cite[Rmk. 8.5]{Pe21} that $\rY_{m|n}(\sigma), \rY_{m|n}(\sigma^\prime)$, and $\rY_{m|n}(\sigma^\dprime)$ are identified with the inverse limits ${\displaystyle  \lim_{\leftarrow}\rY_{m|n}^\ell(\sigma),  \lim_{\leftarrow}\rY_{m|n}^{\ell^\prime}(\sigma^\prime)}$, and ${\displaystyle\lim_{\leftarrow}\rY_{m|n}^{\ell^\dprime}(\sigma^\dprime)}$, respectively. 
The commutative diagram in Corollary \ref{comWdel} ensures that the maps $\Delta_{\ell^\prime,\ell^\dprime}$ are stable as $\ell^\prime, \ell^\dprime\rightarrow \infty$, showing the existence of the induced homomorphism 
\be
\lim_{\leftarrow}\Delta_{\ell^\prime,\ell^\dprime}:\rY_{m|n}(\sigma)\rightarrow \rY_{m|n}(\sigma^\prime)\otimes \rY_{m|n}(\sigma^\dprime),
\ee
lifting from the maps $\Delta_{\ell^\prime,\ell^\dprime}$ for all $\ell^\prime\gge s_{m+n,1}, \ell^\dprime\gge s_{1,m+n}$.
It remains to show that ${\displaystyle \lim_{\leftarrow}\Delta_{\ell^\prime,\ell^\dprime}}$ agrees with the restriction of $\Delta$ on $\rY_{m|n}(\sigma)$.

Let $X$ denote some generator $D_i^{(r)}, E_{i}^{(r)}$ or $F_{i}^{(r)}$ of $\rY_{m|n}(\sigma)\subseteq \rY_{m|n}$. Its image $\Delta(X)$ can be calculated as follows: first express $X$ in terms of a linear combination of monomials in $\sfT_{i,j}^{(r)}$ by the explicit formulas in \cite[Prop. 3.1]{Pe16}. 
Then by \eqref{coproduct}, $\Delta(X)$ equals a linear combination of monomials in $\sfT_{h,k}^{(s)}$.
Finally we rewrite all $\sfT_{h,k}^{(s)}$ appearing in the expression back in terms of the generators $D_i^{(r)}, E_{i}^{(r)}$ and $F_{i}^{(r)}$.
On the other hand, exactly the same procedure can be applied to calculate $\Delta_{\ell^\prime, \ell^\dprime}(X)$ where $X$ runs over generators $D_i^{(r)}, E_{i}^{(r)}$ or $F_{i}^{(r)}$ in $W(\pi)$ for each $\ell$, using \eqref{omega2} and \eqref{coproductW}.
The crucial point is that the formulas expressing $\sfT_{i,j;0}^{(r)}$ in terms of $D_i^{(r)}, E_{i}^{(r)}$ and $F_{i}^{(r)}$ in $\rU(\fkp)$ is completely the same as the explicit formula in \cite[Prop. 3.1]{Pe16}, for both of them are deduced from applying Gauss decomposition to an invertible matrix.
This shows that ${\displaystyle \lim_{\leftarrow}\Delta_{\ell^\prime,\ell^\dprime} =\Delta}$. 
\end{proof}
\begin{rem}\label{rem:row-shift}
In fact, {\em all} parabolic inductions $\Delta_{\ell^\prime,\ell^\dprime}:W(\pi)\rightarrow W(\pi^\prime)\otimes W(\pi^\dprime)$ can be expressed in terms of the comultiplication $\Delta$ on $\rY_{m|n}$ as follows.
Let $\vec{\pi}^\prime$ denote the right-justified pyramid with the same row lengths as $\pi^\prime$, and let $\vec{\pi}^\dprime$ denote the left-justified pyramid with the same row lengths as $\pi^\dprime$. Let $\vec{\pi}=\vec{\pi}^\prime \otimes \vec{\pi}^\dprime$.
Note that the parabolic induction $\Delta_{\ell^\prime,\ell^\dprime}:W(\pi)\rightarrow W(\pi^\prime)\otimes W(\pi^\dprime)$ can be recovered from $\Delta_{\ell^\prime,\ell^\dprime}:W(\vec\pi)\rightarrow W(\vec{\pi}^\prime)\otimes W(\vec{\pi}^\dprime)$ by Lemma \ref{lem:paraiota}, using the isomorphisms $\iota$. Now $\Delta_{\ell^\prime,\ell^\dprime}:W(\vec\pi)\rightarrow W(\vec{\pi}^\prime)\otimes W(\vec{\pi}^\dprime)$ is one of the comultiplications described by Theorem~\ref{thm:grownup}.
\end{rem}

Let $\pi$ and $\dot\pi$ be pyramids with exactly the same height and row lengths, in particular both of level $\ell$.
Let $\sigma$ and $\dot\sigma$ be their corresponding shift matrices, respectively. Pick positive integers $\ell^\prime$ and $\ell^\dprime$ so that $\ell=\ell^\prime+\ell^\dprime$ and $\pi=\pi^\prime\otimes\pi^\dprime$.
By horizontally moving some rows of $\pi^\prime$ and $\pi^\dprime$, one obtains a right-justified pyramid $\dot\pi^\prime$ and a left-justified pyramid $\dot\pi^\dprime$ so that $\dot\pi=\dot\pi^\prime\otimes \dot\pi^\dprime$.
The pair $(\dot\pi^\prime,\ell^\prime)$ determines a shift matrix $\dot\sigma^\prime$, and similarly $(\dot\pi^\dprime,\ell^\dprime)$ determines $\dot\sigma^\dprime$.
Then we have $\dot\sigma=\dot\sigma^\prime+\dot\sigma^\dprime$, where $\dot\sigma^\prime$ and $\dot\sigma^\dprime$ are lower and upper triangular, respectively.
As a consequence of \eqref{Wiota}, we have $W(\dot\pi)\cong W(\pi)$ and $W(\dot\pi^\prime)\otimes W(\dot\pi^\dprime)\cong W(\pi^\prime)\otimes W(\pi^\dprime)$.
Therefore we obtain the following surjective homomorphisms
\[
Y_{m|n}(\dot\sigma)\twoheadrightarrow W(\pi), \quad 
Y_{m|n}(\dot\sigma^\prime)\otimes Y_{m|n}(\dot\sigma^\dprime) \twoheadrightarrow W(\pi^\prime)\otimes W(\pi^\dprime).
\]
The following result follows from Theorem~\ref{thm:grownup} and Remark~\ref{rem:row-shift}.
\begin{cor}\label{SWcop}
The following diagram commutes:
\be
\begin{CD}
\rY_{m|n}(\dot{\sigma}) & @>\Delta >> & \rY_{m|n}(\dot{\sigma}^\prime)\otimes \rY_{m|n}(\dot{\sigma}^\dprime)\\
@V VV&&@VV  V\\
W(\pi) &@> \Delta_{\ell^\prime,\ell^\dprime} >> &  W(\pi^\prime)\otimes W(\pi^\dprime).
\end{CD}
\ee
\end{cor}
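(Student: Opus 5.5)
We will prove the commutativity of the diagram in Corollary~\ref{SWcop} by factoring it into two squares glued along the pyramid $\dot\pi=\dot\pi^\prime\otimes\dot\pi^\dprime$. Both vertical maps are the composites of a truncation with an isomorphism $\iota$. On the left, $\rY_{m|n}(\dot\sigma)\twoheadrightarrow \rY^{\ell}_{m|n}(\dot\sigma)\cong W(\dot\pi)$ is followed by $\iota:W(\dot\pi)\to W(\pi)$. On the right, we truncate each tensor factor to $W(\dot\pi^\prime)\otimes W(\dot\pi^\dprime)$ and then apply $\iota\otimes\iota$. The diagram then splits into an upper square
\[
\rY_{m|n}(\dot\sigma)\xrightarrow{\Delta}\rY_{m|n}(\dot\sigma^\prime)\otimes\rY_{m|n}(\dot\sigma^\dprime)\ \Longrightarrow\ W(\dot\pi)\xrightarrow{\Delta_{\ell^\prime,\ell^\dprime}}W(\dot\pi^\prime)\otimes W(\dot\pi^\dprime)
\]
and a lower square built from the maps $\iota$ and the parabolic inductions of $\pi$ and $\dot\pi$.

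For the upper square, note that $\dot\sigma^\prime$ is lower triangular (because $\dot\pi^\prime$ is right-justified) and $\dot\sigma^\dprime$ is upper triangular (because $\dot\pi^\dprime$ is left-justified), with $\dot\sigma=\dot\sigma^\prime+\dot\sigma^\dprime$. This is exactly the setting of Theorem~\ref{thm:grownup}. The remaining condition to check is $\ell^\prime\gge \dot s_{m+n,1}$ and $\ell^\dprime\gge \dot s_{1,m+n}$. By \eqref{eq:sigma-def}, $\dot s_{m+n,1}$ counts the left indentation of the first row of $\dot\pi$. This indentation sits inside $\dot\pi^\prime$ and is therefore at most $\ell^\prime$; the symmetric argument bounds $\dot s_{1,m+n}$ by $\ell^\dprime$. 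Theorem~\ref{thm:grownup} then says that $\Delta$ descends to the truncations and coincides with $\Delta_{\ell^\prime,\ell^\dprime}$ on $W(\dot\pi)$, so the upper square commutes.

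For the lower square we invoke Lemma~\ref{lem:paraiota}, with $(\dot\pi,\pi)$ playing the roles of $(\pi,\vec\pi)$. The hypotheses hold because $\dot\pi^\prime$ and $\pi^\prime$ have the same row lengths, as do $\dot\pi^\dprime$ and $\pi^\dprime$. The lemma gives $\Delta_{\ell^\prime,\ell^\dprime}\circ\iota=(\iota\otimes\iota)\circ\Delta_{\ell^\prime,\ell^\dprime}$, which is the commutativity of the lower square.

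The only delicate point is bookkeeping: the vertical arrows of Corollary~\ref{SWcop} must be exactly these composites, with all identifications consistent. In particular, the isomorphism $\iota$ of \eqref{iotadef} between $\rY_{m|n}(\dot\sigma)$ and $\rY_{m|n}(\sigma)$ has to descend to the $\iota$ of \eqref{Wiota}. This holds because both fix the $D_i^{(r)}$, so they preserve the ideal generated by $\{D_1^{(r)}\mid r>p_1\}$; here we use that $p_1$ is the same for $\pi$ and $\dot\pi$. With that checked, stacking the upper and lower squares yields the commutative diagram.
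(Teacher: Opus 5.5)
Your proposal is correct and is essentially the paper's argument. The paper deduces the corollary from Theorem~\ref{thm:grownup}, which is your upper square for the triangular decomposition $\dot\sigma=\dot\sigma^\prime+\dot\sigma^\dprime$, together with Remark~\ref{rem:row-shift}, which is exactly your use of Lemma~\ref{lem:paraiota} to transport $\Delta_{\ell^\prime,\ell^\dprime}$ along the isomorphisms $\iota$ (your lower square). Your checks that $\ell^\prime\gge \dot s_{m+n,1}$, that $\ell^\dprime\gge \dot s_{1,m+n}$, and that $\iota$ respects the truncation ideals are details the paper leaves implicit.
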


The comultiplication of a Hopf algebra allows one to construct the tensor product of modules. In our setup, the parabolic induction \eqref{Wdel} plays a similar role with an assumption depending on their parity sequences, which is a new phenomenon in $W$-superalgebra. 
Let $\pi^\prime$, $\pi^\dprime$ be two pyramids with $W(\pi^\prime), W(\pi^\dprime)$ their associated $W$-superalgebras, respectively. With the isomorphism \eqref{Wiota}, we may assume that $\pi^\prime$ is right-justified and $\pi^\dprime$ is left-justified. 
We further assume that $\pi^\prime$ and $\pi^\dprime$ are {\em $\fks$-compatible}, by which we mean that every box in each row of the joint pyramid $\pi:=\pi^\prime\otimes \pi^\dprime$ has the same parity. 
For example, the following two pyramids on the left are not $\fks$-compatible, but the two in the middle are. 
\[
\begin{ytableau}
\none & \none & + \\
\none & \none & - \\
\none & + & + \\
- & - & - 
\end{ytableau}
~~~
\begin{ytableau}
\none \\
+ \\
- & - \\
+ & + & +
\end{ytableau}
\qquad\qquad
\begin{ytableau}
\none & + & + \\
\none & - & - \\
\none & + & + \\
- & - & -
\end{ytableau}
~~~
\begin{ytableau}
\none \\
- \\
+ & + \\
- & - & -
\end{ytableau}
\quad \overset{\otimes}{\Longrightarrow} \quad
\begin{ytableau}
\none & + & + \\
\none & - & - & - \\
\none & + & + & + & + \\
- & - & - & - & - & - 
\end{ytableau}
\]

Let $M^\prime$ be a $W(\pi^\prime)$-module and let $M^\dprime$ be a $W(\pi^\dprime)$-module. With the assumption that $\pi^\prime$ and $\pi^\dprime$ are $\fks$-admissible, it makes sense to define $\pi:=\pi^\prime\otimes\pi^\dprime$. Then the outer tensor $M^\prime\boxtimes M^\dprime$ can be equipped with a $W(\pi)$-module structure by \eqref{Wdel}.

\begin{rem}\label{rem:parity}
Recall the observation in \cite{Pe14} (see also \cite{Ts20} for a proof) that the definition of the super Yangian $\rY_{m|n}$ is independent of the choice of the parity sequence $\fks$, up to isomorphism. A similar result holds for $\rY_{m|n}(\sigma)$ as follows. Let $\mu=(\mu_1,\ldots,\mu_z)$ be the minimal admissible shape of $\sigma$ and let $\fkS_\mu$ denote the corresponding Young subgroup of $\fkS_{m+n}$. Consider the obvious action of $\fkS_\mu$ permuting the parity sequence $\fks$. Then we have 
\beq\label{row-permute}
\rY_{m|n}(\sigma, \fks) \cong \rY_{m|n}(\sigma,\omega\fks), \quad \forall\, \omega\in \fkS_\mu,
\eeq
where the notation $\rY_{m|n}(\sigma, \fks)$ emphasizes the choice of the parity sequence $\fks$.
We first establish this fact for the truncation $\rY_{m|n}^\ell(\sigma, \fks)\cong \rY_{m|n}^\ell(\sigma,\omega\fks)$.
Indeed, this can be easily deduced from the fact that permuting the sequence $\fks$ is equivalent to permuting the rows of $\pi$, and any permutation in $\fkS_\mu$ does not change the corresponding nilpotent element \eqref{e_pi}. Since the definition of $W$-superalgebra depends only on the even nilpotent element $e$ \cite{GG02,Zha14}, we have $W(\pi)\cong W(\omega\pi)$, which further implies that 
\beq\label{row-permute-tr}
\rY_{m|n}^\ell(\sigma, \fks)\cong \rY_{m|n}^\ell(\sigma,\omega\fks),  \quad \forall\, \omega\in \fkS_\mu.
\eeq
Then a similar inverse limit argument as in Theorem \ref{thm:grownup} deduces \eqref{row-permute}. 
As a consequence, when necessary, one may first permute rows of $\pi^\prime$ and $\pi^\dprime$, making them $\fks$-compatible, and then construct the outer tensor of their modules. 
\end{rem}

\subsection{Harish-Chandra homomorphisms}\label{1col-case}
In this subsection, we consider the case when $\pi$ is a single column; that is, $\pi$ consists of $M$ even boxes and $N$ odd boxes that are stacked into a single column with respect to some order. Note that the reading of $\pi$ from top to bottom gives a parity sequence $(\fks_i)_{1\lle i\lle M+N}$ in which 1 appears exactly $M$ times. 
Since the nilpotent element corresponding to $\pi$ is zero, as a special case of Theorem \ref{thm:pe21}, the associated $W$-superalgebra $W(\pi)$ is identified as $\mathrm U(\gl_{M|N})$ by the evaluation map \eqref{evahom}.
To be explicit, $D_i^{(1)}\in W(\pi)$ is identified with $\fks_i e_{i,i}\in \mathrm U(\gl_{M|N})$ for all $1\lle i\lle M+N$.
Recall that $\fkc$ is the span of $\{D_i^{(1)} \, |\, 1\lle i\lle M+N\}$, where $\fkc^*$ is spanned by $\{ \ve_j \,|\, 1\lle j\lle M+N\}$ such that $\ve_j$ is dual to $\fks_jD_j^{(1)}$. 

It is well known that the center of $\mathrm U(\gl_{M|N})$ can be identified with $I(\fkc)$, which is a certain subalgebra of $S(\fkc)$ that we will recall immediately, by the Harish-Chandra map. 
For $1\lle i\lle M+N$, let $x_i:=e_{i,i}$ if $\fks_i=1$ and $y_i:=-e_{i,i}$ if $\fks_i=-1$ so that $S(\fkc)=\mathbb{C}[x_1,\ldots,x_M,y_1,\ldots, y_N]$.
The Weyl group of $\g=\gl_{M|N}$ is $\fkS_M\times \fkS_N$, where $\fkS_M$ permutes $\{x_i\}_{1\lle i\lle M}$ and $\fkS_N$ permutes $\{y_j\}_{1\lle j\lle N}$. Then we have 
\beq\label{def:Ioc}
I(\fkc):= \left\{f \in S(\fkc)^{\fkS_M \times \fkS_N}\:\bigg|\:
\begin{array}{l}
\frac{\partial f}{\partial x_i} + \frac{\partial f}{\partial y_j}
\equiv 0 \pmod{x_i-y_j}\\
\text{for any $1 \lle i \lle M$, $1 \lle j \lle N$}
\end{array}
\right\}.
\eeq
Moreover, a generating set of $I(\fkc)$ is given by the elementary supersymmetric polynomials
\beq\label{def:essp}
e_{r}(x_1,\ldots,x_M/y_1,\ldots,y_N) := \sum_{a+b=r} (-1)^be_{a}(x_1,\ldots,x_M) h_{b}(y_1,\ldots,y_N)
\eeq
for all $r\gge1$, where $e_{a}(x_1,\ldots,x_M)$ means the $a^{th}$ elementary symmetric polynomial and $h_{b}(y_1,\ldots,y_N)$ means the $b^{th}$ complete homogeneous symmetric polynomials.

By a normal order $\lhd$ on $\I=\{1,\ldots,M, \ovl{1},\ldots, \ovl{N}\}$ we mean a total order on $\I$ such that $1\lhd \cdots \lhd M$ and $\ovl{1} \lhd \cdots \lhd \ovl{N}$.
Given such an order, the associated Weyl vector $\rho^\lhd\in \fkc^*$ is uniquely defined by
\beq\label{def:rhos}
\rho^\lhd(\fks_jD_j^{(1)})=(\rho^\lhd, \ve_j)=-\ka_j
\eeq
for all $j\in \I$, where $\ka_j$ is defined in \eqref{eq:ka}.

The parity sequence $\fks$ uniquely determines a normal order $\lhd$ on $\I$ as follows.
We first insert $\lhd$ between each $\fks_i$ and $\fks_{i+1}$, then replace those $\{ \fks_i \, | \, \fks_i=1\}$ by $1,\ldots, M$ and those $\{ \fks_i \,|\, \fks_i=-1\}$ by $\ovl{1},\ldots, \ovl{N}$ from left to right. 
Conversely, if a normal ordering of $\I$ is given, it is easy to recover the corresponding parity sequence where any number without a bar corresponds to 1 and any number with a bar corresponds to $-1$.
For example, $\fks=(1,1,-1,1,-1,1)$ corresponds to the normal order $1\lhd 2\lhd \ovl{1}\lhd 3 \lhd \ovl{2} \lhd 4$. 

Let $\fkb^\lhd$ be the Borel subalgebra spanned by $\{e_{i,j} \,|\, i\unlhd  j\}$ with $\fkn_+^\lhd$ = span $\{e_{i,j} \,|\, i\lhd j\}$ and  $\fkn_-^\lhd$ = span $\{e_{j,i} \,|\, i\lhd j\}$.
It follows from the PBW theorem that the direct sum decomposition 
\beq\label{HC-decom}
\mathrm U(\g)=S(\fkc)\oplus (\fkn_-^\lhd \mathrm U(\g)+ \mathrm U(\g)\fkn_+^\lhd)
\eeq
induces an algebra homomorphism $\phi^\lhd:\mathrm U(\g)\rightarrow S(\fkc)$. The Harish-Chandra homomorphism is defined by the restriction of $\phi^\lhd$ on $Z(\g)$ followed by an automorphism
\beq\label{def:HC}
\HC:=S_{-\rho^\lhd}\circ \phi^\lhd: Z(\g)\rightarrow S(\fkc),
\eeq
where $S_{-\rho^\lhd}:S(\fkc)\rightarrow S(\fkc)$ is induced by 
\beq\label{rhose}
S_{-\rho^\lhd}(e_{i,i})= e_{i,i}+\fks_i\kappa_i, \qquad \forall\, 1\lle i\lle M+N.
\eeq
Similar to the notation $z_{M|N,\fks}(u)$ from \eqref{eq:zdef}, we shall write $\HC^\fks$ when we want to emphasize which parity sequence we are using in the definition of the Harish-Chandra homomorphism. 
We recall some well-known results as the following theorem. 

\begin{thm}[{\cite[Thms~2.2--2.4]{BG19}\cite[\S2.2.3]{CW12}}]\label{thmHC}
Let $\fks$ be an arbitrary parity sequence of type $(M|N)$. We have the following.
\begin{enumerate}
\item The homomorphism $\HC:Z(\g)\rightarrow I(\fkc)$ is an algebra isomorphism. 
\item The map $\HC:Z(\g)\rightarrow S(\fkc)$ is independent of the choice of the parity sequence $\fks$ (equivalently, independent of the choice of the normal order $\lhd$).
\item We have 
\[\HC^\fks(z_{M|N,\fks}(u))=\prod_{i\in \I} (u+\omega_i)^{\fks_i}=\prod_{k=1}^{M}(u+x_k) \bigg/ \prod_{t=1}^N(u+y_t),\]
where the product in the middle expression is written from left to right increasingly with respect to the normal ordering on $\I$ determined by $\fks$, and $\omega_j=e_{j,j}=x_{j'}$ if $\fks_j=1$ and $\omega_j=-e_{j,j}=y_{j'}$  if $\fks_j=-1$ with $$j':=\#\left\{1\lle i < j \: \big| \: \fks_i = \fks_j \right\} +1.$$
\item In particular, the coefficients $\{z_{M|N,\fks}^{(r)}\,|\,r\gge 1\}$ of $z_{M|N,\fks}(u)$ generate the center $Z(\mathrm U(\gl_{M|N}))$. 
Moreover, $\HC^\fks(z_{M|N,\fks}^{(r)})=e_r(x_1,\ldots,x_M/y_1,\ldots,y_N)$.
\end{enumerate}
\end{thm}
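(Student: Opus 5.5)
We would prove the four parts in the order (3), (1), (2), (4). Part (3) is a direct highest-weight computation, and the others reduce to it together with the classical description of $I(\fkc)$.

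\emph{Step 1 (Part (3)).} Fix $\fks$ and the corresponding normal order $\lhd$. For $\lambda\in\fkc^*$, let $v_\lambda$ be the highest weight vector of the Verma module $\mathrm U(\g)\otimes_{\mathrm U(\fkb^\lhd)}\bC_\lambda$. By construction of $\phi^\lhd$ via \eqref{HC-decom}, a central element $z$ acts on $v_\lambda$ by the scalar $\phi^\lhd(z)(\lambda)$. So it suffices to compute the action of $z_{M|N,\fks}(u)$ on $v_\lambda$.

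Under $\ev$, the series $\zeta_k(u)$ is the image of $D_k(u)=|\sfT_{[k]}(u)|_{kk}$. By the Gauss decomposition \eqref{eq:GD}, we have $D_k(u)=\sfT_{k,k}(u)-\sum_{j<k}\sfF_{k,j}(u)D_j(u)\sfE_{j,k}(u)$. Every $\sfE_{j,k}(u)$ with $j<k$ lies in the left ideal generated by the raising operators $\fkn_+^\lhd$ after evaluation, so it kills $v_\lambda$. Hence $\zeta_k(u)v_\lambda=(1+\fks_k\lambda(e_{kk})u^{-1})v_\lambda$.

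Substituting into \eqref{eq:zdef} gives
\[
z(u)v_\lambda=\prod_i\big(u-\kappa_i+\fks_i\lambda(e_{ii})\big)^{\fks_i}v_\lambda .
\]
Composing with the shift $S_{-\rho^\lhd}$ of \eqref{rhose} replaces $e_{ii}$ by $e_{ii}+\fks_i\kappa_i$. This turns each factor into $u+\fks_ie_{ii}=u+\omega_i$, which is exactly (3).

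\emph{Step 2 (Parts (1) and (4)).} Expanding the right side of (3) shows that $\HC^\fks(z^{(r)})=e_r(x/y)$.

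For (1), we need three things.
\begin{itemize}
\item Injectivity of $\HC$. This is standard: a central element acting by zero on all Verma modules is zero, by Zariski density of highest weights together with the PBW/associated-graded argument.
\item $\HC(Z(\g))\subseteq I(\fkc)$. Weyl group invariance and the supersymmetry condition $\partial_{x_i}f+\partial_{y_j}f\equiv0 \pmod{x_i-y_j}$ are both obtained from central characters of Verma modules. Simple reflections give equal central characters for the even part. For an odd simple root $\alpha$, if $(\lambda+\rho,\alpha)=0$ then $f_\alpha v_\lambda$ is singular, which forces $f(\lambda+\rho)=f(\lambda+\rho-\alpha)$ along the hyperplane.
\item Surjectivity. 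This follows because the $e_r(x/y)$ generate $I(\fkc)$ (Sergeev, Pragacz), and by Step 2 they are all hit by the $z^{(r)}$.
\end{itemize}
Since $\HC$ is then an isomorphism, the $z^{(r)}$ generate $Z(\g)$, giving (4).

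\emph{Step 3 (Part (2)).} Independence of $\fks$ is the step we expect to be the main obstacle. The plan is to compare two normal orders differing by one odd reflection. For $\lambda$ generic with respect to the odd simple root $\alpha$, the $\fkb^{\lhd}$-Verma module of highest weight $\lambda$ coincides with the $\fkb^{\lhd'}$-Verma module of highest weight $\lambda-\alpha$, so both have the same central character. Moreover, $\rho^{\lhd'}=\rho^{\lhd}+\alpha$, so the $\rho$-shifted values agree. Genericity then extends to all $\lambda$ by Zariski density, and since any two normal orders are connected by odd reflections, (2) follows.

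Alternatively, (2) follows at once from (3) and (4). Both $\HC^\fks$ and $\HC^{\fks'}$ send $\prod_i(u-\kappa_i)^{\fks_i}\mathcal C(u)$ to the same series $\prod_k(u+x_k)/\prod_t(u+y_t)$. If one checks that $z_{M|N,\fks}(u)=z_{M|N,\fks'}(u)$, since both are evaluations of the same quantum Berezinian under identified Hopf isomorphisms, the two maps agree on a generating set and hence everywhere. We would attempt this cleaner route first, falling back on the odd-reflection argument if identifying the Berezinians across parities requires care.
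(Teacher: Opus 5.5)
Your Step 1 is essentially the paper's own ``more direct approach'' to (3). The paper writes $z_{M|N,\fks}(u)=\prod_i(u-\kappa_i)^{\fks_i}\ev(\mathcal C_{M|N}(u))$ via \eqref{eq:Ber} and reads off $\HC^\fks$ factor by factor. That is the same computation as your evaluation on a $\fkb^\lhd$-highest weight vector: the terms $\sfF_{k,j}D_j\sfE_{j,k}$ die because $\ev(\sfE_{j,k}(u))$ lies in the left ideal generated by $\fkn_+^\lhd$.

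Beyond that the paper does not argue. It quotes (1) from Kac--Sergeev (via Musson), (2) from \cite[Thm.~2.2]{BG19}, and (3) in the standard case from \cite[Thm.~2.3]{BG19}. It gets (3) for general $\fks$ either from (2) or by the direct computation, and (4) from (1)--(3). You instead reprove the quoted results by standard Verma-module arguments:
\begin{itemize}
\item faithfulness of the family of Verma modules for injectivity;
\item central characters for $\HC(Z(\g))\subseteq I(\fkc)$;
\item generation of $I(\fkc)$ by the $e_r(x/y)$, together with (3), for surjectivity;
\item odd reflections for (2).
\end{itemize}
This is a legitimate, self-contained route. The paper's route buys brevity; yours makes the logical dependencies explicit, and one of them needs fixing.

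The even half of your $W$-invariance argument requires every simple reflection of $\fkS_M\times\fkS_N$ to come from a $\fkb^\lhd$-simple even root. That holds for the standard $\fks$ but not in general. For $\fks=(1,-1,1,-1)$, i.e.\ $1\lhd\ovl{1}\lhd 2\lhd\ovl{2}$, all simple roots of $\fkb^\lhd$ are odd. There $e_{2,1}^n v_\lambda$ is not singular, since $[e_{\ovl{1},2},e_{2,1}]=e_{\ovl{1},1}$ is a lowering operator.

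The fix is to reorder. First prove (2) by your odd-reflection argument, which does not use (1). It also needs no genericity: for an odd simple root $\alpha$, the vector $f_\alpha v_\lambda$ is always a nonzero $\fkb^{\lhd'}$-singular vector. Then prove (1) for the standard $\fks$ and transfer it to arbitrary $\fks$ using (2).

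Your preferred ``cleaner'' route to (2) is not really a shortcut. Given (3), (4) and injectivity of $\HC$, the identity $z_{M|N,\fks}(u)=z_{M|N,\fks'}(u)$ is equivalent to (2). So it must be proved independently at the Yangian level: identify the Berezinians across parity sequences and check that this identification is compatible with $\ev$. That route also runs in the opposite direction to the paper, which deduces (3) for general $\fks$ from (2).
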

\begin{proof}
Part (1) is due to Kac and Sergeev, see \cite[\S13.2]{Mus12} for details. Part (2) is established in \cite[Thm.~2.2]{BG19} while Part (3) for the standard parity sequence is obtained in \cite[Thm.~2.3]{BG19}. Part (4) follows from Parts (1)-(3).

Part (3) for a general parity sequence $\fks$ follows from Part (2) and the statement for the standard parity case. Here we provide a more direct approach. By the explicit decomposition of the quantum Berezinian $\mc C_{M|N}(u)$ with respect to an arbitrary parity sequence given in \eqref{eq:Ber}, together with the fact that the evaluation map  $\ev:\mathrm Y_{M|N} \rightarrow \mathrm U(\gl_{M|N})$ from \eqref{evahom} is a homomorphism, one deduces that
\[
z_{M|N,\fks}(u)= \prod_{i=1}^{M+N} (u-\kappa_i)^{\fks_i}\ev( \mathcal C_{M|N}(u)).
\] 
By \eqref{def:HC}, the Harish-Chandra map $\HC^\fks$ with respect to $\fks$ carries the correct translations to each $e_{j,j}$, deducing the desired expression.
\end{proof}

Since the normal order $\lhd$ is determined by $\fks$, the theorem above implies that no matter how the boxes of the column $\pi$ are stacked, we always get the same isomorphism between $Z(\g)$ and $I(\fkc)$, i.e., their images under the Harish-Chandra maps coincide.

\begin{eg}
Consider $\gl_{2|2}$. Let $\fks=(1,1,-1,-1)$ be the standard one, where $(\kappa_1,\kappa_2,\kappa_3,\kappa_4)=(0,1,1,0)$. Thus $\I=\{1\lhd 2\lhd \ovl{1} \lhd \ovl{2}\}$, and 
\begin{multline*}
z_{2|2,\fks}(u)=\zeta_{1}(u)\zeta_2(u-1)\big(\zeta_{\ovl{1}}(u-1)\zeta_{\ovl{2}}(u)\big)^{-1}=\frac{(u+e_{1,1})(u-1+e_{2,2})}{(u-1-e_{3,3})(u-e_{4,4})}\\
=\frac{(u+(e_{1,1}-0))(u+(e_{2,2}-1))}{(u-(e_{3,3}+1))(u-(e_{4,4}+0))}\stackrel{\HC^\fks}{\Rightarrow}
\frac{(u+e_{1,1})(u+e_{2,2})}{(u-e_{3,3})(u-e_{4,4})}
=\frac{(u+x_1)(u+x_2)}{(u+y_1)(u+y_2)}.
\end{multline*}
On the other hand, choose $\tl\fks =(-1,1,-1,1)$, where $\kappa_i=-1$ for all $1\lle i\lle 4$.
Thus $\I=\{\ovl{1}\lhd 1\lhd \ovl{2} \lhd 2\}$, and 
\begin{multline*}
z_{2|2,\tl\fks}(u)=\big(\zeta_{\ovl{1}}(u+1)\big)^{-1}\zeta_1(u+1)\big(\zeta_{\ovl{2}}(u+1)\big)^{-1}\zeta_2(u+1)
=\frac{(u+1+e_{1,1})(u+1+e_{2,2})}{(u+1-e_{3,3})(u+1-e_{4,4})}\\
=\frac{(u+(e_{1,1}+1))(u+(e_{2,2}+1))}{(u-(e_{3,3}-1))(u-(e_{4,4}-1))}\stackrel{\HC^{\tl\fks}}{\Rightarrow}
\frac{(u+e_{1,1})(u+e_{2,2})}{(u-e_{3,3})(u-e_{4,4})}
=\frac{(u+x_1)(u+x_2)}{(u+y_1)(u+y_2)}.
\end{multline*}
\end{eg}

\section{Representations of shifted super Yangians}\label{sec:rep-shift}
In this section, we define category $\mathcal O$, highest weight modules, and $q$-characters for shifted super Yangians. Then we classify the finite dimensional irreducible modules for shifted super Yangians associated to the standard parity sequence.

\subsection{Category $\mc O$}
Recall the Lie subalgebra $\fkc$ of $\rY_{m|n}(\sigma)$ spanned by $D_i^{(1)}$ for $1\lle i\lle m+n$ and the root decomposition. Given a $\Y$-module $\mc M$, we consider it as a $\fkc$-module by restriction. For a weight $\alpha\in\fkc^*$, the \textit{weight space} of weight $\alpha$ is the subspace of $\mc M$ defined by
\[
\mc M_\alpha:=\big\{v\in \mc M\mid (D_i^{(1)}-\alpha(D_i^{(1)}))v=0, ~\forall~ 1\lle i\lle m+n\big\}.
\]

If $\mc M_\alpha$ is non-zero, we say that $\mc M_\alpha$ is a \textit{weight space} of weight $\alpha$ and $\alpha$ is a \textit{weight} of $\mc M$. A non-zero vector $v\in \mc M_\alpha$ is called a \textit{weight vector} of weight $\mathrm{wt}(v):=\alpha$.

Define $\mc O_\sigma$ to be the full subcategory of the category of $\Y$-modules, which consists of all $\Y$-modules $\mc M$ such that 
\begin{enumerate}
    \item $\mc M$ is a direct sum of finite dimensional weight subspaces;
    \item the set of all $\alpha\in\fkc^*$ such that $\mc M_{\alpha}$ is non-zero is contained in a finite union of sets of the form $\mathfrak D(\beta):=\{\alpha\in \fkc^*\mid \alpha\lle \beta\}$ for $\beta\in\fkc^*$.
\end{enumerate}

Let $\mathscr P_{m|n}:=(1+u^{-1}\bC[\![u^{-1}]\!])^{m+n}$ denote the subset of $(m+n)$-tuple of power series in $u^{-1}$ with constant term 1. We call an element in $\mathscr P_{m|n}$ an $\bm\ell$-weight. We write an $\bm\ell$-weight in the form $\bla=(\la_i(u))_{1\lle i\lle m+n}$. Clearly, $\mathscr P_{m|n}$ is an abelian group with respect to the point-wise multiplication of the tuples. For $\bla \in \mathscr P_{m|n}$, we write $\la_i^{(r)}$ for the coefficient of $u^{-r}$ in $\la_i(u)$. Then the associated weight of $\bla\in \mathscr P_{m|n}$ is defined by
\beq\label{eq:lwt-tw}
\varpi(\bla):=\fks_1 \la_1^{(1)}\ve_1+\fks_2\la_2^{(1)}\ve_2+\dots+\fks_{m+n}\la_{m+n}^{(1)}\ve_{m+n}\in\fkc^*.
\eeq

For a $\rY_{m|n}(\sigma)$-module $\mc M\in\mc O_\sigma$ and $\bla\in\mathscr P_{m|n}$, the \textit{$\bm\ell$-weight space} of $\bm\ell$-weight $\bla$ is a subspace of $\mc M$ defined by
\[
\mc M_{\bla}:=\big\{v\in \mc M\mid \forall~ 1\lle i\lle m+n \text{ and }r>0, \exists~ p>0 \text{ such that } (D_i^{(r)}-A_i^{(r)})^pv=0\big\}.
\]
If $\mc M_{\bla}$ is non-zero, we say that $\mc M_{\bla}$  is an $\bm\ell$-weight space of $\bm\ell$-weight $\bla$. A non-zero vector $v\in \mc M_{\bla}$ is called an $\bm\ell$-weight vector of $\bm\ell$-weight $\mathrm{wt}_\ell(v):=\bla$.

Note that the weight space of $\mc M$ are finite dimensional and the elements $D_i^{(r)}$ mutually commute, one has that for each $\alpha\in\fkc^*$,
\[
\mc M_\alpha=\bigoplus_{\substack{\bla\in\mathscr P_{m|n}\\ \varpi(\bla)=\alpha} }\mc M_{\bla}.
\]
Thus $\mc M$ is also a direct sum of its $\bm\ell$-weight spaces, $\mc M=\bigoplus_{\bla\in\mathscr P_{m|n}}\mc M_{\bla}$. 
\subsection{Highest $\bm\ell$-weight modules}
For $\bla\in \mathscr P_{m|n}$, a vector $v$ in a $\rY_{m|n}(\sigma)$-module $\mc M$ is called a \textit{highest $\bm\ell$-weight vector of $\bm\ell$-weight $\bla$} if
\begin{enumerate}
    \item $E_i^{(r)}v=0$ for all $1\lle i<m+n$ and $r>s_{i,i+1}$;
    \item $D_i^{(r)}v=\la_i^{(r)}v$ for all $1\lle i\lle m+n$ and $r> 0$.
\end{enumerate}
We call $\mc M$ a \textit{highest $\bm\ell$-weight module of $\bm\ell$-weight $\bla$} if it is generated by such a highest $\bm\ell$-weight vector.

Alternatively, it is not hard to see from the Gauss decomposition \eqref{eq:GD-shift} that a vector $v$ in a $\rY_{m|n}(\sigma)$-module $\mc M$ is a highest $\bm\ell$-weight module of $\bm\ell$-weight $\bla$ if and only if $T_{i,j}^{(r)}v=0$ for all $1\lle i<j\lle m+n$ and $r>s_{i,j}$, and $T_{i,i}^{(r)}v=\la_i^{(r)}v$ for all $1\lle i\lle m+n$ and $r>0$.

Write $\sigma=\sigma'+\sigma''$ where $\sigma'$ (resp. $\sigma''$) is a lower (resp. upper) shift matrix. 
\begin{lem}\label{lem:tensor}
Let $\mc M'$ and $\mc M''$ be modules over $\rY_{m|n}(\sigma')$ and $\rY_{m|n}(\sigma'')$, respectively. If $v\in \mc M'$ (resp. $w\in \mc M''$) is a highest $\bm\ell$-weight vector of $\bm\ell$-weight $\bla$ (resp. $\bm\xi$), then $v\otimes w$ is a highest $\bm\ell$-weight vector of $\bm\ell$-weight $\bla\bm\xi$.
\end{lem}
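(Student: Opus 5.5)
We use the characterization of highest $\bm\ell$-weight vectors via the $T$-generators stated just before the lemma. The vector $v\otimes w$ in $\mc M'\otimes\mc M''$, viewed as a $\Y$-module through $\Delta$ from Lemma~\ref{lem:copro-res}, is a highest $\bm\ell$-weight vector of weight $\bla\bm\xi$ if and only if
\[
T_{i,j}(u)(v\otimes w)=0 \quad (i<j), \qquad T_{i,i}(u)(v\otimes w)=\la_i(u)\xi_i(u)(v\otimes w).
\]
Here $T_{i,j}(u)$ denotes the series in $\Y$ from \eqref{eq:GD-shift}. The natural tool is the coproduct formula \eqref{coproduct}, but it is stated for the unshifted generators $\sfT_{i,j}(u)$, not for $T_{i,j}(u)$. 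So we work instead with the Gaussian generators in $\rY_{m|n}$, whose values on $v$ and $w$ we control.

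Since $\sigma'$ is lower triangular, $E_j^{(r)}\in \rY_{m|n}(\sigma')$ for all $r>0$. Thus $v$ is annihilated by every $\sfE_j^{(r)}$, $r\gge1$, and by every $\sfE_{i,j}^{(r)}$, which are iterated commutators of these. Hence $\sfT_{i,j}(u)v=0$ for $i<j$ and $\sfT_{i,i}(u)v=\la_i(u)v$ modulo the span of terms $F\cdots$, using the Gauss decomposition \eqref{eq:GD}. We plan to avoid this complication by using Proposition~\ref{prop:copro}, which expresses the coproduct directly in Drinfeld generators.

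\textbf{Step 1.} By Proposition~\ref{prop:copro}(1),
\[
\Delta(D_i^{(r)})\equiv\sum_s D_i^{(s)}\otimes D_i^{(r-s)} \pmod{\mathrm K^{>0}_{m|n}\otimes\mathrm K^{<0}_{m|n}}.
\]
An element of $\mathrm K^{>0}_{m|n}$ is a sum of terms $X\,E_k^{(t)}\,Y$ with $X,Y\in \rY^{\gge0}_{m|n}$. Every such element kills $v$, because $\rY^{\gge0}_{m|n}v\subset\bC v$ (each $D$ acts on $v$ by a scalar and each $E$ kills it). One subtlety: these elements live in $\rY_{m|n}$, and $v$ is only a $\rY_{m|n}(\sigma')$-module vector. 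To make sense of this, first note that $\rY^{\gge0}_{m|n}\subset \rY_{m|n}(\sigma')$ because the upper shifts of $\sigma'$ vanish. Second, since Theorem~\ref{thm:grownup} says $\Delta(\Y)\subset \rY_{m|n}(\sigma')\otimes\rY_{m|n}(\sigma'')$, we can write $\Delta(D_i^{(r)})$ inside the PBW basis of Proposition~\ref{prop:PBW} for the tensor product ordered as $F\,D\,E$. The only terms surviving on $v\otimes w$ are then those whose left factor has no $E$ part. By Proposition~\ref{prop:copro}, these are exactly $\sum_s D_i^{(s)}\otimes D_i^{(r-s)}$. This gives $D_i(u)(v\otimes w)=\la_i(u)\xi_i(u)\,v\otimes w$.

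\textbf{Step 2.} We treat $E_i^{(r)}$ with $r>s_{i,i+1}=s''_{i,i+1}$ in the same way using Proposition~\ref{prop:copro}(2). Acting on $v\otimes w$:
\begin{itemize}
\item the term $1\otimes E_i^{(r)}$ kills $w$;
\item each term $E_i^{(s)}\otimes H_i^{(r-s)}$ kills $v$;
\item the error terms, which lie in $(\mathrm K^{>0})^2\otimes\mathrm K^{<0}$, kill $v$.
\end{itemize}
Hence $E_i^{(r)}(v\otimes w)=0$, including the required sign conventions of the super tensor product, since each term still vanishes.

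\textbf{Main obstacle.} The delicate point is making the ``modulo'' statements of Proposition~\ref{prop:copro} legitimate when acting on $\mc M'\boxtimes\mc M''$, because the ideals $\mathrm K$ there live in the unshifted $\rY_{m|n}$. We plan to resolve this by the PBW/triangular-decomposition argument sketched in Step 1. Alternatively, we can replace the congruences by their shifted versions: the ideals $\mathrm K^{>0}_{m|n}(\sigma')$ and $\mathrm K^{<0}_{m|n}(\sigma'')$ (with the same left/right placement as in Proposition~\ref{prop:copro}) contain the corresponding parts of $\Delta(X)$ by Theorem~\ref{thm:grownup} combined with the triangular decomposition \eqref{eq:triangle}.
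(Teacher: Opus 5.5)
Your proposal is correct and uses the same argument as the paper, whose proof simply cites Proposition~\ref{prop:copro} together with Lemma~\ref{lem:copro-res}; your Steps 1 and 2 unpack that citation. The ``main obstacle'' is milder than you suggest: $\sigma'$ is lower triangular and $\sigma''$ is upper triangular, so $\rY_{m|n}^{\gge 0}\subseteq\rY_{m|n}(\sigma')$ and $\rY_{m|n}^{\lle 0}\subseteq\rY_{m|n}(\sigma'')$, hence the error terms in $\mathrm K_{m|n}^{>0}\otimes\mathrm K_{m|n}^{<0}$ and $(\mathrm K_{m|n}^{>0})^2\otimes\mathrm K_{m|n}^{<0}$ already lie in $\rY_{m|n}(\sigma')\otimes\rY_{m|n}(\sigma'')$ and kill $v\otimes w$ directly, so no PBW detour is needed.
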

\begin{proof}
It follows from Proposition \ref{prop:copro} and Lemma \ref{lem:copro-res}.
\end{proof}

\begin{rem}
By using the isomorphism $\iota$ from \eqref{iotadef}, one can remove the restriction that $\sigma'$ and $\sigma''$ are triangular.
\end{rem}

For $\bla\in\mathscr P_{m|n}$, define the corresponding Verma module $\mc M(\sigma,\bla)$ as the quotient of $\rY_{m|n}(\sigma)$ by the left ideal generated by the elements
\[
\big\{D_i^{(r)}-\la_i^{(r)}\mid 1\lle i\lle m+n,r>0\big\},\qquad \big\{E_i^{(r)} \mid 1\lle i< m+n, r>s_{i,i+1}\big\}.
\]
The Verma module $\mc M(\sigma,\bla)$ is a highest $\bm\ell$-weight module of $\bm\ell$-weight $\bla$, generated by the highest weight vector $w_{\bla}$ (which is the quotient image of the identity). Moreover, it is universal among such modules, i.e. all other highest $\bm\ell$-weight modules of $\bm\ell$-weight $\bla$ are quotients of $\mc M(\sigma,\bla)$.

\begin{prop}\label{prop:PBW-Verma-Y}
For any $\bla\in \mathscr P_{m|n}$, the following sets of vectors form bases for the Verma module $\mc M(\sigma,\bla)$:
\begin{enumerate}
    \item $\{xw_{\bla}\mid x\in \mathcal X\}$, where $\mc X$ denotes the collection of all supermonomials in the elements $\{F_{j,i}^{(r)}\mid 1\lle i<j\lle m+n,r>s_{j,i}\}$ taken in some fixed order;
    \item $\{yw_{\bla}\mid y\in \mathcal Y\}$, where $\mc Y$ denotes the collection of all supermonomials in the elements $\{T_{j,i}^{(r)}\mid 1\lle i<j\lle m+n,r>s_{j,i}\}$ taken in some fixed order.
\end{enumerate}
\end{prop}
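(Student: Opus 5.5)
Part (1) comes first, from the triangular decomposition in Proposition~\ref{prop:PBW}. The multiplication map $\Ym\otimes\Yz\otimes\Yp\to\Y$ is an isomorphism of superspaces, so $\Y$ is a free right module over the positive Borel $\rY^{\gge0}_{m|n}(\sigma)=\Yz\Yp$ with basis $\mc X$. This uses the PBW basis of $\Ym$ in the $F_{j,i}^{(r)}$. Let $J$ be the left ideal of $\rY^{\gge0}_{m|n}(\sigma)$ generated by the $D_i^{(r)}-\la_i^{(r)}$ and the $E_i^{(r)}$. Then $\mc M(\sigma,\bla)=\Y\otimes_{\rY^{\gge0}_{m|n}(\sigma)}\big(\rY^{\gge0}_{m|n}(\sigma)/J\big)$, because the left ideal of $\Y$ generated by the same elements is $\Y J$.

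So it suffices to show $\rY^{\gge0}_{m|n}(\sigma)/J\cong\bC$, that is, $J=\mathrm{K}^{>0}_{m|n}(\sigma)+\bC\text{-span}\{D-\la(D)\}$ has codimension one. Define the one-dimensional representation $\bC_{\bla}$ of $\rY^{\gge0}_{m|n}(\sigma)$ by sending each $E_i^{(r)}$ to $0$ and each $D_i^{(r)}$ to $\la_i^{(r)}$. This is well defined because the relations \eqref{dr2}, \eqref{dr3}, \eqref{dr6}, \eqref{dr8}, \eqref{dr10}, \eqref{dr11}, \eqref{sdr1} that involve only $D$'s and $E$'s are homogeneous of positive degree in $E$ or are commutation relations among the $D$'s. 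The one point needing care is that these relations present the Borel subalgebra. This follows from Proposition~\ref{prop:PBW}: the Borel has basis given by monomials in $D$ times monomials in $E$, and $\mathrm{K}^{>0}$ is spanned by such monomials with at least one $E$ factor. Hence $\rY^{\gge0}/\mathrm{K}^{>0}\cong\Yz$, which is a free polynomial algebra in the $D_i^{(r)}$, and $\bC_{\bla}$ is its quotient by the evaluation character.

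It follows that $\mc M(\sigma,\bla)\cong\Y\otimes_{\rY^{\gge0}}\bC_{\bla}\cong\Ym\otimes\bC_{\bla}$ as superspaces, which gives (1). This step, identifying $\rY^{\gge0}/J$ with $\bC$ via the PBW structure of the Borel, is the main thing to check.

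For (2), use a triangularity argument. By \eqref{eq:GD-shift}, for $i<j$ we have $T_{j,i}(u)=\sum_{k\lle i}F_{j,k}(u)D_k(u)E_{k,i}(u)$. Applied to $w_{\bla}$, only the terms with $k=i$ survive modulo elements in which an $E$ stands on the right. More carefully, write any product of $T_{j,i}$'s in the normal form $\Ym\Yz\Yp$ and apply it to $w_{\bla}$. Then $y w_{\bla}$ equals $\big(\prod F_{j,i}^{(r)}\big)w_{\bla}$ up to a nonzero scalar, plus a combination of basis vectors $x w_{\bla}$ from (1) of strictly lower total degree in the canonical filtration. Here we use that $T_{j,i}^{(r)}\equiv F_{j,i}^{(r)}$ modulo lower filtered degree, exactly as in the proof of Lemma~\ref{lem:PBW-T}; in fact $T_{j,i}^{(r)}$ equals $F_{j,i}^{(r)}$ plus products of generators of smaller degrees.

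Filter $\mc M(\sigma,\bla)$ by $\mathcal F_d\Y\,w_{\bla}$. By (1), the associated graded module has basis $\gr\,x\cdot w_{\bla}$. Since $\gr\,T_{j,i}^{(r)}=\gr\,F_{j,i}^{(r)}$ in the supercommutative algebra $\gr\,\Y$, the leading terms of the $y w_{\bla}$ match those of the $x w_{\bla}$ bijectively, up to sign. A filtered unitriangularity argument within each weight space (finite-dimensional in each filtered degree) then shows that $\{yw_{\bla}\}$ is also a basis.
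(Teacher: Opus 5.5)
Part (1) is fine. It is what the paper means by ``follows immediately from Proposition~\ref{prop:PBW}'', and identifying $\rY^{\gge0}_{m|n}(\sigma)/J\cong\bC$ via $\rY^{\gge0}_{m|n}(\sigma)/\mathrm K^{>0}_{m|n}(\sigma)\cong\Yz$ is a correct way to make it precise. Read your ``$\bC$-span'' as the kernel of the character on $\Yz$.

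Part (2) has a false step. It is not true that $T_{j,i}^{(r)}\equiv F_{j,i}^{(r)}$ modulo $\mathcal F_{r-1}\Y$. Nor is it true that $\gr_rT_{j,i}^{(r)}=\gr_rF_{j,i}^{(r)}$ in $\gr\,\Y$. The extra terms of $T_{j,i}(u)=\sum_{k\lle i}F_{j,k}(u)D_k(u)E_{k,i}(u)$ are products of lower-degree generators, but their \emph{total} degree is still $r$. For example, if $s_{2,1}=0$ then $T_{2,1}^{(2)}=F_{2,1}^{(2)}+F_{2,1}^{(1)}D_1^{(1)}$. Hence $\gr_2T_{2,1}^{(2)}=\gr_2F_{2,1}^{(2)}+\gr_1F_{2,1}^{(1)}\,\gr_1D_1^{(1)}$, which differs from $\gr_2F_{2,1}^{(2)}$ in the free supercommutative algebra $\gr\,\Y$. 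The proof of Lemma~\ref{lem:PBW-T} only gives a unitriangular change of free generators, not equality of leading terms. So your sentence that the leading terms of $yw_{\bla}$ and $xw_{\bla}$ match because $\gr\,T=\gr\,F$ in $\gr\,\Y$ is not justified as written.

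The repair is short. Every term of $\gr_rT_{j,i}^{(r)}-\gr_rF_{j,i}^{(r)}$ contains either a factor $\gr_bD_k^{(b)}$ with $b\gge1$ or a factor $\gr_cE_{k,i}^{(c)}$. So the difference lies in the ideal $\mathcal I\subset\gr\,\Y$ generated by these elements. This ideal annihilates $\gr\,\mc M(\sigma,\bla)$ for your filtration $\mathcal F_d\Y\,w_{\bla}$, for three reasons: $D_k^{(b)}w_{\bla}=\la_k^{(b)}w_{\bla}$ lies in filtered degree $0<b$; every $E$ kills $w_{\bla}$; and $\gr\,\Y$ is supercommutative, with $\gr\,\mc M$ cyclic on the image of $w_{\bla}$. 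Hence $\gr(yw_{\bla})=\pm\gr(xw_{\bla})$, where $x$ is obtained from $y$ by replacing each $T_{j,i}^{(r)}$ by $F_{j,i}^{(r)}$. Your graded argument then goes through. Your ``More carefully'' sentence, which normal-orders and lets $\Yz$ act by scalars on $w_{\bla}$, is exactly this mechanism at the level of the module and is correct there. What is wrong is the appeal to an identity in $\gr\,\Y$.
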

\begin{proof}
The first statement follows immediately from Proposition \ref{prop:PBW}. The second statement follows from similar filtration argument as in \cite[Thm.~5.5]{BK08} with the help of Proposition \ref{prop:PBW} and Lemma \ref{lem:PBW-T}.
\end{proof}

Therefore, any other weight vectors other than $w_{\bla}$ have weights smaller than $\mathrm{wt}(w_{\bla})=\varpi(\bla)$ and the $\bm\ell$-weight space of $\bm\ell$-weight $\bla$ in $\mc M(\sigma,\bla)$ is one-dimensional. It follows from the standard argument that $\mc M(\sigma,\bla)$ has a unique maximal submodule, the
quotient by which is irreducible and denoted by $\mc L(\sigma,\bla)$. Then $\mc L(\sigma,\bla)$ is the unique (up to isomorphism) irreducible highest $\bm\ell$-weight module of $\bm\ell$-weight $\bla$ over $\Y$. We shall always use $v_{\bla}$ to denote a highest $\bm\ell$-weight vector of $\bm\ell$-weight $\bla$ in $\mc L(\sigma,\bla)$. 

If $\sigma$ is the zero matrix, then we simply write $\mc M(\bla):=\mc M(\sigma,\bla)$ and $\mc L(\bla):=\mc L(\sigma,\bla)$.

Clearly, $\mc M(\sigma,\bla)$ is not in the category $\mc O_\sigma$. However, the irreducible $\mc L(\sigma,\bla)$ can be in $\mc O_\sigma$ with suitable $\bla$. Denote by $\mathscr R_{m|n}$ the subset of $\mathscr P_{m|n}$ consisting of all $\bla$ such that $\la_i(u)/\la_{i+1}(u)$ is a rational function for all $1\lle i<m+n$.

By the standard argument in \cite[Thm.~3.6]{MY14}, one has the following.

\begin{prop}\label{prop:O-rational}
For $\bla\in\mathscr P_{m|n}$, the $\Y$-module $\mc L(\sigma,\bla)$ has finite dimensional weight spaces (or equivalently $\mc L(\sigma,\bla)\in \mc O_\sigma$) if and only if $\bla\in\mathscr R_{m|n}$. Moreover, any simple object in $\mc O_\sigma$ is isomorphic to $\mc L(\sigma,\bla)$ for some $\bla\in \mathscr R_{m|n}$. Furthermore, all $\bm\ell$-weights of a module in $\mathcal O_\sigma$ are in $\mathscr R_{m|n}$.
\end{prop}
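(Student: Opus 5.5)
The plan is to adapt the standard argument of \cite[Thm.~3.6]{MY14} (and its Yangian predecessors). The idea is to reduce everything to rank-one computations along each simple root direction, using the triangular decomposition \eqref{eq:triangle} and the PBW basis of Proposition~\ref{prop:PBW-Verma-Y}.

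\emph{Necessity.} Suppose $\mc L(\sigma,\bla)$ has finite dimensional weight spaces, and fix $1\lle i<m+n$. Consider the weight space of weight $\varpi(\bla)-\alpha_i$. It is spanned by the vectors $F_i^{(r)}v_{\bla}$ with $r>s_{i+1,i}$, so it is finite dimensional. Hence there exist $N$ and scalars $c_0,\dots,c_N$, not all zero, with
\[
\sum_{t=0}^{N} c_t F_i^{(r+t)}v_{\bla}=0 \quad\text{for all } r \text{ large}.
\]
In fact, because $[D^{(2)}_{\bullet},F_i^{(r)}]$ shifts the index $r$ up to lower terms (use $\mathbf D_{i+1}^{(2)}$ as in the proof of Proposition~\ref{prop:copro}), a single linear relation propagates to a recursion valid for all large $r$. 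Now apply $E_i^{(s)}$ and use \eqref{dr5}: $E_i^{(s)}F_i^{(r)}v_{\bla}=-\fks_{i+1}\,\big(\text{coefficient of }u^{-r-s}\text{ in } \la_i(u)^{-1}\la_{i+1}(u)\big)v_{\bla}$. Here $D_i'(u)=D_i(u)^{-1}$ by \eqref{dr1}. This shows that the coefficients $h_k$ of $\la_{i+1}(u)/\la_i(u)$ satisfy a linear recursion with constant coefficients for all large $k$, so the series is rational. Since $s$ ranges over all $s>s_{i,i+1}$, the shifts cause no trouble: only large $k$ matter.

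\emph{Sufficiency.} Conversely, suppose each $\la_i(u)/\la_{i+1}(u)$ is rational. I would show that every weight space of $\mc L(\sigma,\bla)$ is finite dimensional, as follows.
\begin{enumerate}
\item First, for each $i$, find a relation $\sum_t c_tF_i^{(r+t)}v_{\bla}=0$ for all $r$. This holds because the vector on the left is annihilated by all $E_j^{(s)}$: for $j\neq i$ this is immediate, and for $j=i$ it follows from the recursion satisfied by the coefficients of the rational function. A nonzero vector of lower weight annihilated by every $E_j$ in an irreducible highest weight module must be zero. Consequently, the span of $\{F_i^{(r)}v_{\bla}\}_r$ is finite dimensional.
\item Next, using \eqref{dr4} and the $D^{(2)}$-shift operator, upgrade this to the statement that for every weight $\beta$, the space $\rY^-_{m|n}(\sigma)_{-\beta}v_{\bla}$ is finite dimensional. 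Here I would proceed by induction on the height of $\beta$, writing $\rY^-_{-\beta}$ as spanned by products $F_{i_1}^{(r_1)}\cdots F_{i_k}^{(r_k)}$. By the inductive hypothesis, each tail $F_{i_2}^{(r_2)}\cdots v_{\bla}$ lies in a finite dimensional $\Yz$-stable space $V$, on which the $D$'s act through a finite dimensional quotient. Thus $V$ is killed by polynomials in the shift operators, and the relations \eqref{dr4} then force $\{F_{i_1}^{(r)}V\}_r$ to span a finite dimensional space.
\end{enumerate}
I expect this inductive step to be the main obstacle. It requires a careful bookkeeping of how $[D_{i}^{(2)},F_j^{(r)}]$ acts as a shift modulo lower terms, including the parity signs $\fks_i$, and the odd case $\fks_i\neq\fks_{i+1}$, where $(F_i^{(r)})^2$-type relations differ. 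It is, however, formally identical to \cite{MY14}.

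\emph{Remaining statements.} The last two claims are then routine. A simple object in $\mc O_\sigma$ has a maximal weight by condition (2), so it contains a vector killed by all $E$'s. By commutativity and finite dimensionality of weight spaces, it contains a common $D$-eigenvector $v$, so the object is $\mc L(\sigma,\bla)$ with $\bla\in\mathscr R_{m|n}$ by the first part. Finally, for arbitrary $\bla'$-weights of $\mc M\in\mc O_\sigma$, rationality follows by applying the necessity argument to generalized eigenvectors. One can pass to a Jordan–Hölder type filtration of the finite dimensional space $\bigoplus_{\beta\le\alpha}$ of relevant weight spaces, using that $\bm\ell$-weights of $\mc M$ occur among the $\bm\ell$-weights of simple subquotients. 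Each of these is rational by the first part, as in \cite{MY14}, since $\mathscr R_{m|n}$ is a group closed under the multiplicative shifts by $\la_i/\la_{i+1}$-type factors coming from \eqref{dr3}–\eqref{dr4}.
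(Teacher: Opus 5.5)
Your necessity argument, the Step 1 singular-vector argument, and the identification of simple objects are fine. Step 2 of sufficiency, however, has a genuine gap. The only consequence of \eqref{dr4} you invoke is the shift identity $\fks_{i+1}F_i^{(r+1)}=[D_{i+1}^{(2)},F_i^{(r)}]-\fks_{i+1}F_i^{(r)}D_{i+1}^{(1)}$. For $x\in V$ this only gives $\operatorname{span}\{F_i^{(r)}x\}_{r}\subseteq \bC[D_{i+1}^{(2)}]\,F_i^{(r_0)}V$. That space is finite dimensional only if $D_{i+1}^{(2)}$ already acts locally finitely on the weight space of weight $\varpi(\bla)-\beta$, which is exactly what you are trying to prove. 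The fact that $V$ itself is killed by a polynomial in the shift operator puts no constraint on the target.

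In fact your reasoning uses nothing that distinguishes $\mc L(\sigma,\bla)$ from the Verma module $\mc M(\sigma,\bla)$. There $V=\bC w_{\bla}$ is finite dimensional, $\Yz$-stable and killed by polynomials in the shift operators, and all of \eqref{dr4} holds. Yet the vectors $F_i^{(r)}w_{\bla}$, $r>s_{i+1,i}$, are linearly independent by Proposition~\ref{prop:PBW-Verma-Y}, even when $\bla\in\mathscr R_{m|n}$. In Step 1 you supplied the missing input through the $E$-action and irreducibility; at higher heights that input disappears. A direct induction would have to carry linear recursions for the operator series $F_i(u)$ and $H_i(u)$ on all lower weight spaces and rerun the singular-vector argument at every height. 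You have not done that work, and "formally identical to \cite{MY14}" does not substitute for it.

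A short route uses tools already in the paper (compare the proofs of Proposition~\ref{prop:rk1} and Theorem~\ref{thm:fd}).
\begin{enumerate}
\item Since $v_{\bla}\in\mc L(\bla)$ remains a highest $\bm\ell$-weight vector for $\Y\subseteq\rY_{m|n}$, $\mc L(\sigma,\bla)$ is a subquotient of the restriction of the $\rY_{m|n}$-module $\mc L(\bla)$. So it suffices to take $\sigma=0$.
\item Rationality of all $\la_i/\la_{i+1}$ is exactly what lets you twist by some $\mu_f$ (which merely relabels weight spaces) so that $\la_i(u)=\prod_{k=1}^N(1+a_{i,k}u^{-1})$ for every $i$, with a common $N$.
\item Let $M(\mu^{(k)})$ be the Verma module of $\gl_{m|n}$ for the upper triangular Borel with $\mu^{(k)}(e_{i,i})=\fks_i a_{i,k}$. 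By Proposition~\ref{prop:copro}, the tensor product of highest weight vectors in $\ev^*M(\mu^{(1)})\otimes\cdots\otimes\ev^*M(\mu^{(N)})$ is a highest $\bm\ell$-weight vector of $\bm\ell$-weight $\bla$.
\item This tensor product has finite dimensional weight spaces with weights bounded above, hence so does its subquotient $\mc L(\bla)$.
\end{enumerate}

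Your treatment of the last assertion also needs repair. ``Each of these is rational by the first part'' only covers highest $\bm\ell$-weights. Closure of $\mathscr R_{m|n}$ under ``shifts'' presupposes that every $\bm\ell$-weight of $\mc L(\sigma,\bla)$ is $\bla$ times rational factors, which you have not established. Instead, run your necessity argument on operators. For $\mc M\in\mc O_\sigma$ and a weight $\alpha$, the restrictions of $F_i^{(r)}$ to $\mc M_\alpha\to\mc M_{\alpha-\alpha_i}$ and to $\mc M_{\alpha+\alpha_i}\to\mc M_\alpha$ lie in finite dimensional Hom-spaces. On these the shift identity is a linear operator; here the trick is legitimate because source and target are both finite dimensional. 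So these restrictions satisfy a common linear recursion in $r$. Hence so does $H_i^{(k)}|_{\mc M_\alpha}=-\fks_{i+1}[E_i^{(a)},F_i^{(k+1-a)}]|_{\mc M_\alpha}$. Evaluating on joint eigenvectors in each generalized $\bm\ell$-weight space shows that every $\bm\ell$-weight of $\mc M$ lies in $\mathscr R_{m|n}$; this also reproves the ``only if'' direction.
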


The following is a generalization of \cite[Prop.~3.8]{KTWWY19}.
\begin{lem}\label{lem:1-dim}
Let $\bm\zeta$ be an $\bm\ell$-weight such that
\[
\frac{\zeta_i(u)}{\zeta_{i+1}(u)}=\frac{u^{d_i}}{Q_i(u)}
\]
where $d_i:=s_{i,i+1}+s_{i+1,i}$ and $Q_i(u)$ is a monic polynomial in $u$ of degree $d_i$ for all $1\lle i<m+n$. Then the $\Y$-module $\mc L(\sigma,\bm\zeta)$ is 1-dimensional.    
\end{lem}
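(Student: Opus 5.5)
To prove the lemma, I will show that $\mc L(\sigma,\bm\zeta)$ is spanned by $v_{\bm\zeta}$, i.e.\ that $F_i^{(r)}v_{\bm\zeta}=0$ for all $1\lle i<m+n$ and $r>s_{i+1,i}$. Since $\Ym$ is generated by the $F_i^{(r)}$, this gives $\mc L(\sigma,\bm\zeta)=\Ym\Yz\Yp v_{\bm\zeta}=\bC v_{\bm\zeta}$ by the triangular decomposition \eqref{eq:triangle}. Because $\mc L(\sigma,\bm\zeta)$ is irreducible, it suffices to show that each vector $w_i^{(r)}:=F_i^{(r)}v_{\bm\zeta}$ is a highest $\bm\ell$-weight vector (or zero) of weight $\varpi(\bm\zeta)-\alpha_i\neq\varpi(\bm\zeta)$. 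Indeed, if $w_i^{(r)}\neq 0$ it would then generate a proper nonzero submodule, whose weights all lie below $\varpi(\bm\zeta)-\alpha_i$. So the whole task is to verify $E_j^{(s)}w_i^{(r)}=0$ for all $j$ and all $s>s_{j,j+1}$.

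For $j\neq i$, the relation \eqref{dr5} gives $[E_j^{(s)},F_i^{(r)}]=0$, so $E_j^{(s)}F_i^{(r)}v_{\bm\zeta}=\pm F_i^{(r)}E_j^{(s)}v_{\bm\zeta}=0$. For $j=i$, \eqref{dr5} gives
\[
E_i^{(s)}F_i^{(r)}v_{\bm\zeta}=-\fks_{i+1}\sum_{t}D_i^{\prime(r+s-1-t)}D_{i+1}^{(t)}v_{\bm\zeta},
\]
which is $-\fks_{i+1}$ times the coefficient of $u^{-(r+s-1)}$ in $\zeta_i(u)^{-1}\zeta_{i+1}(u)=Q_i(u)u^{-d_i}$, multiplied by $v_{\bm\zeta}$. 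Since $Q_i$ is monic of degree $d_i$, the series $Q_i(u)u^{-d_i}=1+c_1u^{-1}+\cdots+c_{d_i}u^{-d_i}$ has no terms of order beyond $u^{-d_i}$. Now $r>s_{i+1,i}$ and $s>s_{i,i+1}$ give $r+s-1\gge s_{i+1,i}+s_{i,i+1}+1=d_i+1>d_i$, so this coefficient vanishes. (Here $D_i^{\prime}(u)=D_i(u)^{-1}$ by \eqref{dr1}, and $D_i,D_{i+1}$ commute by \eqref{dr2}, so the sum is exactly that series coefficient.) Hence $E_i^{(s)}w_i^{(r)}=0$ as well.

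With all $E_j^{(s)}$ annihilating $w_i^{(r)}$, the Gelfand--Tsetlin subalgebra $\Yz$ preserves $\Ym_{-\alpha_i}v_{\bm\zeta}$, by \eqref{dr4}. So the span $U$ of the $w_i^{(r)}$ is stable under $\Yz$ and killed by $\Yp$, which means $\rY_{m|n}(\sigma)U=\Ym U$ lies in weights $\lle\varpi(\bm\zeta)-\alpha_i$ and is a proper submodule. This avoids needing each $w_i^{(r)}$ to be an actual $\bm\ell$-eigenvector. By irreducibility $U=0$, and the lemma follows.

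The main care point is the index bookkeeping: checking that $d_i=s_{i,i+1}+s_{i+1,i}$ matches the lowest allowed $r+s-1$, and tracking signs from odd $E,F$. Neither affects the vanishing argument.
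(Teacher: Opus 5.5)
Your proof is correct and is essentially the paper's argument. In both, the key step is that $E_i^{(s)}F_i^{(r)}v=-\fks_{i+1}H_i^{(r+s-1)}v=0$ for $i=j$, because $H_i(u)$ acts by $u^{-d_i}Q_i(u)$ while $r+s-1>d_i$, together with $[E_j^{(s)},F_i^{(r)}]=0$ for $j\neq i$. The paper runs this in the Verma module $\mc M(\sigma,\bm\zeta)$ and concludes that the $F_i^{(r)}w_{\bm\zeta}$ generate its maximal proper submodule, whereas you work in $\mc L(\sigma,\bm\zeta)$ and kill these vectors by irreducibility, which is only a cosmetic difference.
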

\begin{proof}
Consider the highest $\bm\ell$-weight vector $w_{\bm\zeta}$ in the Verma module $\mc M(\sigma,\bm\zeta)$. Then it follows from \eqref{dr5} and \eqref{eq:def-H} that
\[
E_{i}^{(r)}F_i^{(s)}w_{\bm\zeta}=[E_{i}^{(r)},F_i^{(s)}]w_{\bm\zeta}=
-\fks_{i+1}H_i^{(r+s-1)}w_{\bm\zeta}=0
\]
for all $1\lle i<m+n$, $r>s_{i,i+1}$, and $s>s_{i+1,i}$ due to our choice of $\bm\zeta$. Indeed, by \eqref{eq:def-H} we have  that $H_i(u)w_{\bm\zeta}=u^{-d_i}Q_i(u)w_{\bm\zeta}$ and hence $H_i^{(r+s-1)}w_{\bm\zeta}=0$ if $r>s_{i,i+1}$ and $s>s_{i+1,i}$. Thus the unique maximal proper submodule in $\mc M(\sigma,\bm\zeta)$ is generated by  all $F_i^{(s)}w_{\bm\zeta}$ for $1\lle i<m+n$ and $s>s_{i+1,i}$, completing the proof.
\end{proof}

\subsection{$q$-character}
Denote by $\widehat \bZ[\mathscr P_{m|n}]$ the \textit{completed group algebra} which consists of formal sums $$S=\sum_{\bla\in \mathscr P_{m|n}}a_{\bla}[\bla]$$ for integers $a_{\bla}$ satisfying
\begin{enumerate}
    \item the set $\{\varpi(\bla)\mid \bla\in \mathrm{supp}\,S\}$ is contained in a finite union of sets of the form $\mathfrak D(\beta)$ for $\beta\in\mathfrak c^*$;
    \item for each $\alpha\in \mathfrak c^*$ the set $\{\bla\in \mathrm{supp}\,S\mid \varpi(\bla)=\alpha\}$ is finite.
\end{enumerate}
Here $\mathrm{supp}\,S:=\{\bla\mid a_{\bla}\ne 0\}$ and $\widehat \bZ[\mathscr P_{m|n}]$ is multiplicative by extending $[\bla][\bm \xi]=[\bla\bm\xi]$.

Following \cite{Kn95} and \cite{FR99} (cf. also \cite{LM21} for the super Yangian case), define the $q$-character (also called \textit{Gelfand-Tsetlin character}) of a $\rY_{m|n}(\sigma)$-module $\mc M$ in the category $\mc O_\sigma$ by
\beq\label{eq:ch-def}
\mathrm{ch}\,\mc M:=\sum_{\bla\in \mathscr P_{m|n}}(\dim \mc M_{\bla})[\bla].
\eeq
Since $\mc M$ is in category $\mc O_\sigma$, we have $\mathrm{ch}\,\mc M\in \widehat \bZ[\mathscr P_{m|n}]$.

Recall the coproduct $\Delta:\Y\to\rY_{m|n}(\sigma')\otimes \rY_{m|n}(\sigma'')$ from \eqref{eq:coproduct}, where $\sigma'$ (resp. $\sigma''$) is the lower (resp. upper) triangular shift matrix such that $\sigma=\sigma'+\sigma''$. Let $\mc M'$ be a $\rY_{m|n}(\sigma')$-module and $\mc M''$ a $\rY_{m|n}(\sigma'')$-module. Then the tensor product of $\mc M'$ and $\mc M''$ can be regarded as a $\Y$-module via $\Delta$, which we denote it by the ``external" tensor product $\mc M'\boxtimes \mc M''$. Since $\Delta(D_i^{(1)})=D_i^{(1)}\otimes 1+1\otimes D_i^{(1)}$, we have
\[
\mc M'_\alpha\otimes \mc M''_{\beta}\subset (\mc M'\boxtimes \mc M'')_{\alpha+\beta}
\]
for all $\alpha,\beta\in\mathfrak c^*$. 

The following lemma is standard, following the same argument of \cite[Prop.~1]{Kn95} and \cite[Thm.~2]{FR99} with the help of Proposition \ref{prop:copro}.

\begin{lem}\label{ch-multiplicative}
Suppose $\mc M'\in \mc O_{\sigma'}$ and $\mc M''\in \mc O_{\sigma''}$. Then $\mc M'\boxtimes \mc M''\in \mc O_\sigma$ and 
\[
\mathrm{ch}(\mc M'\boxtimes \mc M'')=(\mathrm{ch}\, \mc M')(\mathrm{ch}\,\mc M'').
\]
\end{lem}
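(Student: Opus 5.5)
The plan is to follow the argument of Knight and Frenkel--Reshetikhin. The key input is the triangularity of the coproduct from Proposition~\ref{prop:copro}, restricted via Lemma~\ref{lem:copro-res}.

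First I show that $\mc M'\boxtimes \mc M''$ lies in $\mc O_\sigma$. Since $\Delta(D_i^{(1)})=D_i^{(1)}\otimes 1+1\otimes D_i^{(1)}$, we get
\[
(\mc M'\boxtimes\mc M'')_\gamma=\bigoplus_{\alpha+\beta=\gamma}\mc M'_\alpha\otimes\mc M''_\beta .
\]
The weights of $\mc M'$ lie in $\bigcup_a\mathfrak D(\beta_a)$, and those of $\mc M''$ lie in $\bigcup_b\mathfrak D(\beta'_b)$, both finite unions. For fixed $\gamma$, only finitely many pairs $(\alpha,\beta)$ with $\alpha\lle\beta_a$, $\beta\lle\beta'_b$ and $\alpha+\beta=\gamma$ occur. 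Indeed, $\beta_a-\alpha$ and $\beta'_b-\beta$ are sums of simple roots adding up to the fixed element $\beta_a+\beta'_b-\gamma$, and there are finitely many such decompositions. Hence each weight space is finite dimensional, and the weights lie in $\bigcup_{a,b}\mathfrak D(\beta_a+\beta'_b)$.

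Next I compute the $\bm\ell$-weights. Fix $\gamma$ and choose total orders compatible with the dominance order. Take an ordered basis of $\bigoplus_{\alpha+\beta=\gamma}\mc M'_\alpha\otimes\mc M''_\beta$ of the form $v\otimes w$, where $v$ runs through a basis of $\mc M'_\alpha$ adapted to the generalized eigenspace decomposition into $\mc M'_{\bla}$, and $w$ likewise for $\mc M''$. Within a fixed $\alpha$, I further choose $v$ so that each $D_i^{(r)}$ is upper triangular; this is possible since the $D_i^{(r)}$ commute. By Proposition~\ref{prop:copro}(1), $\Delta(D_i^{(r)})=\sum_s D_i^{(s)}\otimes D_i^{(r-s)}+X$ with $X\in\mathrm K^{>0}\otimes\mathrm K^{<0}$. (These ideals are taken for the unshifted Yangian, but $X$ acts on modules over the shifted subalgebras through $\Delta$ of elements of $\rY_{m|n}(\sigma)$. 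If needed, I will redo Proposition~\ref{prop:copro}(1) inside $\rY_{m|n}(\sigma')\otimes\rY_{m|n}(\sigma'')$, modulo $\mathrm K^{>0}_{m|n}(\sigma')\otimes\mathrm K^{<0}_{m|n}(\sigma'')$.) The error term $X$ raises the weight of the first factor strictly and lowers that of the second, so it maps $\mc M'_\alpha\otimes\mc M''_\beta$ into $\bigoplus_{\alpha'>\alpha}\mc M'_{\alpha'}\otimes\mc M''_{\gamma-\alpha'}$.

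Ordering the blocks by $\alpha$ decreasing, $D_i(u)$ is therefore block upper triangular. On each diagonal block it acts as $D_i(u)\otimes D_i(u)$, which is triangular with diagonal entries $\la_i(u)\xi_i(u)$. It follows that the generalized eigenvalue multiset of $D_i(u)$ on $(\mc M'\boxtimes\mc M'')_\gamma$ is the multiset of products $\bla\bm\xi$, counted with multiplicity $\dim\mc M'_{\bla}\dim\mc M''_{\bm\xi}$. I take joint generalized eigenspaces simultaneously for all $D_i^{(r)}$, using commutativity and simultaneous triangularization. Summing over $\gamma$ then gives $\mathrm{ch}(\mc M'\boxtimes\mc M'')=\mathrm{ch}\,\mc M'\cdot\mathrm{ch}\,\mc M''$, and the product is well defined in $\widehat\bZ[\mathscr P_{m|n}]$ by the finiteness shown in the first step.

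The main obstacle is the shifted setting. I must justify that the correction term $X$ in $\Delta(D_i^{(r)})$ acts on $\mc M'\boxtimes\mc M''$ as an element of $\mathrm K^{>0}_{m|n}(\sigma')\otimes \mathrm K^{<0}_{m|n}(\sigma'')$, or at least as an operator that strictly raises the first weight. It suffices to use the $\mathbf Q_{m|n}$-grading. $X$ is a sum of tensors of homogeneous elements of degrees $(\eta,-\eta)$ with $\eta>0$, and those elements lie in $\rY_{m|n}(\sigma')$ and $\rY_{m|n}(\sigma'')$ respectively by Theorem~\ref{thm:grownup}. Only the grading, not ideal membership, is needed for the triangularity, and this avoids the subtlety.
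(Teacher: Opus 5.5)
Your proposal is correct and is essentially the argument the paper intends: its proof simply cites the standard Knight/Frenkel--Reshetikhin triangularity argument together with Proposition~\ref{prop:copro}, and you have carried that argument out in full (weight bookkeeping, block upper triangularity, and reading joint generalized eigenvalues off the diagonal). You also handle the shifted setting, which the paper leaves implicit, by using the $\mathbf Q_{m|n}$-bigrading to show that the correction term in $\Delta(D_i^{(r)})$ strictly raises the first weight inside $\rY_{m|n}(\sigma')\otimes\rY_{m|n}(\sigma'')$; this is a valid way to do it.
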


\subsection{Finite dimensional irreducible modules}
We first establish some preliminary results for the rank 1 case using the results of \cite{BBG13} for principal finite $W$-superalgebras.

\begin{prop}\label{prop:rk1}
Let $\bla\in \mathscr P_{1|1}$, the irreducible $\rY_{1|1}(\sigma)$-module $\mc L(\sigma,\bla)$ is finite dimensional if and only if $\la_1(u)/\la_2(u)$ is a rational function.
\end{prop}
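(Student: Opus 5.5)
Necessity is immediate from Proposition~\ref{prop:O-rational}: a finite dimensional module has finite dimensional weight spaces, so $\mc L(\sigma,\bla)\in\mc O_\sigma$ and hence $\bla\in\mathscr R_{1|1}$, i.e.\ $\la_1(u)/\la_2(u)$ is rational. For sufficiency, write $\la_1(u)/\la_2(u)=P(u)/Q(u)$ with $P,Q$ monic of the same degree. We reduce to a tensor product of known finite dimensional modules. Here $\sigma$ is determined by $s_{1,2},s_{2,1}$, and the isomorphism $\iota$ of \eqref{iotadef} only sees $d:=s_{1,2}+s_{2,1}$. We may therefore take $\sigma$ upper triangular ($s_{2,1}=0$, $s_{1,2}=d$), at the price of an explicit relabelling of $\bla$ that preserves rationality of the ratio.

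We then produce $\mc L(\sigma,\bla)$ as a subquotient of $\mc L(\sigma,\bm\zeta)\boxtimes V$, using the coproduct \eqref{eq:copro-spec} in the form $\Delta:\rY_{1|1}(\sigma)\to \rY_{1|1}(\sigma')\otimes\rY_{1|1}(\sigma'')$. The factor $\mc L(\sigma,\bm\zeta)$ is the one-dimensional module of Lemma~\ref{lem:1-dim}, with $\zeta_1/\zeta_2=u^{d}/Q_0(u)$ for a chosen monic $Q_0$ of degree $d$; with the appropriate triangularity it sits on the $\sigma$-side. The factor $V$ is a finite dimensional highest $\bm\ell$-weight module over $\rY_{1|1}$ of $\bm\ell$-weight $\bm\xi$, where $\bm\zeta\bm\xi=\bla$. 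Thus we need $\xi_1/\xi_2=Q_0(u)P(u)/(u^dQ(u))$, which is again a ratio of monic polynomials of equal degree.

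For $\rY_{1|1}$ (unshifted) every irreducible $\mc L(\bm\xi)$ with rational $\xi_1/\xi_2$ is finite dimensional, by the classification in \cite{Zh96}. Alternatively, since $\mc L(\bm\xi)$ is a subquotient of a tensor product of evaluation modules, each $2$- or $1$-dimensional over $\gl_{1|1}$, this can be read off from \cite{BBG13} via the principal $W$-superalgebra $W(\pi)\cong \rY^\ell_{1|1}(\sigma)$ for a two-row pyramid. Indeed, \cite{BBG13} shows that every highest weight irreducible over the principal $W$-superalgebra is finite dimensional (of dimension at most $2^{\ell}$), and Theorem~\ref{thm:pe21} identifies these with $\rY_{1|1}(\sigma)$-modules on which $D_1(u)$ is polynomial after normalization. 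This gives a more direct route: twist by an automorphism $\mu_f$ of \eqref{eq:mu-f} with $f(u)$ chosen so that $f\la_1$ becomes $u^{-p_1}\times$(a monic polynomial of degree $p_1$) for large $p_1$, making $\mc L(\sigma,\mu_f^*\bla)$ factor through the truncation $\rY^\ell_{1|1}(\sigma)\cong W(\pi)$ (possible since only the ratio $\la_1/\la_2$ is constrained). Then apply the finite dimensionality of all irreducible highest weight $W(\pi)$-modules in the $\gl_{1|1}$-type principal case from \cite{BBG13}.

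By Lemma~\ref{lem:tensor} the tensor product $v_{\bm\zeta}\otimes v_{\bm\xi}$ is a highest $\bm\ell$-weight vector of weight $\bla$ in a finite dimensional module, so the submodule it generates has $\mc L(\sigma,\bla)$ as a quotient, which is therefore finite dimensional. The main obstacle is the truncation step: checking that after twisting by $\mu_f$ the kernel generators $D_1^{(r)}$, $r>p_1$, act by zero on $\mc L$. On the highest weight vector this holds by the choice of $f$, and it propagates because the ideal generated by these elements annihilates an irreducible module once its highest weight vector is killed by them. This last point needs the two-sided ideal to have the property that $I_\ell v=0$ whenever its generators kill $v$, which I would verify using the highest-weight structure and Theorem~\ref{thm:vanish}, or sidestep via the tensor product route.
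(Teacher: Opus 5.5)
Your tensor-product route is correct, but it is a detour compared with the paper's argument. The paper takes the $\rY_{1|1}$-module $\mc L(\bla)$, which is finite dimensional by \cite[Thm.~4]{Zh95} since $\la_1(u)/\la_2(u)$ is rational, and simply restricts it along the canonical embedding $\rY_{1|1}(\sigma)\hookrightarrow\rY_{1|1}$. The generators $D_i^{(r)}$ and $E_1^{(r)}$ ($r>s_{1,2}$) of $\rY_{1|1}(\sigma)$ are literally elements of $\rY_{1|1}$. Hence $v_{\bla}$ remains a highest $\bm\ell$-weight vector of $\bm\ell$-weight $\bla$, and $\mc L(\sigma,\bla)$ is a subquotient of $\mc L(\bla)$.

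Your construction reduces to exactly this if you take $Q_0(u)=u^{d}$: then $\mc L(\sigma,\bm\zeta)$ is the counit module, and $(\mathrm{id}\otimes\varepsilon)\circ\Delta$ is just the inclusion. So for this statement you need neither the triangular reduction via $\iota$ nor Theorem~\ref{thm:grownup}, Lemma~\ref{lem:tensor} and Lemma~\ref{lem:1-dim}. The extra freedom in $\bm\zeta$ only becomes essential for even simple roots, as in the proof of Theorem~\ref{thm:fd}. There a factor $u^{d_j}/Q_j(u)$ cannot be produced by the unshifted Yangian, whereas for the odd root of $\gl_{1|1}$ every rational ratio already can.

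Two small corrections. First, $\iota$ fixes every $D_i^{(r)}$, so no relabelling of $\bla$ occurs. Second, with $\sigma$ upper triangular, \eqref{eq:copro-spec} puts $\rY_{1|1}$ in the first tensor factor, so the module should be $V\boxtimes\mc L(\sigma,\bm\zeta)$.

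In your BBG13 alternative, the ``propagation'' claim is not correct as stated. Killing $v_{\bla}$ by $D_1^{(r)}$ for $r>p_1$ does not by itself force the two-sided ideal to act by zero on the whole irreducible module. For example, Theorem~\ref{thm:vanish} forces $\la_2^{(r)}=0$ for $r>p_2$ on any module factoring through $W(\pi)$. The correct criterion is Theorem~\ref{thm:W-rep-basis}, which additionally requires $u^{p_2}\la_2(u)\in\bC[u]$ after twisting. Since you do not rely on that route, this does not affect your proof.
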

\begin{proof}
($\Rightarrow$). This follows immediately from Proposition \ref{prop:O-rational}.

($\Leftarrow$). Suppose $\bla\in \mathscr P_{1|1}$ is such that $\la_1(u)/\la_2(u)$ is a rational function. If $\sigma$ is the zero matrix, then the statement follows from \cite[Thm.~4]{Zh95}. Now consider the general case. By the canonical embedding $\rY_{1|1}(\sigma)\hookrightarrow \rY_{1|1}$, the finite dimensional $\rY_{1|1}$-module $\mc L(\bla)$ is also a $\rY_{1|1}(\sigma)$-module by restriction. Moreover, $v_{\bla} \in \mc L(\bla)$ remains to be a highest $\bm\ell$-weight vector of $\bm\ell$-weight $\bla$ over $\rY_{1|1}(\sigma)$. Thus $\mc L(\sigma,\bla)$ is a sub-quotient of $\mc L(\bla)$ and hence finite dimensional as well.
\end{proof}

In the rest of this subsection, we write that $d:=s_{1,2}+s_{2,1}$. By Proposition \ref{prop:rk1}, we can write
\beq\label{eq:PQdef1}
\frac{\la_1(u)}{\la_2(u)}=\frac{f(u)}{g(u)}=\frac{u^{d}P(u)}{Q(u)},
\eeq
where $f(u)$, $g(u)$, $P(u)$ and $Q(u)$ are monic polynomials in $u$ such that
\beq\label{eq:gcd=1}
(f(u),g(u))=(P(u),Q(u))=1.
\eeq
This is always possible by taking $P(u)=f(u)/(f(u),u^d)$ and $Q(u)=g(u)u^d/(f(u),u^d)$.

Let $r=\deg P(u)$, then $\deg Q(u)=d+r$. We write further that
\beq\label{eq:PQdef2}
P(u)=\prod_{i=1}^{r}(u+a_{i}),\qquad Q(u)=\prod_{j=1}^{d+r}(u+b_{j}),
\eeq
where $a_i,b_j$ are complex numbers. Let $v_{\bla}$ be a highest $\bm\ell$-weight vector in $\mc L(\sigma,\bla)$.

\begin{prop}\label{prop:res-rk1}
Under the above assumption, we have the following.
\begin{enumerate}
    \item The dimension of $\mc L(\sigma,\bla)$ is $2^r$ with a basis given by 
    \beq\label{eq:basis-rk1}
    \{F_1^{(i_1)}\cdots F_1^{(i_k)}v_{\bla}\mid 0\lle k\lle r,s_{2,1}<i_1<\cdots<i_k\lle s_{2,1}+r\}.
    \eeq
    \item If $d>0$, then $\mc L(\sigma,\bla)$ is isomorphic to the restriction of the $\rY_{1|1}$-module $\mc L(\bla)$ if and only if $f(0)\ne 0$.
\end{enumerate}
\end{prop}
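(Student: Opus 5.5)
The plan is to exploit that $\rY_{1|1}(\sigma)$ has only one odd simple root, so $E_1^{(r)}$ and $F_1^{(s)}$ are odd and, by the remark after Definition~\ref{def:sY}, pairwise anticommute: $[F_1^{(r)},F_1^{(s)}]=0$ as supercommutators. Thus $\rY_{1|1}^-(\sigma)$ is an exterior algebra on $\{F_1^{(s)}\mid s>s_{2,1}\}$ (Proposition~\ref{prop:PBW}). Hence $\mc L(\sigma,\bla)$ is spanned by the vectors $F_1^{(i_1)}\cdots F_1^{(i_k)}v_{\bla}$ with $s_{2,1}<i_1<\cdots<i_k$. The proof of (1) will first cut this down to indices $\lle s_{2,1}+r$, and then show that the surviving $2^r$ vectors are independent.

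For the spanning reduction, I compute $E_1^{(p)}F_1^{(q)}v_{\bla}=-\fks_2H_1^{(p+q-1)}v_{\bla}$ from \eqref{dr5}. Here $H_1(u)v_{\bla}=\frac{\la_2(u)}{\la_1(u)}v_{\bla}=u^{-d}\frac{Q(u)}{P(u)}v_{\bla}$. The rank-one space $V_1=\mathrm{span}\{F_1^{(q)}v_{\bla}\}$ modulo the radical is identified with the image of the bilinear pairing $(p,q)\mapsto h_{p+q-1}$, where $h_t$ is the coefficient of $u^{-t}$ in $u^{-d}Q/P$. Shifting the indices $p=s_{1,2}+p'$ and $q=s_{2,1}+q'$ turns this into the Hankel matrix $(c_{p'+q'-1})_{p',q'\gge 1}$ of the series $Q(u)/P(u)$, ignoring its polynomial part. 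Since $\gcd(P,Q)=1$, Kronecker's theorem says this Hankel matrix has rank exactly $r=\deg P$. Moreover, the linear recursion given by the coefficients of $P$ shows that $F_1^{(q)}v_{\bla}$ for $q>s_{2,1}+r$ is congruent to a combination of $F_1^{(s_{2,1}+1)}v_{\bla},\dots,F_1^{(s_{2,1}+r)}v_{\bla}$ modulo vectors killed by every $E_1^{(p)}$. Such vectors in degree $-\alpha_1$ of an irreducible module are zero, so these $r$ vectors span the degree $-\alpha_1$ part. I will lift this to all degrees by noting that $\sum_t c_t F_1^{(q+t)}$, namely the recursion combination, generates a submodule not containing $v_{\bla}$. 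Equivalently, I will use the identity $P(\text{shift})\,F_1(u)v_{\bla}\in$ maximal submodule, and multiply by the remaining $F$'s, which anticommute. This gives spanning by the set \eqref{eq:basis-rk1}.

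For independence, which I expect to be the main obstacle, I apply products of $E_1^{(p)}$'s to a top wedge. On a vector $F_1^{(i_1)}\cdots F_1^{(i_k)}v_{\bla}$, each $E_1^{(p)}$ acts as a superderivation contracting one $F$ via $[E_1^{(p)},F_1^{(q)}]=-\fks_2\sum_t D_1'^{(\cdot)}D_2^{(\cdot)}$. The resulting $D$-terms do not act as scalars on non-highest vectors, but by \eqref{dr4} they move $F$'s past with triangular corrections. Filtering by total degree and taking leading terms, the pairing matrix between $\bigwedge^k$ of the $E$'s and $\bigwedge^k$ of the $F$'s becomes the $k\times k$ minors of the rank-$r$ nondegenerate Hankel block. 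These are nonzero, so the vectors are independent and $\dim=2^r$. If this leading-term argument is too messy, the fallback is \cite{BBG13}. There, $\rY_{1|1}(\sigma)$ truncated to level $d+r$ is a principal $W$-superalgebra for $\gl_{\cdot|\cdot}$, with known $2^r$-dimensional irreducibles; I would pull back and use Lemma~\ref{lem:1-dim} together with the tensor product (Lemma~\ref{lem:tensor}) to realize $\mc L(\sigma,\bla)$ as a subquotient of a $2^r$-dimensional module, which gives the upper bound matching the lower bound.

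For (2), the restriction of $\mc L(\bla)$ contains $v_{\bla}$ as a highest $\bm\ell$-weight vector. By \cite{Zh95} its dimension is $2^{\deg}$ of the reduced numerator of $\la_1/\la_2=f/g$, namely $2^{\deg f}$. Since $\deg f=r+\deg\gcd(f,u^d)$, the dimensions agree exactly when $\gcd(f,u^d)=1$, that is, when $f(0)\ne0$ (given $d>0$). I still need irreducibility of the restriction when the dimensions agree. This holds because $\mc L(\sigma,\bla)$ is a subquotient containing $v_{\bla}$, and by the basis comparison it is all of the restriction. Conversely, if the dimensions differ, the restriction is not isomorphic to $\mc L(\sigma,\bla)$.
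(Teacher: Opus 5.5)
Your argument works in outline and takes a genuinely different route from the paper.

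The paper twists by $\mu_\wp$ so that $\la_1(u)=u^{-r}P(u)$ and $\la_2(u)=u^{-d-r}Q(u)$. Then $\bla$ is a highest $\bm\ell$-weight for the truncation $W(\pi)$ attached to a two-row (principal) pyramid. Part (1) is imported wholesale from \cite[Thm.~7.3]{BBG13} by pulling back the irreducible $W(\pi)$-module. Part (2) is the same dimension count against \cite{Zh95} that you give; you also spell out the converse, which the paper leaves implicit.

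You instead stay inside $\rY_{1|1}(\sigma)$. You use the exterior algebra on the $F_1^{(s)}$, the Hankel matrix of the proper part of $Q/P$ with Kronecker's theorem to get rank $r$, and the recursion from $P$ to cut the spanning set down to \eqref{eq:basis-rk1}. This is self-contained and avoids both the $W$-superalgebra identification and \cite{BBG13}. It also shows that the shift enters only through the polynomial part of $Q/P$, and with $\sigma=0$ it recovers the dimension count from \cite{Zh95} used in (2). Your spanning argument is fine as written.

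The independence step, however, is not yet an argument, and its premise is wrong: the $D$-terms \emph{do} act by scalars on the whole Verma module. Since $\fks_1\ne\fks_2$ forces $\ka_2=\ka_1$, \eqref{eq:Ber} gives $H_1(u)=\mathcal C_{1|1}(u+\ka_1)^{-\fks_1}$. This is central by Proposition~\ref{prop:center}; equivalently, \eqref{dr3}--\eqref{dr4} give $D_1(u)$ and $D_2(u)$ identical commutation relations with $E_1$ and $F_1$. Hence $[E_1^{(p)},F_1^{(q)}]$ acts on $\mc M(\sigma,\bla)$ as the scalar $-\fks_2h_{p+q-1}$, and the $E$'s and $F$'s act as a Clifford algebra. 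With no filtration needed,
\[
E_1^{(p_1)}\cdots E_1^{(p_k)}F_1^{(q_1)}\cdots F_1^{(q_k)}w_{\bla}=\pm\det\big(-\fks_2h_{p_a+q_b-1}\big)_{1\lle a,b\lle k}\,w_{\bla}.
\]
A degree $-k\alpha_1$ vector lies in the radical iff all such products kill it. So independence of \eqref{eq:basis-rk1} reduces to nonsingularity of the $k$-th compound of the block $(c_{p'+q'-1})_{1\lle p',q'\lle r}$.

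That block is nonsingular: the Hankel matrix has rank $r$, and the recursion from $P$ makes every row and every column a combination of the first $r$. Compounds of nonsingular matrices are nonsingular. Individual minors may vanish, so ``these are nonzero'' is not the right statement.

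Finally, your fallback as phrased only produces an upper bound, which you already have. To get the lower bound from \cite{BBG13}, you would need to do what the paper does. That is, twist so that $\bla$ factors through the truncation, then identify the pulled-back $2^r$-dimensional irreducible with $\mc L(\sigma,\bla)$.
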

\begin{proof}
(1). Recall $\mu_{\wp}$ from \eqref{eq:mu-f} which fixes $F_1^{(r)}$ for all $r>0$. Twisting the module $\mc L(\sigma,\bla)$ by the automorphism $\mu_\wp$ with suitable $\wp(u)$, we can assume
\beq\label{eq:form}
\la_1(u)=u^{-r}P(u),\qquad \la_2(u)=u^{-d-r}Q(u).
\eeq

Let $W(\pi)$ be the corresponding finite $W$-superalgebra obtained via the quotient $\Omega$. It follows from \cite[\S7]{BBG13} that there exists an irreducible $W$-module denoted by $\overline L(\substack{a_1 \cdots a_{r} \\ b_1 \cdots b_{d+r}})$. Pulling back through the quotient homomorphism $\Omega$, $\overline L(\substack{a_1 \cdots a_{r} \\ b_1 \cdots b_{d+r}})$ remains to be irreducible as a $\mathrm{Y}_{1|1}(\sigma)$-module. Moreover, it is a highest $\bm\ell$-weight module with highest $\bm\ell$-weight $\bla$. Hence it is isomorphic to $\mc L(\sigma,\bla)$ and the statement follows from \cite[Thm.~7.3]{BBG13}.

(2). If $f(0)\ne 0$, then $(f(u),u^d)=1$ and hence $f(u)=P(u)$. This implies $r=\deg f(u)$. It is known from \cite[Thm.~4]{Zh95} that  the $\rY_{1|1}$-module $\mc L(\bla)$ has dimension $2^r$. Thus it follows from Part (1) that $\mc L(\sigma,\bla)$ and the $\rY_{1|1}$-module $\mc L(\bla)$ have the same dimension. The statement follows as $\mc L(\sigma,\bla)$ is a sub-quotient of the restriction module $\mc L(\bla)$.
\end{proof}

Now we classify finite dimensional irreducible modules over the shifted super Yangian $\Y$ associated with the standard parity sequence. We remark that such classification for non-shifted case is only available for standard parity in \cite[Thm.~4]{Zh96}. Though a criterion for $\mc L(\bla)$ to be finite dimensional with an arbitrary parity sequence $\fks$ can be
recursively deduced from \cite[Thm.~4]{Zh96} via the odd reflections of super Yangian, see \cite{Mo22,Lu22}.

\begin{thm}\label{thm:fd}
Let $\fks$ be the standard parity sequence. For $\bla\in\mathscr P_{m|n}$, the irreducible $\Y$-module $\mc L(\sigma,\bla)$ is finite dimensional if and only if there exist (necessarily unique) monic polynomials $P_i(u),Q_i(u)$ for $1\lle i<m+n$ such that $\deg P_m(u)=\deg Q_m(u)$, $(P_i(u),Q_i(u))=1$, $Q_j(u)$ is of degree $d_j:=s_{j,j+1}+s_{j+1,j}$, and
\beq\label{eq:fd-criterion}
\frac{\la_j(u)}{\la_{j+1}(u)}=\frac{P_j(u)}{P_{j}(u-\fks_j)}\times \frac{u^{d_j}}{Q_j(u)},\qquad 
\frac{\la_m(u)}{\la_{m+1}(u)}=\frac{P_m(u)}{Q_{m}(u)},
\eeq
for each $1\lle j<m+n$ and $j\ne m$.
\end{thm}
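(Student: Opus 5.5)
Both directions reduce the problem to rank one and to the non-shifted case \cite[Thm.~4]{Zh96}, using the coproduct of Lemma~\ref{lem:copro-res} to build modules.

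\emph{Necessity.} Suppose $\mc L(\sigma,\bla)$ is finite dimensional. For each $1\lle j<m+n$, consider the subalgebra of $\Y$ generated by $D_j^{(r)},D_{j+1}^{(r)},E_j^{(r)},F_j^{(r)}$. It is a quotient of $\rY_{1|1}(\sigma_j)$ when $j=m$, and of the shifted Yangian of $\gl_2$ (or of its odd analogue, $\rY_{0|2}$) with shift $(s_{j,j+1},s_{j+1,j})$ when $j\ne m$. The cyclic submodule generated by $v_{\bla}$ under this subalgebra is finite dimensional and has highest $\bm\ell$-weight $(\la_j,\la_{j+1})$, so its irreducible quotient is a finite-dimensional irreducible module of the rank one shifted Yangian.

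For $j=m$, Proposition~\ref{prop:rk1} gives that $\la_m/\la_{m+1}$ is rational. Writing it in lowest terms, we get $\deg P_m=\deg Q_m$ because both series have constant term $1$. For $j\ne m$, we use the rank-one classification of \cite[Thm.~7.9]{BK08}, or equivalently the argument of \cite[Prop.~3.8]{KTWWY19} via Lemma~\ref{lem:1-dim}. This gives
\[
\frac{\la_j(u)}{\la_{j+1}(u)}=\frac{P_j(u)}{P_j(u-\fks_j)}\cdot\frac{u^{d_j}}{Q_j(u)},\qquad \deg Q_j=d_j.
\]
For odd rows $\fks_j=-1$, so the shift is $u+1$; we handle this via the isomorphism $\rY_{0|2}\cong\rY_2$ twisted by $u\mapsto -u$. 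Uniqueness of the $P_j,Q_j$ follows from coprimality.

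\emph{Sufficiency.} Given such data, we factor $\bla=\bm\zeta\cdot\bm\xi$. Here $\bm\zeta$ is an $\bm\ell$-weight with $\zeta_j/\zeta_{j+1}=u^{d_j}/Q_j(u)$ for $j\ne m$ and $\zeta_m/\zeta_{m+1}=u^{d_m}/\tilde Q_m(u)$, where $\tilde Q_m$ is a suitable monic degree-$d_m$ polynomial. Then $\bm\xi$ is an unshifted $\bm\ell$-weight satisfying Zhang's criterion \cite[Thm.~4]{Zh96}. The delicate point is the $m$-th component: we need $\xi_m/\xi_{m+1}=P_m\tilde Q_m/(u^{d_m}Q_m)$ to still be an admissible $\rY_{1|1}$-ratio. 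Since Zhang's condition at the odd node only requires rationality with equal degrees, any choice works, for example $\tilde Q_m=u^{d_m}$.

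Next we use $\iota$ (Remark after Lemma~\ref{lem:tensor}) to split $\sigma=\sigma'+\sigma''$ with one factor equal to zero. Concretely, we move all shifts to one side so that the homomorphism $\Delta^\iota:\Y\to\rY_{m|n}(\sigma)\otimes\rY_{m|n}$ is available. Lemma~\ref{lem:1-dim} shows that $\mc L(\sigma,\bm\zeta)$ is one-dimensional, and $\mc L(\bm\xi)$ is finite dimensional by \cite{Zh96}. By Lemma~\ref{lem:tensor}, $v_{\bm\zeta}\otimes v_{\bm\xi}$ is a highest $\bm\ell$-weight vector of $\bm\ell$-weight $\bla$ in the finite-dimensional module $\mc L(\sigma,\bm\zeta)\boxtimes\mc L(\bm\xi)$. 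Hence $\mc L(\sigma,\bla)$ is a subquotient of this module, and so is finite dimensional.

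The main obstacle is the necessity direction for $j\ne m$ in the shifted setting. We must show that a finite-dimensional irreducible module of the rank one shifted Yangian has exactly the stated form, with $\deg Q_j=d_j$ exactly rather than merely at most $d_j$. To do this we mimic \cite[\S7]{BK08}: twist by $\mu_f$ to normalise, then compare with restriction from the unshifted Yangian as in Proposition~\ref{prop:res-rk1}. We also need the $\iota$-twisted coproduct to preserve highest weight vectors; this should follow from Proposition~\ref{prop:copro} together with the explicit form of $\iota$.
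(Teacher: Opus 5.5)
Your proposal is essentially the paper's proof. Necessity restricts $v_{\bla}$ to the rank-one subalgebras $\psi_j(\rY_{\fks_j\fks_{j+1}}(\sigma_j))$ and uses Proposition~\ref{prop:rk1} at $j=m$ and Brundan--Kleshchev at $j\neq m$; sufficiency tensors the one-dimensional module of Lemma~\ref{lem:1-dim} (with $\zeta_m/\zeta_{m+1}=1$) with Zhang's finite-dimensional $\rY_{m|n}$-module through the ($\iota$-twisted) coproduct, and the only cosmetic difference is that you put the shifted factor on the left rather than the right.

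The ``main obstacle'' you flag is discharged by citation alone. The rank-one shifted statement with $\deg Q_j=d_j$ exactly is \cite[Coro.~7.5]{BK08}, which is what the paper cites (Thm.~7.9 there is the $W$-algebra version), and your $u\mapsto -u$ identification transports it to the odd rows. Your suggested substitutes would not work: Lemma~\ref{lem:1-dim}/KTWWY and comparison with restrictions of unshifted modules as in Proposition~\ref{prop:res-rk1} only exhibit finite-dimensional modules, so they cannot prove necessity.
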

\begin{proof}
($\Rightarrow$). Let $\sigma_j$ denote the $2\times 2$ submatrix
\[
\begin{bmatrix}
s_{j,j} & s_{j,j+1}\\
s_{j+1,j} & s_{j+1,j+1}
\end{bmatrix}
\]
of the shift matrix $\sigma$. The map $\psi_j:\rY_{\fks_{j}\fks_{j+1}}(\sigma_j)\to \Y$ defined by
\[
\psi_j:\rY_{\fks_{j}\fks_{j+1}}(\sigma_j)\to \Y,\quad D_k^{(r)}\mapsto D_{j+k-1}^{(r)},~E_1^{(r)}\mapsto E_j^{(r)},~F_1^{(r)}\mapsto F_j^{(r)},\quad k=1,2,
\]
induces an embedding of the shifted (super) Yangian $\rY_{\fks_{j}\fks_{j+1}}(\sigma_j)$ into $\Y$. Then the highest $\bm\ell$-weight vector $v_{\bla}$ remains to be a highest $\bm\ell$-weight vector of highest $\bm\ell$-weight $\bla_j=(\la_j(u),\la_{j+1}(u))$ over the (super)algebra $\rY_{\fks_{j}\fks_{j+1}}(\sigma_j)$. Note that $\fks$ is the standard parity sequence. If $1\lle j<m+n$ and $j\ne m$, then $\fks_j=\fks_{j+1}$ and hence \eqref{eq:fd-criterion} with the corresponding restrictions follows from \cite[Coro.~7.5]{BK08}. If $j=m$, then $\fks_m\ne \fks_{m+1}$ and hence \eqref{eq:fd-criterion} follows from Proposition \ref{prop:rk1}.

($\Leftarrow$). Recalling the isomorphism $\iota$ from \eqref{iotadef}, the twist of a highest $\bm\ell$-weight $\rY_{m|n}(\vec{\sigma})$-module by $\iota$ is again a highest $\bm\ell$-weight $\Y$-module with the same highest $\bm\ell$-weight. Moreover, the twist of a module in $\mc O_{\vec{\sigma}}$ is in $\mc O_\sigma$. Thus it suffices to prove it for the case when $\sigma$ is an upper shift matrix. Then we have the coproduct $\Delta:\Y\to \rY_{m|n}\otimes\Y$. Rewrite $\bla$ as a product $\bla=\bm\gamma\bm\zeta$ such that
\[
\frac{\gamma_j(u)}{\gamma_{j+1}(u)}=\frac{P_j(u)}{P_j(u-\fks_j)}, \quad \frac{\gamma_m(u)}{\gamma_{m+1}(u)}=\frac{P_m(u)}{Q_m(u)},\quad \frac{\zeta_j(u)}{\zeta_{j+1}(u)}=\frac{u^{d_j}}{Q_j(u)},\quad \frac{\zeta_m(u)}{\zeta_{m+1}(u)}=1,
\]
for $1\lle j<m+n$ and $j\ne m$. Via the coproduct, we have the $\Y$-module $\mc L(\bm\gamma)\boxtimes \mc L(\sigma,\bm\zeta)$, where $\mc L(\bm\gamma)$ is the irreducible $\rY_{m|n}$-module with highest $\bm\ell$-weight $\bm\gamma$. Let $v_{\bm\gamma}$ and $v_{\bm \zeta}$ be the corresponding highest $\bm\ell$-weight vectors in $\mc L(\bm\gamma)$ and $\mc L(\sigma,\bm\zeta)$, respectively. It follows from Lemma \ref{lem:tensor} that $v_{\bm\gamma}\otimes v_{\bm \zeta}$ is a highest $\bm\ell$-weight vector of $\bm\ell$-weight $\bla$. Thus $\mc L(\sigma,\bla)$ is isomorphic to a sub-quotient of $\mc L(\bm\gamma)\boxtimes \mc L(\sigma,\bm\zeta)$. By \cite[Thm.~4]{Zh96}, $\mc L(\bm\gamma)$ is finite dimensional. Therefore, the statement follows as $\mc L(\sigma,\bm\zeta)$ is 1-dimensional by Lemma \ref{lem:1-dim}.  
\end{proof}

\section{Representations of finite $W$-superalgebras}\label{sec:rep-W}
In this section, we study highest $\bm\ell$-weight modules over the superalgebra $W(\pi)$. 

\subsection{Preliminaries} 
We start with notations that will be used to describe the representations of the finite $W$-superalgebra $W(\pi)$, where $\pi$ is a fixed pyramid $\pi=(q_1,\dots,q_\ell)$ of height $\lle m+n$. Denote as usual $(p_1,\dots,p_{m+n})$ the tuple of row lengths, and fix a choice of shift matrix $\sigma=(s_{i,j})_{1\lle i,j\lle m+n}$. 

A \textit{$\pi$-tableau} is the pyramid $\pi$ with arbitrary complex numbers inserted into each box of $\pi$. 
Two $\pi$-tableaux $\mathbf A$ and $\mathbf B$ are \textit{row equivalent}, denoted by $\mathbf A \sim_{\mathrm{row}} \mathbf B$, if one can be obtained from the other by permuting entries within rows. Denote by $\mathrm{Row}(\pi)$ the set of all row equivalence classes of $\pi$-tableaux. We call elements of $\mathrm{Row}(\pi)$ as \textit{row symmetrized $\pi$-tableaux}. 

Similarly, two $\pi$-tableaux $\mathbf A$ and $\mathbf B$ are \textit{column equivalent}, denoted by $\mathbf A \sim_{\mathrm{col}} \mathbf B$, if one can be obtained from the other by permuting entries in boxes of the same parity within columns. 

Let $\pi=\pi'\otimes \pi''$ for pyramids $\pi'$ and $\pi''$. If  $\mathbf A'$ and $\mathbf A''$ are $\pi'$-tableau and $\pi''$-tableau, respectively. We write $\mathbf A'\otimes\mathbf A''$ for the $\pi$-tableau obtained by concatenating $\mathbf A'$ and $\mathbf A''$.

Sometimes, we also consider finite $W$-superalgebras that are truncated shifted super Yangians associated to standard parity sequence. We call a pyramid $\pi$ is \textit{standard} if the first $m$ rows are even while the last $n$ rows are odd. 

When $\pi$ is standard, a $\pi$-tableau is \textit{column strict} if, for boxes in the same column from top to bottom, the entries in even boxes are strictly decreasing while the entries in the odd boxes are strictly increasing. Note that the partial order $\gge$ on $\mathbb C$ is defined by $a \gge b$ if
$(a-b)\in\bN$.
Denote $\mathrm{Col}(\pi)$ the set of all column strict $\pi$-tableaux.

Define weights, $\bm\ell$-weights, highest $\bm\ell$-weight vectors, $q$-characters and category $\mathcal O_\sigma$ exactly the same as in Section \ref{sec:rep-shift}, regarding $W(\pi)$-modules as $\Y$-modules via the quotient map $\Omega:\Y\twoheadrightarrow W(\pi)$. Then for a module $\mathcal M\in \mathcal O_\sigma$, we have $\mathrm{ch}\,\mc M\in \widehat \bZ[\mathscr P_{m|n}]$.

Let  $\pi=\pi'\otimes \pi''$ be a decomposition with $\pi'$ of level $\ell'$ and $\pi''$ of level $\ell''$, $\mc M'$ a $W(\pi')$-module and $\mc M''$ a $W(\pi'')$-module. One defines a $W(\pi)$-module structure on the tensor product $\mc M'\otimes \mc M''$ through the coproduct $\Delta_{\ell',\ell''}$ from \eqref{Wdel}. We denote this module by $\mc M'\boxtimes \mc M''$.

Let $\sigma=\sigma'+\sigma''$ where $\sigma'$ and $\sigma''$ are the corresponding shift matrices for $\pi'$ and $\pi''$, respectively. It follows from Corollary \ref{SWcop} and Lemma \ref{ch-multiplicative} that
\[
\mathrm{ch}(\mc M'\boxtimes \mc M'')=(\mathrm{ch}\, \mc M')(\mathrm{ch}\,\mc M'').
\]

\subsection{Highest weight modules}
Our main goal is to understand the universal highest $\bm\ell$-weight module of $\bm\ell$-weight $\bla\in\mathscr P_{m|n}$ for the superalgebra $W(\pi)$. Such a module is obviously the unique largest quotient of the $\rY_{m|n}(\sigma)$-module $\mc M(\sigma,\bla)$ on which the kernel of the homomorphism $\Omega$ acts as zero. Thus such a $W(\pi)$-module can be defined as $W(\pi)\otimes_{\rY_{m|n}(\sigma)}\mc M(\sigma,\bla)$. We write $v_+$ for the highest $\bm\ell$-weight vector $1\otimes w_{\bla}$ in $W(\pi)\otimes_{\rY_{m|n}(\sigma)}\mc M(\sigma,\bla)$.

\begin{thm}\label{thm:W-rep-basis}
For $\bla\in \mathscr P_{m|n}$, the $W(\pi)$-module $W(\pi)\otimes_{\rY_{m|n}(\sigma)}\mc M(\sigma,\bla)$ is non-zero if and only if $u^{p_i}\la_i(u)\in \bC[u]$ for all $1\lle i\lle m+n$. In that case, the following sets of vectors form bases for $W(\pi)\otimes_{\rY_{m|n}(\sigma)}\mc M(\sigma,\bla)$:
\begin{enumerate}
    \item $\{xv_+\mid x\in \mathcal X\}$, where $\mc X$ denotes the collection of all supermonomials in the elements $\{F_{j,i}^{(r)}\mid 1\lle i<j\lle m+n,s_{j,i}<r\lle s_{j,i}+p_i\}$ taken in some fixed order;
    \item $\{yv_+\mid y\in \mathcal Y\}$, where $\mc Y$ denotes the collection of all supermonomials in the elements $\{T_{j,i}^{(r)}\mid 1\lle i<j\lle m+n,s_{j,i}<r\lle s_{j,i}+p_i\}$ taken in some fixed order.
\end{enumerate}
\end{thm}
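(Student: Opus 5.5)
The plan follows the argument of \cite[Thm.~6.1]{BK08}, using Theorem~\ref{thm:vanish} and the PBW results for the truncated algebra.

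\emph{Necessity.} Suppose the module $W(\pi)\otimes_{\rY_{m|n}(\sigma)}\mc M(\sigma,\bla)$ is nonzero. Then $v_+\neq 0$, because $v_+$ generates the module. By Theorem~\ref{thm:vanish}, $T_{i,i}^{(r)}=0$ in $W(\pi)$ for $r>p_i$. On the other hand, $T_{i,i}(u)=\sum_{k\lle i}F_{i,k}(u)D_k(u)E_{k,i}(u)$, and every $E_{k,i}$ with $k<i$ kills $v_+$. Hence $T_{i,i}(u)v_+=D_i(u)v_+=\la_i(u)v_+$. It follows that $\la_i^{(r)}=0$ for $r>p_i$, i.e.\ $u^{p_i}\la_i(u)\in\bC[u]$.

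\emph{Spanning.} Use the triangular decomposition together with Proposition~\ref{prop:PBW-T-truncated}. Order the PBW monomials in the $T_{a,b}^{(r)}$, with $s_{a,b}<r\lle s_{a,b}+p_{\min(a,b)}$, so that lower-triangular generators come first, then diagonal, then upper-triangular. These monomials span $W(\pi)$. Acting on $v_+$, the upper-triangular $T_{a,b}^{(r)}$ ($a<b$) kill $v_+$, since they lie in the left ideal generated by the $E$'s. The diagonal ones act by scalars. So the vectors $yv_+$, $y\in\mc Y$, span the module.

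For the set $\mc X$, pass to the Kazhdan filtration. Modulo lower filtration degree, $F_{j,i}^{(r)}$ and $T_{j,i}^{(r)}$ agree. Moreover, in $\gr W(\pi)$, $F_{j,i}^{(r)}$ with $r>s_{j,i}+p_i$ lies in the ideal generated by lower-degree data, again by Theorem~\ref{thm:vanish} and Proposition~\ref{prop:PBW-truncated}. A triangularity argument therefore gives spanning by $\mc X v_+$ as well.

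\emph{Linear independence.} This is the main obstacle. The plan is to construct a module of the right size on which the corresponding elements act independently. Write $\la_i(u)=u^{-p_i}\prod_{j}(u+a_{i,j})$ after shifting by $\ka_i$, and realize $\bla$ via a $\pi$-tableau.

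Next, use the Miura transform / parabolic induction $\Delta_{\ell-1,1}$ (or iterate down to columns). Form the tensor product of Verma modules $M_1\boxtimes\cdots\boxtimes M_\ell$ for $\rU(\gl_{m_c|n_c})$ with suitable highest weights read from the columns. By Lemma~\ref{lem:tensor} and Corollary~\ref{SWcop}, the tensor product of highest weight vectors is a highest $\bm\ell$-weight vector of weight $\bla$. Hence there is a $W(\pi)$-module map from the universal module into this tensor product, sending $v_+$ to that vector.

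It then suffices to show that the images $yv_+$ are linearly independent. Compare leading terms in a filtration of the tensor product. In $\gr$, the Miura image of $T_{j,i}^{(r)}$ ($j>i$) has leading term a sum over column-chains of products of $e$'s. Its lowest-column factor should make the images of distinct PBW monomials have distinct leading monomials in $\bigotimes_c \rU(\fkn^-_c)$. This mirrors the proof of injectivity of $\gr\,\mu$ (Theorem~\ref{muinj}) restricted to the negative part. The count also has to match: the number of $T_{j,i}^{(r)}$ equals $\sum_{i<j}p_i=\dim\mathfrak n^-$ of $\h$-parts ``plus'' the $\fkp$-negative directions. If that direct comparison is awkward, a fallback is a generic-parameter argument: compute the Gelfand–Tsetlin character of the tensor product via Lemma~\ref{ch-multiplicative}, compare it with the PBW count, and extend to all $\bla$ by Zariski density.

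Finally, independence of $\mc X v_+$ follows from independence of $\mc Y v_+$ by the unitriangular change of basis above.
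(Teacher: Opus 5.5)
\textbf{Verdict: there is a genuine gap in the linear independence step.} Your necessity argument and your spanning argument for $\{yv_+\mid y\in\mc Y\}$ are correct and match the paper.

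The route you propose for independence cannot work for all $\bla$. The natural map $\cM(\bA)\to\cM(\bA_1)\boxtimes\cdots\boxtimes\cM(\bA_\ell)$, $v_+\mapsto v_1\otimes\cdots\otimes v_\ell$, is \emph{not} injective in general. The two sides do have the same ordinary character; in fact $\sum_{i<j}p_i=\sum_c\binom{q_c}{2}=\dim\fkn^-_\h$ exactly, with no extra ``$\fkp$-negative directions''. However, the tensor product need not be generated by $v_1\otimes\cdots\otimes v_\ell$.

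Already for a $2\times2$ rectangle (non-super $\gl_2$, $\ell=2$) one finds that $T_{2,1}^{(1)}v_+$ and $T_{2,1}^{(2)}v_+$ map to $(e^{[1]}_{2,1}+e^{[2]}_{2,1})v$ and $(c_1e^{[1]}_{2,1}+c_2e^{[2]}_{2,1})v$. Here $c_1,c_2$ are affine in the highest weights of the two columns, so the images are proportional when $c_1=c_2$.

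Your leading-term heuristic fails for the same reason. The standard-degree-$r$ symbol of $\mu(T_{j,i}^{(r)})$ is a sum of products $e^{[c_1]}\cdots e^{[c_r]}$ along row chains from $j$ to $i$. It projects to zero in $S(\fkn^-_\h)$ as soon as $r>j-i$, so the actual image on $v$ comes from lower-order terms whose coefficients are polynomials in $\bla$ that can degenerate.

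The Zariski-density fallback also goes the wrong way. Linear independence is an open condition and can be lost under specialization. To rescue it you would have to prove that $W(\pi)/W(\pi)\{E_i^{(r)}\}$ is free over $W^0(\pi)$ with basis $\mc Y$, and you would also need generic cyclicity of the tensor product. In the paper that cyclicity (Lemma~\ref{lem:simple-generic}) is only available through the character formula, which itself rests on this theorem, so using it here would be circular.

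\textbf{The missing idea:} independence needs no auxiliary module, only PBW.
\begin{enumerate}
\item Write the left ideal $\mathcal J$ defining $\cM(\sigma,\bla)$ as generated by $P=\{T_{i,i}^{(r)}-\la_i^{(r)}\}\cup\{T_{i,j}^{(r)}\mid i<j\}$. Put $Q=\{T_{j,i}^{(r)}\mid i<j\}$ and order $Q$ before $P$.
\item By Lemma~\ref{lem:PBW-T} and Proposition~\ref{prop:PBW-Verma-Y}, the ordered monomials in $P\cup Q$ containing at least one $P$-factor form a basis of $\mathcal J$.
\item Under $\Omega$, any such monomial with an out-of-range factor ($r>s_{a,b}+p_{\min(a,b)}$) maps to $0$. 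This uses Theorem~\ref{thm:vanish} together with $\la_i^{(r)}=0$ for $r>p_i$. Every other such monomial maps to an ordered monomial in the truncated sets $\bar P\cup\bar Q$ with at least one $\bar P$-factor, so these span the image $\bar{\mathcal J}$.
\item By Proposition~\ref{prop:PBW-T-truncated} (shifting diagonal generators by scalars preserves the PBW property), these monomials together with the monomials in $\bar Q$ form a basis of $W(\pi)$.
\item Hence the images of $\mc Y$ form a basis of $W(\pi)/\bar{\mathcal J}$. This gives spanning and independence at once, for every admissible $\bla$. Part (1) then follows from your unitriangular filtration comparison.
\end{enumerate}
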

\begin{proof}
It suffices to prove Part (2) as Part (1) follows by  a similar filtration argument in the proof of \cite[Thm.~5.5]{BK08}. Recall that the Verma module $\mc M(\sigma,\bla)$ is the quotient of $\Y$ by the left ideal $\mc J$ generated by the elements
\[
\big\{D_i^{(r)}-\la_i^{(r)}\mid 1\lle i\lle m+n,r>0\big\}\cup\big\{E_{i}^{(r)}\mid 1\lle i \lle m+n,r>s_{i,i+1}\big\}.
\]
It is not hard to see from \eqref{gedef} and the Gauss decomposition \eqref{eq:GD-shift} the left ideal $\mc J$ is also equivalently generated by the elements
\[
P:=\big\{T_{i,i}^{(r)}-\la_i^{(r)}\mid 1\lle i\lle m+n,r>0\big\}\cup\big\{T_{i,j}^{(r)}\mid 1\lle i<j\lle m+n,r>s_{i,j}\big\}.
\]
Let $Q:=\big\{T_{j,i}^{(r)}\mid 1\lle i<j\lle m+n,r>s_{j,i}\big\}$. Pick any ordering on $P\cup Q$ such that the elements of $Q$ precede the elements of $P$. Clearly, it follows from Lemma \ref{lem:PBW-T} and Proposition \ref{prop:PBW-Verma-Y} that the ordered supermonomials in the elements $P\cup Q$ containing at least one element from $P$ form a basis of $\mc J$.

The $W(\pi)$-module $W(\pi)\otimes_{\rY_{m|n}(\sigma)}\mc M(\sigma,\bla)$ can be identified as the quotient of $W(\pi)$ by the image $\bar{\mc J}$ of $\mc J$ under the quotient map $\Omega:\Y\to W(\pi)$. 

If $\la_i^{(r)}\ne 0$ for some $1\lle i\lle m+n$ and $r>p_i$ (which is exactly $u^{p_i}\la_i(u)\notin\bC[u]$), then it follows from Theorem \ref{thm:vanish} that the image of $T_{i,i}^{(r)}-\la_i^{(r)}$ is a unit in $W(\pi)$ and hence $W(\pi)\otimes_{\rY_{m|n}(\sigma)}\mc M(\sigma,\bla)=0$.

Conversely, suppose $u^{p_i}\la_i(u)\in\bC[u]$. Consider two sets
\begin{align*}
\overline P:=&\,\big\{T_{i,i}^{(r)}-\la_i^{(r)}\mid 1\lle i\lle m+n,\,0<r\lle p_i\big\}\\
&\,\cup\big\{T_{i,j}^{(r)}\mid 1\lle i<j\lle m+n,\,s_{i,j}< r\lle s_{i,j}+p_i\big\} 
\end{align*}
and $\overline Q:=\big\{T_{j,i}^{(r)}\mid 1\lle i<j\lle m+n,\,s_{j,i}<r\lle s_{j,i}+p_i\big\}$. By Theorem \ref{thm:vanish} and Lemma \ref{prop:PBW-T-truncated}, the order supermonomials in elements of $\overline P\cup \overline Q$ containing at least one element from $\overline P$ form a basis of $\bar{\mc J}$. Therefore, the image of $\mc Y$ forms a basis for $W(\pi)/\bar{\mc J}$, completing the proof of Part (2).
\end{proof}

Recall $\ka_i$ defined in \eqref{eq:ka} for $1\lle i\lle m+n$. Now suppose that $v_+$ is a non-zero highest $\bm\ell$-weight vector in some $W(\pi)$-module $\mc M$. It follows from Theorem \ref{thm:W-rep-basis} that there exist numbers $(a_{i,j})_{1\lle i\lle m+n,1\lle j\lle p_i}$ such that
\beq\label{eq:Di-eigenvalues}
(u-\ka_i)^{p_i}D_i(u-\ka_i)v_+=\prod_{j=1}^{p_i}(u+a_{i,j})v_+, \qquad 1\lle i\lle m+n.
\eeq
In this way, the highest $\bm\ell$-weight vector $v_+$ defines a row symmetrized $\pi$-tableau $\bA$ so that the numbers $a_{i,1},\dots,a_{i,p_i}$ are on the $i$-th row of $\pi$. From now on we shall also say  the highest $\bm\ell$-weight vector $v_+$ is \textit{of type $\bA$} if the equations \eqref{eq:Di-eigenvalues} hold. 

Conversely, given $\bA\in \mathrm{Row}(\pi)$ with entries $a_{i,1},\dots,a_{i,p_i}$ on its $i$-th row. Define the \textit{Verma module} $\mc M(\bA)$ over $W(\pi)$ by
\beq\label{eq:Verma-W}
\mc M(\bA):=W(\pi)\otimes_{\Y} \mc M(\sigma,\bla),
\eeq
where $\bla=(\la_i(u))_{1\lle i\lle m+n}\in \mathscr P_{m|n}$ satisfies
\beq\label{eq:lam-tab}
(u-\ka_i)^{p_i}\la_i(u-\ka_i)=\prod_{j=1}^{p_i}(u+a_{i,j}),\qquad 1\lle i\lle m+n.
\eeq
Then $\mc M(\bA)$ is the universal highest $\bm\ell$-weight module of type $\bA$ (or $\bm\ell$-weight $\bla$). It follows from Theorem \ref{thm:W-rep-basis} that $v_+$ is non-zero and hence a highest $\bm\ell$-weight vector of $\bm\ell$-weight $\bla$. Moreover, $\mc M(\bA)$ is in the category $\mc O_{\sigma}$. Again, it has a unique maximal submodule denoted by $\mathrm{rad}\,\mc (\bA)$. The quotient \beq\label{eq:irred-W}
\mc L(\bA):= \mc M(\bA)/\mathrm{rad}\,\mc (\bA) \cong W(\pi)\otimes_{\Y} \mc L(\sigma,\bla),
\eeq
is the unique (up to isomorphism) irreducible highest $\bm\ell$-weight module of type $\bA$. The modules $L(\bA)$ for $\bA\in \mathrm{Row}(\pi)$ give a complete set of pairwise non-isomorphic irreducible modules over $W(\pi)$ in the category $\mc O_\sigma$, cf. Proposition \ref{prop:O-rational}.

\begin{eg}
The trivial $W(\pi)$-module, defined as the restriction of the trivial $\rU(\fkp)$-module, is isomorphic to the module $\mc L(\bA_0)$. Here, $\bA_0$ is the tableau with all entries in the $i$-{th} row equal to $-\ka_i$. Namely, the corresponding highest $\bm\ell$-weight $\bla=(\la_i(u))_{1\lle i\lle m+n}$ satisfies $\la_i(u)=1$ for all $1\lle i\lle m+n$.
\end{eg}

\begin{eg}\label{eg:1-col}
Suppose the pyramid $\pi$ consists of a single column of height $m'+n'\lle m+n$, where $m'$ and $n'$ denote respectively the numbers of even and odd boxes, then we have $W(\pi)=\rU(\gl_{m'|n'})$. Let $\bA$ be a $\pi$-tableau
with entries $a_1,\cdots,a_{m'+n'}\in \mathbb C$ from top to bottom. A highest $\bm\ell$-weight vector for $W(\pi)$ of type $\bA$ is a vector such that
\begin{enumerate}
    \item $e_{i,j}v^+=0$ for all $1\lle i<j\lle m'+n'$;
    \item $e_{i,i}v^+=\fks_{m+n-m'-n'+i}(a_i+\ka_{m+n-m'-n'+i})v^+$ for all $1\lle i\lle m'+n'$.
\end{enumerate}
Thus the module $\mc M(\bA)$ is exactly the Verma module $\mc M(\alpha)$ over $\rU(\gl_{m'|n'})$ with some suitable weight $\alpha$.
\end{eg}

We end this subsection with   the classification of finite dimensional irreducible representations of $W(\pi)$ when $\pi$ is standard.

\begin{thm}\label{thm:fd-W}
Let $\pi$ be standard. For $\mathbf A\in\mathrm{Row}(\pi)$, the irreducible $W(\pi)$-module $\mathcal L(\bA)$ is finite dimensional if and only $\bA$ has a representative belonging to $\mathrm{Col}(\pi)$.
\end{thm}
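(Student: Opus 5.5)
We derive the statement from Theorem~\ref{thm:fd}, translating its Drinfeld-polynomial criterion into a combinatorial condition on $\bA$. Since $\mc L(\bA)\cong W(\pi)\otimes_{\Y}\mc L(\sigma,\bla)$ with $\bla$ as in \eqref{eq:lam-tab}, and $\mc L(\sigma,\bla)$ is annihilated by $\ker\Omega$, the module $\mc L(\bA)$ is just $\mc L(\sigma,\bla)$ regarded as a $W(\pi)$-module. Hence $\mc L(\bA)$ is finite dimensional if and only if $\bla$ satisfies \eqref{eq:fd-criterion}. The task is therefore to show that, for $\la_i(u)$ given by $(u-\ka_i)^{p_i}\la_i(u-\ka_i)=\prod_j(u+a_{i,j})$, condition \eqref{eq:fd-criterion} holds exactly when the rows of $\bA$ can be reordered to give a column strict tableau.

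First compute the ratios $\la_j(u)/\la_{j+1}(u)$. For the standard sequence, $\ka_j=j-1$ for $j\lle m$ and $\ka_j=2m-j$ for $j>m$, so the shifts differ by $\mp1$ within the even and odd blocks and are equal across the boundary $j=m$. Write $A_j(u)=\prod_k(u+a_{j,k})$. For $j\ne m$ one gets, up to the $u$-power prefactors,
\[
\frac{\la_j(u)}{\la_{j+1}(u)}=\frac{A_j(u+\ka_j)}{A_{j+1}(u+\ka_{j+1})}\cdot u^{p_{j+1}-p_j},
\]
and $p_{j+1}-p_j=s_{j,j+1}+s_{j+1,j}=d_j$ by the pyramid geometry. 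This accounts for the factor $u^{d_j}/Q_j(u)$. The condition for $j\ne m$ is then the rank-one non-super criterion: the multiset of $p_{j+1}$ entries of row $j+1$ contains a sub-multiset of size $p_j$ that pairs with the entries of row $j$ so that each pair satisfies $a_{j,k}-a_{j+1,k'}\in\bN$ (even rows, decreasing) or $a_{j+1,k'}-a_{j,k}\in\bN$ (odd rows, increasing), with the correct $\pm1$ offsets. This is exactly the translation carried out in \cite[Thm.~7.9]{BK08}, and it applies verbatim within the even block and, after the sign flip $\fks_j=-1$, within the odd block. For $j=m$ the condition is only $\deg P_m=\deg Q_m$, which holds automatically because rows $m$ and $m+1$ are aligned in a pyramid, so their length difference equals $d_m$, and the shifts agree. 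So the boundary imposes no constraint, consistent with column strictness not relating even and odd boxes.

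Second, turn these adjacent-row pairing conditions into the existence of a column strict representative. Within each parity block, \cite[Thm.~7.9]{BK08} (via its Lemma on "interlacing" tableaux) shows that pairwise conditions for consecutive rows can be realized simultaneously by one row permutation, placing the matched entry of row $j+1$ directly beneath the corresponding entry of row $j$. The pyramid shape guarantees that box $k$ of row $j$ lies above a box of row $j+1$, with the left and right indentations governed by $s_{j+1,j}$ and $s_{j,j+1}$. The even and odd blocks are independent, so combining the two arrangements gives an element of $\mathrm{Col}(\pi)$. Conversely, a column strict representative directly provides the polynomials $P_j,Q_j$.

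The main obstacle is the bookkeeping in the first step: checking that the $\ka_i$ shifts produce exactly $P_j(u)/P_j(u-\fks_j)$ with the right sign in the odd block, and that the leftover factor has the form $u^{d_j}/Q_j(u)$ with $\deg Q_j=d_j$ and $(P_j,Q_j)=1$. I would handle this by reducing to the $2\times2$ subalgebras $\psi_j$ used in the proof of Theorem~\ref{thm:fd}. For even pairs, the reduction is precisely \cite[Cor.~7.5]{BK08}. For odd pairs, I would apply the parity-swap isomorphism relating $\rY_{0|2}$ to $\rY_{2|0}$ (with $u\mapsto -u$), which reverses the strictness direction. Alternatively, the whole theorem follows by combining \cite[Thm.~7.9]{BK08} for each block with \cite[Thm.~7.2]{BBG13} at the boundary, as noted in the introduction.
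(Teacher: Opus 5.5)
Your argument is correct. For the \emph{only if} direction it agrees in substance with the paper, but for the \emph{if} direction it takes a different route.

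\textbf{Only if.} The paper restricts to the rank-one subalgebras attached to adjacent rows of equal parity. It then invokes the two-row case \cite[Coro.~7.8]{BK08} and the gluing argument of \cite[Thm.~7.9]{BK08}. This is exactly what you obtain by combining the \emph{only if} half of Theorem~\ref{thm:fd} with your dictionary between \eqref{eq:fd-criterion} and column strictness.

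\textbf{If.} The paper never passes through Drinfeld polynomials. It takes a column strict representative with columns $\bA_1,\dots,\bA_\ell$ and notes that each $\mc L(\bA_r)$ is a finite-dimensional $\gl_{m_r|n_r}$-module (Example~\ref{eg:1-col}, using that $\pi$ is standard). It then realizes $\mc L(\bA)$ as a subquotient of $\mc L(\bA_1)\boxtimes\cdots\boxtimes\mc L(\bA_\ell)$ via parabolic induction and Lemma~\ref{lem:tensor}. You instead convert column strictness into Drinfeld polynomials and appeal to the \emph{if} half of Theorem~\ref{thm:fd}. That half rests on Zhang's classification \cite{Zh96}, the coproduct $\Y\to\rY_{m|n}\otimes\Y$, and the one-dimensional module of Lemma~\ref{lem:1-dim}.

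\textbf{Comparison.} The paper's route stays inside $W$-superalgebras and needs only finite-dimensional $\gl_{m|n}$-theory. It also exhibits an explicit finite-dimensional tensor product containing $\mc L(\bA)$ as a subquotient. Your route makes the theorem a formal corollary of Theorem~\ref{thm:fd}; the only new input is the combinatorial translation, as the introduction also remarks.

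\textbf{Minor points.}
\begin{itemize}
\item The pairwise condition is strict: $a_{j,k}-a_{j+1,k'}\in\bZ_{>0}$ in the even block and $a_{j+1,k'}-a_{j,k}\in\bZ_{>0}$ in the odd block. Your $\kappa$-bookkeeping does produce exactly this, but you should state it that way rather than as ``$\in\bN$ with offsets''.
\item No interlacing lemma is needed for the gluing. Each pairwise condition is an injection defined on the whole multiset of row $j$, and every box of row $j$ lies above a box of row $j+1$. So you can arrange the rows one at a time from top to bottom.
\item The parity-swap isomorphism is unnecessary, since \eqref{eq:fd-criterion} already carries $P_j(u+1)$ in the odd block.
\item The coprimality of $P_j,Q_j$ costs nothing. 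A common factor $u+c$ can be cancelled by replacing $Q_j(u)$ with $(u+c-\fks_j)Q_j(u)/(u+c)$, which keeps $\deg Q_j=d_j$.
\end{itemize}
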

\begin{proof}
The proof is the same as that of \cite[Thm.~7.9]{BK08}. For the only if part, one only needs to repeat that argument for $1\lle i<m+n$ and $i\ne m$ using \cite[Coro.~7.8]{BK08}; see also the proof of Theorem \ref{thm:fd}. For the if part, it is also the same. Suppose that $\bA$ has a representative belonging to $\mathrm{Col}(\pi)$. Let
$\bA_1,\dots,\bA_\ell$ be the columns of this representative.
Since $\pi$ is standard and $\bA_i$ is column strict, the irreducible module $\mc L(\bA_i)$ is finite dimensional (see Example \ref{eg:1-col}). By
Lemma \ref{lem:tensor}, the tensor product $\mc L(\bA_1)\boxtimes \cdots\boxtimes \mc L(\bA_\ell)$ is a finite dimensional
$W(\pi)$-module containing a highest $\bm\ell$-weight vector of type $\bA$. Hence $\mc L(\bA)$ being its subquotient is also finite
dimensional.
\end{proof}
\begin{rem}
Let $\bla$ be the $\bm\ell$-weight related to the $\pi$-tableau $\bA$ as in \eqref{thm:fd}. By Theorems \ref{thm:fd} and \ref{thm:fd-W}, the $W(\pi)$-module $\mc L(\bA)$ is finite dimensional if and only if the $\Y$-module $\mc L(\sigma,\bla)$ is finite dimensional.
\end{rem}

\subsection{Character of Verma modules}
Introduce the following shorthand notation for some special elements of the completed group algebra $\widehat\bZ[\mathscr P_{m+n}]$:
\beq\label{eq:y}
y_{i,a}:=1+(a+\ka_i)u_i^{-1}
\eeq
for $1\lle i\lle m+n$.

A \textit{supertuple} $c=(c_{i,j,k})_{1\lle i<k\lle m+n,1\lle j\lle p_i}$ is a tuple that satisfies:
\begin{enumerate}
    \item all $c_{i,j,k}$ are natural numbers;
    \item if $\fks_i\ne \fks_{k}$, then $c_{i,j,k}\lle 1$ for all $1\lle j\lle p_i$.
\end{enumerate} 
One of our main results is the character formula of the Verma module.

\begin{thm}\label{thm:character}
For $\bA\in \mathrm{Row}(\pi)$ with entries $a_{i,1},\dots,a_{i,p_i}$ on the $i$-th row for each $1\lle i\lle m+n$, we have
\beq\label{eq:ch-Verma}
\begin{split}
&\mathrm{ch}\,\mc M(\bA)\\
=\,&\sum_{c}\prod_{i=1}^{m+n}\prod_{j=1}^{p_i}\left(y_{i,a_{i,j}-\fks_i(c_{i,j,i+1}+\dots+c_{i,j,m+n})}\prod_{k=i+1}^{m+n}\Big(\frac{y_{k,a_{i,j}-\fks_i(c_{i,j,k+1}+\dots+c_{i,j,m+n})}}{y_{k,a_{i,j}-\fks_i(c_{i,j,k}+\dots+c_{i,j,m+n})}}\Big)^{\fks_i\fks_k}\right),
\end{split}
\eeq
where the sum is over all supertuples $c=(c_{i,j,k})_{1\lle i<k\lle m+n,1\lle j\lle p_i}$.
\end{thm}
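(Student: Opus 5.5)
Following the strategy of \cite[Thm.~5.11]{BK08}, I would deduce the formula from the character of a Verma module for a single column together with the multiplicativity of characters under parabolic induction. The first step is to reduce to the \emph{generic} case. The weight spaces of $\mc M(\bA)$ have bases that do not depend on $\bA$ (Theorem \ref{thm:W-rep-basis}). The matrix entries of each $D_i^{(r)}$ on a fixed weight space are polynomial in the entries of $\bA$. Hence the generalized eigenvalues, i.e.\ the $\bm\ell$-weights with multiplicity, depend continuously (as multisets) on $\bA$. So if \eqref{eq:ch-Verma} holds on a Zariski dense set of tableaux, it holds everywhere. I will therefore assume $\bA$ generic: $a_{i,j}-a_{k,l}\notin\bZ$ whenever $(i,j)\ne(k,l)$.

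The second step treats a single column (Example \ref{eg:1-col}), and in fact a single box in row $i$. For one box the $W$-algebra is $\rU(\gl_{1|0})$ or $\rU(\gl_{0|1})$, and its Verma module is one-dimensional with $\bm\ell$-weight $y_{i,a}$. For a general pyramid, iterate the parabolic induction down to individual boxes. Concretely, split $\pi$ into columns by \eqref{miura} and then cut each column row by row. Lemma \ref{ch-multiplicative}, through Corollary \ref{SWcop}, gives that $\mathrm{ch}$ of a tensor product of one-box Verma modules is $\prod_{i,j}y_{i,a_{i,j}}$. This product is only the $c=0$ term. The full formula must instead come from comparing $\mc M(\bA)$ with the induced module $\mc M(\bA_1)\boxtimes\cdots\boxtimes\mc M(\bA_\ell)$ of the column Verma modules. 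So I would first compute $\mathrm{ch}$ of a $\gl_{m'|n'}$ Verma module in the Gelfand--Tsetlin sense. On the PBW basis $F_{k,i}^{(1)}$-monomials, with each odd root vector appearing to exponent at most $1$, a triangularity argument shows that the $D$'s act triangularly with diagonal entries given by the $c$-shifted $y$'s. This triangularity rests on Lemma \ref{lem:relations}, Corollary \ref{cor:string}, and the Gauss decomposition. That yields \eqref{eq:ch-Verma} for one column, with the sum over supertuples reflecting that odd $F$'s square to zero.

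The third step is multiplicativity, used to glue the columns. Both $\mc M(\bA)$ and $\mc M(\bA_1)\boxtimes\cdots\boxtimes\mc M(\bA_\ell)$ have a highest vector of type $\bA$. By Theorem \ref{thm:W-rep-basis}, both have the same graded dimension with respect to the $\mathbf Q_{m|n}$-grading, since $\prod_r(\text{column PBW counts})$ matches the truncated PBW count $s_{j,i}<r\le s_{j,i}+p_i$. By universality there is a map $\mc M(\bA)\to\bigotimes\mc M(\bA_r)$. For generic $\bA$ I would show this map is an isomorphism by checking that the product module is generated by its highest vector. The check: genericity forces every $\bm\ell$-weight of the product to have multiplicity at most one in each weight space and to be distinct from $\bla$ unless the vector is the highest one. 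Then any submodule not containing $v_+$ would give a singular vector whose $\bm\ell$-weight must equal one linked to $\bla$, which genericity rules out. Multiplying the column formulas then gives the product over $i,j$ in \eqref{eq:ch-Verma}, after checking that the product of the column sums reorganizes into the sum over supertuples indexed by boxes.

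I expect the main obstacle to be the single-column triangularity computation. The super relation in Lemma \ref{lem:relations}(1) produces residue terms after differentiation, and ordering-sensitive vanishing of odd string vectors (Corollary \ref{cor:string}) occurs there. My first attempt would be an induction on the height $m'+n'$ via $\Delta_L$/$\Delta_R$-type restriction to $\gl_{m'+n'-1}$, using genericity to diagonalize.
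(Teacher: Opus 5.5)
Your proposal is correct in outline, but it takes a different route from the paper. The continuity reduction is the same as the paper's path argument with $\bA^\varsigma(t)$ and Lemma \ref{lem:continuous}; the difference is what you do in the generic case.

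The paper never proves $\mc M(\bA)\cong\mc M(\bA_1)\boxtimes\cdots\boxtimes\mc M(\bA_\ell)$ first. Instead it proves the branching rule (Theorem \ref{thm:branching}) for restricting $\mc M(\bA)$ from $W(\pi)$ to $W(\bar\pi)$, for an arbitrary pyramid, assuming genericity only within rows. That proof uses the vectors built from $L_i(-a_{i,j}+\fks_i(k-1))$, Lemmas \ref{lem:simple} and \ref{lem:general}, and the super relations of Lemma \ref{lem:relations}; this is where Corollary \ref{cor:string} and the order-dependence of odd strings enter. Induction on $m+n$ then gives every row but the last, which is forced by the central quantum Berezinian. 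The product decomposition, Lemma \ref{lem:simple-generic}(2), is deduced only afterwards, from the character formula.

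You invert this order. The only branching you need is the one-column case, $\rU(\gl_{m'|n'})$ restricted to the top-left block of one size smaller. This is the $\ell=1$ case of Theorem \ref{thm:branching} and is classical: use the tensor identity and a weight filtration of the supersymmetric algebra on $\mathrm{span}\{e_{N,i}\}$. It needs no genericity and none of Lemma \ref{lem:relations}. Set it up this way rather than as a triangularity claim for an unspecified PBW order.

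You then transfer the column result to $\pi$ using three ingredients:
\begin{itemize}
\item universality of $\mc M(\bA)$;
\item the PBW count of Theorem \ref{thm:W-rep-basis}: each column contributes one $e_{j,i}$ for every pair $i<j$ of rows it contains, and row $i$ lies in exactly $p_i$ columns;
\item multiplicativity of $q$-characters under parabolic induction.
\end{itemize}
This is lighter and uses the parabolic induction of \S3 exactly as intended. The cost is that you need genericity across all boxes rather than within rows, and you do not obtain the branching rule for $W(\pi)$ itself, which the paper gets as a structural by-product.

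Two points to tighten.

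\emph{The cyclicity step.} You phrase it in terms of submodules, but showing that every nonzero submodule of the product module $P$ contains $v$ only identifies the socle; it does not show $P=W(\pi)v$. Argue instead on the quotient $Q=P/W(\pi)v$. If $Q\neq 0$, a maximal weight space of $Q$ contains a highest $\bm\ell$-weight vector. By \eqref{eq:Di-eigenvalues}, its $\bm\ell$-weight has $\pi$-tableau form $\prod_{i,j}y_{i,b_{i,j}}$. The maximal-$\mathfrak i$ argument in the proof of Lemma \ref{lem:simple-generic} shows that no nonzero supertuple in $\mathrm{ch}\,P$ produces such a monomial when the $a_{i,j}$ lie in distinct classes modulo $\bZ$. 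Since $P_{\varpi(\bla)}=\bC v$, this forces $Q=0$. So the real input is this tableau-form constraint, not multiplicity one of $\bm\ell$-weights.

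\emph{The row-by-row cutting.} Drop the ``cut each column row by row'' step. Parabolic induction only splits a pyramid between columns, so that operation does not exist.
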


We shall prove Theorem \ref{thm:character} in the next two subsections. For the sake of the argument, we shall \textit{assume henceforth that the shift matrix $\sigma$ is upper triangular} (or equivalently, that \textit{$\pi$ is left-justified}). The general case follows immediately by twisting with the isomorphism $\iota$ defined in \eqref{iotadef}.

By interchanging the two products on the right-hand side of \eqref{eq:ch-Verma}, one obtains the following generalization of \cite[Coro. 6.3]{BK08}.

\begin{cor}\label{cor:char}
Let $\bA_1,\dots,\bA_\ell$ be the columns of any representative of the row-symmetrized $\pi$-tableau $\bA\in \mathrm{Row}(\pi)$, so that $\bA\sim_{\mathrm{row}}\bA_1\otimes \bA_2\otimes \cdots\otimes \bA_{\ell}$. Then
\[
\mathrm{ch}\,\mc M(\bA)=\prod_{k=1}^\ell \mathrm{ch}\,\mc M(\bA_k)=\mathrm{ch}\big(\mc M(\bA_1)\boxtimes\cdots \boxtimes \mc M(\bA_\ell)\big).
\]
\end{cor}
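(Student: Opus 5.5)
The plan is to read Corollary \ref{cor:char} off from the formula \eqref{eq:ch-Verma} of Theorem \ref{thm:character}. The key observation is that both the summation set and the summand of \eqref{eq:ch-Verma} factor over the individual boxes of $\bA$. A supertuple $c=(c_{i,j,k})$ is simply an independent choice, for each box $(i,j)$ of $\pi$, of a tuple $(c_{i,j,k})_{k>i}$. Each such tuple satisfies its own constraints ($c_{i,j,k}\in\bN$, and $c_{i,j,k}\lle 1$ whenever $\fks_i\ne\fks_k$); these constraints involve only that box and its row parity $\fks_i$. The summand is a product over boxes $(i,j)$ of a factor $\Phi_{i}(a_{i,j};c_{i,j,\bullet})$, which depends only on the entry $a_{i,j}$, the row index $i$, and the tuple $c_{i,j,\bullet}$. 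Hence the sum over supertuples becomes a product over boxes:
\[
\mathrm{ch}\,\mc M(\bA)=\prod_{i=1}^{m+n}\prod_{j=1}^{p_i}\Big(\sum_{c_{i,j,\bullet}}\Phi_i(a_{i,j};c_{i,j,\bullet})\Big).
\]
This manipulation is legitimate in $\widehat\bZ[\mathscr P_{m|n}]$, because for fixed weight only finitely many terms contribute. In particular, the right-hand side is visibly independent of the chosen representative of $\bA$ in $\mathrm{Row}(\pi)$.

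Next, I regroup the boxes by columns instead of rows. Let $\bA\sim_{\mathrm{row}}\bA_1\otimes\cdots\otimes\bA_\ell$. Then every box of $\pi$ lies in exactly one column, and its entry is the corresponding entry of some $\bA_t$. So the product above equals $\prod_{t=1}^\ell\prod_{(i,j)\in \text{column }t}(\cdots)$.

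It remains to identify each column factor with $\mathrm{ch}\,\mc M(\bA_t)$. Column $t$ is itself a pyramid $\pi_t$, a single column occupying some set of rows. I regard it as a pyramid of the same height $m+n$ with empty rows allowed, so it has the same parity sequence $\fks$ and the same $\ka_i$. Applying Theorem \ref{thm:character} to $\pi_t$ gives a sum over supertuples indexed by the boxes of $\pi_t$. The factorization argument from the first step then shows that this sum is exactly the product of the box factors $\Phi_i$ over the boxes of column $t$. Empty rows contribute nothing, since their $p_i=0$. This gives the first equality.

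The second equality follows from the multiplicativity of characters under $\boxtimes$ established at the start of Section \ref{sec:rep-W}, namely $\mathrm{ch}(\mc M'\boxtimes\mc M'')=\mathrm{ch}\,\mc M'\,\mathrm{ch}\,\mc M''$ via Corollary \ref{SWcop} and Lemma \ref{ch-multiplicative}. Iterating it with the coassociativity of Lemma \ref{wasso} handles $\ell$ factors. The only point needing care is that the single-column pyramids are $\fks$-compatible, which holds automatically because they are columns of the same pyramid $\pi$, and that the relevant shift matrices add up correctly, where I use Remark \ref{rem:row-shift} and the $\iota$-twist. The main (mild) obstacle is justifying this bookkeeping: that Theorem \ref{thm:character} applied to a column with empty rows uses the same $\fks$ and $\ka_i$ as for $\pi$, so that the factors $\Phi_i$ match literally.
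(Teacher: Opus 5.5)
Your proposal is correct and takes essentially the same route as the paper. The paper gets the first equality by factoring the sum over supertuples in \eqref{eq:ch-Verma} box by box and regrouping the product by columns, each column factor being the formula of Theorem~\ref{thm:character} for the one-column pyramid; the second equality is the multiplicativity $\mathrm{ch}(\mc M'\boxtimes\mc M'')=(\mathrm{ch}\,\mc M')(\mathrm{ch}\,\mc M'')$ recorded at the start of Section~\ref{sec:rep-W}, and your bookkeeping about empty rows, the common $\fks$ and $\ka_i$, and the $\iota$-twist makes explicit what the paper leaves implicit.
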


As an application, Corollary \ref{cor:char} allows us to establish an isomorphism between centers of the enveloping superalgebra $\rU(\gl_{M|N})$ and finite $W$-superalgebra $W(\pi)$ in Theorem \ref{isocent} below. Corollary \ref{cor:char} is also a key ingredient of \cite{CW26} for establishing a canonical basis character formula for the irreducible $W(\pi)$-modules in arbitrary parabolic BGG-type categories. This latter application, suggested by S.-J. Cheng and W. Wang, serves as the primary motivation for the present article.

\subsection{Branching rule for generic case}
We consider the \textit{generic} case, i.e. under the assumption that $a_{i,j_1}-a_{i,j_2}\notin \bZ$ for all $1\lle i< m+n$ and $1\lle j_1\ne j_2\lle p_i$. We shall assume this condition holds throughout this subsection and establish the following branching rule for the Verma modules.

Let $\bar\pi$ the pyramid obtained from $\pi$ by removing the bottom row. Then the row lengths of $\bar \pi$ are $p_1,\dots,p_{m+n-1}$. The submatrix $\bar\sigma=(s_{i,j})_{1\lle i,j<m+n}$ corresponds to the shift matrix for $\bar\pi$. Clearly, there exists a homomorphism
\begin{align*}
&W(\bar\pi)\to W(\pi),&\\
&D_i^{(r)}\mapsto D_i^{(r)}, &(1\lle i<m+n,r>0),\\
&E_i^{(r)}\mapsto E_i^{(r)}, &(1\lle i<m+n-1,r>s_{i,i+1}),\\
&F_i^{(r)}\mapsto F_i^{(r)}, &(1\lle i<m+n-1,r>s_{i+1,i}).
\end{align*}
It follows from Proposition \ref{prop:PBW-truncated} that this homomorphism is injective. Therefore, we can consider $W(\bar\pi)$ as a subalgebra of $W(\pi)$.

\begin{thm}\label{thm:branching}
Let $\bA\in\mathrm{Row}(\pi)$ with entries $a_{i,1},\dots,a_{i,p_i}$ on its $i$-th row for each $1\lle i\lle m+n$, where $a_{i,j_1}-a_{i,j_2}\notin \bZ$ for all $1\lle i\lle m+n$ and $1\lle j_1\ne j_2\lle p_i$. There is a filtration $0=M_0\subset M_1\subset \cdots$ of $\cM(\bA)$ as a $W(\bar\pi)$-module with $\bigcup_{i\gge 0}M_i=\cM(\bA)$ and subquotients isomorphic to the  Verma modules $\cM(\bB)$ for $\bB\in \mathrm{Row}(\bar\pi)$ such that $\bB$ has the entries $(a_{i,1}-\fks_ic_{i,1}),\dots,(a_{i,p_i}-\fks_ic_{i,p_i})$ on its $i$-th row for each $1\lle i<m+n$, one for each tuple $(c_{i,j})_{1\lle i<m+n,1\lle j\lle p_i}$ of natural numbers such that $c_{i,j}\lle 1$ if $\fks_i\ne\fks_{m+n}$.
\end{thm}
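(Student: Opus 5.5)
We follow the strategy of \cite[Thm.~6.2]{BK08}, adapting it to the super setting. Throughout, $\sigma$ is upper triangular. Theorem~\ref{thm:W-rep-basis}(2) gives a PBW-type basis of $\cM(\bA)$. We order the generators so that the elements $T_{m+n,i}^{(r)}$ ($1\lle i<m+n$, $s_{m+n,i}<r\lle s_{m+n,i}+p_i$) come first, and the generators $T_{j,i}^{(r)}$ with $j<m+n$ (which lie in $W(\bar\pi)$) come last. Then $\cM(\bA)$ is spanned by vectors $x\,y\,v_+$, with $x$ a supermonomial in the bottom-row elements $T_{m+n,i}^{(r)}$ and $y$ a supermonomial in $W(\bar\pi)$'s lowering generators. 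The bottom-row generators have odd parity exactly when $\fks_i\ne\fks_{m+n}$. This is the source of the restriction $c_{i,j}\lle 1$: exterior powers replace symmetric powers in those directions. A count of graded dimensions then shows that the number of Verma subquotients predicted in the statement matches $\cM(\bA)$, weight space by weight space. Concretely, for each $i<m+n$ the $p_i$ generators $T_{m+n,i}^{(r)}$ produce a polynomial (resp.\ exterior) algebra in $p_i$ variables, which we regroup as $c_{i,1},\dots,c_{i,p_i}$.

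Next we construct the highest weight vectors. For each admissible tuple $c=(c_{i,j})$ we look for a $W(\bar\pi)$-highest $\bm\ell$-weight vector $v_c\in\cM(\bA)$ whose type is the tableau $\bB_c$ with entries $a_{i,j}-\fks_ic_{i,j}$. Following \cite[Lem.~6.17]{BK08}, we build it as an ordered product of ``$F$-strings'' $\prod_{i,j}\prod_{t}T_{m+n,i}(\text{evaluated at } u=-a_{i,j}+\fks_i t+\cdots)$ applied to $v_+$. Equivalently, we take suitable linear combinations of the $T_{m+n,i}^{(r)}$, obtained from residues of $T_{m+n,i}(u)$ at the appropriate points, in the spirit of Corollary~\ref{cor:string}. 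To see that $v_c$ is annihilated by $E_k^{(r)}$ for $k<m+n-1$ and that the $D_k(u)$ for $k<m+n$ act by the correct eigenvalues, we use the RTT relation \eqref{RTT}. Restricted to indices $<m+n$ against $T_{m+n,i}(v)$, this relation shows that $T_{m+n,i}$ transforms like a vector-representation component under the subalgebra $W(\bar\pi)$. The eigenvalue shift by $\fks_i$ comes from the super sign in \eqref{RTT}.

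Genericity ($a_{i,j_1}-a_{i,j_2}\notin\bZ$) is exactly what guarantees that the residues at distinct points of the same row never collide. It also ensures that the resulting vectors $v_c$ are nonzero and linearly independent modulo lower terms. In the super case, products of odd strings could otherwise vanish, as in Corollary~\ref{cor:string}. Nonvanishing is checked by passing to the associated graded with respect to the filtration of Theorem~\ref{thm:W-rep-basis}. There the leading term of $v_c$ is a nonzero multiple of a monomial in the $T_{m+n,i}^{(r)}$ times $v_+$, and genericity makes the relevant Vandermonde-type determinant in the $a_{i,j}$ nonzero.

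Finally, we assemble the filtration. We order the tuples $c$ compatibly with the dominance order on weights and let $M_k$ be the $W(\bar\pi)$-submodule generated by the first $k$ vectors $v_c$. By universality, each subquotient $M_k/M_{k-1}$ is a quotient of $\cM(\bB_c)$. We compare $q$-characters, or equivalently dimensions of weight spaces, using the basis count from the first step and Theorem~\ref{thm:W-rep-basis} applied to $\bar\pi$. This forces every surjection $\cM(\bB_c)\twoheadrightarrow M_k/M_{k-1}$ to be an isomorphism and shows $\bigcup M_k=\cM(\bA)$.

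The main obstacle is the second step: verifying that $E_k^{(r)}$ kills $v_c$ modulo earlier $M_{k-1}$, and computing the action of $D_k(u)$. Differentiating the super RTT relation produces residue terms, and making them cancel is exactly where genericity enters. We would first handle the case of a single string ($c$ supported at one $(i,j)$) by a direct computation with \eqref{RTT}. We would then induct on $\sum c_{i,j}$, working modulo the submodule generated by strictly smaller tuples.
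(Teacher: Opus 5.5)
Your skeleton is the one the paper uses, following \cite{BK08}: a PBW count, string vectors built from the bottom row, highest weight modulo lower terms, then universality plus a size comparison. The gap is that the tool you name for the key step is not available.

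The relation \eqref{RTT} holds for the generators $\sfT_{i,j}^{(r)}$ of the unshifted $\rY_{m|n}$. The $T_{i,j}^{(r)}$ of $\Y$, and hence of $W(\pi)$, are built from the shifted root vectors of \eqref{gedef}. They differ from the $\sfT_{i,j}^{(r)}$ and do not satisfy \eqref{RTT} once $\sigma\neq 0$. Already for $m+n=2$ and $s_{1,2}=1$, the instance relating $T_{1,1}(u)$ and $T_{1,2}(v)$ is off by a term proportional to $(u-v)v^{-1}D_1(u)D_1(v)E_{1,2}(u)$. Moreover, the $\sfT_{m+n,i}^{(r)}$ need not lie in $\Y$ at all. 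So the claim that $T_{m+n,i}$ ``transforms like a vector component'' under $W(\bar\pi)$ has no foundation, and neither does your computation of the $D_k(u)$-eigenvalues of $v_c$.

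The missing idea is the paper's reduction. Put $L_i(u)=u^{p_i}T_{m+n,i}(u)$. If $\fkm$ has weight $\alpha$ and $\alpha+\ve_j-\ve_i$ is not a weight for any $j<i$, then $L_i(u)\fkm=u^{p_i}F_{m+n,i}(u)D_i(u)\fkm$. Since $\sigma$ is upper triangular, $F_{m+n,i}(u)D_i(u)$ lies in the unshifted negative Borel subalgebra, where the Drinfeld-type relations of Lemma~\ref{lem:relations} (from \cite{Pe16}) are valid. Part~(4) gives the shift by $\fks_i$ in Lemmas~\ref{lem:simple} and~\ref{lem:general}, and Parts~(3), (6), (7) control the remaining $D_k(u)$. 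This also forces the order of the product: the $L_{m+n-1}$-strings must act on $v_+$ first and the $L_1$-strings last, so that the weight hypothesis holds at every step.

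You have also misplaced the difficulty. Annihilation by the $E_k^{(r)}$ with $k<m+n-1$ needs no residue calculus. Let $M'_c$ be the span of the $y v_{c'}$ with $c'<c$. If $\{y v_c\}$ is already known to be a basis of $\cM(\bA)$, then the image of $v_c$ in $\cM(\bA)/M'_c$ has maximal weight, by the choice of total order on supertuples, and is killed for weight reasons.

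For this you need the basis statement first, not from a character comparison at the end; the paper gets it from Theorem~\ref{thm:W-rep-basis} and Lemma~\ref{lem:general}. Your leading-term argument cannot supply it. All $v_c$ with the same row sums $|c|_i$ have the same leading monomial. When $\fks_i\neq\fks_{m+n}$ and $|c|_i\gge 2$, the naive top-degree term contains the square of the odd element $T_{m+n,i}^{(p_i)}$ and even vanishes in the associated graded.

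What is needed is Lemma~\ref{lem:general}, which supplies three things:
\begin{enumerate}
\item the Vandermonde change of basis between $L_i^{(1)}\fkm,\dots,L_i^{(p_i)}\fkm$ and $L_i(-a_1)\fkm,\dots,L_i(-a_{p_i})\fkm$;
\item the $D_i$-eigenvalue computation modulo lower terms;
\item when $\fks_i\neq\fks_{m+n}$, the use of Part~(1) of Lemma~\ref{lem:relations}, which shows that reordering the factors rescales the vector by a ratio of two numbers of the form $\pm(a_{i,j_1}-a_{i,j_2})\pm 1$.
\end{enumerate}
Both numbers in that ratio are nonzero by genericity, and this is where genericity is really used. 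Without it, such products depend on the order of the factors and can vanish (Corollary~\ref{cor:string}).
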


This branching rule allows us to prove Theorem \ref{thm:character} for generic case, see the beginning of \S \ref{sec:proof-char} below. Then we deduce Theorem \ref{thm:character} for the general case from the generic case by a continuation argument. We prove Theorem \ref{thm:branching} in the rest of this subsection. 

We prepare some lemmas that will be used in the proof.
\begin{lem}\label{lem:relations}
Assume $\pi$ is left-justified. The following relations hold in $W(\pi)$.
\begin{enumerate}
\item For all $i<j$, 
\begin{itemize}
    \item if $\fks_i=\fks_j$, we have $[F_{j,i}(u)D_i(u),F_{j,i}(v)D_i(v)]=0$;
    \item if $\fks_i\ne\fks_j$, we have
\[
(u-v-\fks_j)F_{j,i}(u)D_i(u)F_{j,i}(v)D_i(v)=(v-u-\fks_j)F_{j,i}(v)D_i(v)F_{j,i}(u)D_i(u).
\]
\end{itemize}
\item For all $i<j<k$, $(u-v)[F_{k,j}(u),F_{j,i}(v)]$ equals 
\beq\label{4.6-2-k}
\fks_j\sum_{r\gge 0}(-1)^r\Big(\sum_{\substack{i<i_1<\cdots<i_r\lle j\\ i_{r+1}=k}}F_{i_{r+1},i_r}(u)F_{i_r,i_{r-1}}(u)\cdots F_{i_2,i_1}(u)\big(F_{i_1,i}(v)-F_{i_1,i}(u)\big)\Big).
\eeq
\item For all $i<j$ and $k<i$ or $k>j$, $[D_k(u),F_{j,i}(v)]=0$.
\item For all $i<j$, $(u-v)[D_i(u),F_{j,i}(v)]=\fks_i(F_{j,i}(u)-F_{j,i}(v))D_i(u)$.
\item For all $i<j$, $(u-v)[D_j(u),F_{j,i}(v)]$ equals
\[
\fks_j\sum_{r\gge 0}(-1)^r\Big(\sum_{\substack{i<i_1<\cdots<i_r< j\\ i_{r+1}=j}}F_{i_{r+1},i_r}(u)F_{i_r,i_{r-1}}(u)\cdots F_{i_2,i_1}(u)\big(F_{i_1,i}(v)-F_{i_1,i}(u)\big)D_j(u)\Big).
\]
\item For all $i<j<k$, $(-1)^{|i||j|+|i||k|+|j||k|}(u-v)[D_j(u),F_{k,i}(v)]$ equals
\[
\sum_{r\gge 0}(-1)^r\Big(\sum_{i<i_1<\dots<i_r<i_{r+1}= j}F_{i_{r+1},i_r}(u)F_{i_r,i_{r-1}}(u)\cdots F_{i_2,i_1}(u)\big(F_{i_1,i}(v)-F_{i_1,i}(u)\big)F_{k,j}(u)D_j(u)\Big).
\]
\item For all $i<j<k$, $(-1)^{|i||j|+|i||k|+|j||k|}(u-v)[F_{k,j}(u)D_j(u),F_{k,i}(v)]$ equals
\[
\sum_{r\gge 0}(-1)^r\Big(\sum_{\substack{i<i_1<\cdots<i_r< j\\ i_{r+1}=k}}F_{i_{r+1},i_r}(u)F_{i_r,i_{r-1}}(u)\cdots F_{i_2,i_1}(u)\big(F_{i_1,i}(v)-F_{i_1,i}(u)\big)F_{k,j}(u)D_j(u)\Big).
\]
\end{enumerate}
\end{lem}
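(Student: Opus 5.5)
Since the pyramid is left-justified, $\sigma$ is upper triangular, so $s_{j,i}=0$ for $i<j$. Hence all the series $F_{j,i}(u)$, $D_i(u)$ are full series in $u^{-1}$, exactly as in the unshifted super Yangian $\rY_{m|n}$. The plan is to prove every relation first in $\rY_{m|n}$ and then transport it along the maps $\Y\hookrightarrow\rY_{m|n}$ and $\Omega:\Y\twoheadrightarrow W(\pi)$. Under the canonical embedding, the elements $F_{j,i}^{(r)}$ of $\Y$ defined by \eqref{gfdef} coincide with the Gaussian generators $\sfF_{j,i}^{(r)}$ of $\rY_{m|n}$, since with $s_{j,j-1}=0$ the recursion \eqref{gfdef} is the standard recursion in $\rY_{m|n}$ (cf. \cite{Pe16}). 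Likewise $T_{i,j}(u)$ equals $\sfT_{i,j}(u)$ whenever $i\gge j$, by comparing \eqref{eq:GD-shift} with \eqref{eq:GD}. It therefore suffices to establish identities among the generating series $\sfF_{j,i}(u)$, $\sfD_i(u)$ in $\rY_{m|n}[\![u^{-1},v^{-1}]\!]$.

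\textbf{Step 1: reduce to small rank via quasideterminants.} Each series involved (for instance $F_{j,i}(u)D_i(u)=\sfT_{j,i}$ computed in a suitable submatrix, and $F_{k,j}(u)D_j(u)$) is expressed as a quasideterminant of a submatrix of $\sfT(u)$. By the standard embedding/shift homomorphisms $\psi_k:\rY_{m-a|n-b}\to\rY_{m|n}$ of \cite{Pe16,Ts20}, these quasideterminants are images of $\sfT_{p,q}(u)$ in smaller super Yangians. Concretely:
\begin{itemize}
\item $F_{j,i}(u)D_i(u)$ is the image of a $T_{2,1}(u)$-type entry in a $\rY$ of rank two, built from rows and columns $\{1,\dots,i,j\}$ and $\{1,\dots,i\}$, after quotienting by rows and columns $1,\dots,i-1$.
\end{itemize}
Part (1) then follows from the RTT relation \eqref{RTT} with indices $(i,j)=(k,l)=(2,1)$. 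When $\fks_i=\fks_j$ the right-hand side is symmetric, giving commutativity. When $\fks_i\ne\fks_j$, the entry is odd and the sign $(-1)^{|i||j|+\cdots}$ produces the displayed relation with the shift $\fks_j$; here the RHS terms $\sfT_{k,j}(u)\sfT_{i,l}(v)$ collapse, again using \eqref{RTT}. Parts (3) and (4) are the usual Gauss-decomposition consequences of \eqref{dr2}--\eqref{dr4}, in generating-function form. For example, (4) is \eqref{dr4} for $j=i+1$ rewritten as $(u-v)[D_i(u),F_i(v)]=\fks_i(F_i(u)-F_i(v))D_i(u)$, extended to $F_{j,i}$ by the recursion \eqref{gfdef} and $[D_i(u),F_{j-1}^{(1)}]=0$ for $i<j-1$.

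\textbf{Step 2: the nested relations (2), (5), (6), (7).} These carry the combinatorial sums over chains $i<i_1<\cdots<i_r$. I would prove them by induction on $j-i$ (resp. $k-i$), following the non-super argument of \cite[Lem.~6.?]{BK08}, which rests on \cite[\S5]{BK05}. The base cases are:
\begin{itemize}
\item $j=i+1$ in (2): this is \eqref{dr9} in generating form, $(u-v)[F_{i+1}(u),F_i(v)]=\fks_{i+1}F_{i+1}(u)(F_i(v)-F_i(u))$ up to the sign conventions.
\item $j=i+1$ in (5): this is \eqref{dr4} with index $i+1$.
\end{itemize}
For the inductive step, apply $[\,\cdot\,,F_{j-1}^{(1)}]$ and use $F_{j,i}(v)=\fks_{j-1}[F_{j-1}^{(1)},F_{j-1,i}(v)]$ together with the super Jacobi identity. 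Each commutator with $F_{j-1}^{(1)}$ either extends a chain by inserting $j-1$ or leaves it unchanged, and the $(-1)^r$ signs arise from the $\fks$ factors. Relations (6) and (7) are deduced from (5) and (2), respectively, by substituting $F_{k,i}(v)=\pm[F_{k,j}^{(1)},F_{j,i}(v)]+\cdots$, or more directly by viewing $F_{k,j}(u)D_j(u)$ as the $(k,j)$ entry of the quasideterminant block and applying the rank-reduced RTT relation to the triple $(i,j,k)$.

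\textbf{Main obstacle.} The main obstacle is the sign bookkeeping in the super Jacobi identity for the inductive step of (2) and (5)--(7). In particular, one must check that the global factor $(-1)^{|i||j|+|i||k|+|j||k|}$ in (6) and (7) comes out uniformly regardless of which of $i,j,k$ are odd. I would handle this by first doing the rank-three case $m+n=3$ explicitly for all eight parity sequences, using \eqref{RTT} directly on $\sfT_{3,1},\sfT_{3,2},\sfT_{2,1}$ with the Gauss decomposition. I would then propagate to general indices via the quasideterminant embedding, which preserves these relations up to the parity-dependent signs recorded in \eqref{tautran}.
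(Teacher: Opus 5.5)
Your reduction to the unshifted $\rY_{m|n}$ is the paper's: for upper triangular $\sigma$ the series $F_{j,i}(u)$, $D_i(u)$ are the Gauss series $\sfF_{j,i}(u)$, $\sfD_i(u)$, so you work in $\rY_{m|n}$ and push forward by $\Omega$. Your treatment of (1) is sound: the shift map $\psi_{i-1}$ applied to \eqref{RTT} for $\sfT_{j-i+1,1}(u)$ and $\sfT_{j-i+1,1}(v)$, where the sign is $\fks_j$. Your treatment of (3) and (4) is also sound. The side claim $T_{i,j}(u)=\sfT_{i,j}(u)$ for $i\gge j$ is false once some $s_{k,j}>0$ with $k<j$, but you never use it.

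The gap is in relation (2), the core of the lemma. Your base case is wrong. The generating form of \eqref{dr9} has boundary terms, namely the $u^0$- and $v^0$-coefficients of $(u-v)[F_{i+1}(u),F_i(v)]$:
\[
[F_{i+1}^{(1)},F_i(v)]=\fks_{i+1}F_{i+2,i}(v)\ \text{(by \eqref{gfdef})}, \qquad -[F_{i+1}(u),F_i^{(1)}].
\]
Iterating \eqref{dr9} gives $[F_{i+1}(u),F_i^{(1)}]=\fks_{i+1}\big(F_{i+2,i}(u)-F_{i+1}(u)F_i(u)\big)$. Hence
\[
(u-v)[F_{i+1}(u),F_i(v)]=\fks_{i+1}\Big(F_{i+2,i}(v)-F_{i+2,i}(u)-F_{i+1}(u)\big(F_i(v)-F_i(u)\big)\Big).
\]
Your formula drops the $r=0$ chain term and has the wrong sign on the other term. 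No choice of sign convention restores the missing term.

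More importantly, the identity $[F_j(u),F_{j-1}^{(1)}]=\fks_j\big(F_{j+1,j-1}(u)-F_j(u)F_{j-1}(u)\big)$ is exactly what your inductive step is missing. Write $F_{j,i}(v)=\fks_{j-1}[F_{j-1}^{(1)},F_{j-1,i}(v)]$ and use $[F_j(u),F_{j-1,i}(v)]=0$ from \eqref{dr10}. Then
\[
(u-v)[F_j(u),F_{j,i}(v)]=\fks_{j-1}(u-v)\big[[F_j(u),F_{j-1}^{(1)}],F_{j-1,i}(v)\big].
\]
The two terms $F_{j+1,j-1}(u)$ and $-F_j(u)F_{j-1}(u)$ then yield, by induction, precisely the chains with $i_r<j$ and those with $i_r=j$. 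This step invokes (2) with top index $(j-1)+2$. So you also need the passage from $k=j+1$ to arbitrary $k$: apply $[F_t^{(1)},\,\cdot\,]$ for $t=j+1,\dots,k-1$, using $[F_t^{(1)},F_{j,i}(v)]=0$ and $[F_t^{(1)},F_{t,h}(u)]=\fks_tF_{t+1,h}(u)$. Your base case covers only $k=i+2$.

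Your fallback (check rank three for all parities, then propagate by a quasideterminant embedding) cannot work. For non-adjacent $i<j<k$, the right-hand sides of (2), (5)--(7) involve $F_{a,b}$ for all intermediate indices. Neither kind of map carries the Gauss generators $F_{2,1},F_{3,2},D_2$ of a rank-three super Yangian to $F_{j,i},F_{k,j},D_j$:
\begin{itemize}
\item the shift maps $\psi$ translate all indices by a constant;
\item the subset embeddings $\sfT_{a,b}\mapsto\sfT_{c_a,c_b}$ do not send Gauss generators to Gauss generators.
\end{itemize}
The signs have to be tracked through the Jacobi steps directly. In (2) this is harmless, since only $\fks_{j-1}^2=1$ and $\fks_j$ occur.

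Once (2) holds, (5) follows from (2), (3) and $[D_j(u),F_{j-1}^{(1)}]=\fks_jF_{j-1}(u)D_j(u)$. Relations (6) and (7) can then be obtained from $F_{k,i}(v)=\fks_j[F_{k,j}^{(1)},F_{j,i}(v)]$, which is the $u^0$-coefficient of (2).
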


\begin{proof}
The proof follows directly from results of \cite{Pe16} together with some simple inductive argument. We prove \eqref{4.6-2-k} in detail and omit the proof for others. Since $\pi$ is left-justified, $\sigma$ is upper triangular ($s_{j,i}=0$ for all $j\gge i$). This implies that the element $F_{j,i}^{(r)}\in \rY_{m|n}(\sigma)$ can be identified with $\sfF_{j,i}^{(r)}\in\rY_{m|n}$. Consequently, 
$F_{j,i}^{(r)}=\fks_{j-1}[F_{j-1}^{(1)}, F_{j-1,i}^{(r)}]$ for all $j-i\gge 2$, 
or in terms of generating series 
\beq\label{pf4.6}
F_{j,i}(u) = \fks_{j-1}[F_{j-1}^{(1)}, F_{j-1,i}(u)].
\eeq

It suffices to establish \eqref{4.6-2-k} in the special case $k=j+1$:
\begin{multline}\label{4.6-2-1}
(u-v)[F_{j}(u),F_{j,i}(v)]= \\
\fks_j\sum_{r\gge 0}(-1)^r \Big(\sum_{\substack{i<i_1<\cdots<i_r\lle j\\ i_{r+1}=j+1}}F_{i_{r+1},i_r}(u)F_{i_r,i_{r-1}}(u)\cdots F_{i_2,i_1}(u)\big(F_{i_1,i}(v)-F_{i_1,i}(u)\big) \Big).
\end{multline}
Indeed, applying $[F_{j+1}^{(1)}, ? ]$ to both sides of \eqref{4.6-2-1}, using the facts that $[F_{j+1}^{(1)},F_{j,i}(v)]=0$, which follows from \eqref{dr10}, and $[F_{j+1}^{(1)},F_{j+1,h}(u)]=\fks_{j+1} F_{j+2,h}(u)$ for all $h\lle j$, which follows from \eqref{pf4.6}, one deduces \eqref{4.6-2-k} for $k=j+2$. Applying $[F_{t}^{(1)}, ? ]$ successively to the results for $t=j+2,\cdots,k-1$, one obtains \eqref{4.6-2-k} in general.

We prove \eqref{4.6-2-1} by induction on $j-i$. The initial case when $j-i=1$ is exactly the result of applying the shift map $\psi_{i-1}$ to \cite[(6.21)]{Pe16}, which also implies the following identity:
\beq\label{pf4.6-2}
[F_{j}(u), F_{j-1}^{(1)}]=\fks_{j}\big(F_{j+1,j-1}(u) - F_j(u) F_{j-1}(u)\big).
\eeq 
Assume now $j-i\gge 2$, then
\begin{align*}
&(u-v)[F_{j}(u), F_{j,i}(v)]= (u-v) \big[ F_{j}(u), [F_{j-1}^{(1)}, F_{j-1,i}(v)] \big]\fks_{j-1} \\
=~~&(u-v) \big[ [F_{j}(u), F_{j-1}^{(1)}], F_{j-1,i}(v) \big]\fks_{j-1}  \\
\stackrel{\eqref{pf4.6-2}}{=}&(u-v) \big[ F_{j+1,j-1}(u) - F_j(u) F_{j-1}(u), F_{j-1,i}(v) \big]\fks_{j-1}\fks_{j}\\
=~~&(u-v) \big[ F_{j+1,j-1}(u), F_{j-1,i}(v) \big]\fks_{j-1}\fks_{j} -(u-v)F_j(u) \big[ F_{j-1}(u), F_{j-1,i}(v) \big]\fks_{j-1}\fks_{j} \\
=~~& \fks_j \sum_{a\gge 0}(-1)^a \Big(\sum_{\substack{i<i_1<\cdots<i_a\lle j-1\\ i_{a+1}=j+1}}F_{i_{a+1},i_a}(u)F_{i_a,i_{a-1}}(u)\cdots F_{i_2,i_1}(u)\big(F_{i_1,i}(v)-F_{i_1,i}(u)\big) \Big) \\
&- \fks_j \sum_{b\gge 0}(-1)^b F_{j}(u)\Big(\sum_{\substack{i<i_1<\cdots<i_b\lle j-1\\ i_{b+1}=j}}F_{i_{b+1},i_b}(u)F_{i_b,i_{r-1}}(u)\cdots F_{i_2,i_1}(u)\big(F_{i_1,i}(v)-F_{i_1,i}(u)\big) \Big) 
\end{align*}
Note that the last equality is due to the induction hypothesis. The two summations correspond to the cases when $i_r\neq j$ and $i_r=j$, respectively, in the RHS of \eqref{4.6-2-1}, completing the proof.
\end{proof}

\begin{cor}\label{cor:string}
For $1\lle i<j\lle m+n$, if $\fks_i\ne \fks_j$ , then
\[
F_{ji}(u)D_i(u)F_{ji}(u-\fks_i)D_{i}(u-\fks_i)=0.
\]
\end{cor}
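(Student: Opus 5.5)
Specialize the second identity of Lemma \ref{lem:relations}(1) to $v=u-\fks_i$. Since $\fks_i\ne\fks_j$, we have $\fks_j=-\fks_i$. Write $X(u):=F_{j,i}(u)D_i(u)$. The relation reads
\[
(u-v-\fks_j)X(u)X(v)=(v-u-\fks_j)X(v)X(u).
\]
At $v=u-\fks_i$ the right-hand coefficient is $v-u-\fks_j=-\fks_i+\fks_i=0$, while the left-hand coefficient is $u-v-\fks_j=\fks_i+\fks_i=2\fks_i$. Hence formally $2\fks_i\,X(u)X(u-\fks_i)=0$, which is the claim.

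The only real work is to justify the substitution $v=u-\fks_i$, since both sides are formal series in two variables. I would write
\[
X(u)X(v)=\sum_{r,s}X^{(r)}X^{(s)}u^{-r}v^{-s},
\]
which is a series in $u^{-1},v^{-1}$ with coefficients in $W(\pi)$, or in $\Y$, where the lemma also holds. Substituting $v=u-c$ and expanding $(u-c)^{-s}$ in $u^{-1}$ is a well-defined ring homomorphism
\[
\bC[\![u^{-1},v^{-1}]\!]\to\bC[\![u^{-1}]\!].
\]
It is well defined because the coefficient of each power $u^{-N}$ receives contributions only from pairs $(r,s)$ with $r+s\le N$. The relation is multiplied only by polynomials in $u,v$, so the identity is preserved under this substitution.

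Next I would check that the substitution gives exactly the stated product. The result $X(u)X(u-\fks_i)$ is the expansion in $u^{-1}$ of the product of the series $X(u)$ and $X(u-\fks_i)$, and this is how the corollary should be read. The right-hand side becomes $0\cdot X(u-\fks_i)X(u)=0$. The left-hand side becomes $2\fks_i\,X(u)X(u-\fks_i)$. Since $2\fks_i$ is an invertible scalar, $X(u)X(u-\fks_i)=0$.

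I expect the only subtle point to be confirming the sign conventions in Lemma \ref{lem:relations}(1), namely that it is $\fks_j$ and not $\fks_i$ that appears, and relating this to $\fks_i$ via $\fks_j=-\fks_i$. Everything else is routine. If the lemma is only stated in $W(\pi)$, the corollary is meant there, and the same argument applies verbatim.
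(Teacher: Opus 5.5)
Your proposal is correct and follows the paper's own argument: the paper's proof just says the identity follows immediately from Part (1) of Lemma \ref{lem:relations}, and specializing $v=u-\fks_i$ is exactly that deduction, since $\fks_j=-\fks_i$ makes the right-hand coefficient $v-u-\fks_j$ vanish and leaves the invertible scalar $2\fks_i$ on the left. Your justification that the formal substitution is legitimate is extra detail that the paper leaves implicit.
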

\begin{proof}
Follows immediately from Part (1) of Lemma \ref{lem:relations} .
\end{proof}

Let
\beq
L_i(u)=\sum_{r=1}^{p_i}L_i^{(r)}u^{p_i-r}:=u^{p_i}T_{m+n,i}(u)\in W(\pi)[u]
\eeq
for each $1\lle i<m+n$. 
We shall apply the following observation repeatedly from now on: given a vector $\mathfrak m$ of weight $\alpha$ in a $W(\pi)$-module $\mathfrak M$ with the property that $\alpha+\ve_j-\ve_i$ is not a weight of $\mathfrak M$ for any $1\lle j< i$, then we have by \eqref{eq:GD-shift} that
\[
L_i(u)\mathfrak m=u^{p_i}F_{m+n,i}(u)D_i(u)\mathfrak m.
\]

\begin{lem}\label{lem:simple}
Suppose we are given $1\lle i<m+n$ and a vector $\mathfrak m$ of weight $\alpha$ in a $W(\pi)$-module $\mathfrak M$ such that
\begin{enumerate}
    \item $\alpha-d(\ve_i-\ve_{m+n})+\ve_j-\ve_i$ is not a weight of $\mathfrak M$ for any $1\lle j<i$ and $d\in\bZ_{\gge 0}$;
    \item $u^{p_i}D_i(u)\mathfrak m\equiv (u+a_1)\cdots(u+a_{p_i})\mathfrak m \pmod{ \mathfrak M'[u]}$ for some $a_1,\dots,a_{p_i}\in \bC$ and some subspace $\mathfrak M'$ of $\mathfrak M$, where $a_{j_1}-a_{j_2}\notin \bZ$ for $j_1\ne j_2$.
\end{enumerate}
For $1\lle j\lle p_i$, define $\fkm_j:=L_{i}(-a_j)\fkm$. Then we have
\begin{align*}
&u^{p_i}D_i(u)\fkm_j\\
\equiv~& (u+a_1)\cdots(u+a_{j-1})(u+a_j-\fks_i)(u+a_{j+1})\cdots (u+a_{p_i})\fkm_j \pmod{\sum_{r=1}^{p_i}L_i^{(r)}\fkM'[u]}.
\end{align*}
Moreover, the subspace of $\fkM$ spanned by the vectors $\fkm_1,\dots,\fkm_{p_i}$ coincides with the subspace spanned by the vectors $L_i^{(1)}\fkm,\dots,L_{i}^{(p_i)}\fkm$.
\end{lem}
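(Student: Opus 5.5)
The plan is to derive a commutation identity between $u^{p_i}D_i(u)$ and $L_i(v)$ on the vector $\fkm$ from Lemma~\ref{lem:relations}(4), and then specialize $v=-a_j$. The span statement will follow from a Vandermonde argument.

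\textbf{Step 1: the commutation identity.} Write $F(v):=F_{m+n,i}(v)$. Lemma~\ref{lem:relations}(4) gives $(u-v)[D_i(u),F(v)]=\fks_i(F(u)-F(v))D_i(u)$. Multiplying on the right by $D_i(v)$ and using that $D_i(u)$ and $D_i(v)$ commute, we get
\[
(u-v)D_i(u)F(v)D_i(v)=(u-v-\fks_i)F(v)D_i(v)D_i(u)+\fks_i F(u)D_i(u)D_i(v).
\]
We apply this identity to $\fkm$ and multiply by $u^{p_i}v^{p_i}$.

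\textbf{Step 2: replacing $F D$ by $L_i$.} The vectors $\fkm$, $D_i(u)\fkm$ and $D_i(v)\fkm$ all have weight $\alpha$, since the $D$'s preserve weights. Hypothesis (1) with $d=0$ and the observation preceding the lemma (via \eqref{eq:GD-shift}) then give $u^{p_i}F(u)D_i(u)\fkw=L_i(u)\fkw$ for each of these vectors $\fkw$. The general $d$ in hypothesis (1) is what makes the same replacement legitimate on the lower weight spaces reached by the $F_{m+n,i}$'s. Writing $P(u):=\prod_k(u+a_k)$ and using hypothesis (2), we obtain
\[
(u-v)\,u^{p_i}D_i(u)L_i(v)\fkm\equiv(u-v-\fks_i)P(u)L_i(v)\fkm+\fks_iP(v)L_i(u)\fkm
\]
modulo $\sum_rL_i^{(r)}\fkM'[u,v]$. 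To get this congruence, note that $u^{p_i}D_i(u)\fkm=P(u)\fkm+\fkm'(u)$ with $\fkm'(u)\in\fkM'[u]$. Hence $L_i(v)\,u^{p_i}D_i(u)\fkm$ differs from $P(u)L_i(v)\fkm$ only by an element of $\sum_r L_i^{(r)}\fkM'[u,v]$, and similarly with $u$ and $v$ interchanged.

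\textbf{Step 3: specializing $v=-a_j$.} Since $P(-a_j)=0$, the second term on the right disappears, and we get
\[
(u+a_j)\,u^{p_i}D_i(u)\fkm_j\equiv(u+a_j-\fks_i)P(u)\fkm_j .
\]
The right-hand side is $(u+a_j)$ times $(u+a_j-\fks_i)\prod_{k\ne j}(u+a_k)\fkm_j$. Multiplication by $(u+a_j)$ is injective on $V[\![u^{-1}]\!]$ for the quotient vector space $V=\fkM/\sum_rL_i^{(r)}\fkM'$. So we may cancel the factor $(u+a_j)$ and obtain the asserted congruence.

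\textbf{Step 4: the span statement.} We have $\fkm_j=\sum_{r}(-a_j)^{p_i-r}L_i^{(r)}\fkm$. The $a_j$ are pairwise distinct, since their differences are non-integers. Hence the Vandermonde matrix $((-a_j)^{p_i-r})_{j,r}$ is invertible, and the two spans coincide.

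\textbf{Main obstacle.} The delicate point is Step 2: justifying the replacement of $F_{m+n,i}(u)D_i(u)$ by $u^{-p_i}L_i(u)$ in \emph{every} term of the identity. I will check carefully, using \eqref{eq:GD-shift} together with hypothesis (1), that the terms $F_{m+n,k}D_kE_{k,i}$ with $k<i$ annihilate each relevant vector. I also need to confirm that the congruences modulo $\fkM'$ remain compatible after applying $L_i(v)$.
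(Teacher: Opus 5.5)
Your proposal is correct and follows the same route as the paper, which simply defers to the argument of \cite[Lem.~6.16]{BK08} with Lemma~\ref{lem:relations}(4) supplying the factor $\fks_i$: right-multiply by $D_i(v)$, convert $u^{p_i}F_{m+n,i}(u)D_i(u)$ into $L_i(u)$ on weight-$\alpha$ vectors, specialize $v=-a_j$, cancel $(u+a_j)$, and finish with a Vandermonde argument. Two harmless remarks: only the case $d=0$ of hypothesis (1) is used in this lemma (general $d$ matters for the iteration in Lemma~\ref{lem:general}), and the cancellation of $(u+a_j)$ should be phrased for Laurent series in $u^{-1}$ (the vectors involve $u^{p_i}$) rather than in $V[\![u^{-1}]\!]$, which changes nothing.
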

\begin{proof}
The proof is parallel to that of \cite[Lem. 6.16]{BK08} with the help of Part (4) from Lemma \ref{lem:relations}, which clearly shows how $\fks_i$ shows up. 
\end{proof}

Let $C_d$ denote the set all $p_i$-tuples $c=(c_1,\dots,c_{p_i})$ of natural numbers summing to $d$. Put a total order on $C_d$ so that $c'<c$ if $c'$ is lexicographically greater than $c$. If $\fks_i\ne \fks_{m+n}$, we suppose further that $c_j\lle 1$ for all $1\lle j\lle p_i$.

\begin{lem}\label{lem:general}
Under the same assumptions as in Lemma \ref{lem:simple}, define the vector $\fkm_c$ for $c\in C_d$ by 
\[
\fkm_c:=\prod_{j=1}^{p_i}\prod_{k=1}^{c_j}L_{i}\big(-a_j+\fks_i(k-1)\big)\fkm,
\]
where the products are taken in order of increasing indices from left to right. Then we have
\[
u^{p_i}D_i(u)\fkm_c\equiv (u+a_1-\fks_ic_1)\cdots(u+a_{p_i}-\fks_ic_{p_i})\fkm_c\quad \pmod{\fkM'_c[u]}
\]
where $\fkM'_c$ is the subspace of $\fkM$ spanned by all the vectors $\fkm_{c'}$ for $c'<c$ and $L_i^{(r_1)}\cdots L_i^{(r_d)}\fkM'$ for $1\lle r_1,\dots,r_d\lle p_i$, Moreover, the vectors $\{\fkm_c\mid c\in C_d\}$ span the same subspace of $\fkM$ as the vectors $L_i^{(r_1)}\cdots L_i^{(r_d)}\fkm$ for all $1\lle r_1,\dots,r_d\lle p_i$. In particular, if $\fks_i\ne \fks_{m+n}$, then the vectors $\{\fkm_c\mid c\in C_d\}$ span the same subspace of $\fkM$ as the vectors $L_i^{(r_1)}\cdots L_i^{(r_d)}\fkm$ for all $1\lle r_1<\dots<r_d\lle p_i$.
\end{lem}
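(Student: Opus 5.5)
We argue by induction on $d$, following the proof of \cite[Lem.~6.17]{BK08}, with Lemma \ref{lem:simple} as the basic step. For $d=0$ there is nothing to prove ($\fkm_c=\fkm$, $\fkM'_c=\fkM'$). For $d>0$, fix $c\in C_d$ and let $j$ be the largest index with $c_j>0$. Set $c^-:=c-\delta_j$, the tuple with $c_j$ lowered by one. Then $c^-\in C_{d-1}$, and it still satisfies $c^-_k\lle 1$ when $\fks_i\ne\fks_{m+n}$. Since the products are ordered with increasing indices from left to right, we have
\[
\fkm_c=L_i\big(-a_j+\fks_i(c_j-1)\big)\fkm_{c^-}.
\]
By induction, $\fkm_{c^-}$ satisfies hypothesis (2) of Lemma \ref{lem:simple} modulo $\fkM'_{c^-}[u]$, with roots $a_k-\fks_ic^-_k$. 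These roots are still pairwise non-congruent modulo $\bZ$ by genericity. The weight of $\fkm_{c^-}$ is $\alpha-(d-1)(\ve_i-\ve_{m+n})$. Hence hypothesis (1) for $\fkm_{c^-}$ is inherited from hypothesis (1) for $\fkm$, and this is exactly why condition (1) of Lemma \ref{lem:simple} was stated with an arbitrary $d$. Applying Lemma \ref{lem:simple} to $\fkm_{c^-}$ at the root $a_j-\fks_i(c_j-1)$ then gives the eigenvalue congruence for $\fkm_c$. The congruence holds modulo $\sum_r L_i^{(r)}\fkM'_{c^-}[u]$, and we must check that this space lies in $\fkM'_c[u]$.

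For that containment, note first that $L_i^{(r)}L_i^{(r_1)}\cdots L_i^{(r_{d-1})}\fkM'$ is of the allowed form. The remaining pieces are $L_i^{(r)}\fkm_{c'}$ with $c'<c^-$. By the spanning statement in Lemma \ref{lem:simple}, applied to $\fkm_{c'}$ (again legitimate by induction), $L_i^{(r)}\fkm_{c'}$ is a combination of the vectors $L_i(\cdot)\fkm_{c'}$ evaluated at the roots of $\fkm_{c'}$. Each such vector is of the form $\fkm_{c''}$, where $c''$ is $c'$ with one entry raised, up to reordering of the factors. Here we need the $L_i(x)$ at different points to (super)commute modulo lower terms. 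We would get this from Lemma \ref{lem:relations}(1) together with the observation that $L_i(u)$ acts as $u^{p_i}F_{m+n,i}(u)D_i(u)$ on these vectors; the weight hypothesis (1) guarantees this. In the case $\fks_i=\fks_{m+n}$, these operators genuinely commute. In the case $\fks_i\neq\fks_{m+n}$, they commute up to the rational factor $(u-v-\fks_{m+n})/(v-u-\fks_{m+n})$, which is nonzero and finite at the relevant generic points. Raising an entry of a lexicographically larger tuple $c'$ produces a tuple $c''<c$ in our order, so these terms land in $\fkM'_c$. This reordering step is where we expect the main obstacle. It requires a careful bookkeeping of the lexicographic order and of the extra residue terms coming from the super relation. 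If needed, we would first reduce modulo the lower $\fkm_{c'}$ and only then compare leading terms.

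The spanning claim also goes by induction on $d$. The span of all $L_i^{(r_1)}\cdots L_i^{(r_d)}\fkm$ equals $\sum_r L_i^{(r)}\cdot\mathrm{span}\{\fkm_{c'}:c'\in C_{d-1}\}$. By Lemma \ref{lem:simple} applied to each $\fkm_{c'}$, this equals the span of the $L_i(\text{root})\fkm_{c'}$, and by the reordering argument above this is the span of the $\fkm_c$ with $c\in C_d$. In the case $\fks_i\ne\fks_{m+n}$ there is one more ingredient. Corollary \ref{cor:string} gives $F_{m+n,i}(u)D_i(u)F_{m+n,i}(u-\fks_i)D_i(u-\fks_i)=0$. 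Consequently any tuple with an entry $c_j\gge 2$ yields $\fkm_c=0$, because it contains the consecutive product $L_i(-a_j+\fks_ik)L_i(-a_j+\fks_i(k-1))$. This is why only the tuples with $c_j\lle 1$ survive. On the other side, the products $L_i^{(r)}L_i^{(r)}$ reduce by the same anticommutation relation from Lemma \ref{lem:relations}(1) to combinations of products with distinct indices. Together these give the final statement with $r_1<\dots<r_d$.
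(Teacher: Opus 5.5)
Your proposal is correct and follows essentially the paper's route. It is an induction on $d$ modeled on \cite[Lem.~6.17]{BK08}, with Lemma~\ref{lem:simple} as the inductive step, and with Part~(1) of Lemma~\ref{lem:relations} and Corollary~\ref{cor:string} giving order-independence of $\fkm_c$ when $\fks_i=\fks_{m+n}$, and proportionality under reordering at the generic evaluation points when $\fks_i\ne\fks_{m+n}$. One small slip: under the stated left-to-right convention the factor of largest index acts \emph{first} on $\fkm$, so neither $\fkm_c=L_i(-a_j+\fks_i(c_j-1))\fkm_{c^-}$ nor your vanishing product $L_i(-a_j+\fks_ik)L_i(-a_j+\fks_i(k-1))$ follows from the convention itself; both come from the reordering argument, which you already have, so this is harmless.
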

\begin{proof}
We first note the following simple observation. If $\fks_i=\fks_{m+n}$, then by Part (1) of Lemma \ref{lem:relations} the definition of the vectors $\fkm_c$ does not depend on the order taken in the products. If $\fks_i\ne\fks_{m+n}$, then by Part (1) of Lemma \ref{lem:relations} the definition of $\fkm_c$ does depend on the order taken in the products. However, under our genericness assumption, it follows from Part (1) of Lemma \ref{lem:relations} that different orders in the products give rise to proportional vectors $\fkm_c$. The rest of the proof is similar (actually slightly simpler as $a_i$ are generic) to that of \cite[Lem.~6.17]{BK08}.\end{proof}

Now we are ready to prove Theorem \ref{thm:branching}. 
Denote by $C$ the set of all supertuples of natural numbers $c=(c_{i,j})_{1\lle i<m+n,1\lle j\lle p_i}$. Let 
$$|c|_i:=\sum_{j=1}^{p_i}c_{i,j},\qquad |c|:=\sum_{i=1}^{m+n-1}|c|_i.$$
Introduce a total order on $C$ as follows. We say that $c'<c$ if  one of the following hold:
\begin{enumerate}
    \item $|c'|<|c|$;
    \item $|c'|=|c|$ and there exists $1\lle i<m+n$ such that $|c'|_i>|c|_i$ and $|c'|_j=|c|_j$ for $i< j<m+n$;
    \item $|c'|_j=|c|_j$ for all $1\lle j<m+n$, and there exists $1\lle i<m+n$ such that the tuple $(c_{j,1}',\dots,c_{j,p_j}')$ is equal to the tuple $(c_{j,1},\dots,c_{j,p_j})$ for all $1\lle j<i$ while the tuple $(c_{i,1}',\dots,c_{i,p_i}')$ is lexicographically greater than the tuple $(c_{i,1},\dots,c_{i,p_i})$.
\end{enumerate}

Let $\fkM:=\mathcal M(\bA)$. For every supertuple $c\in C$, define a vector $\fkm_c\in\fkM$ by
\[
\fkm_c:=\prod_{i=1}^{m+n-1} \prod_{j=1}^{p_i}\prod_{k=1}^{c_{i,j}} L_i(-a_{i,j}+\fks_i(k-1))\, v_+
\]
where the products are taken in increasing order from left to right. It follows from Theorem \ref{thm:W-rep-basis} and Lemma \ref{lem:general} that the vectors $\{y\fkm_c\mid y\in \mathcal Y,c\in C\}$ form a basis of $\fkM$, where $\mathcal Y$ is the set of all supermonomials in the elements $\{T_{j,i}^{(r)}\mid 1\lle i<j< m+n,0<r\lle p_i\}$ taken in some fixed order.

For every supertuple $c\in C$, let 
$$
\fkM_c:=\mathrm{span}\{y\fkm_{c'}\mid y\in \mathcal Y,c'\lle c\},\quad \fkM_c':=\mathrm{span}\{y\fkm_{c'}\mid y\in \mathcal Y,c'< c\}.
$$ 
Then $\fkM=\bigcup_{c\in C}\fkM_c$. To complete the proof, it suffices to prove that $\fkM_c$ is a $W(\bar\pi)$-submodule of $\fkM$ with $\fkM_c/\fkM_c'\cong \cM(\bB)$ for $\bB\in\mathrm{Row}(\bar\pi)$ such that the $i$-th row of $\bB$ is $(a_{i,1}-\fks_ic_{i,1},\dots,a_{i,p_i}-\fks_i c_{i,p_i})$ for $1\lle i<m+n$. We prove it by induction on the total order on $C$. By induction hypothesis, $\fkM_c'$ is a $W(\bar\pi)$-submodule of $\fkM$. The vectors
\[
\big\{y\fkm_{c'}+\fkM_c'\mid y\in\mathcal Y,c'\gge c\big\}.
\]
form a basis of the $W(\bar\pi)$-module $\fkM/\fkM_c'$. It follows from the definition of the total order on $C$ that the vector $\overline{\fkm}_c:=\fkm_c+\fkM_c'$ is a vector of maximal $\gl_{m|n}$-weight in $\fkM/\fkM_c'$. Therefore, the vector $\overline{\fkm}_c$ is annihilated by all $E_i^{(r)}$ for $1\lle i<m+n-1$ and $r>s_{i,i+1}$. By Parts (3), (6), (7) of Lemma \ref{lem:relations}, Lemma \ref{lem:general}, and the PBW theorem for $\mathrm Y_{m|n}^{\lle 0}(\sigma)$, we have
\[
u^{p_i}D_i(u)\overline\fkm_c =\prod_{j=1}^{p_i}(u+a_{i,j}-\fks_i c_{i,j})\overline \fkm_c.
\]
Thus, $\overline\fkm_c$ is a highest $\bm\ell$-weight vector of highest $\bm\ell$-weight corresponding to the pyramid $\bar\pi$. By Part (2) of Theorem \ref{thm:W-rep-basis} and the universal property of the Verma modules, we conclude that $\fkM_c$ is a $W(\bar\pi)$-submodule of $\fkM$ and moreover $\fkM_c/\fkM_c'\cong \cM(\bB)$ as they have the same size. 

\subsection{Proof of Theorem \ref{thm:character}}\label{sec:proof-char}
We first prove Theorem \ref{thm:character} under the genericness assumption, i.e. $a_{i,j_1}-a_{i,j_2}\notin \bZ$ for all $1\lle i< m+n$ and $1\lle j_1\ne j_2\lle p_i$. We proceed by induction on $m+n$. The base case $m+n=1$ is trivial. For the inductive step, it follows from Theorem \ref{thm:branching} and the induction hypothesis that the character of $\mathrm{Res}_{W(\bar \pi)}^{W(\pi)}\mc (\bA)$ equals
\[
\sum_{c}\prod_{i=1}^{m+n-1}\prod_{j=1}^{p_i}\left(y_{i,a_{i,j}-\fks_i(c_{i,j,i+1}+\dots+c_{i,j,m+n})}\prod_{k=i+1}^{m+n-1}\Big(\frac{y_{k,a_{i,j}-\fks_i(c_{i,j,k+1}+\dots+c_{i,j,m+n})}}{y_{k,a_{i,j}-\fks_i(c_{i,j,k}+\dots+c_{i,j,m+n})}}\Big)^{\fks_i\fks_k}\right),
\]
where the sum is over all supertuples $c=(c_{i,j,k})_{1\lle i<k< m+n,1\lle j\lle p_i}$. 

Note that by Proposition \ref{prop:center} the operator
\[
\prod_{i=1}^{m+n}\Big((u-\ka_i)^{p_i}D_i(u-\ka_i)\Big)^{\fks_i}
\]
acts on $\mc M(\bA)$ as the scalar $\prod_{i=1}^{m+n}\prod_{j=1}^{p_i}(u+a_{i,j})^{\fks_i}$. Recall from \eqref{eq:y}, every monomial appearing in $\mathrm{ch}\,\mc M(\bA)$ has to simplify to $\prod_{i=1}^{m+n}\prod_{j=1}^{p_i}(u+a_{i,j})^{\fks_i}$ if we replace $y_{i,a}$ by $(u+a)^{\fks_i}$ everywhere. Therefore, one recovers $\mathrm{ch}\,\mc M(\bA)$ uniquely from the above expression, completing the proof of Theorem \ref{thm:character} under the genericness assumption.

Now let us consider the general case where $a_{i,j_1}-a_{i,j_2}\in\bZ$ might happen for $j_1\ne j_2$. Fix an arbitrary tuple of complex numbers $\varsigma=(\varsigma_{i,j})_{1\lle i< m+n,1\lle j\lle p_i}$ such that 
\beq\label{eq:generic}
(a_{i,j_1}+\varsigma_{i,j_1}t)-(a_{i,j_2}+\varsigma_{i,j_2}t)\notin \bZ
\eeq
for all $1\lle i< m+n$, $1\lle j_1\ne j_2\lle p_2$, and $t\in (0,1]$. We write $a_{i,j}^\varsigma(t):=a_{i,j}+\varsigma_{i,j}t$ and $\bA^\varsigma(t)$ for the symmetrized $\pi$-tableau with $(a_{i,j}^\varsigma(t))_{1\lle i< m+n,1\lle j\lle p_i}$ in its $i$-th row. Clearly, we have $a_{i,j}^\varsigma(0)=a_{i,j}$ and $\bA^\varsigma(0)=\bA$.

Define an order on the elements $\bm F:=\{F_{k,i}^{(j)}\mid 1\lle i<k\lle m+n,1\lle j\lle p_i\}$ by the lexicographical order of the tuple $(k,i,j)$. Fix an arbitrary order for the elements $\bm D:=\{D_{i}^{(r)}\mid 1\lle i\lle m+n,1\lle r\lle p_i\}$. Extend the orders so that the $F$'s are always smaller than the $D$'s. 

Let $W^-(\pi)$ and $W^{0}(\pi)$ be the subalgebras of $W(\pi)$ generated by elements from $\bm F$ and $\bm D$, respectively. Let $W^{\lle 0}(\pi)$ be the subalgebra of $W(\pi)$ generated by elements from $\bm F\cup\bm D$.

For a supertuple $\theta=(\theta_{i,j,k})_{1\lle i<k\lle m+n,1\lle j\lle p_i}$, define the ordered monomial
\[
F_\theta =\prod_{k=2}^{m+n}\prod_{i=1}^{k-1}\prod_{j=1}^{p_i}\big(F_{k,i}^{(j)}\big)^{\theta_{i,j,k}}.
\]
Then it follows from Proposition \ref{prop:PBW-truncated} that the ordered monomials $F_\theta$ for all supertuples $\theta$ form a basis of $W^-(\pi)$, where we put elements of smaller order to the left of larger ones. Similarly, we have a PBW type basis for $W^{\lle 0}(\pi)$. Note that $W(\pi)$ is $\mathbf Q_{m|n}$-graded. For $\alpha\in \mathbf Q_{m|n}^+$, denote by $C_\alpha$ the set of all supertuples $\theta$ such that the degree of $F_\theta$ is $-\alpha$. Clearly, $C_\alpha$ is a finite set. Since $\deg D_i^{(r)}=0$, by \eqref{dr4} and the PBW basis, there exist unique constants $\Gamma^{(r,s)}_{i;\theta,\theta'}$ for $1\lle i\lle m+n$, $\theta,\theta'\in C_\alpha$ and $1\lle r,s\lle p_i$ such that 
\beq\label{eq:DFtheta}
D_i^{(r)}F_\theta=\sum_{s=1}^{p_i}\sum_{\theta'\in C_\alpha}\Gamma^{(r,s)}_{i;\theta,\theta'}F_{\theta'} D_i^{(s)}.
\eeq

For the Verma modules $\mc M(\bA^\varsigma(t))$, it follows from Theorem \ref{thm:W-rep-basis} that we can identify all these modules with the same vector superspace $W^-(\pi)$. Due to \eqref{eq:Di-eigenvalues}, for each $\alpha\in \mathbf Q_{m|n}^+$, the subspace 
$$
W_\alpha^-(\pi):=\mathrm{span}\{F_\theta\mid \theta\in C_\alpha\}
$$
is invariant under the action of $D_i^{(r)}$. It follows from \eqref{eq:Di-eigenvalues} and \eqref{eq:DFtheta} that the element $D_i^{(r)}$ acts on the vector $F_\theta$ by
\[
D_i^{(r)}F_\theta=\sum_{s=1}^{p_i}\sum_{\theta'\in C_\alpha}\Gamma^{(r,s)}_{i;\theta,\theta'} F_{\theta'} e_s\big(a_{i,1}^\varsigma(t)+\ka_i,\dots,a_{i,p_i}^\varsigma(t)+\ka_i\big),
\]
where $e_s(x_1,\dots,x_p)$ is the $s$-th elementary symmetric polynomials. We can also think this is obtained from \eqref{eq:DFtheta} applying to the highest $\bm\ell$-weight vector $v_+$. Hence all the entries of the matrix corresponding to the action of $D_i^{(r)}$ on $W_\alpha^-(\pi)\subset \mc M(\bA^\varsigma(t))$ with respect to the basis $\mathrm{span}\{F_\theta\mid \theta\in C_\alpha\}$ are polynomials in $t$. Moreover, it follows from \eqref{eq:generic}
and the generic case established above that the $q$-character of $\mathrm{ch}\big(\mc M(\bA^\varsigma(t))\big)$ for $t\in (0,1]$ restricted to the subspace $W_\alpha^-(\pi)$ is given by
\[
\sum_{c\in C_\alpha}\prod_{i=1}^{m+n}\prod_{j=1}^{p_i}\left(y_{i,a_{i,j}^\varsigma(t)-\fks_i(c_{i,j,i+1}+\dots+c_{i,j,m+n})}\prod_{k=i+1}^{m+n}\Big(\frac{y_{k,a_{i,j}^\varsigma(t)-\fks_i(c_{i,j,k+1}+\dots+c_{i,j,m+n})}}{y_{k,a_{i,j}^\varsigma(t)-\fks_i(c_{i,j,k}+\dots+c_{i,j,m+n})}}\Big)^{\fks_i\fks_k}\right).
\]
It is well known that the eigenvalues of a matrix depend continuously on its entries, see e.g. \cite[Coro.~VI.1.6]{Bha97}. 

\begin{lem}\label{lem:continuous}
Let $t\to X(t)$ be a continuous map from $[0,1]$ into the space of $r\times r$ complex matrices. Then there exist continuous functions $\la_1(t),\dots,\la_r(t)$ that, for each $t\in [0,1]$, are eigenvalues of $X(t)$.
\end{lem}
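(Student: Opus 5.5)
The plan is to prove a slightly more general statement: if $t\mapsto p_t(z)=z^r+c_1(t)z^{r-1}+\dots+c_r(t)$ is a monic polynomial of degree $r$ whose coefficients are continuous on a closed interval $J$, then there are continuous functions $\la_1(t),\dots,\la_r(t)$ on $J$ such that $p_t(z)=\prod_{i}(z-\la_i(t))$ for every $t\in J$. Applying this to $p_t(z)=\det(z-X(t))$ gives Lemma~\ref{lem:continuous}, since the coefficients of the characteristic polynomial are polynomials in the entries of $X(t)$. I will argue by induction on $r$; the case $r=1$ is trivial.

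\emph{Step 1 (continuity of the multiset of roots).} Fix $t_0\in J$, and let $\mu_1,\dots,\mu_k$ be the distinct roots of $p_{t_0}$, with multiplicities $n_1,\dots,n_k$. Choose disjoint discs $B_a$ of radius $\ve$ around the $\mu_a$. By Rouch\'e's theorem, for $t$ close to $t_0$ the polynomial $p_t$ has exactly $n_a$ roots, counted with multiplicity, in $B_a$, and none outside $\bigcup_a B_a$. Moreover, the power sums of the roots lying in $B_a$ are $\frac{1}{2\pi\sqrt{-1}}\oint_{\partial B_a} z^q\,p_t'(z)/p_t(z)\,dz$. These depend continuously on $t$, so $p_t=\prod_a p_t^{(a)}$, where $p_t^{(a)}$ is monic of degree $n_a$ with continuous coefficients near $t_0$.

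\emph{Step 2 (local selections and extension).} Suppose $k\gge 2$ at $t_0$. Then every $n_a<r$, so by the induction hypothesis each factor $p^{(a)}$ has a continuous root selection on a small closed interval around $t_0$. Concatenating these gives a local selection for $p_t$. Any permutation of the labels is again a selection, so this local selection can be relabelled to start from any prescribed ordering of the roots of $p_{t_0}$. Now take an interval $[c,d]\subset J$ on which every $p_t$ has at least two distinct roots, and suppose a selection is given on $[c,s)$. Each $\la_i$ stays, for $t$ near $s$, in the union of the discs of Step 1 at $t_0=s$. Being continuous on a connected set, it stays in a single disc; since $\ve$ was arbitrary, $\la_i(t)$ converges as $t\to s$. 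The limits form the root multiset of $p_s$, so $\la_i$ extends continuously to $s$. Gluing with a relabelled local selection at $s$ then extends the selection past $s$. A supremum argument therefore yields a continuous selection on all of $[c,d]$, extending any given one on an initial piece.

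\emph{Step 3 (the degenerate points, the main obstacle).} Points where $p_t$ has a single root $\mu(t)$ of multiplicity $r$ are exactly where the inductive splitting fails. Let $Z$ be the set of such $t$. It is closed, because the limit of polynomials of the form $(z-\mu)^r$ is again of that form. On $Z$ I set $\la_i(t)=\mu(t)=-c_1(t)/r$ for all $i$. The complement $J\setminus Z$ is a countable union of relatively open intervals $I$. On each $I$, exhaust by an increasing sequence of closed subintervals and apply Step 2 successively, each time extending the previous selection; this gives a continuous selection on all of $I$. Continuity at a point $s\in Z$ follows from Step 1 with $k=1$: all roots of $p_t$ lie within $\ve$ of $\mu(s)$ for $t$ near $s$, so every $\la_i(t)\to\mu(s)$ no matter how the labels were chosen on the neighbouring intervals. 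Hence the combined functions $\la_1,\dots,\la_r$ are continuous on all of $J$, which completes the induction and the proof.
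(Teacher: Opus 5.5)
Your argument is correct, and it does more than the paper: the paper gives no proof of Lemma~\ref{lem:continuous}. It only records it as the standard fact that eigenvalues depend continuously on the matrix entries, citing \cite[Coro.~VI.1.6]{Bha97}.

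You instead prove a slightly more general statement: a monic polynomial with continuous coefficients on a closed interval has a continuous root selection. Your route is induction on the degree:
\begin{itemize}
\item Step 1: Rouch\'e's theorem and the contour-integral power sums give continuity of the unordered roots, and a local factorization near any point where there are at least two distinct roots.
\item Step 2: the induction hypothesis gives local selections there, which you glue along the interval after relabelling.
\item Step 3: the closed set $Z$ of totally degenerate parameters is handled separately, where continuity is automatic because all roots collapse to $-c_1(t)/r$.
\end{itemize}
Steps 2--3 are a direct proof of the passage from a continuous unordered spectrum to a continuous ordered selection. This is the part that genuinely uses the real one-dimensional parameter, through connectedness of subintervals and gluing along a line. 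It fails for complex or multi-parameter families, e.g.\ the eigenvalues $\pm\sqrt{z}$ of $\left(\begin{smallmatrix}0&1\\ z&0\end{smallmatrix}\right)$ as $z$ runs around the unit circle. The citation buys brevity; your route buys a self-contained argument that shows exactly where the interval structure matters.

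Two small points need tightening when you write it out.
\begin{itemize}
\item Selections on $[c,s]$ for different $s$ need not be compatible, so the ``supremum argument'' as stated is incomplete. Either run it through Zorn's lemma (your extension-to-the-limit step is exactly what bounds chains), or replace it by a finite cover of $[c,d]$ by local-selection intervals, glued left to right at a point of each overlap after relabelling.
\item Exhausting an interval $I$ in Step 3 also needs the mirror-image, leftward version of Step 2.
\end{itemize}
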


Note that we know explicitly the generalized joint eigenvalues of $\{D_i^{(r)}\mid 1\lle i\lle m+n,0<r\lle p_i\}$ acting on the subspace $W_\alpha^-(\pi)$ for $t\in(0,1]$. By applying Lemma \ref{lem:continuous} to all possible linear combinations of the elements $\{D_i^{(r)}\mid 1\lle i\lle m+n,0<r\lle p_i\}$, we concluded that the generalized joint eigenvalues (with multiplicities) of $\{D_i^{(r)}\mid 1\lle i\lle m+n,0<r\lle p_i\}$ acting on the subspace $W_\alpha^-(\pi)$ for $t=0$ are encoded by
\[
\sum_{c\in C_\alpha}\prod_{i=1}^{m+n}\prod_{j=1}^{p_i}\left(y_{i,a_{i,j}-\fks_i(c_{i,j,i+1}+\dots+c_{i,j,m+n})}\prod_{k=i+1}^{m+n}\Big(\frac{y_{k,a_{i,j}-\fks_i(c_{i,j,k+1}+\dots+c_{i,j,m+n})}}{y_{k,a_{i,j}-\fks_i(c_{i,j,k}+\dots+c_{i,j,m+n})}}\Big)^{\fks_i\fks_k}\right).
\]
Thus
we have $\mathrm{ch}\,\mc M(\bA)=\lim_{t\rightarrow 0}\mathrm{ch}\big(\mc M(\bA^\varsigma(t))\big)$, completing the proof for the general case.

\subsection{Isomorphism of Center}
As an application of the character formula, we show that the center of $W$-superalgebra depends only on the superdimension $(M|N)$ up to isomorphism, and is independent of the shape of the pyramid. 
Our approach combines the arguments in \cite{BK08, BG19}.

The following simple lemma shows that generic Verma modules are irreducible.
\begin{lem}\label{lem:simple-generic}
Let $\bA\in\mathrm{Row}(\pi)$ with entries $a_{i,1},\dots,a_{i,p_i}$ on its $i$-{th} row for each $1\lle i\lle m+n$, where $a_{i_1,j_1}-a_{i_2,j_2}\notin \bZ$ for all $1\lle i_1,i_2\lle m+n$ and $1\lle j_1,j_2\lle p_i$ provided $(i_1,j_1)\ne (i_2,j_2)$. Then we have the following.
\begin{enumerate}
    \item The Verma module $\mc M(\bA)$ is irreducible.
    \item Let $\bA_1,\dots,\bA_\ell$ be the columns of any representative of the row-symmetrized $\pi$-tableau $\bA\in \mathrm{Row}(\pi)$, so that $\bA\sim_{\mathrm{row}}\bA_1\otimes \bA_2\otimes \cdots\otimes \bA_{\ell}$. Then
\[
\mc M(\bA)\cong \mc M(\bA_1)\boxtimes\cdots \boxtimes \mc M(\bA_\ell).
\]
\end{enumerate}
\end{lem}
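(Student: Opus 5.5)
Both parts will follow from the Verma character formula, Theorem~\ref{thm:character} and Corollary~\ref{cor:char}, together with an analysis of which $\bm\ell$-weights of $\mc M(\bA)$ can carry a singular vector. Part (1) is the main content; Part (2) is deduced from Part (1).

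\textbf{Part (1).} Suppose $\mc M(\bA)$ is reducible. Then it has a proper submodule $\mc N\neq 0$, and $\mc N\in\mc O_\sigma$ because its weights are bounded above by $\varpi(\bla)$. A vector of maximal weight in $\mc N$ is annihilated by every $E_i^{(r)}$. Since the $D_i^{(r)}$ commute, that maximal weight space contains an $\bm\ell$-weight vector $w$, which is then a highest $\bm\ell$-weight vector of type $\bB$ for some $\bB\in\mathrm{Row}(\pi)$, with $\bB\neq\bA$. By the description of \eqref{eq:Di-eigenvalues} and of $\mc M(\bA)$, the $\bm\ell$-weight of $w$ must appear in the formula \eqref{eq:ch-Verma} for some nonzero supertuple $c$. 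We now use the center. By Proposition~\ref{prop:center}, $\prod_i\big((u-\ka_i)^{p_i}D_i(u-\ka_i)\big)^{\fks_i}$ acts on $\mc M(\bA)$ by a scalar. Evaluating it on $v_+$ and on $w$ gives the rational function identity
\[
\prod_{i,j}(u+a_{i,j})^{\fks_i}=\prod_{i,j}(u+b_{i,j})^{\fks_i},
\]
where the $b_{i,j}$ are the entries of $\bB$.

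From \eqref{eq:ch-Verma}, row $i$ of $\bB$ consists of shifts $a_{i',j}-\fks_{i'}(\text{integer})$ of entries from rows $i'\le i$, rearranged. After cancellation, both sides of the identity have the form $\prod(u+x)^{\pm1}$ with every $x$ congruent modulo $\bZ$ to some $a_{i,j}$. Under the genericity hypothesis, distinct $a_{i,j}$ lie in distinct $\bZ$-cosets. So the identity can be split coset by coset: for each $(i,j)$ the factors coming from $a_{i,j}$ must reproduce $(u+a_{i,j})^{\fks_i}$ exactly. Inside a single coset, the contribution of $a_{i,j}$ is the telescoping product from the $(i,j)$-factor of \eqref{eq:ch-Verma}. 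Since $\bB$ is a row-symmetrized tableau, this product must collapse to a single factor sitting in row $i$ with entry $a_{i,j}$. That forces $c_{i,j,k}=0$ for all $k$. Hence $c=0$, so $\bB=\bA$ and $w$ is proportional to $v_+$, which contradicts properness of $\mc N$. This coset-by-coset bookkeeping is the main obstacle, because the factors in \eqref{eq:ch-Verma} carry the signs $\fks_i\fks_k$. I would handle it by induction on the row index. Fix the lowest row $k$ in which some $c_{i,j,k}\neq0$. The factor $y_{k,\cdot}$ in the $u_k$-variable then has an exponent that cannot be matched by the unique entry of $\bB$ in that coset and row.

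\textbf{Part (2).} By Theorem~\ref{thm:W-rep-basis} and Corollary~\ref{SWcop}, the module $\mc M(\bA_1)\boxtimes\cdots\boxtimes\mc M(\bA_\ell)$ is a $W(\pi)$-module. The tensor product of the highest weight vectors is a highest $\bm\ell$-weight vector of type $\bA$: Lemma~\ref{lem:tensor} gives this, iterated using the coassociativity of Lemma~\ref{wasso}. This vector generates a quotient of $\mc M(\bA)$, and by Part (1) that quotient is $\mc M(\bA)$ itself, because the image of $\mc M(\bA)$ is a nonzero quotient of an irreducible module. So we obtain an injection $\mc M(\bA)\hookrightarrow\mc M(\bA_1)\boxtimes\cdots\boxtimes\mc M(\bA_\ell)$. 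By Corollary~\ref{cor:char} and Lemma~\ref{ch-multiplicative}, both modules have the same $q$-character, hence the same finite-dimensional weight spaces. The injection is therefore an isomorphism.
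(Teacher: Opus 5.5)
Your proof is correct and follows the paper's argument. For Part (1), the paper likewise writes the highest $\bm\ell$-weight $\prod y_{i,b_{i,j}}$ of a subquotient as a monomial of \eqref{eq:ch-Verma} for a nontrivial supertuple $c$. It then compares components in the maximal (lowest) row $\mathfrak i$ with some $c_{i,j,\mathfrak i}\ne 0$, where a factor with negative exponent in the $\bZ$-coset of $a_{i,j}$ cannot be cancelled. Part (2) is deduced from Part (1) and Corollary~\ref{cor:char} exactly as you do.

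The only superfluous step in your argument is the quantum Berezinian identity. It holds automatically for every monomial of \eqref{eq:ch-Verma}, as noted in the proof of Theorem~\ref{thm:character}, so all the content lies in your row-by-row $\bm\ell$-weight comparison.
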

\begin{proof}Part (2) follows from Part (1) and Corollary \ref{cor:char}. Thus it suffices to prove Part (1).

Suppose $\mc M(\bA)$ is reducible, then we can assume $\mc L(\bB)$ is a subquotient of $\mc M(\bA)$, where $\bB\in\mathrm{Row}(\pi)$ with entries $b_{i,1},\dots,b_{i,p_i}$ on its $i$-th row for each $1\lle i\lle m+n$ and $\bA\ne \bB$. Therefore the  $\bm\ell$-weight corresponding to the highest $\bm\ell$-weight vector of $\mc L(\bB)$ is $\prod_{i=1}^{m+n}\prod_{j=1}^{p_i}y_{i,b_{i,j}}$. Hence there exists a supertuple $c=(c_{i,j,k})_{1\lle i<k\lle m+n,1\lle j\lle p_i}$ such that
\[
\prod_{i=1}^{m+n}\prod_{j=1}^{p_i}y_{i,b_{i,j}}=\prod_{i=1}^{m+n}\prod_{j=1}^{p_i}\left(y_{i,a_{i,j}-\fks_i(c_{i,j,i+1}+\dots+c_{i,j,m+n})}\prod_{k=i+1}^{m+n}\Big(\frac{y_{k,a_{i,j}-\fks_i(c_{i,j,k+1}+\dots+c_{i,j,m+n})}}{y_{k,a_{i,j}-\fks_i(c_{i,j,k}+\dots+c_{i,j,m+n})}}\Big)^{\fks_i\fks_k}\right).
\]
Since $\bA\ne \bB$, the supertuple $c$ is non-trivial. Let ${\mathfrak i}$ be maximal such that $c_{i,j,{\mathfrak i}}\ne 0$ for some $1\lle i<{\mathfrak i}$ and $1\lle j\lle p_i$. Then we have
\[
\prod_{j=1}^{p_{\mathfrak i}}y_{{\mathfrak i},b_{{\mathfrak i},j}}=\prod_{j=1}^{p_{\mathfrak i}} y_{{\mathfrak i},a_{{\mathfrak i},j}}\prod_{i=1}^{{\mathfrak i}-1}\prod_{j=1}^{p_i}\Big(\frac{y_{{\mathfrak i},a_{i,j}}}{y_{{\mathfrak i},a_{i,j}-\fks_{\mathfrak i}c_{i,j,{\mathfrak i}}}}\Big)^{\fks_i\fks_{\mathfrak i}}.
\]
Since $a_{i_1,j_1}-a_{i_2,j_2}\notin \bZ$ for $(i_1,j_1)\ne (i_2,j_2)$, there is no way to rewrite the RHS as a monomial in $y_{{\mathfrak i},b_{{\mathfrak i},j}}$ as the LHS. Hence we obtain a contradiction, completing the proof.
\end{proof}

Recall the notations from Section \ref{sec:W-alg} and, in particular, the Miura transform $\mu$ from \eqref{miura}.
\begin{lem}\label{lem:mucent1}
We have $\mu\big(Z(W(\pi))\big)\subset Z(\rU(\h))$.
\end{lem}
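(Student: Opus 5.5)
Following the strategy of \cite{BK08} (cf.\ the proof of the analogous statement there), I would show that for $z\in Z(W(\pi))$, the element $\mu(z)$ commutes with every generator $e_{i,j}^{[r]}$ of $\rU(\h)$. Since $\mu$ is injective (Theorem \ref{muinj}) but not surjective, one cannot argue algebraically directly; instead we use a density argument with modules. Namely, for a family of $\rU(\h)$-modules $V$ which is jointly faithful on $\rU(\h)$, we show that $[\mu(z),x]$ acts as zero on $V$ for all $x\in\h$.

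The concrete plan is as follows. Take $V=\mc M(\bA_1)\boxtimes\cdots\boxtimes\mc M(\bA_\ell)$, regarded as a $\rU(\h)\cong\rU(\gl_{m_1|n_1})\otimes\cdots\otimes\rU(\gl_{m_\ell|n_\ell})$-module. It is an outer tensor product of Verma modules over the column algebras (Example \ref{eg:1-col}), where the columns $\bA_k$ range over tableaux for which the concatenation $\bA=\bA_1\otimes\cdots\otimes\bA_\ell$ satisfies the genericity hypothesis of Lemma \ref{lem:simple-generic}. Pulling back along $\mu$ (the iterated parabolic induction \eqref{miura}), $V$ becomes the $W(\pi)$-module $\mc M(\bA_1)\boxtimes\cdots\boxtimes\mc M(\bA_\ell)$, which by Lemma \ref{lem:simple-generic}(2) is isomorphic to the irreducible Verma module $\mc M(\bA)$. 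Since $z$ is central and $\mc M(\bA)$ is irreducible with finite-dimensional weight spaces, $z$ acts by a scalar by Schur's lemma, more precisely by its eigenvalue on the highest weight vector $v_+$. Hence $\mu(z)$ acts on $V$ by a scalar, and so $[\mu(z),x]$ annihilates $V$ for every $x\in\rU(\h)$.

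It remains to show that the annihilator in $\rU(\h)$ of all such $V$ is zero. The generic tuples $(\bA_1,\dots,\bA_\ell)$ form a Zariski dense subset of the space of all weights of $\h$. It is standard that the intersection of the annihilators of Verma modules over $\rU(\gl_{m_k|n_k})$ whose highest weights range over a Zariski dense set is zero. One checks this via the PBW basis: an element acting as zero on $\mc M(\alpha)$ for a dense set of $\alpha$ has, by polynomial dependence on $\alpha$, all coefficients vanishing. This also holds for tensor products. Therefore $[\mu(z),x]=0$, and so $\mu(z)\in Z(\rU(\h))$.

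The main obstacle is verifying the identification of $V$ pulled back by $\mu$ with $\mc M(\bA_1)\boxtimes\cdots\boxtimes\mc M(\bA_\ell)$ for the given module structure. This requires the twist $\zeta$ in \eqref{zetatrans} together with the $\kappa_i$-shifts of Example \ref{eg:1-col}, which must be matched so that highest weights line up. It also requires the faithfulness (density) statement in the super setting. For the latter I would reduce to the even case by passing to the associated graded, where $\gr\,\rU(\h)$ is supercommutative and the highest-weight evaluation argument goes through.
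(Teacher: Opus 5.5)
Your proposal is correct and is essentially the paper's proof. For generic $\bA$, the pullback of $\mc M(\bA_1)\boxtimes\cdots\boxtimes\mc M(\bA_\ell)$ along $\mu$ is irreducible by Lemma~\ref{lem:simple-generic}, so $\mu(z)$ acts by a scalar and $[\mu(z),h]$ annihilates this module; the conclusion then follows because the annihilator of a Zariski dense family of Verma modules over $\rU(\h)$ is zero, which the paper simply cites from \cite[Proof of Lem.~13.1.4]{Mus12}.

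The ``main obstacle'' you flag is not a real one. The $W(\pi)$-module structure on the $\boxtimes$-product is by definition the pullback along the iterated parabolic induction $\mu$, and the density step only needs the correspondence between tableau entries and $\h$-weights to be an affine bijection, so no precise matching of the $\zeta$- and $\ka_i$-shifts is required.
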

\begin{proof}
We need to show that $[\mu(z),h]=0$ for $z\in Z(W(\pi))$ and $h\in \rU(\h)$. By the standard fact that the annihilator of any Zariski dense set of Verma (super)modules is zero (see e.g. \cite[Proof of Lem.~13.1.4]{Mus12}), it suffices to check that $[\mu(z),h]$ annihilates $\mc M(\bA_1)\boxtimes\cdots \boxtimes\mc M(\bA_\ell)$ for generic $\bA\in \mathrm{Tab}(\pi)$ with columns $\bA_1,\dots,\bA_{\ell}$. By Lemma \ref{lem:simple-generic}, $\mc M(\bA_1)\boxtimes\cdots \boxtimes\mc M(\bA_\ell)$ is irreducible for generic $\bA\in \mathrm{Tab}(\pi)$ when viewed as a $W(\pi)$-module via pulling back through $\mu$. Therefore $\mu(z)$ acts on $\mc M(\bA_1)\boxtimes\cdots \boxtimes\mc M(\bA_\ell)$ as a scalar, implying that $[\mu(z),h]$ acts on it as zero.
\end{proof}

Let $\mathsterling:\mathrm U(\fkp)\rightarrow \mathrm U(\fkp)$ be the superalgebra automorphism given by 
\[
\mathsterling(e_{ij})=\tilde e_{i,j}=e_{ij}+\delta_{i,j} \fks_{\row(i)}(\hbar-\check{q}_{\col(i)} - \check{q}_{\col(i)+1} -\cdots-\check{q}_\ell),
\]
where $\hbar=m-n$ and $\check{q}_{a}=m_a-n_a$, for all $1\lle a\lle \ell$. Let $\psi:\mathrm U(\g)\twoheadrightarrow \mathrm U(\fkp)$ be the linear map defined as the composition first of the projection 
$\mathrm{pr}_\chi:\mathrm U(\g)\to \mathrm U(\fkp)$ along the direct sum decomposition $\mathrm U(\g)=\mathrm U(\fkp)\oplus I_\chi$ then the automorphism $\mathsterling$. The restriction of $\psi$ to $Z(\mathrm U(\g))$ gives a well-defined algebra homomorphism
\beq\label{ceniso}
\psi:Z(\mathrm U(\g))\to Z(W(\pi)),
\eeq
where the image belongs to the center of $W(\pi)$.    

Let $\fks(\pi)$ be the parity sequence obtained from reading the parity of boxes of $\pi$ down-columns from left to right.  
Recall the series $z_{M|N,\fks(\pi)}(u)$ defined in \eqref{eq:zdef}.
We calculate the image of $z_{M|N,\fks(\pi)}(u)$ under the composition $\mu\circ\psi:Z(\mathrm U(\g))\to \mathrm U(\h)$ as follows.
Since the Miura map $\mu$ can be understood as the iteration of the parabolic induction \eqref{Wdel} by $(\ell-1)$ times, the following result can be deduced as in the one-column case discussed in \S \ref{1col-case}:
\beq\label{comd1}
\begin{split}
\mu\circ\psi\big( z_{M|N,\fks(\pi)}(u) \big)=&\,\mathsterling_1\big( z_{m_1|n_1,\fks(\pi_1)}(u)\big) \otimes \mathsterling_2\big( z_{m_2|n_2,\fks(\pi_2)}(u-\check{q_1}) \big)\otimes \cdots\\
&\,\cdots\otimes \mathsterling_\ell\big( z_{m_\ell|n_\ell,\fks(\pi_\ell)}(u-\check{q_1}-\check{q_2}-\cdots-\check{q}_{\ell-1})\big)\in \mathrm{U}(\h)[\![u^{-1}]\!],
\end{split}
\eeq
where $\fks(\pi_a)$ stands for the parity sequence obtained from the reading of the $a$-{th} column of $\pi$ and $\mathsterling_a:\mathrm{U}(\gl_{m_a|n_a})\to \mathrm{U}(\gl_{m_a|n_a})$ is the automorphism sending $e_{i,j}$ to $e_{i,j}+\delta_{i,j}(-1)^{\tp(i)}(\hbar-\check{q}_a)$, for each $1\lle a\lle \ell$. 

Under the identification $\mathrm{U}(\h)\cong \mathrm{U}(\gl_{m_1|n_1})\otimes \cdots \otimes \mathrm{U}(\gl_{m_\ell|n_\ell})$, the subspace $S(\fkc)\subseteq \mathrm{U}(\h)$ is identified with 
\be
S(\fkc)\cong S(\fkc_1)\otimes S(\fkc_2) \otimes \cdots \otimes S(\fkc_\ell),
\ee
where $\fkc_a$ denotes the Cartan subalgebra of $\gl_{m_a|n_a}$, corresponding to the $a$-{th} column of $\pi$.
Let $\hc_a:Z(\mathrm{U}(\gl_{m_a|n_a}))\stackrel{\sim}{\rightarrow} I(\fkc_a)\subseteq S(\fkc_a)$ denote the Harish-Chandra map with respect to the parity sequence $\fks(\pi_a)$. 
Consider the composition 
\beq\label{comd2}
\hc:=(\hc_1\otimes\cdots\otimes\hc_{\ell} )\circ ( {\mathsterling_1}^{-1} \otimes \cdots \otimes {\mathsterling_\ell^{-1})} :\mathrm{U}(\h)\rightarrow S(\fkc).
\eeq
By \eqref{comd1} and \eqref{comd2}, we see that 
\[
\HC^{\fks(\pi)}\big(z_{M|N,\fks(\pi)}(u)\big) = \hc \circ \mu\circ\psi\big( z_{M|N,\fks(\pi)}(u) \big).
\]
Together with Lemma \ref{lem:mucent1}, the following diagram commutes:
\beq\label{HChc}
\begin{CD}
Z(\mathrm{U}(\g))&& @> \HC >> && S(\mathfrak{c})\\
@V \psi VV &  && &@AA \hc A\\
Z( W(\pi) )  && @> \mu >> &&  Z(\mathrm{U}(\mathfrak{h})) 
\end{CD}
\eeq

\begin{thm}\label{isocent}
The map $\psi:Z(\mathrm{U}(\g))\to Z( W(\pi) )$ is an algebra isomorphism. In particular, the elements $\psi(z_{M|N}^{(r)})$ for all $r\gge 1$ generate the center $Z( W(\pi) )$.
\end{thm}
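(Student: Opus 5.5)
The plan is to follow \cite[Thm.~6.10]{BK08} and \cite{BG19}, working from the commutative diagram \eqref{HChc}. There are three steps: injectivity of $\psi$, a lower bound on the image of $\mu$ restricted to the center, and a comparison of sizes that yields surjectivity.

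\emph{Injectivity.} By \eqref{HChc} we have $\hc\circ\mu\circ\psi=\HC$. By Theorem~\ref{thmHC}(1), $\HC$ is injective on $Z(\rU(\g))$, so $\psi$ is injective. For this I must first check that the diagram commutes on all of $Z(\rU(\g))$ and not only on the generating series. This is immediate, because all maps involved are algebra homomorphisms and the coefficients $z^{(r)}_{M|N,\fks(\pi)}$ generate $Z(\rU(\g))$ by Theorem~\ref{thmHC}(4).

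\emph{Surjectivity.} Take $z\in Z(W(\pi))$. By Lemma~\ref{lem:mucent1}, $\mu(z)\in Z(\rU(\h))$, so $f:=\hc(\mu(z))$ lies in $I(\fkc_1)\otimes\cdots\otimes I(\fkc_\ell)$. I would then show that $f$ actually lies in $I(\fkc)$, i.e.\ that it is symmetric in all even and in all odd variables and satisfies the supersymmetry condition \eqref{def:Ioc}.

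For this I use central characters on generic Verma modules. The element $z$ acts on $\mc M(\bA)$ by a scalar. By Lemma~\ref{lem:simple-generic}(2) this scalar equals the value of $f$ at the point of $\fkc^*$ determined by the columns of $\bA$, after the $\rho$-shifts built into $\hc$ (compare Example~\ref{eg:1-col}). The key input is Theorem~\ref{thm:character}, through Corollary~\ref{cor:char}: the $q$-character of $\mc M(\bA)$ depends only on the row-equivalence class of $\bA$. Hence permuting entries within a row leaves the module, and therefore the scalar, unchanged. This gives invariance of $f$ under moving values between columns along rows. Combined with the column-wise $\fkS_{m_a}\times\fkS_{n_a}$-invariance, I expect this to generate full $\fkS_M\times\fkS_N$-invariance, since same-parity boxes in different rows and columns can be connected through chains of such moves together with the $\rho$-shift bookkeeping, exactly as in \cite{BK08}.

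The supersymmetry condition is the main obstacle. My first attempt is to use the fact that whenever two entries $x_i,y_j$ in the same column coincide after shifting, the column $\gl_{m_a|n_a}$-Verma module is atypical. At such points $f$ is constant along the line $x_i=y_j=t$, by the same property of $I(\fkc_a)$. If the boxes involved lie in different columns, I first relocate them into a common column using row-invariance. Zariski density then gives the identity $\partial_{x_i}f+\partial_{y_j}f\equiv 0\pmod{x_i-y_j}$.

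Once $f\in I(\fkc)=\HC(Z(\rU(\g)))$, choose $z'\in Z(\rU(\g))$ with $\HC(z')=f$. Then $\hc\mu(\psi(z'))=\hc\mu(z)$. Now $\hc$ is injective on $Z(\rU(\h))$, being a tensor product of Harish-Chandra isomorphisms composed with shifts, and $\mu$ is injective by Theorem~\ref{muinj}. Therefore $z=\psi(z')$.

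An alternative for surjectivity, which I would keep as a fallback, is a graded-dimension argument. With respect to the Kazhdan filtration, one would show that $\gr Z(W(\pi))$ embeds into invariants whose Poincaré series matches that of $\gr Z(\rU(\g))$. This would reduce surjectivity to the injectivity already established. Finally, the generation statement follows from Theorem~\ref{thmHC}(4).
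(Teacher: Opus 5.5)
Your proposal is correct and is essentially the paper's own proof. Injectivity comes from \eqref{HChc} and Theorem~\ref{thmHC}. Surjectivity comes from showing $\hc\circ\mu(z)\in I(\fkc)$ through column-wise invariance plus row-permutation invariance on generic Verma modules via Lemma~\ref{lem:simple-generic}(2), followed by injectivity of $\hc\circ\mu$. (Note that $\mc M(\bA)=\mc M(\bB)$ for row-equivalent tableaux holds by definition; the character formula enters only through that lemma.) The one streamlining in the paper is that supersymmetry needs no separate argument, since $I(\fkc)=\big(I(\fkc_1)\otimes\cdots\otimes I(\fkc_\ell)\big)\cap S(\fkc)^{\fkS_M\times\fkS_N}$, which is exactly what your relocation-to-a-common-column argument establishes, so the atypicality remark and the graded-dimension fallback can be dropped.
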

\begin{proof}
The injectivity of $\psi$ follows from that of $\HC$ and the commutative diagram \eqref{HChc}.
It remains to prove that $\psi$ is surjective. Since $\hc\circ\mu$ is injective and we know $\HC\big( Z(\mathrm{U}(\g)) \big)=I(\fkc)$ by Theorem \ref{thmHC}, it suffices to show $\hc\circ \mu\big(Z(W(\pi))\big)\subseteq I(\fkc)$.  Note that the image of $\hc$ is contained in $I(\fkc_1) \otimes \cdots \otimes I(\fkc_\ell)$. It reduces to show that $\hc\circ\mu\big(Z(W(\pi))\big)\subset S(\mathfrak c)^{\fkS_M\times \fkS_N}$ as $I(\fkc)=\big(I(\fkc_1) \otimes \cdots \otimes I(\fkc_\ell)\big)\cap \big(S(\mathfrak c)^{\fkS_M\times \fkS_N}\big)$. 

Let $\bA$ and $\bB$ be $\pi$-tableaux that share the same content in even and odd boxes, respectively. By the definition of the Harish-Chandra homomorphism, to show $\hc\circ\mu\big(Z(W(\pi))\big)\subset S(\mathfrak c)^{\fkS_M\times \fkS_N}$, we need to show that any element $z\in Z( W(\pi) )$ acts on the modules $\mc M(\bA_1)\boxtimes\cdots \boxtimes\mc M(\bA_\ell)$ and $\mc M(\bB_1)\boxtimes\cdots \boxtimes\mc M(\bB_\ell)$ by the same scalar, where $\bA_i$ and $\bB_i$ denote the $i$-th column of $\bA$ and $\bB$, respectively.
Similar to the argument in the proof of Lemma \ref{lem:mucent1}, it suffices to prove the statement for generic $\bA$ and $\bB$, since the set of generic $\pi$-tableaux is Zariski dense in the set of all $\pi$-tableaux.

If $\bB$ is obtained from $\bA$ by permuting even (resp. odd) entries within columns, then this is obvious since the image of $\hc$ is contained in $I(\fkc_1) \otimes \cdots \otimes I(\fkc_\ell)$.

If $\bB$ is obtained from $\bA$ by permuting even (resp. odd) entries within rows, then by \eqref{eq:Verma-W}--\eqref{eq:lam-tab} we have $\mc M(\bA)=\mc M(\bB)$. Since $\bA$ and $\bB$ are generic, then by Lemma \ref{lem:simple-generic} we have 
\[
\mc M(\bA_1)\boxtimes\cdots \boxtimes \mc M(\bA_\ell)\cong \mc M(\bA)=\mc M(\bB)\cong \mc M(\bB_1)\boxtimes\cdots \boxtimes \mc M(\bB_\ell),
\] 
as irreducible $W(\pi)$-modules. As a consequence, any element $z\in Z( W(\pi) )$ acts on these modules by exactly the same scalar. The general case follows from these two special cases.
\end{proof}

\appendix
\section{Proof of Theorem \ref{thm:vanish}}\label{sec:app}
We follow the same strategy of \cite[\S2.4 \& \S3.7]{BK08}.
\subsection{Parabolic presentation}
We recall the parabolic presentations of $\Y$ from \cite[Proof of Prop.~5.15]{Pe21} which is slight different from \cite{Pe16}. By a \textit{shape} we mean a tuple $\nu=(\nu_1,\dots,\nu_z)$ of positive integers summing to $m+n$. We say that a shaper $\nu=(\nu_1,\dots,\nu_z)$ is admissible for $\sigma$ if $s_{i,j}=0$ for all $\nu_1+\cdots+\nu_{a-1}<i,j\lle \nu_1+\cdots+\nu_{a}$ and $1\lle a \lle z$, in which case we define
\beq\label{eq:sabnu}
s_{a,b}(\nu):=s_{\nu_1+\cdots+\nu_a,\nu_1+\cdots+\nu_b}
\eeq
for $1\lle a,b\lle z$.

Given an admissible shape $\nu=(\nu_1,\dots,\nu_z)$. Let $^\nu T_{a,b}(u)$ be the $\nu_a\times\nu_b$ matrix whose $ij$-entry is given by $T_{\nu_1+\cdots+\nu_{a-1}+i,\nu_1+\cdots+\nu_{b-1}+j}(u)$ for each $1\lle a,b\lle z$. Denote $^\nu T(u)$ the $z\times z$-block matrix with the $ab$-block given by $^\nu T_{a,b}(u)$. Exploit the block-wise Gauss decomposition of $^\nu T(u)$ to define the matrices $^{\nu}D(u)$, $^{\nu}E(u)$, and $^{\nu}F(u)$ such that
\[
^{\nu}T(u)={}^{\nu}F(u){}^{\nu}D(u){}^{\nu}E(u),
\]
where ${}^{\nu}D(u)$ is a $z\times z$ diagonal block matrix with $aa$-block $^\nu D_a(u)$ being a matrix of size $\nu_a\times\nu_a$, ${}^{\nu}E(u)$ is a $z\times z$ upper unitriangular block matrix with $ab$-block $^\nu E_{a,b}(u)$ being a matrix of size $\nu_a\times\nu_b$, and ${}^{\nu}F(u)$ is a $z\times z$ lower unitriangular block matrix with $ba$-entry $^\nu F_{b,a}(u)$ being a matrix of size $\nu_b\times\nu_a$. Thus  $^\nu E_{a,a}(u)={}^\nu F_{a,a}(u)=1$ are the identity matrix of size $\nu_a\times \nu_a$ for each $1\lle a\lle z$ and
\beq\label{eq:GD-comp}
^\nu T_{a,b}(u)=\sum_{c=1}^{\min(a,b)} {}^\nu F_{a,c}(u)^\nu D_c(u) ^\nu E_{c,b}(u).
\eeq

Denote the $(i,j)$-entry of $^\nu D_a(u)$, $^\nu E_{a,b}(u)$, and $^\nu F_{b,a}(u)$ by
\begin{align*}
^\nu D_{a;i,j}(u)&=\sum_{r\gge 0} {}^\nu D_{a;i,j}^{(r)}u^{-r},\qquad 1\lle i,j\lle \nu_a,\\
^\nu E_{a,b;i,j}(u)&=\sum_{r\gge 0} {}^\nu E_{a,b;i,j}^{(r)}u^{-r},\qquad 1\lle i\lle \nu_a,1\lle j\lle \nu_b,\\
^\nu F_{b,a;i,j}(u)&=\sum_{r\gge 0} {}^\nu F_{b,a;i,j}^{(r)}u^{-r},\qquad 1\lle j\lle \nu_a,1\lle i\lle \nu_b,
\end{align*}
respectively, where ${}^\nu D_{a;i,j}^{(r)}$, ${}^\nu E_{a,b;i,j}^{(r)}$, ${}^\nu F_{b,a;i,j}^{(r)}$ are elements of $\Y$. 

Define $^{\nu}|i|_a$ for $1\lle a\lle z$, $1\lle i\lle \nu_a$ by the rule:
\[
^{\nu}|i|_a:=|\nu_1+\cdots+\nu_{a-1}+i|.
\]

We shall often omit the superscript $\nu$ and write simply $D_{a;i,j}^{(r)}$, $E_{a,b;i,j}^{(r)}$, $F_{b,a;i,j}^{(r)}$. We also use the abbreviation: $E_{a;i,j}^{(r)}:=E_{a,a+1;i,j}^{(r)}$, $F_{a;i,j}^{(r)}:=F_{a+1,a;i,j}^{(r)}$. 

The anti-isomorphism $\tau$ from \eqref{taudef} satisfies
\begin{align*}
\tau(D_{a;i,j}^{(r)})&=  (-1)^{|i|_a|j|_a+|j|_a} D_{a;i,j}^{(r)},\\
\tau(E_{a,b;i,j}^{(r)})&=  (-1)^{|i|_a|j|_b+|j|_b} E_{a,b;i,j}^{(r)},\\
\tau(F_{b,a;i,j}^{(r)})&=  (-1)^{|i|_b|j|_a+|j|_a} F_{b,a;i,j}^{(r)}.
\end{align*}

It is known from \cite[\S3]{Pe16} that $\Y$ is generated by the elements
\begin{align*}
&\big\{D_{a;i,j}^{(r)}\mid 1\lle a\lle z,1\lle i,j\lle \nu_a,r>0\big\},\\
&\big\{E_{a;i,j}^{(r)}\mid 1\lle a<m,1\lle i\lle \nu_a,1\lle j\lle \nu_{a+1},r>s_{a,a+1}(\nu)\big\},\\
&\big\{F_{a;i,j}^{(r)}\mid 1\lle a<m,1\lle i\lle \nu_{a+1},1\lle j\lle \nu_{a},r>s_{a+1,a}(\nu)\big\}
\end{align*}
subject to relations recorded explicitly in \cite[(5.3)--(5.18)]{Pe21} (see also \cite{Pe16}). Moreover, the supermonomials in the elements
\begin{align*}
&\big\{D_{a;i,j}^{(r)}\mid 1\lle a\lle z,1\lle i,j\lle \nu_a,r>0\big\},\\
&\big\{E_{a,b;i,j}^{(r)}\mid 1\lle a<b\lle z,1\lle i\lle \nu_a,1\lle j\lle \nu_{b},r>s_{a,b}(\nu)\big\},\\
&\big\{F_{b,a;i,j}^{(r)}\mid 1\lle a<b\lle z,1\lle i\lle \nu_{b},1\lle j\lle \nu_{a},r>s_{b,a}(\nu)\big\}
\end{align*}
taken in any fixed order form a basis of $\Y$. Note that the definition of the higher order root elements $E_{a,b;i,j}^{(r)}$ and $E_{a,b;i,j}^{(r)}$ given here is different from \cite{Pe16} but the same as in \cite{Pe21}. The equivalence of the two definitions can be verified by Lemma \ref{lem:BK08-2.2} below.

\subsection{Preliminary lemmas}
\begin{lem}[{\cite[Lem.~2.2]{BK08}}]\label{lem:BK08-2.2}
For $1\lle a<b-1<m+n$, $1\lle i\lle \nu_a$, $1\lle j\lle \nu_b$ and $r>s_{a,b}(\nu)$, we have
\[
E_{a,b;i,j}^{(r)}=(-1)^{|g|_{b-1}}\big[E_{a,b-1;i,g}^{(r-s_{b-1,b}(\nu))},E_{b-1;g,j}^{(s_{b-1,b}(\nu)+1)}\big]
\]
for any $1\lle g\lle \nu_{b-1}$. Similarly, for $1\lle a<b-1<m+n$, $1\lle i\lle \nu_b$, $1\lle j\lle \nu_a$ and $r>s_{b,a}(\nu)$, we have
\[
F_{b,a;i,j}^{(r)}=(-1)^{|g|_{b-1}}\big[F_{b-1;i,g}^{(s_{b-1,b}(\nu)+1)},F_{b-1,a;g,j}^{(r-s_{b-1,b}(\nu))}\big]
\]
for any $1\lle g\lle \nu_{b-1}$.
\end{lem}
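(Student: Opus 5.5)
The plan is to argue by induction on $b-a$, directly from the definitions of the parabolic generators via the block Gauss decomposition \eqref{eq:GD-comp}, in the same way as \cite[Lem.~2.2]{BK08}. I will prove the $E$-formula in detail; the $F$-formula then follows by applying the anti-isomorphism $\tau$ and using its explicit action on $D_{a;i,j}^{(r)}$, $E_{a,b;i,j}^{(r)}$, $F_{b,a;i,j}^{(r)}$ recorded just before this lemma. The extra signs $(-1)^{|i|_a|j|_b+|j|_b}$ from $\tau$ have to combine to give $(-1)^{|g|_{b-1}}$ again, which is a short parity bookkeeping exercise.

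\textbf{Step 1 (reduction to the unshifted case).} Since all the elements live in $\rY_{m|n}$ via the canonical embedding, and the block Gauss decomposition of ${}^\nu T(u)$ in the shifted algebra agrees with that of $\sfT(u)$ in $\rY_{m|n}$ (only the vanishing of low-order coefficients differs), it suffices to establish the corresponding generating-series identity in $\rY_{m|n}$. In $\rY_{m|n}$ I will derive, from the RTT relation \eqref{RTT} restricted to the blocks, the identity
\[
[E_{a,b-1;i,g}(u),E_{b-1;g,j}^{(1)}]=(-1)^{|g|_{b-1}}E_{a,b;i,j}(u)+(\text{terms killed below}),
\]
together with the standard parabolic relation $[E_{a,b-1;i,g}(u),E_{b-1;g',j}(v)]$ written in the form $(u-v)[\,\cdot\,,\,\cdot\,]=\pm\big(E_{a,b;i,j}(v)-E_{a,b;i,j}(u)\big)\pm E_{a,b-1;i,g}(u)(E_{b-1;g,j}(v)-E_{b-1;g,j}(u))$ (the analogue of \cite[(6.21)]{Pe16} and of \eqref{4.6-2-1}). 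The independence of the choice of $g$ comes out automatically, since the right-hand side does not involve $g$ apart from the sign.

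\textbf{Step 2 (shift bookkeeping).} Using the isomorphism $\iota$ from \eqref{iotadef} at the block level, I will shift the degrees so that $E_{b-1;g,j}(v)$ starts at $v^{-s_{b-1,b}(\nu)-1}$. Then I will take the coefficient of $u^{-(r-s_{b-1,b}(\nu))}v^{-(s_{b-1,b}(\nu)+1)}$ in the relation from Step 1. The quadratic term $E_{a,b-1}(u)E_{b-1}(v)$ contributes only in degrees excluded by $r-s_{b-1,b}(\nu)>s_{a,b-1}(\nu)$ together with the additivity \eqref{sijk}, namely $s_{a,b}(\nu)=s_{a,b-1}(\nu)+s_{b-1,b}(\nu)$. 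This leaves exactly $E_{a,b;i,j}^{(r)}$.

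\textbf{Main obstacle.} I expect the hardest part to be this extraction: checking that the lowest admissible coefficient $s_{b-1,b}(\nu)+1$ kills every unwanted term while keeping track of the super signs $(-1)^{|i|_a|g|_{b-1}+\cdots}$ from \eqref{RTT}. My first attempt will be to copy the argument of \cite[Lem.~2.2]{BK08} line by line, inserting the parity factors, and to check the sign on the case $z=3$ with all $\nu_a=1$, where the statement should reduce to \eqref{gedef}.
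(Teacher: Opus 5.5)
\textbf{There is a genuine gap: Step~1 rests on a false identification.} The parabolic generators ${}^\nu E_{a,b;i,j}^{(r)}$ of $\Y$ come from the block Gauss decomposition of the matrix $T(u)$ in \eqref{eq:GD-shift}. That matrix is built from the commutator-defined elements $E_{i,j}^{(r)}$ of \eqref{gedef}. As the paper stresses, these are in general \emph{not} the Gauss coefficients $\sfE_{i,j}^{(r)}$ of $\rY_{m|n}$, and they differ in the nonzero coefficients, not only in which low-order ones vanish.

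Your own test case exposes this. Take $m+n=3$, $\nu=(1,1,1)$, $s_{1,2}=0$, $s_{2,3}=1$. The lemma is then literally the definition $E_{1,3}^{(r)}=\fks_2[E_1^{(r-1)},E_2^{(2)}]$. But \eqref{dr8} with $\sigma=0$ gives
\[
\fks_2[\sfE_1^{(r-1)},\sfE_2^{(2)}]=\sfE_{1,3}^{(r)}-\sfE_1^{(r-1)}\sfE_2^{(1)},
\]
and the product term is nonzero by PBW. So the identity you would prove in $\rY_{m|n}$ is a different, false statement.

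The mechanism in Step~2 also does not exist. The coefficient of $u^{-(r-s)}v^{-(s+1)}$ in $(u-v)[X(u),Y(v)]$ is the difference $[X^{(r-s+1)},Y^{(s+1)}]-[X^{(r-s)},Y^{(s+2)}]$. The quadratic term contributes $X^{(r-s)}Y^{(s+1)}\neq 0$ there, and no inequality such as $r-s_{b-1,b}(\nu)>s_{a,b-1}(\nu)$ removes it. A single commutator with no product term is isolated only at the boundary coefficient, where $Y$ sits in its lowest degree. In $\rY_{m|n}$ that lowest degree is $1$, not $s_{b-1,b}(\nu)+1$. The content of the lemma is exactly that the shifted elements absorb these product terms.

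\textbf{How to repair it.} You must work with the elements of $\Y$ themselves. Two routes are available.

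\emph{The paper's route.} It argues by backward induction on the length $z$ of $\nu$. For $z=m+n$ the claim is \eqref{gedef}, by uniqueness of the Gauss decomposition of $T(u)$. In the inductive step one splits a block $\nu_p=x+y$ into a finer shape $\mu$, writes ${}^\nu E_{a,b;i,j}(u)$ in terms of the ${}^\mu E$'s, and checks the identity case by case using the induction hypothesis and the relations of \cite{Pe21}. Independence of $g$ is not automatic there; it uses commutation with ${}^\nu D^{(1)}_{b-1;g,x+1}$.

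\emph{Your $\iota$ idea, applied before comparing with $\rY_{m|n}$.} Use $\iota$ to move all shifts below the diagonal.
\begin{itemize}
\item From \eqref{iotadef}, \eqref{gedef}, \eqref{gfdef} and \eqref{eq:GD-shift} one checks that $\iota$ conjugates $T(u)$ by a diagonal matrix of powers of $u$ that is scalar on the blocks of $\nu$.
\item Hence $\iota$ sends ${}^\nu E^{(r)}_{a,b;i,j}$ to ${}^\nu\vec E^{(r-s_{a,b}(\nu))}_{a,b;i,j}$.
\item The block upper factor depends only on the $E_{i,j}(u)$, and for zero upper shifts these coincide with the $\sfE_{i,j}(u)$.
\end{itemize}
This reduces you to $\sigma=0$. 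There your $\rY_{m|n}$ computation, taken at the boundary coefficient $v$-degree $1$, is legitimate. The $F$ case then follows via $\tau$ as you planned.
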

\begin{proof}
It suffices to prove the identities for the $E$'s as that of $F$'s follow by applying the anti-isomorphism $\tau$ from \eqref{taudef}. We proceed by backward induction on the length of the admissible shape $\nu=(\nu_1,\dots,\nu_z)$. The base case $z=m+n$ follows by definition \eqref{gedef}. Now suppose $z<m+n$. Pick $1\lle p\lle m+n$ and $x,y>0$ such that $\nu_p=x+y$, and then let $\mu=(\nu_1,\dots,\nu_{p-1},x,y,\nu_{p+1},\dots,\nu_z)$, an admissible shape of length strictly less than that of $\nu$. Then a straightforward calculation implies that for each $1\lle a<b\lle m+n$, $1\lle i\lle \nu_a$ and $1\lle j\lle \nu_b$  that
\be
^\nu E_{a,b;i,j}(u)=
\begin{cases}
^\mu E_{a,b;i,j}(u)    &\text{ if }b<p,\\
^\mu E_{a,b;i,j}(u)    &\text{ if }b=p,j\lle x,\\
^\mu E_{a,b+1;i,j-x}(u)    &\text{ if }b=p,j> x,\\
^\mu E_{a,b+1;i,j}(u)    &\text{ if }a<p,b> p,\\
^\mu E_{a,b+1;i,j}(u)    -\sum_{l=1}^y {}^\mu E_{a;i,l}(u) {}^\mu E_{a+1,b+1;l,j}(u) & \text{ if }a=p,i\lle x,\\
^\mu E_{a+1,b+1;i-x,j}(u)    &\text{ if }a=p,i>x,\\
^\mu E_{a+1,b+1;i,j}(u)    &\text{ if }a>p.
\end{cases}
\ee
For $b>a+1$, we need to prove that
\beq\label{eq:app1}
^\nu E_{a,b;i,j}(u)=(-1)^{{}^\nu|g|_{b-1}}\big[{}^\nu E_{a,b-1;i,g}(u),{}^\nu E_{b-1;g,j}^{s_{b-1,b}(\nu)+1}\big]u^{-s_{b-1,b}(\nu)}
\eeq
for each $1\lle g\lle \nu_{b-1}$. We shall rewrite both sides of the identity that we need to prove in terms of the $^\mu E$'s and then use the induction hypothesis
\[
^\mu E_{a,b;i,j}(u)=(-1)^{{}^\mu|g|_{b-1}}\big[{}^\mu E_{a,b-1;i,g}(u),{}^\mu E_{b-1;g,j}^{s_{b-1,b}(\mu)+1}\big]u^{-s_{b-1,b}(\mu)}
\]
for each $1\lle a<b-1\lle m+n$, $1\lle i\lle \mu_a$, $1\lle j\lle \mu_b$ and $1\lle g\lle \mu_{b-1}$. 

We proceed case by case.

(1) If $b<p$ or $b=p,j\lle x$, then it is obvious.

(2) Suppose $b=p,j>x$. By induction hypothesis, we have
    \[
    ^\mu E_{a,b+1;i,j-x}(u)=(-1)^{{}^\mu|h|_{b}}\big[{}^\mu E_{a,b;i,h}(u),{}^\mu E_{b;h,j-x}\big]u^{-s_{b,b+1}(\mu)}
    \]
    for $1\lle h\lle x$. Since $s_{b,b+1}(\mu)=0$ (as $\nu$ is admissible) and $^{\mu} E_{b;h,j-x}^{(1)}={}^\nu D_{b;h,j}^{(1)}$, we find that
    \beq\label{eq:helper01}
    ^{\nu} E_{a,b;i,j}(u)=(-1)^{{}^\nu|h|_{b}}\big[{}^\nu E_{a,b;i,h}(u),{}^\nu D_{b;h,j}^{(1)}\big].
    \eeq
    Thus using the relations established in the previous cases and the defining relations of $\Y$, we find that
    \begin{align*}
    &(-1)^{{}^\nu|h|_{b}}\big[{}^\nu E_{a,b;i,h}(u),{}^\nu D_{b;h,j}^{(1)}\big]\\
    =\,&(-1)^{{}^\nu|h|_{b}+{}^\nu|g|_{b-1}}\Big[\big[{}^\nu E_{a,b-1;i,g}(u),{}^\nu E_{b-1;g,h}^{(s_{b-1,b}(\nu)+1)}\big],{}^\nu D_{b;h,j}^{(1)}\Big]u^{-s_{b-1,b}(\nu)}\\
    =\,&(-1)^{{}^\nu|h|_{b}+{}^\nu|g|_{b-1}}\Big[{}^\nu E_{a,b-1;i,g}(u),\big[{}^\nu E_{b-1;g,h}^{(s_{b-1,b}(\nu)+1)},{}^\nu D_{b;h,j}^{(1)}\big]\Big]u^{-s_{b-1,b}(\nu)}\\
    =\,&(-1)^{{}^\nu|g|_{b-1}}\big[{}^\nu E_{a,b-1;i,g}(u),{}^\nu E_{b-1;g,j}^{(s_{b-1,b}(\nu)+1)}\big]u^{-s_{b-1,b}(\nu)}
    \end{align*}
    for any $1\lle g\lle \nu_{b-1}$.

(3) Suppose $a<p,b>p$. If $b>p+1$ or $b=p+1,g>x$, then it is easy. Now consider the case $b=p+1$ and $g\lle x$. It is known in case (2) that
\beq\label{eq:helper02}
^\nu E_{a,b;i,j}(u)=(-1)^{{}^\nu|x+1|_{b-1}}\big[{}^\nu E_{a,b-1;i,x+1}(u),{}^\nu E_{b-1;x+1,j}^{(s_{b-1,b}(\nu)+1)}\big]u^{-s_{b-1,b}(\nu)}.
\eeq
Using the cases already considered to express $^\nu E_{a,b-1;i,g}^{(r)}$ as a commutator then using the relation \cite[(5.13)]{Pe21}, one has that $[{}^\nu E_{a,b-1;i,g}^{(r)},{}^\nu E_{b-1;x+1,j}^{(s)}]=0$. Applying $[{}^\nu D_{b-1;g,x+1}^{(1)},?]$ to it and using the relation \eqref{eq:helper01}, we have
\[
(-1)^{{}^\nu|x+1|_{b-1}}\big[{}^\nu E_{a,b-1;i,x+1}^{(r)},{}^\nu E_{b-1;x+1,j}^{(s)}\big]=(-1)^{{}^\nu|g|_{b-1}}\big[{}^\nu E_{a,b-1;i,g}^{(r)},{}^\nu E_{b-1;g,j}^{(s)}\big].
\]
In particular,
\[
(-1)^{{}^\nu|x+1|_{b-1}}\big[{}^\nu E_{a,b-1;i,x+1}(u),{}^\nu E_{b-1;x+1,j}^{(s_{b-1,b}(\nu)+1)}\big]=(-1)^{{}^\nu|g|_{b-1}}\big[{}^\nu E_{a,b-1;i,g}(u),{}^\nu E_{b-1;g,j}^{(s_{b-1,b}(\nu)+1)}\big].
\]
Combining this with \eqref{eq:helper01}, we get that 
$$
^\nu E_{a,b;i,j}(u)=(-1)^{{}^\nu|g|_{b-1}}\big[{}^\nu E_{a,b-1;i,g}(u),{}^\nu E_{b-1;g,j}^{(s_{b-1,b}(\nu)+1)}\big]u^{-s_{b-1,b}(\nu)}
$$
as required.

(4) Suppose $a=p,i\lle x$. The LHS of \eqref{eq:app1} is equal to
\[
^\mu E_{a,b+1;i,j}(u)-\sum_{l=1}^y{}^\mu E_{a;i,l}(u){}^\mu E_{a+1,b+1;l,j}(u)
\]
while the RHS equals
\[
(-1)^{{}^\mu|g|_b}\Big[
^\mu E_{a,b;i,g}(u)-\sum_{l=1}^y{}^\mu E_{a;i,l}(u){}^\mu E_{a+1,b;l,g}(u),{}^\mu E_{b;g,j}^{(s_{b,b+1}(\mu)+1)}
\Big]u^{-s_{b,b+1}(\mu)}.
\]
Note that ${}^\mu E_{a;i,l}(u)$ and ${}^\mu E_{b;g,j}^{(s_{b,b+1}(\mu)+1)}$ supercommute. The equality \eqref{eq:app1} follows from the induction hypothesis.

(5) If $a=p,i>x$ or $a>p$, then it is also easy.\end{proof}

Introduce another family of elements of $\Y$. Recall that ${}^\nu E_{a,a}(u)$ and $^\nu F_{a,a}(u)$ are both the identity. Define
\begin{align}
^\nu \overline E_{a,b}(u)&:= {}^\nu E_{a,b}(u)-\sum_{c=a}^{b-1}{}^\nu E_{a,c}(u){}^{\nu}E_{c,b}^{(s_{c,b}(\nu)+1)}u^{-s_{c,b}(\nu)-1},\label{eq:barE-def}\\
^\nu \overline F_{b,a}(u)&:= {}^\nu F_{b,a}(u)-\sum_{c=a}^{b-1}{}^\nu F_{b,c}^{(s_{b,c}(\nu)+1)}{}^{\nu}F_{c,a}(u)u^{-s_{b,c}(\nu)-1},\label{eq:barF-def}
\end{align}
for $1\lle a<b\lle z$. By convention, we have $$^\nu \overline E_{a,a+1}(u)={}^\nu \overline E_{a,a+1}(u),\qquad ^\nu \overline F_{a+1,a}(u)={}^\nu F_{a+1,a}(u).$$ As usual, the $ij$-entry of $^\nu \overline E_{a,b}(u)$ and $^\nu \overline F_{b,a}(u)$ are denoted by
\begin{align*}
^\nu \overline E_{a,b;i,j}(u)=\sum_{r>s_{a,b}(\nu)}^\nu \overline E_{a,b;i,j}^{(r)}u^{-r},\quad 
^\nu \overline F_{b,a;j,i}(u)=\sum_{r>s_{b,a}(\nu)}^\nu \overline F_{b,a;j,i}^{(r)}u^{-r},
\end{align*}
for $1\lle i\lle \nu_a$ and $1\lle j\lle \nu_b$. Again we shall drop the superscript $\nu$.

\begin{lem}[{\cite[Lem.~2.3]{BK08}}]\label{lem:BK08-2.3}
For $1\lle a<b-1<m+n$, $1\lle i\lle \nu_a$, $1\lle j\lle \nu_b$ and $r>s_{a,b}(\nu)+1$, we have
\[
\overline E_{a,b;i,j}^{(r)}=(-1)^{|g|_{b-1}}[E_{a,b-1;i,g}^{(r-s_{b-1,b}(\nu)-1)},E_{b-1;g,j}^{(s_{b-1,b}(\nu)+2)}]
\]
for any $1\lle g\lle \nu_{b-1}$. Similarly, for $1\lle a<b-1<m+n$, $1\lle i\lle \nu_b$, $1\lle j\lle \nu_a$ and $r>s_{b,a}(\nu)$, we have
\[
\overline F_{b,a;i,j}^{(r)}=(-1)^{|g|_{b-1}}[F_{b-1;i,g}^{(s_{b-1,b}(\nu)+2)},F_{b-1,a;g,j}^{(r-s_{b-1,b}(\nu)-1)}]
\]
for any $1\lle g\lle \nu_{b-1}$.
\end{lem}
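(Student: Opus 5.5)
The plan follows \cite[Lem.~2.3]{BK08}: derive the identity for $\overline E$ from Lemma \ref{lem:BK08-2.2} by one extra degree shift, and obtain the $F$-version by applying the anti-isomorphism $\tau$.

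First, I will bring $E_{b-1;g,j}^{(s_{b-1,b}(\nu)+2)}$ into reach of Lemma \ref{lem:BK08-2.2}, which only involves $E_{b-1;g,j}^{(s_{b-1,b}(\nu)+1)}$. The shift is produced by the operator $\mathbf D^{(2)}$-type element used in the proof of Proposition \ref{prop:copro}. In the parabolic setting I take
\[
X:={}^\nu D_{b-1;g,g}^{(2)}-\tfrac12\big({}^\nu D_{b-1;g,g}^{(1)}\big)^2
\]
suitably corrected, or alternatively a trace-type combination over the $(b-1)$-st diagonal block. The parabolic analogue of \eqref{dr3}, from \cite[(5.3)--(5.18)]{Pe21}, should give two things. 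It should give $[X,E_{b-1;g,j}^{(s)}]=\pm E_{b-1;g,j}^{(s+1)}$, modulo terms $\sum_h E_{b-1;h,j}^{(s)}\cdot(\text{degree-one } D)$ that can be absorbed. It should also give that $[X,E_{a,b-1;i,g}(u)]$ equals $\pm\big(u\,E_{a,b-1;i,g}(u)-E_{a,b-1;i,g}^{(s_{a,b-1}+1)}u^{-s_{a,b-1}}\big)$ plus a product $E_{a,b-1}(u)\cdot E_{b-1}$-type correction. This correction is exactly the form of the terms subtracted in \eqref{eq:barE-def}.

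Second, I apply $\operatorname{ad}X$ to the series identity \eqref{eq:app1} from Lemma \ref{lem:BK08-2.2}. Using the super Jacobi identity, the left side becomes $[X,E_{a,b;i,j}(u)]$. The right side becomes
\[
[[X,E_{a,b-1;i,g}(u)],E^{(s+1)}]+[E_{a,b-1;i,g}(u),E^{(s+2)}],
\]
where $s=s_{b-1,b}(\nu)$. Expanding $[X,E_{a,b;i,j}(u)]$ by the same rule yields $u\,E_{a,b;i,j}(u)$ minus its leading coefficient, together with the correction terms $E_{a,c}(u)E_{c,b}^{(s_{c,b}+1)}$. Comparing, the quantity $u\,E_{a,b}(u)$ minus everything subtracted in \eqref{eq:barE-def} collapses to $u\,\overline E_{a,b;i,j}(u)$. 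The surviving right-hand term is $[E_{a,b-1;i,g}(u),E^{(s+2)}_{b-1;g,j}]$ times the appropriate power of $u$. Taking the coefficient of $u^{-r}$ then gives the claimed formula. The extra condition $r>s_{a,b}(\nu)+1$ reflects that the lowest coefficient was stripped off. The summands with $c$ strictly between $a$ and $b-1$ in \eqref{eq:barE-def} are handled by induction on $b-a$: the inductive hypothesis rewrites $E_{a,b-1}$ in barred form, and the relation \cite[(5.13)]{Pe21} (commutation of $E_{a,c}$ with $E_{b-1}$ for non-adjacent blocks) kills the cross terms.

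An alternative I would fall back on is to reduce, as in the proof of Lemma \ref{lem:BK08-2.2}, by backward induction on the length of $\nu$ to the finest shape $z=m+n$. In that case the identity is a statement about Drinfeld generators and can be checked from \eqref{dr3}, \eqref{dr8} and \eqref{gedef} alone. The refinement step would again be a case analysis using the transition formulas between ${}^\nu E$ and ${}^\mu E$.

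The main obstacle is the bookkeeping of the correction terms in the second step. I need to verify that the non-leading contributions from $[X,\cdot]$ match precisely the subtracted sums in \eqref{eq:barE-def}, including the signs $(-1)^{|g|_{b-1}}$, and that the result is independent of $g$. That independence should follow as in case (3) of Lemma \ref{lem:BK08-2.2}, by applying $[D_{b-1;g,h}^{(1)},\cdot]$. Once the $E$-identity is established, the $F$-identity follows by applying $\tau$ with the sign formulas recorded above.
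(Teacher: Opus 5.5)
\textbf{There is a genuine gap: your central step fails.} You claim that $[X,E_{a,b;i,j}(u)]$, ``expanded by the same rule'', yields $u\,E_{a,b;i,j}(u)$ minus its leading coefficient plus the corrections in \eqref{eq:barE-def}. This is false. Your $X$ lives in the $(b-1)$-st diagonal block, which lies strictly between $a$ and $b$, and such an element does not shift $E_{a,b}$.

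Take the smallest case: $m+n=3$, all $\fks_i=1$, $\sigma=0$, $\nu=(1,1,1)$, $a=1$, $b=3$, $X=\mathbf D_2^{(2)}$. From \eqref{dr3} one gets $[X,E_1^{(r)}]=-E_1^{(r+1)}+E_1^{(r)}$ and $[X,E_2^{(r)}]=E_2^{(r+1)}$. Hence
\[
[X,E_{1,3}^{(r)}]=-E_{1,3}^{(r+1)}+E_{1,3}^{(r)}+[E_1^{(r)},E_2^{(2)}]=-E_2^{(1)}E_1^{(r)},
\]
where the last equality uses \eqref{dr8}. So the left side of your $\operatorname{ad}X$-identity is a pure product term. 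The shifted piece $-E_{1,3}^{(r+1)}$ actually sits in $[[X,E_{a,b-1}],E_{b-1}^{(s+1)}]$ on the right, and your bookkeeping does not account for it there. Separately, a correction ``of type $E_{a,b-1}(u)\cdot E_{b-1}$'' cannot occur in $[X,E_{a,b-1;i,g}(u)]$: it has the $\mathbf Q_{m|n}$-degree of $E_{a,b}$, not of $E_{a,b-1}$.

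\textbf{The approach is also circular.} None of the relations \cite[(5.3)--(5.18)]{Pe21} gives the commutator of a diagonal-block element with a non-simple $E_{a,b}$. The only way to compute $[X,E_{a,b;i,j}(u)]$ is to expand $E_{a,b;i,j}(u)$ via \eqref{eq:app1} itself. But then applying $\operatorname{ad}X$ to \eqref{eq:app1} is a tautology and says nothing about $[E_{a,b-1;i,g}(u),E_{b-1;g,j}^{(s+2)}]$. In the example above, evaluating $[X,E_{1,3}^{(r)}]$ required \eqref{dr8}, so the detour through $X$ contains no information beyond that relation.

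\textbf{The missing ingredient is the degree-transfer relation \cite[(5.11)]{Pe21},} the parabolic form of \eqref{dr8}. It expresses $[E_{c;i,g}(u),E_{c+1;g,j}^{(t+1)}]$ as $u\,[E_{c;i,g}(u),E_{c+1;g,j}^{(t)}]$, up to a leading-coefficient term and the product $\sum_h E_{c;i,h}(u)E_{c+1;h,j}^{(t)}$. The paper uses it directly:
\begin{itemize}
\item For $b=a+2$, apply this relation with $t=s_{b-1,b}(\nu)+1$ and then Lemma \ref{lem:BK08-2.2}; this immediately gives $(-1)^{|g|_{b-1}}\overline E_{a,b;i,j}(u)$.
\item For $b>a+2$, write $E_{a,b-1;i,g}(u)$ as a commutator via Lemma \ref{lem:BK08-2.2}. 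Use (5.11) to move the extra degree from $E_{b-1}$ onto $E_{b-2}$, remove cross terms with (5.13), and apply the induction hypothesis on $b-a$.
\end{itemize}
No $D^{(2)}$-type operator is needed.

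Your fallback does invoke \eqref{dr8}, but it is only sketched: the transition formulas between ${}^\nu\overline E$ and ${}^\mu\overline E$ under refinement are not worked out. Since (5.11) holds for every admissible $\nu$, that reduction is unnecessary anyway. The deduction of the $F$-identity via $\tau$ is fine.
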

\begin{proof}
Again it suffices to prove the identities for the $E$'s and the identities for the $F$'s will follow by applying the anti-isomorphism $\tau$ from \eqref{taudef2}. It is equivalent to prove
\[
\overline E_{a,b;i,j}(u)=\big[E_{a,b;i,g}(u),E_{b-1;g,j}^{(s_{b-1,b}(\nu)+2)}\big]u^{-(s_{b-1,b}(\nu)-1}.
\]
We prove it by induction on $b\gge a+2$. For the base case $b=a+2$, we have by \cite[(5.11)]{Pe21} that
\begin{align*}
&\big[E_{a,b;i,g}(u),E_{b-1;g,j}^{(s_{b-1,b}(\nu)+2)}\big]-\big[E_{a,b;i,g}(u),E_{b-1;g,j}^{(s_{b-1,b}(\nu)+1)}\big]u\\&=
\big[E_{a,b;i,g}^{(s_{a,b-1}(\nu)+1)},E_{b-1;g,j}^{(s_{b-1,b}(\nu)+1)}\big]u^{-s_{b-1,b}(\nu)}-(-1)^{|g|_{b-1}}\sum_{h=1}^{\nu_{b-1}}E_{a,b-1;i,h}(u)E_{b-1;h,j}^{(s_{b-1,b}(\nu)+1)}.
\end{align*}
Multiplying by $u^{-s_{b-1,b}(\nu)-1}$ and applying Lemma \ref{lem:BK08-2.2}, we obtain
\begin{align*}
\big[E_{a,b;i,g}(u),&\,E_{b-1;g,j}^{(s_{b-1,b}(\nu)+2)}\big]u^{-s_{b-1,b}(\nu)-1}=(-1)^{|g|_{b-1}} E_{a,b;i,j}(u) \\&
-(-1)^{|g|_{b-1}}E_{a,b;i,j}^{(s_{a,b}(\nu)+1)}u^{-s_{a,b}(\nu)-1}-(-1)^{|g|_{b-1}}\sum_{h=1}^{\nu_{b-1}}E_{a,b-1;i,h}(u)E_{b-1;h,j}^{(s_{b-1,b}(\nu)+1)}.
\end{align*}
It follows from \eqref{eq:barE-def} that the RHS is exactly $(-1)^{|g|_{b-1}} \overline E_{a,b;i,j}(u)$.

Now suppose that $b>a+2$. Using Lemma \ref{lem:BK08-2.2}, the relations \cite[(5.11),~(5.13)]{Pe21}, and the induction hypothesis, we have
\begin{align*}
&[E_{a,b-1;i,g}(u),E_{b-1;g,j}^{(s_{b-1,b}(\nu)+2)}]u^{-1}\\
=~&(-1)^{|1|_{b-2}}\big[[E_{a,b-2;i,1}(u),E_{b-2;i,g}^{(s_{b-2,b-1}(\nu)+1)}],E_{b-1;g,j}^{(s_{b-1,b}(\nu)+2)}\big]u^{-s_{b-2,b-1}(\nu)-1}\\
=~&(-1)^{|1|_{b-2}}\big[E_{a,b-2;i,1}(u),[E_{b-2;i,g}^{(s_{b-2,b-1}(\nu)+1)},E_{b-1;g,j}^{(s_{b-1,b}(\nu)+2)}]\big]u^{-s_{b-2,b-1}(\nu)-1}\\
=~&(-1)^{|1|_{b-2}}\big[E_{a,b-2;i,1}(u),[E_{b-2;i,g}^{(s_{b-2,b-1}(\nu)+2)},E_{b-1;g,j}^{(s_{b-1,b}(\nu)+1)}]\big]u^{-s_{b-2,b-1}(\nu)-1}\\
&-(-1)^{|1|_{b-2}+|g|_{b-1}}\sum_{h=1}^{\nu_{b-1}}\big[E_{a,b-2;i,1}(u),E_{b-2;1,h}^{(s_{b-2,b-1}(\nu)+1)}E_{b-1;h,j}^{(s_{b-1,b}(\nu)+1)}\big]u^{-s_{b-2,b-1}(\nu)-1}\\
=~&(-1)^{|1|_{b-2}}\big[[E_{a,b-2;i,1}(u),E_{b-2;i,g}^{(s_{b-2,b-1}(\nu)+2)}],E_{b-1;g,j}^{(s_{b-1,b}(\nu)+1)}\big]u^{-s_{b-2,b-1}(\nu)-1}\\
&-(-1)^{|1|_{b-2}+|g|_{b-1}}\sum_{h=1}^{\nu_{b-1}}\big[E_{a,b-2;i,1}(u),E_{b-2;1,h}^{(s_{b-2,b-1}(\nu)+1)}\big]E_{b-1;h,j}^{(s_{b-1,b}(\nu)+1)}u^{-s_{b-2,b-1}(\nu)-1}\\
=~&\big[\overline E_{a,b-1;i,g}(u),E_{b-1;g,j}^{(s_{b-1,b}(\nu)+1)}\big]-(-1)^{|g|_{b-1}}\sum_{h=1}^{\nu_{b-1}}E_{a,b-1;i,h}(u)E_{b-1;h,j}^{(s_{b-1,b}(\nu)+1)}u^{-1}.
\end{align*}
Now multiplying both sides by $u^{-s_{b-1,b}(\nu)}$ and using \eqref{eq:barE-def} together with Lemma \ref{lem:BK08-2.2} completes the proof. 
\end{proof}

\subsection{Finishing the proof}
Now we are in the position to prove that $T_{i,j}^{(r)}$ in $W(\pi)$ is zero whenever $r>s_{i,j}+p_{\min(i,j)}$. 

Recall the fixed choice of $k$ from \eqref{eq:k-def} and other notations from Section \ref{sec:W-alg}. Given $k\lle \theta\lle \ell$, let $\pi_{\theta}$ denote the pyramid $(q_1,\dots,q_{\theta})$ of level $\theta$, i.e., the first $\theta$ columns of $\pi$. Let $\sigma_\theta$ be the shift matrix for $\pi_\theta$ defined by \eqref{eq:sigma-def} with the chosen $k$. Denote by $\g_{\theta}$ the Lie superalgebra $\gl_{m_1+\dots+m_\theta|n_1+\dots+n_\theta}$ and regard it as the subsuperalgebra of $\g$ via the canonical embedding. Let $\mathtt I_\theta:\rU(\g_\theta)\hookrightarrow \rU(\g)$ be the induced embedding for the universal enveloping superalgebras. We consider elements both of $W(\pi)\subset \rU(\g)$ and of $W(\pi_\theta)\subset \rU(\g_{\theta})\subset \rU(\g)$. To avoid the confusion, we proceed the latter by the embedding $\mathtt I_\theta$. For example, the notation $\mathtt I_{\ell-1}(\overline E_{a,b;i,j}^{(r)})$ in the lemma below  means the image of the elements $\overline E_{a,b;i,j}^{(r)}$ of $W(\pi_{\ell-1})$ under the embedding $\mathtt I_{\ell-1}$. We always work relative to the minimal admissible shape $\nu=(\nu_1,\dots,\nu_z)$ for $\sigma$.

\begin{lem}\label{lem:BK08-3.3}
Assume that $q_1\gge q_{\ell}$ and $k<\ell$. Then for all meaningful $a,b,i,j$ and $r$, we have
\begin{align}
    D_{a;i,j}^{(r)}&=\mathtt I_{\ell-1}(D_{a;i,j}^{(r)})+\delta_{a,z}\Big(\sum_{h=1}^{\nu_z} (-1)^{|h|_z}\mathtt I_{\ell-1}(D_{z;i,h}^{(r-1)})e_{q_1+\dots+q_{\ell-1}+h,q_1+\dots+q_{\ell-1}+j}\notag\\&
    \hskip4.5cm -\big[\mathtt I_{\ell-1}(D_{z;i,j}^{(r-1)}),e_{q_1+\dots+q_{\ell-1}+j-q_{\ell},q_1+\dots+q_{\ell-1}+j}\big]
    \Big),\label{eq:app5}\\
    E_{a,z;i,j}^{(r)}&=\mathtt I_{\ell-1}(\overline E_{a,z;i,j}^{(r)})+\Big(\sum_{h=1}^{\nu_z} (-1)^{|h|_z}\mathtt I_{\ell-1}(E_{a,z;i,h}^{(r-1)})e_{q_1+\dots+q_{\ell-1}+h,q_1+\dots+q_{\ell-1}+j}\notag\\&
    \hskip4.5cm -\big[\mathtt I_{\ell-1}(E_{a,z;i,j}^{(r-1)}),e_{q_1+\dots+q_{\ell-1}+j-q_{\ell},q_1+\dots+q_{\ell-1}+j}\big]
    \Big),\label{eq:app2}\\
    E_{a,b;i,j}^{(r)}&=\mathtt I_{\ell-1}(E_{a,b;i,j}^{(r)}),\hskip 8.1cm \text{ for }b<z,\label{eq:app3}\\
    F_{b,a;i,j}^{(r)}&=\mathtt I_{\ell-1}(F_{b,a;i,j}^{(r)}).\label{eq:app4}
\end{align}
\end{lem}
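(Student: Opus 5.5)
We follow the strategy of \cite[Lem.~3.3]{BK08}: express the parabolic generators of $W(\pi)$ via the explicit elements $\sfT_{i,j;x}^{(r)}$ of \eqref{mystery}, realize them as images of the corresponding elements of $W(\pi_{\ell-1})$ corrected by terms involving the last column, and verify the resulting formulas on generators before extending to higher root elements. Since $q_1\gge q_\ell$ and $k<\ell$, the last column is not the tallest one, so $\pi_{\ell-1}$ is still a pyramid with the same chosen $k$; its shift matrix $\sigma_{\ell-1}$ differs from $\sigma$ only in the entries $s_{a,z}(\nu)$, which drop by one, and its bottom block of rows has length one less. This explains why only $D_z$ and $E_{a,z}$ pick up correction terms.

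\emph{Step 1: the elements $F$ and $E_{a,b}$ with $b<z$.} For $F_{b,a;i,j}^{(r)}$ with $r=s_{b,a}(\nu)+1$ (the lowest allowed degree), and for $E_{a;i,j}^{(r)}$ with $a+1<z$, we inspect \eqref{mystery} with $x=\nu_1+\cdots+\nu_a$. Conditions (2)--(4) prevent any $\tilde e$ with an index in the $\ell$-th column from appearing, because the last column lies to the right and, in these cases, the row constraints forbid reaching it. Hence these elements coincide with their counterparts in $\rU(\g_{\ell-1})$, once we check that the shifts $\rho$ in \eqref{etilde} agree. They do, because $\rho_r$ for $\pi$ and for $\pi_{\ell-1}$ differ by $\check q_\ell$, which is compensated by the change in $\hbar$ only when the chosen $k$ is unchanged. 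With this checked, \eqref{eq:app4} and \eqref{eq:app3} hold on generators. They extend to all $r$ and all $a<b$ by the commutator formulas of Lemma~\ref{lem:BK08-2.2}, because $\mathtt I_{\ell-1}$ is an algebra homomorphism.

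\emph{Step 2: $D_{z;i,j}^{(r)}$.} Here \eqref{mystery} allows the final factor of a chain to end in the last column. We split the sum according to whether the last factor $\tilde e_{i_s,j_s}$ has $\col(j_s)=\ell$. Terms not touching column $\ell$ give $\mathtt I_{\ell-1}(D_{z;i,j}^{(r)})$. Terms that do touch column $\ell$ factor as a $(r-1)$-degree chain in $\pi_{\ell-1}$ followed by either $e_{h',j'}$ inside the last column or a step from column $\ell-1$ to column $\ell$. The first kind of term gives the sum over $h$. For the second kind, we commute through, producing the commutator term; here the sign $(-1)^{|h|_z}$ and the constant shifts in $\tilde e$ must be tracked carefully, which yields \eqref{eq:app5}.

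\emph{Step 3 (main obstacle): \eqref{eq:app2}.} The formula must be proved first for $E_{z-1;i,j}^{(r)}$ directly from \eqref{mystery}, by the same splitting as in Step 2. The appearance of $\overline E$ rather than $E$ is forced by the change of shift $s_{a,z}(\sigma_{\ell-1})=s_{a,z}(\nu)-1$: the lowest-degree term is subtracted as in \eqref{eq:barE-def}. For general $a$, we then argue by downward induction on $a$. We write $E_{a,z}^{(r)}$ as a commutator using Lemma~\ref{lem:BK08-2.2}, apply the already-known formulas for $E_{a,z-1}$ (Step 1) and $E_{z-1,z}$, and recognise $\mathtt I_{\ell-1}(\overline E_{a,z})$ through Lemma~\ref{lem:BK08-2.3}. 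The correction terms commute appropriately, since $e$'s in the last column commute with $\mathtt I_{\ell-1}(E_{a,z-1})$ and the bracket term transforms correctly.

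The delicate part is bookkeeping the super signs and the $\rho$-shifts in these commutator manipulations, mirroring \cite[Lem.~3.3]{BK08} with signs $(-1)^{|g|_{z-1}}$ inserted.
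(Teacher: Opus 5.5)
Your plan follows the same route as the paper's proof. You settle the generators first. You then push \eqref{eq:app3}--\eqref{eq:app4} to higher root elements with Lemma~\ref{lem:BK08-2.2}. Finally you get \eqref{eq:app2} for $a<z-1$ from $E_{a,z}^{(r)}=\pm[E_{a,z-1;i,g}^{(r-s)},E_{z-1;g,j}^{(s+1)}]$ with $s=s_{z-1,z}(\nu)$, using the vanishing of commutators with the correction terms and Lemma~\ref{lem:BK08-2.3} in $W(\pi_{\ell-1})$, where the shift has dropped to $s-1$.

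The only structural difference is that you recompute the generator cases from \eqref{mystery} instead of quoting \cite[Lem.~10.2]{Pe21}. That is where your justification breaks: the shifts do \emph{not} agree.
\begin{itemize}
\item Since $k$ is kept, $\hbar=m_k-n_k$ is the same for $\pi$ and $\pi_{\ell-1}$. Hence $\rho_c$ computed for $\pi$ equals $\rho_c$ computed for $\pi_{\ell-1}$ minus $\check q_\ell$ for every $c<\ell$, and nothing compensates.
\item So the diagonal $\tilde e_{i,i}$'s of the two pyramids differ by $(-1)^{\tp(i)}\check q_\ell$, and diagonal factors do occur in \eqref{mystery}.
\item Example: take $\pi=(2,1)$ with both rows even. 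In $W(\pi)$ one gets $D_1^{(1)}=e_{11}-1$ and $F_1^{(2)}=-e_{21}(e_{11}-1)$. In $W(\pi_1)=\rU(\gl_2)$ one gets $D_1^{(1)}=e_{11}$ and $F_1^{(2)}=-e_{21}e_{11}$.
\end{itemize}
Your chain-by-chain comparison is therefore valid only if $\mathtt I_{\ell-1}$ is taken to be the shifted embedding $\tilde e_{i,j}\mapsto\tilde e_{i,j}$. The diagonal last-column entries in the correction terms must likewise be read as $\tilde e$'s. This is the normalisation built into \eqref{wgcoprod}, where $\tilde e_{i,j}\mapsto\tilde e_{i,j}\otimes 1$. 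You need to adopt it explicitly rather than assert that the shifts coincide.

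Two further points need fixing.
\begin{itemize}
\item \textbf{Degrees of $F$.} In Step 1 you check $F$ only in the lowest degree, but Lemma~\ref{lem:BK08-2.2} raises $b-a$, not $r$. Producing $F_{b,a}^{(r)}$ requires $F_{b-1,a}$ in all degrees, so the base must be $F_{a;i,j}^{(r)}$ for every $r>s_{a+1,a}(\nu)$. Your inspection of \eqref{mystery} works verbatim for every $r$. It also gives \eqref{eq:app5} for $a<z$, which you never mention.
\item \textbf{The bracket term in Step 3.} The correction terms of $E_{z-1;g,j}^{(s+1)}$ involve entries inside column $\ell$, which commute with $\mathtt I_{\ell-1}(E_{a,z-1;i,g}^{(r-s)})$ for trivial index reasons. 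They also involve the entry $e_{p,q}$ joining column $\ell-1$ to column $\ell$ in a row of block $z$, and here $p$ is a box of $\pi_{\ell-1}$. You need $[\mathtt I_{\ell-1}(E_{a,z-1;i,g}^{(r-s)}),e_{p,q}]=0$, which is not automatic. It holds because, by condition (3) in \eqref{mystery}, no monomial of $E_{b;\cdot,\cdot}$ with $b\le z-2$ contains a factor $\tilde e_{\cdot,p}$: the next factor would have to sit in column $\ell$. This property survives the commutators of Lemma~\ref{lem:BK08-2.2} by the Jacobi identity. It is precisely the observation the paper draws from \cite[Thm.~9.2 \& (9.5)]{Pe21}, and it should replace your phrase ``the bracket term transforms correctly''.
\end{itemize}
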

\begin{proof}
The equations \eqref{eq:app5}, \eqref{eq:app2} for $a=z-1$, and \eqref{eq:app3}--\eqref{eq:app4} for the special case $b=a+1$ follow from \cite[Lem.~10.2]{Pe21}. For the cases $b>a+1$, \eqref{eq:app3}--\eqref{eq:app4} are proved with the help of Lemma \ref{lem:BK08-2.2} by a simple induction on $b-a$. 

Then we consider the difficult \eqref{eq:app2}. Using \cite[Thm.~9.2 \& (9.5)]{Pe21} and Lemma \ref{lem:BK08-2.2}, it is not hard to see that 
\begin{align*}
&\big[\mathtt I_{\ell-1}(E_{a,z-1;i,g}^{(r)}),e_{q_1+\dots+q_{\ell-1}+j-q_{\ell},q_1+\dots+q_{\ell-1}+j}\big]\\
=\,&\big[\mathtt I_{\ell-1}(E_{a,z-1;i,g}^{(r)}),e_{q_1+\dots+q_{\ell-1}+h,q_1+\dots+q_{\ell-1}+j}\big]=0,
\end{align*}
for any $1\lle g\lle \nu_{z-1}$. Thus, \eqref{eq:app2} follows from the above observation and Lemma \ref{lem:BK08-2.3}.
\end{proof}

\begin{lem}\label{lem:BK08-3.4}
Assume that $q_1\gge q_\ell$ and $k<\ell$. We have
\begin{enumerate}
    \item for all $1\lle i\lle m+n$, $1\lle j\lle m+n-q_{\ell}$ and $r>0$ we have $T_{i,j}^{(r)}=\mathtt I_{\ell-1}(T_{i,j}^{(r)})$;
    \item for all $1\lle i\lle m+n$, $ j>m+n-q_{\ell}$ and $r>0$ we have 
\begin{align*}
T_{i,j}^{(r)}&=\mathtt I_{\ell-1}(T_{i,j}^{(r)})-\sum_{\substack{1\lle h\lle m+n-q_{\ell}\\ s_{h,j}\lle r}}  \mathtt I_{\ell-1}(T_{i,h}^{(r-s_{h,j})})\mathtt I_{\ell-1}(T_{h,j}^{(s_{h,j})})\\
&\hskip0.65cm  +\sum_{m+n-q_{\ell}<h\lle m+n}(-1)^{|h|}\mathtt I_{\ell-1}(T_{i,h}^{(r-1)})e_{q_1+\dots+q_{\ell}+h-m-n,q_1+\dots+q_{\ell}+j-m-n}
\\
&\hskip4.15cm -\big[\mathtt I_{\ell-1}(T_{i,j}^{(r-1)}),e_{q_1+\dots+q_{\ell}+j-m-n,q_1+\dots+q_{\ell}+j-m-n}\big].
\end{align*}
\end{enumerate}
\end{lem}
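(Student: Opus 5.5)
The plan is to follow the non-super argument of \cite[Lem.~3.4]{BK08}, feeding in the expressions from Lemma \ref{lem:BK08-3.3}. Throughout we work with the minimal admissible shape $\nu$ for $\sigma$. Since $q_1\gge q_\ell$ and $k<\ell$, the last $q_\ell$ rows of $\pi$ form exactly the final block $\nu_z$ (or lie inside it). This is what allows Lemma \ref{lem:BK08-3.3} to single out the block index $z$.

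First step. Rewrite $T_{i,j}(u)$ through the block Gauss decomposition \eqref{eq:GD-comp}, namely $T_{i,j}(u)=\sum_c F_{a,c}(u)D_c(u)E_{c,b}(u)$ with the appropriate matrix entries. Here $a$ and $b$ are the blocks containing $i$ and $j$.

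Part (1). If $j\lle m+n-q_\ell$, then $b<z$. So every factor occurring is a $D_c$ with $c\lle b<z$, an $E_{c,b}$ with $b<z$, or an $F$. By \eqref{eq:app5} (whose correction term carries $\delta_{c,z}$), \eqref{eq:app3} and \eqref{eq:app4}, each factor equals its $\mathtt I_{\ell-1}$-image. Since $\mathtt I_{\ell-1}$ is an algebra homomorphism, $T_{i,j}^{(r)}=\mathtt I_{\ell-1}(T_{i,j}^{(r)})$.

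Part (2). Now $b=z$, and the factors $D_z$ and $E_{c,z}$ are modified according to \eqref{eq:app5} and \eqref{eq:app2}. Package the correction terms as a generating-series identity. Write $X(u)\mapsto X(u)\,u^{-1}(\text{matrix }e)$ plus a commutator with the diagonal-shifted $e_{\cdot,\cdot}$. Then summing over $c$ reproduces the two $e$-terms in the statement, with $D_{z}(u)E_{z,z}$ and $E_{c,z}(u)$ combining into $T_{i,h}(u)$ and $T_{i,j}(u)$.

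The extra piece is the discrepancy between $\overline E_{c,z}$ and $E_{c,z}$ in \eqref{eq:app2}. By \eqref{eq:barE-def} this discrepancy is $-\sum_{d}E_{c,d}(u)E_{d,z}^{(s_{d,z}+1)}u^{-s_{d,z}-1}$. After multiplying by $F_{a,c}D_c$ and summing over $c$, it should assemble into $-\sum_{h\lle m+n-q_\ell}\mathtt I_{\ell-1}(T_{i,h}(u))\,\mathtt I_{\ell-1}(T_{h,j}^{(s_{h,j})})u^{-s_{h,j}}$. To see this, note that for $h$ in block $d<z$ the lowest-degree coefficient of $T_{h,j}$ equals the lowest-degree coefficient of $E_{d,z}$, since higher Gauss terms vanish in that degree. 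Taking the coefficient of $u^{-r}$ then yields the stated formula.

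Main obstacle. The hardest part is the bookkeeping of this middle sum. Three things must be checked: the index ranges of $h$ and the condition $s_{h,j}\lle r$ match; the signs $(-1)^{|h|}$ coming from the parabolic parities ${}^\nu|h|_z$ and from moving odd generators past each other are correct; and the commutator term $[\mathtt I_{\ell-1}(T_{i,j}^{(r-1)}),e_{\cdot,\cdot}]$ survives the recombination with the $F_{a,c}D_c$ factors. I would handle the last point by noting that the diagonal $e$'s of the last column supercommute with $\mathtt I_{\ell-1}(F)$ and $\mathtt I_{\ell-1}(D_c)$ for $c<z$. For the sign check I would verify the formula by hand in the case $\nu_z=1$ before doing the general case.
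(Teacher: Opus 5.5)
Your plan is the paper's route: expand $T_{i,j}(u)$ by the block Gauss decomposition \eqref{eq:GD-comp}, substitute Lemma~\ref{lem:BK08-3.3} factor by factor, and use \eqref{eq:barE-def} to produce the middle sum. Several parts are correct as you have them:
Part (1); the identification of rows $m+n-q_\ell+1,\dots,m+n$ with the last block of $\nu$ (every column has height at least $q_\ell$); the recombination of the terms with $e$ on the right into $\sum_h(-1)^{|h|}\mathtt I_{\ell-1}(T_{i,h}^{(r-1)})e_{\cdot,\cdot}$; and your lowest-degree argument for the middle sum. For that last point, note that the shifts in \eqref{eq:barE-def} are those of $\sigma_{\ell-1}$, where $s_{d,z}(\nu)$ drops by one. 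This is why $\mathtt I_{\ell-1}(T_{h,j}^{(s_{h,j})})$ and $u^{-s_{h,j}}$ appear, not $s_{h,j}+1$.

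The justification you give for the commutator term does not work. Convert the local index in \eqref{eq:app5} and \eqref{eq:app2} to the global row $j$. The element produced is the off-diagonal $e_{y,x}$, with $y=q_1+\dots+q_{\ell-1}+j-m-n$ (row $j$, column $\ell-1$) and $x=q_1+\dots+q_\ell+j-m-n$ (row $j$, column $\ell$). The repeated index in the displayed statement is a misprint. A diagonal element of column $\ell$ would supercommute with all of $\mathtt I_{\ell-1}(\rU(\g_{\ell-1}))$, and the term would simply vanish.

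For $e_{y,x}$, however, $[e_{p,q},e_{y,x}]=\delta_{q,y}e_{p,x}$ for $p,q$ indexing $\g_{\ell-1}$. You want $\sum_c\mathtt I_{\ell-1}(F_{a,c}D_c)[\mathtt I_{\ell-1}(E_{c,z}),e_{y,x}]$, together with the $c=z$ term taken directly from \eqref{eq:app5}, to equal $[\mathtt I_{\ell-1}(T_{i,j}),e_{y,x}]$. For that you must show that $\mathtt I_{\ell-1}(F_{a,c})$ and $\mathtt I_{\ell-1}(D_c)$, for $c<z$, contain no factor $\tilde e_{p,y}$.

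This is not automatic: an elementary $E_i^{(1)}$ with $i,i+1$ in block $z$ does contain such a factor. It does hold for the parabolic generators, by the explicit description from \cite[Thm.~9.2 \& (9.5)]{Pe21} already used in proving Lemma~\ref{lem:BK08-3.3}:
\begin{itemize}
\item $D_c$ ($c<z$) and $F_b$ are elements of the form \eqref{mystery} with $x$ a block boundary $\lle m+n-q_\ell$.
\item If $\row(j_t)$ lies in block $z$ and $t<s$, condition (3) gives $\col(j_t)<\col(i_{t+1})\lle\ell-1$, so $j_t\ne y$.
\item $\row(j_s)$ never lies in block $z$.
\item The higher $F_{a,c}$ then inherit the property through Lemma~\ref{lem:BK08-2.2}.
\end{itemize}
With this supplied, your outline goes through.
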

\begin{proof}
For $1\lle a,b\lle z$, $1\lle i\lle \nu_a$, and $1\lle j\lle \nu_b$, we have by definition that
\[
T_{\nu_1+\dots+\nu_{a-1}+i,\nu_1+\dots+\nu_{b-1}+j}(u)
=\sum_{c=1}^{\min(a,b)}\sum_{s,t=1}^{\nu_c} F_{a,c;i,s}(u)D_{c;s,t}(u)E_{c,b;t,j}(u).
\]
Apply Lemma \ref{lem:BK08-3.3} to rewrite the RHS along with \eqref{eq:barE-def}--\eqref{eq:barF-def}. After a straightforward calculation 
\end{proof}

\begin{proof}[Proof of Theorem \ref{thm:vanish}]
We prove it by induction on $\ell$. The base case $\ell=1$ is verified directly by definition. Using the anti-automorphism $\tau$ if necessary, we can assume that $q_1\gge q_\ell$ and hence fix a choice of $k$ such that $k<\ell$. Note that if $i,h\lle j$, then
\[
(s_{i,j}+p_i)-s_{h,j}=s_{i,h}+p_{\min(i,h)}.
\]
Thus, the induction hypothesis implies that all the terms in the RHS of Lemma \ref{lem:BK08-3.4} are zero if $r>s_{i,j}+p_{\min(i,j)}$, completing the proof.
\end{proof}

\bibliographystyle{amsalpha}
\bibliography{reference}

@incollection{Losev10,
author = {I. Losev},
title = {{Finite $W$-algebras}},
booktitle = {{Proceedings of the International Congress of Mathematicians 2010 (ICM 2010)}},
chapter = {},
year = {2010},
pages = {1281-1307},
doi = {10.1142/9789814324359_0096}
}

@incollection {EK05,
    AUTHOR = {Elashvili, A. G. and Kac, V. G.},
     TITLE = {Classification of good gradings of simple {L}ie algebras},
 BOOKTITLE = {Lie groups and invariant theory},
    SERIES = {Amer. Math. Soc. Transl. Ser. 2},
    VOLUME = {213},
     PAGES = {85--104},
 PUBLISHER = {Amer. Math. Soc., Providence, RI},
      YEAR = {2005},
      ISBN = {0-8218-3733-8},
   MRCLASS = {17B20 (17B70)},
  MRNUMBER = {2140715},
MRREVIEWER = {Rutwig\ Campoamor-Stursberg},
       DOI = {10.1090/trans2/213/05},
       URL = {https://doi.org/10.1090/trans2/213/05},
}

@article{CC24,
     AUTHOR = {Chen, C. and Cheng, S.},
     TITLE = {{W}hittaker {C}ategories of {Q}uasi-reductive {L}ie {S}uperalgebras and {P}rincipal {F}inite {W}-superalgebras},
   JOURNAL = {Transform. Groups},
  FJOURNAL = {Transformation Groups},
      YEAR = {2024},
      ISBN = {1531-586X},
       DOI = {10.1007/s00031-024-09887-8},
       URL = {https://doi.org/10.1007/s00031-024-09887-8},
}

@article{ZS16,
	author = {Zeng, Y. and Shu, B.},
	title = {{O}n {K}ac-{W}eisfeiler modules for general and special linear {L}ie superalgebras},
	JOURNAL = {Israel J. Math.},
        FJOURNAL = {Israel J. of Mathematics},
        VOLUME = {214},
        YEAR = {2016},
        NUMBER = {1},
        PAGES = {471--490},
        DOI = {10.1007/s11856-016-1338-1},
}

@article {PS16,
    AUTHOR = {Poletaeva, E. and Serganova, V.},
     TITLE = {{O}n {K}ostant's theorem for the {L}ie superalgebra {Q}(n)},
   JOURNAL = {Adv. Math.},
  FJOURNAL = {Advances in Mathematics},
    VOLUME = {300},
      YEAR = {2016},
     PAGES = {320-359},
      ISSN = {0001-8708},
       DOI = {https://doi.org/10.1016/j.aim.2016.03.021},
       URL = {https://www.sciencedirect.com/science/article/pii/S0001870816001250},
}

@article {PS17,
    AUTHOR = {Poletaeva, E. and Serganova, V.},
     TITLE = {{O}n the finite {W}-algebra for the {L}ie superalgebra {Q(N)} in the non-regular case},
   JOURNAL = {J. Math. Phys.},
  FJOURNAL = {Journal of Mathematical Physics},
    VOLUME = {58},
    NUMBER = {11},
      YEAR = {2017},
     PAGES = {111701},
      ISSN = {0022-2488},
       DOI = {10.1063/1.4993709},
       URL = {https://doi.org/10.1063/1.4993709},
}

@article{BR03,
	author = {Briot, C. and Ragoucy, E.},
	title = {$\mathcal{W}$-superalgebras as truncations of super-{Y}angians},
	JOURNAL = {J. Phys. A},
        FJOURNAL = {Journal of Physics A: Mathematical and General},
        VOLUME = {36},
        YEAR = {2003},
        NUMBER = {4},
        PAGES = {1057},
        DOI = {10.1088/0305-4470/36/4/314},
}

@article {WZ09,
    AUTHOR = {Wang, W. and Zhao, L.},
     TITLE = {{R}epresentations of {L}ie superalgebras in prime characteristic {I}},
     JOURNAL = {Proc. Lond. Math. Soc.},
     FJOURNAL = {Proceedings of the London Mathematical Society},
     VOLUME = {99},
     YEAR = {2009},
     NUMBER = {1},
     PAGES = {145-167},
     DOI = {https://doi.org/10.1112/plms/pdn057},
}

@article {Wa11,
    AUTHOR = {Wang, W.},
     TITLE = {{N}ilpotent orbits and finite $W$-algebras},
	JOURNAL = {Fields Inst. Commun.},
        FJOURNAL = {Fields Institute Communications},
        VOLUME = {59},
        YEAR = {2011},
        PAGES = {71-105},
        DOI = {https://doi.org/10.1090/fic/059},
}

@article{ZS25,
	author = {Zeng, Y. and Shu, B.},
	title = {{H}ighest {W}eight {T}heory for {M}inimal {F}inite {$W$}-{S}uperalgebras and {R}elated {W}hittaker {C}ategories},
	JOURNAL = {Publ. Res. Inst. Math. Sci.},
        FJOURNAL = {Publications of the Research Institute for Mathematical Sciences},
        VOLUME = {61},
        YEAR = {2025},
        NUMBER = {1},
        PAGES = {53-137},
        DOI = {DOI 10.4171/PRIMS/61-1-2},
}

@article{RS99,
	author = {Ragoucy, E. and Sorba, P.},
	title = {{Y}angian {R}ealisations from {F}inite {W}-{A}lgebras},
	JOURNAL = {Comm. Math. Phys.},
        FJOURNAL = {Communications in Mathematical Physics},
        VOLUME = {203},
        YEAR = {1999},
        NUMBER = {3},
        PAGES = {551-572},
        ISBN = {1432-0916},
        DOI = {10.1007/s002200050034},
}

@article {KTWWY19,
    AUTHOR = {Kamnitzer, J. and Tingley, P. and Webster, B. and
              Weekes, A. and Yacobi, O.},
     TITLE = {Highest weights for truncated shifted {Y}angians and product
              monomial crystals},
   JOURNAL = {J. Comb. Algebra},
  FJOURNAL = {Journal of Combinatorial Algebra},
    VOLUME = {3},
      YEAR = {2019},
    NUMBER = {3},
     PAGES = {237--303},
      ISSN = {2415-6302,2415-6310},
   MRCLASS = {16G20 (05E10 16T99 17B10)},
  MRNUMBER = {4011667},
MRREVIEWER = {Aleksandr\ Panov},
       DOI = {10.4171/JCA/32},
       URL = {https://doi.org/10.4171/JCA/32},
}

@article {MR04,
    AUTHOR = {Molev, A. and Retakh, V.},
     TITLE = {Quasideterminants and {C}asimir elements for the general linear Lie superalgebra},
   JOURNAL = {Int. Math. Res. Notices.},
  FJOURNAL = {International Mathematics Research Notices},
    VOLUME = {2004},
      YEAR = {2004},
    NUMBER = {13},
     PAGES = {611–619},
      ISSN = {1073-7928},
       DOI = {10.1155/S1073792804132935},
       URL = {https://academic.oup.com/imrn/article-pdf/2004/13/611/1824935/2004-13-611.pdf},
}

@article {Lo11,
    AUTHOR = {Losev, I.},
     TITLE = {1-{D}imensional representations and parabolic induction for {W}-algebras},
   JOURNAL = {Adv. Math.},
  FJOURNAL = {Advances in Mathematics},
    VOLUME = {226},
      YEAR = {2011},
    NUMBER = {6},
     PAGES = {4841-4883},
      ISSN = {0001-8708},
       DOI = {https://doi.org/10.1016/j.aim.2010.12.021},
       URL = {https://www.sciencedirect.com/science/article/pii/S0001870810004548},
}

@article{Pr02,
	author = {Premet, A.},
    title = {Special Transverse Slices and Their Enveloping Algebras},
    JOURNAL = {Adv. Math.},
    FJOURNAL = {Advances in Mathematics},
    volume = {170},
    year = {2002},
    number = {1},
    pages = {1-55},
    issn = {0001-8708},
    doi = {https://doi.org/10.1006/aima.2001.2063},
    url ={https://www.sciencedirect.com/science/article/pii/S0001870801920638},
}

@article{Pr95,
	author = {Premet, A.},
	title = {Irreducible representations of Lie algebras of reductive groups and the {K}ac-{W}eisfeiler conjecture},
	journal = {Invent. Math.},
	Fjournal = {Inventiones mathematicae},
	volume = {121},
	year = {1995},
    number = {1},
	pages = {79--117},
	isbn = {1432-1297},
	doi = {10.1007/BF01884291},
	url = {https://doi.org/10.1007/BF01884291},
}

@article{Ko78,
	author = {Kostant, B.},
	title = {On {W}hittaker vectors and representation theory},
	journal = {Invent. Math.},
	Fjournal = {Inventiones mathematicae},
    volume = {48},
	year = {1978},
    number = {2},
	pages = {101--184},
	isbn = {1432-1297},
    doi = {10.1007/BF01390249},
	url = {https://doi.org/10.1007/BF01390249},
}

@article {GGRW05,
    AUTHOR = {Gelfand, I. and Gelfand, S. and Retakh, V. and
              Wilson, R.},
     TITLE = {Quasideterminants},
   JOURNAL = {Adv. Math.},
  FJOURNAL = {Advances in Mathematics},
    VOLUME = {193},
      YEAR = {2005},
    NUMBER = {1},
     PAGES = {56--141},
      ISSN = {0001-8708,1090-2082},
   MRCLASS = {05E05 (15A15 16W30)},
  MRNUMBER = {2132761},
MRREVIEWER = {K.\ Chandrasekhara Rao},
       DOI = {10.1016/j.aim.2004.03.018},
       URL = {https://doi.org/10.1016/j.aim.2004.03.018},
}

@article {Gow05,
    AUTHOR = {Gow, L.},
     TITLE = {On the {Y}angian {$Y(\germ{gl}_{m|n})$} and its quantum
              {B}erezinian},
   JOURNAL = {Czechoslovak J. Phys.},
  FJOURNAL = {Czechoslovak Journal of Physics},
    VOLUME = {55},
      YEAR = {2005},
    NUMBER = {11},
     PAGES = {1415--1420},
      ISSN = {0011-4626,1572-9486},
   MRCLASS = {81R12 (17B81 81R50)},
  MRNUMBER = {2223829},
MRREVIEWER = {Jonathan\ Brundan},
       DOI = {10.1007/s10582-006-0019-4},
       URL = {https://doi.org/10.1007/s10582-006-0019-4},
}

@article{CW26,
  title={{Simple character formulas for finite $W$-superalgebras of type A}},
  author={Cheng, S.-J. and Wang, W.},
  journal={arXiv preprint \arxiv{2603.01373}},
  year={2026}
}

@book {Mus12,
    AUTHOR = {Musson, I.},
     TITLE = {{Lie superalgebras and enveloping algebras}},
    SERIES = {Graduate Studies in Mathematics},
    VOLUME = {131},
 PUBLISHER = {American Mathematical Society, Providence, RI},
      YEAR = {2012},
     PAGES = {xx+488},
      ISBN = {978-0-8218-6867-6},
   MRCLASS = {17-02 (16S30 17B35)},
  MRNUMBER = {2906817},
MRREVIEWER = {Aleksandr\ Nikolaevich\ Sergeev},
       DOI = {10.1090/gsm/131},
       URL = {https://doi.org/10.1090/gsm/131},
}

@article {Zha14,
    AUTHOR = {Zhao, L.},
     TITLE = {Finite {$W$}-superalgebras for queer {L}ie superalgebras},
   JOURNAL = {J. Pure Appl. Algebra},
  FJOURNAL = {Journal of Pure and Applied Algebra},
    VOLUME = {218},
      YEAR = {2014},
    NUMBER = {7},
     PAGES = {1184--1194},
      ISSN = {0022-4049,1873-1376},
   MRCLASS = {17B35 (17B10)},
  MRNUMBER = {3168490},
MRREVIEWER = {Aleksandr\ Nikolaevich\ Sergeev},
       DOI = {10.1016/j.jpaa.2013.11.012},
       URL = {https://doi.org/10.1016/j.jpaa.2013.11.012},
}

@article {GG02,
    AUTHOR = {Gan, W. L. and Ginzburg, V.},
     TITLE = {Quantization of {S}lodowy slices},
   JOURNAL = {Int. Math. Res. Not.},
  FJOURNAL = {International Mathematics Research Notices},
      YEAR = {2002},
    NUMBER = {5},
     PAGES = {243--255},
      ISSN = {1073-7928,1687-0247},
   MRCLASS = {53D20 (17B35 17B63 53D50)},
  MRNUMBER = {1876934},
MRREVIEWER = {William\ M.\ McGovern},
       DOI = {10.1155/S107379280210609X},
       URL = {https://doi.org/10.1155/S107379280210609X},
}

@book {Bha97,
    AUTHOR = {Bhatia, R.},
     TITLE = {Matrix analysis},
    SERIES = {Graduate Texts in Mathematics},
    VOLUME = {169},
 PUBLISHER = {Springer-Verlag, New York},
      YEAR = {1997},
     PAGES = {xii+347},
      ISBN = {0-387-94846-5},
   MRCLASS = {15-02 (47-02)},
  MRNUMBER = {1477662},
MRREVIEWER = {R.\ J.\ Bumcrot},
       DOI = {10.1007/978-1-4612-0653-8},
       URL = {https://doi.org/10.1007/978-1-4612-0653-8},
}

@book {CW12,
    AUTHOR = {Cheng, S.-J. and Wang, W.},
     TITLE = {Dualities and representations of {L}ie superalgebras},
    SERIES = {Graduate Studies in Mathematics},
    VOLUME = {144},
 PUBLISHER = {American Mathematical Society, Providence, RI},
      YEAR = {2012},
     PAGES = {xviii+302},
      ISBN = {978-0-8218-9118-6},
   MRCLASS = {17B10},
  MRNUMBER = {3012224},
MRREVIEWER = {Aleksandr\ Nikolaevich\ Sergeev},
       DOI = {10.1090/gsm/144},
       URL = {https://doi.org/10.1090/gsm/144},
}

@article {CH23,
    AUTHOR = {Chang, H. and Hu, H.},
     TITLE = {A note on the center of the super {Y}angian {$Y_{M|N}(\germ
              s)$}},
   JOURNAL = {J. Algebra},
  FJOURNAL = {Journal of Algebra},
    VOLUME = {633},
      YEAR = {2023},
     PAGES = {648--665},
      ISSN = {0021-8693,1090-266X},
   MRCLASS = {17B37},
  MRNUMBER = {4620131},
       DOI = {10.1016/j.jalgebra.2023.06.025},
       URL = {https://doi.org/10.1016/j.jalgebra.2023.06.025},
}

@article{LPTTW25,
  title={{Shifted twisted Yangians and finite $ W $-algebras of classical type}},
  author={Lu, K. and Peng, Y.-N. and Tappeiner, L. and Topley, L. and Wang, W.},
  journal={arXiv preprint \arxiv{2505.03316}},
  year={2025}
}

@article {Zh96,
    AUTHOR = {Zhang, R. B.},
     TITLE = {The {${\rm gl}(M|N)$} super {Y}angian and its
              finite-dimensional representations},
   JOURNAL = {Lett. Math. Phys.},
  FJOURNAL = {Letters in Mathematical Physics},
    VOLUME = {37},
      YEAR = {1996},
    NUMBER = {4},
     PAGES = {419--434},
      ISSN = {0377-9017,1573-0530},
   MRCLASS = {17B10 (81R50)},
  MRNUMBER = {1401045},
MRREVIEWER = {Hedi\ Benamor},
       DOI = {10.1007/BF00312673},
       URL = {https://doi.org/10.1007/BF00312673},
}

@article {Zh95,
    AUTHOR = {Zhang, R. B.},
     TITLE = {Representations of super {Y}angian},
   JOURNAL = {J. Math. Phys.},
  FJOURNAL = {Journal of Mathematical Physics},
    VOLUME = {36},
      YEAR = {1995},
    NUMBER = {7},
     PAGES = {3854--3865},
      ISSN = {0022-2488,1089-7658},
   MRCLASS = {81R10 (17B70)},
  MRNUMBER = {1339907},
MRREVIEWER = {Peter\ Pre\v snajder},
       DOI = {10.1063/1.530932},
       URL = {https://doi.org/10.1063/1.530932},
}

@article {MY14,
    AUTHOR = {Mukhin, E. and Young, C. A. S.},
     TITLE = {Affinization of category {$\scr{O}$} for quantum groups},
   JOURNAL = {Trans. Amer. Math. Soc.},
  FJOURNAL = {Transactions of the American Mathematical Society},
    VOLUME = {366},
      YEAR = {2014},
    NUMBER = {9},
     PAGES = {4815--4847},
      ISSN = {0002-9947,1088-6850},
   MRCLASS = {17B37},
  MRNUMBER = {3217701},
MRREVIEWER = {Christian\ Ohn},
       DOI = {10.1090/S0002-9947-2014-06039-X},
       URL = {https://doi.org/10.1090/S0002-9947-2014-06039-X},
}

@article {HZ24,
    AUTHOR = {Hernandez, D. and Zhang, H.},
     TITLE = {Shifted {Y}angians and polynomial {$R$}-matrices},
   JOURNAL = {Publ. Res. Inst. Math. Sci.},
  FJOURNAL = {Publications of the Research Institute for Mathematical
              Sciences},
    VOLUME = {60},
      YEAR = {2024},
    NUMBER = {1},
     PAGES = {1--69},
      ISSN = {0034-5318,1663-4926},
   MRCLASS = {20G42 (16T25 81R50)},
  MRNUMBER = {4803333},
MRREVIEWER = {Run-Qiang\ Jian},
       DOI = {10.4171/prims/60-1-1},
       URL = {https://doi.org/10.4171/prims/60-1-1},
}

@article {FPT22,
    AUTHOR = {Frassek, R. and Pestun, V. and Tsymbaliuk, A.},
     TITLE = {Lax matrices from antidominantly shifted {Y}angians and
              quantum affine algebras: {A}-type},
   JOURNAL = {Adv. Math.},
  FJOURNAL = {Advances in Mathematics},
    VOLUME = {401},
      YEAR = {2022},
     PAGES = {Paper No. 108283, 73},
      ISSN = {0001-8708,1090-2082},
   MRCLASS = {17B37 (81R10)},
  MRNUMBER = {4394682},
MRREVIEWER = {Dmitry\ V.\ Artamonov},
       DOI = {10.1016/j.aim.2022.108283},
       URL = {https://doi.org/10.1016/j.aim.2022.108283},
}

@article {BFN18,
    AUTHOR = {Braverman, A. and Finkelberg, M. and Nakajima,
              H.},
     TITLE = {Towards a mathematical definition of {C}oulomb branches of
              3-dimensional {$\mathcal N=4$} gauge theories, {II}},
   JOURNAL = {Adv. Theor. Math. Phys.},
  FJOURNAL = {Advances in Theoretical and Mathematical Physics},
    VOLUME = {22},
      YEAR = {2018},
    NUMBER = {5},
     PAGES = {1071--1147},
      ISSN = {1095-0761,1095-0753},
   MRCLASS = {57R57 (14J33 14N35 16G20 17B67 81T13)},
  MRNUMBER = {3952347},
MRREVIEWER = {Dave\ Auckly},
       DOI = {10.4310/ATMP.2018.v22.n5.a1},
       URL = {https://doi.org/10.4310/ATMP.2018.v22.n5.a1},
}

@incollection {FR99,
    AUTHOR = {Frenkel, E. and Reshetikhin, N.},
     TITLE = {The {$q$}-characters of representations of quantum affine
              algebras and deformations of {$\scr W$}-algebras},
 BOOKTITLE = {Recent developments in quantum affine algebras and related
              topics ({R}aleigh, {NC}, 1998)},
    SERIES = {Contemp. Math.},
    VOLUME = {248},
     PAGES = {163--205},
 PUBLISHER = {Amer. Math. Soc., Providence, RI},
      YEAR = {1999},
      ISBN = {0-8218-1199-1},
   MRCLASS = {17B37 (05E15 17B68)},
  MRNUMBER = {1745260},
       DOI = {10.1090/conm/248/03823},
       URL = {https://doi.org/10.1090/conm/248/03823},
}

@article {Kn95,
    AUTHOR = {Knight, H.},
     TITLE = {Spectra of tensor products of finite-dimensional
              representations of {Y}angians},
   JOURNAL = {J. Algebra},
  FJOURNAL = {Journal of Algebra},
    VOLUME = {174},
      YEAR = {1995},
    NUMBER = {1},
     PAGES = {187--196},
      ISSN = {0021-8693,1090-266X},
   MRCLASS = {17B37 (81R50)},
  MRNUMBER = {1332866},
MRREVIEWER = {Preeti\ Parashar},
       DOI = {10.1006/jabr.1995.1123},
       URL = {https://doi.org/10.1006/jabr.1995.1123},
}

@article {LM21,
    AUTHOR = {Lu, K. and Mukhin, E.},
     TITLE = {Jacobi-{T}rudi identity and {D}rinfeld functor for super
              {Y}angian},
   JOURNAL = {Int. Math. Res. Not. IMRN},
  FJOURNAL = {International Mathematics Research Notices. IMRN},
      YEAR = {2021},
    NUMBER = {21},
     PAGES = {16751--16810},
      ISSN = {1073-7928,1687-0247},
   MRCLASS = {17B10 (16T25 81R12 82B23)},
  MRNUMBER = {4338233},
MRREVIEWER = {Weiqiang\ Wang},
       DOI = {10.1093/imrn/rnab023},
       URL = {https://doi.org/10.1093/imrn/rnab023},
}

@article {Mo07,
    AUTHOR = {Molev, A.},
     TITLE = {Yangians and {C}lassical {L}ie {A}lgebras},
   JOURNAL = {Math. Surveys Monogr.},
  FJOURNAL = {Mathematical Surveys and Monographs},
 PUBLISHER = {Amer. Math. Soc., Providence, RI},
    VOLUME = {143},
      YEAR = {2007},
      ISSN = {0076-5376},
      ISBN = {978-0-8218-4374-1},
   MRCLASS = {17B37},
  MRNUMBER = {2355506},
MRREVIEWER = {Ian M.\ Musson},
       URL = {https://bookstore.ams.org/surv-143},
}

@article {Mo22,
    AUTHOR = {Molev, A.},
     TITLE = {Odd reflections in the {Y}angian associated with
              {$\germ{gl}(m|n)$}},
   JOURNAL = {Lett. Math. Phys.},
  FJOURNAL = {Letters in Mathematical Physics},
    VOLUME = {112},
      YEAR = {2022},
    NUMBER = {1},
     PAGES = {Paper No. 8, 15},
      ISSN = {0377-9017,1573-0530},
   MRCLASS = {17B37},
  MRNUMBER = {4367922},
MRREVIEWER = {Yung-Ning\ Peng},
       DOI = {10.1007/s11005-021-01501-2},
       URL = {https://doi.org/10.1007/s11005-021-01501-2},
}

@article {Pe21,
    AUTHOR = {Peng, Y.-N.},
     TITLE = {Finite {$W$}-superalgebras via super {Y}angians},
   JOURNAL = {Adv. Math.},
  FJOURNAL = {Advances in Mathematics},
    VOLUME = {377},
      YEAR = {2021},
     PAGES = {Paper No. 107459, 60},
      ISSN = {0001-8708,1090-2082},
   MRCLASS = {17B60 (16T05 17B35 17B38)},
  MRNUMBER = {4186002},
MRREVIEWER = {Plamen\ Koshlukov},
       DOI = {10.1016/j.aim.2020.107459},
       URL = {https://doi.org/10.1016/j.aim.2020.107459},
}

@article {Lu22,
    AUTHOR = {Lu, K.},
     TITLE = {A note on odd reflections of super {Y}angian and {B}ethe
              ansatz},
   JOURNAL = {Lett. Math. Phys.},
  FJOURNAL = {Letters in Mathematical Physics},
    VOLUME = {112},
      YEAR = {2022},
    NUMBER = {2},
     PAGES = {Paper No. 29, 26},
      ISSN = {0377-9017,1573-0530},
   MRCLASS = {17B37 (82B23)},
  MRNUMBER = {4400677},
MRREVIEWER = {Yung-Ning\ Peng},
       DOI = {10.1007/s11005-022-01524-3},
       URL = {https://doi.org/10.1007/s11005-022-01524-3},
}

@article {Na91,
    AUTHOR = {Nazarov, M.},
     TITLE = {Quantum {B}erezinian and the classical {C}apelli identity},
   JOURNAL = {Lett. Math. Phys.},
  FJOURNAL = {Letters in Mathematical Physics},
    VOLUME = {21},
      YEAR = {1991},
    NUMBER = {2},
     PAGES = {123--131},
      ISSN = {0377-9017,1573-0530},
   MRCLASS = {17B37 (16W55 17A70 81R50)},
  MRNUMBER = {1093523},
MRREVIEWER = {Tuan\ W.\ Chen},
       DOI = {10.1007/BF00401646},
       URL = {https://doi.org/10.1007/BF00401646},
}

@article {Na20,
    AUTHOR = {Nazarov, M.},
     TITLE = {Yangian of the {G}eneral {L}inear {L}ie {S}uperalgebra,},
   JOURNAL = {SIGMA Symmetry Integrability Geom. Methods Appl.},
  FJOURNAL = {SIGMA. Symmetry, Integrability and Geometry. Methods and Applications},
    VOLUME = {16},
      YEAR = {2020},
    NUMBER = {112},
     PAGES = {Paper No. 112, 24 pages},
      ISSN = {1815-0659},
   MRCLASS = {17B37 (16T20 81R50)},
  MRNUMBER = {4170710},
MRREVIEWER = {Xin\ Tang},
       DOI = {10.3842/SIGMA.2020.112},
       URL = {https://doi.org/10.3842/SIGMA.2020.112},
}

@article {Pe16,
    AUTHOR = {Peng, Y.-N.},
     TITLE = {Parabolic presentations of the super {Y}angian
              {$Y(\germ{gl}_{M|N})$} associated with arbitrary 01-sequences},
   JOURNAL = {Comm. Math. Phys.},
  FJOURNAL = {Communications in Mathematical Physics},
    VOLUME = {346},
      YEAR = {2016},
    NUMBER = {1},
     PAGES = {313--347},
      ISSN = {0010-3616,1432-0916},
   MRCLASS = {17B67},
  MRNUMBER = {3528423},
       DOI = {10.1007/s00220-015-2548-9},
       URL = {https://doi.org/10.1007/s00220-015-2548-9},
}

@article {KU22,
    AUTHOR = {Kodera, R. and Ueda, M.},
     TITLE = {Coproduct for affine {Y}angians and parabolic induction for
              rectangular {$W$}-algebras},
   JOURNAL = {Lett. Math. Phys.},
  FJOURNAL = {Letters in Mathematical Physics},
    VOLUME = {112},
      YEAR = {2022},
    NUMBER = {1},
     PAGES = {Paper No. 3, 37},
      ISSN = {0377-9017,1573-0530},
   MRCLASS = {17B37 (17B69)},
  MRNUMBER = {4362474},
MRREVIEWER = {Dmitry\ V.\ Artamonov},
       DOI = {10.1007/s11005-021-01500-3},
       URL = {https://doi.org/10.1007/s11005-021-01500-3},
}

@article {Ho12,
    AUTHOR = {Hoyt, C.},
     TITLE = {Good gradings of basic {L}ie superalgebras},
   JOURNAL = {Israel J. Math.},
  FJOURNAL = {Israel Journal of Mathematics},
    VOLUME = {192},
      YEAR = {2012},
    NUMBER = {1},
     PAGES = {251--280},
      ISSN = {0021-2172,1565-8511},
   MRCLASS = {17B70},
  MRNUMBER = {3004082},
MRREVIEWER = {Anargyros\ Fellouris},
       DOI = {10.1007/s11856-012-0023-2},
       URL = {https://doi.org/10.1007/s11856-012-0023-2},
}

@article {Ts20,
    AUTHOR = {Tsymbaliuk, A.},
     TITLE = {Shuffle algebra realizations of type {$A$} super {Y}angians
              and quantum affine superalgebras for all {C}artan data},
   JOURNAL = {Lett. Math. Phys.},
  FJOURNAL = {Letters in Mathematical Physics},
    VOLUME = {110},
      YEAR = {2020},
    NUMBER = {8},
     PAGES = {2083--2111},
      ISSN = {0377-9017,1573-0530},
   MRCLASS = {17B37 (81R10)},
  MRNUMBER = {4126874},
MRREVIEWER = {Slaven\ Ko\v zi\'c},
       DOI = {10.1007/s11005-020-01287-9},
       URL = {https://doi.org/10.1007/s11005-020-01287-9},
}

@article {Go07,
    AUTHOR = {Gow, L.},
     TITLE = {Gauss decomposition of the {Y}angian {$Y({\germ{gl}}_{m|n})$}},
   JOURNAL = {Comm. Math. Phys.},
  FJOURNAL = {Communications in Mathematical Physics},
    VOLUME = {276},
      YEAR = {2007},
    NUMBER = {3},
     PAGES = {799--825},
      ISSN = {0010-3616,1432-0916},
   MRCLASS = {17B37 (81R50)},
  MRNUMBER = {2350438},
MRREVIEWER = {Jonathan\ Brundan},
       DOI = {10.1007/s00220-007-0349-5},
       URL = {https://doi.org/10.1007/s00220-007-0349-5},
}

@article {Ge20,
    AUTHOR = {Genra, N.},
     TITLE = {Screening operators and parabolic inductions for affine
              {${\mathcal W}$}-algebras},
   JOURNAL = {Adv. Math.},
  FJOURNAL = {Advances in Mathematics},
    VOLUME = {369},
      YEAR = {2020},
     PAGES = {107179, 62},
      ISSN = {0001-8708,1090-2082},
   MRCLASS = {17B69},
  MRNUMBER = {4091897},
MRREVIEWER = {Philsang\ Yoo},
       DOI = {10.1016/j.aim.2020.107179},
       URL = {https://doi.org/10.1016/j.aim.2020.107179},
}

@article {BK08,
    AUTHOR = {Brundan, J. and Kleshchev, A.},
     TITLE = {Representations of shifted {Y}angians and finite
              {$W$}-algebras},
   JOURNAL = {Mem. Amer. Math. Soc.},
  FJOURNAL = {Memoirs of the American Mathematical Society},
    VOLUME = {196},
      YEAR = {2008},
    NUMBER = {918},
     PAGES = {viii+107},
      ISSN = {0065-9266,1947-6221},
      ISBN = {978-0-8218-4216-4},
   MRCLASS = {17B37 (17B10)},
  MRNUMBER = {2456464},
MRREVIEWER = {Serge\ M.\ Skryabin},
       DOI = {10.1090/memo/0918},
       URL = {https://doi.org/10.1090/memo/0918},
}

@article {BK06,
    AUTHOR = {Brundan, J. and Kleshchev, A.},
     TITLE = {Shifted {Y}angians and finite {$W$}-algebras},
   JOURNAL = {Adv. Math.},
  FJOURNAL = {Advances in Mathematics},
    VOLUME = {200},
      YEAR = {2006},
    NUMBER = {1},
     PAGES = {136--195},
      ISSN = {0001-8708,1090-2082},
   MRCLASS = {17B37},
  MRNUMBER = {2199632},
MRREVIEWER = {Chengming\ Bai},
       DOI = {10.1016/j.aim.2004.11.004},
       URL = {https://doi.org/10.1016/j.aim.2004.11.004},
}

@article {BBG13,
    AUTHOR = {Brown, J. and Brundan, J. and Goodwin, S. M.},
     TITLE = {Principal {$W$}-algebras for {${\rm GL}(m|n)$}},
   JOURNAL = {Algebra Number Theory},
  FJOURNAL = {Algebra \& Number Theory},
    VOLUME = {7},
      YEAR = {2013},
    NUMBER = {8},
     PAGES = {1849--1882},
      ISSN = {1937-0652,1944-7833},
   MRCLASS = {17B10 (17B35)},
  MRNUMBER = {3134037},
MRREVIEWER = {David\ Edward\ Hill},
       DOI = {10.2140/ant.2013.7.1849},
       URL = {https://doi.org/10.2140/ant.2013.7.1849},
}

@article {BG19,
    AUTHOR = {Brundan, J. and Goodwin, S. M.},
     TITLE = {Whittaker coinvariants for {${\rm GL}(m|n)$}},
   JOURNAL = {Adv. Math.},
  FJOURNAL = {Advances in Mathematics},
    VOLUME = {347},
      YEAR = {2019},
     PAGES = {273--339},
      ISSN = {0001-8708,1090-2082},
   MRCLASS = {17B10 (17B37)},
  MRNUMBER = {3916872},
MRREVIEWER = {Aleksandr\ Nikolaevich\ Sergeev},
       DOI = {10.1016/j.aim.2019.02.025},
       URL = {https://doi.org/10.1016/j.aim.2019.02.025},
}

@article {BK05,
    AUTHOR = {Brundan, J. and Kleshchev, A.},
     TITLE = {Parabolic presentations of the {Y}angian {$Y({\germ{gl}}_n)$}},
   JOURNAL = {Comm. Math. Phys.},
  FJOURNAL = {Communications in Mathematical Physics},
    VOLUME = {254},
      YEAR = {2005},
    NUMBER = {1},
     PAGES = {191--220},
      ISSN = {0010-3616,1432-0916},
   MRCLASS = {17B37},
  MRNUMBER = {2116743},
MRREVIEWER = {Sonia\ Natale},
       DOI = {10.1007/s00220-004-1249-6},
       URL = {https://doi.org/10.1007/s00220-004-1249-6},
}

@article {Pe14,
    AUTHOR = {Peng, Y.-N.},
     TITLE = {Finite {W}-{S}uperalgebras and {T}runcated {S}uper {Y}angians},
   JOURNAL = {Lett. Math. Phys.},
  FJOURNAL = {Letters in Mathematical Physics},
    VOLUME = {104},
      YEAR = {2014},
    NUMBER = {1},
     PAGES = {89--101},
   MRCLASS = {17B37},
  MRNUMBER = {3147661},
MRREVIEWER = {Liu.\ Ming},
       DOI = {10.1007/s11005-013-0656-z},
       URL = {https://doi.org/10.1007/s11005-013-0656-z},
}

\end{document}